\documentclass[a4paper,reqno]{amsart}

\usepackage[utf8]{inputenc}
\usepackage{amssymb}
\usepackage{enumitem}
\usepackage{mathrsfs}
\usepackage{mathtools}
\usepackage{amsmath}
\usepackage [dvipsnames] { xcolor }
\usepackage{multirow}
\usepackage[colorlinks=true]{hyperref}

\hypersetup{linkcolor=Green,urlcolor=Cyan,citecolor=Cyan}

\setlist[enumerate]{label=\emph{(\roman*)}}

\usepackage{environ}

\newtheorem{theorem}{Theorem}[section]

\newtheorem{lemma}[theorem]{Lemma}
\newtheorem{proposition}[theorem]{Proposition}

\theoremstyle{definition}
\newtheorem{definition}[theorem]{Definition}
\newtheorem{remark}[theorem]{Remark}
\numberwithin{equation}{section}
\newcommand{\R}{\mathbb{R}}

\newcommand{\dd}{{\rm d}}
\newcommand{\vertiii}[1]{{\left\vert\kern-0.25ex\left\vert\kern-0.25ex\left\vert #1 
    \right\vert\kern-0.25ex\right\vert\kern-0.25ex\right\vert}}

\parindent=0pt

\begin{document}

\title[Minimal Mass Blow-up solution]{Construction of blow-up solution with minimal mass for 2D cubic Zakharov--Kuznetsov equation}

\author{Yang Lan}
\address{Yau Mathematical Sciences Center, Tsinghua University, Beijing 100084, P. R. China.}
\email{lanyang@mail.tsinghua.edu.cn}

\author{Xu Yuan}
\address{State Key Laboratory of Mathematical Sciences, Academy of Mathematics and Systems Science, Chinese Academy of Sciences, Beijing 100190, P. R. China.}
\email{xu.yuan@amss.ac.cn}

\begin{abstract}
In this article, we construct a minimal mass blow-up solution of the two-dimensional cubic (mass-critical) Zakharov--Kuznetsov equation:
	\begin{equation*}
		\partial_t \phi+\partial_{x_1}(\Delta \phi+\phi^3)=0,\quad (t,x)\in [0,\infty)\times \mathbb{R}^2.
	\end{equation*}
	Let $s>\frac{3}{4}$. Bhattacharya-Farah-Roudenko~\cite{BGFR} show that $H^{s}$ solutions with $\|\phi\|_{L^{2}}<\|Q\|_{L^{2}}$ are global in time. For such low regularity solutions, we study the dynamics at the threshold $\|\phi\|_{L^{2}}=\|Q\|_{L^{2}}$ and demonstrate that finite time blow-up singularity formation may occur. This result and its proof are inspired by the recent blow-up result~\cite{MartelPilod} for the mass-critical gKdV equation. This result is also complement of previous result~\cite{CLYMINI} for nonexistence of minimal mass blow-up element in the energy space $H^{1}(\R^{2})$ of the two-dimensional cubic Zakharov--Kuznetsov equation.

\end{abstract}

\maketitle

\section{Introduction}
\subsection{Main result}
In this article, we consider the minimal mass dynamics for
the 2D cubic Zakharov--Kuznetsov (ZK) equation,
\begin{equation}\label{CP}
\partial_t \phi+\partial_{x_1}(\Delta \phi+\phi^3)=0,\quad (t,x)\in [0,\infty)\times \mathbb{R}^2.
\end{equation}
Here, $x=(x_{1},x_{2})\in \R^{2}$ and  $\Delta=\partial_{x_{1}}^{2}+\partial_{x_{2}}^{2}$ is the Laplace operator on $\R^{2}$. The Cauchy problem for~\eqref{CP} is locally well-posed in the Sobolev space $H^{s}(\R^{2})$ with $s\ge \frac{1}{4}$ (see~\cite{Faminskii,Kinoshita,LinPas}). More precisely, for any initial data $u_{0}\in H^{s}(\R^{2})$ with $s\ge \frac{1}{4}$, there exists a unique (in a certain class) maximal solution $\phi$ of~\eqref{CP} in 
$C\left([0,T);H^{s}(\R^{2})\right)$. For this Cauchy problem, the following blow-up criterion holds:
\begin{equation}\label{equ:blowCauchy}
	T<\infty\Longrightarrow 
	\||\nabla |^{s}u(t)\|_{L^{2}}=\infty\quad \mbox{as}\ t\uparrow T.
\end{equation}

\smallskip
Recall that, if $\phi$ is a solution of~\eqref{CP} then for any $\lambda>0$, the function 
\begin{equation*}
	\phi_{\lambda}(t,x)=\frac{1}{\lambda}\phi\left(\frac{t}{\lambda^{3}},\frac{x}{\lambda}\right),\quad \mbox{for}\ (t,x)\in [0,\infty)\times \R^{2},
\end{equation*}
is also a solution of~\eqref{CP}. This scaling symmetry keeps the $L^{2}$-norm invariant so that the problem is \emph{mass-critical}. 
Recall also that, for any solution $\phi$ of~\eqref{CP}, the mass $M(\phi)$ and the energy $E(\phi)$ are formally conserved, where
\begin{equation*}
	M(\phi)=\int_{\R^{2}}\phi^{2} \dd x\quad \mbox{and}\quad 
	E(\phi)=\frac{1}{2}\int_{\R^{2}}|\nabla \phi|^{2}\dd x
	-\frac{1}{4}\int_{\R^{2}}\phi^{4}\dd x.
\end{equation*} 

We recall the family of solitary wave solutions of~\eqref{CP}. Let $Q\in H^{1}(\R^{2})$ be the unique radial positive solution of the equation
\begin{equation*}
	-\Delta Q+Q-Q^{3}=0,\quad \mbox{on}\ \R^{2}.
\end{equation*}
Then, for any $(\lambda_{0},x_{10},x_{20})\in (0,\infty)\times \R^{2}$, the function 
\begin{equation*}
	\phi(t,x)=\lambda_{0}^{-1}Q\left(\lambda_{0}^{-1}\left(x_{1}-\lambda_{0}^{-2}t-x_{10}\right), \lambda_{0}^{-1}\left(x_{2}-x_{20}\right)\right),
\end{equation*}
is a solution of~\eqref{CP} called \emph{solitary wave} or \emph{soliton}. It is well-known that $E(Q)=0$ and that $Q$ is related to the following
sharp Gagliardo-Nirenberg inequality (see~\cite{Weinstein})
\begin{equation*}
	\int_{\R^{2}}|f(x)|^{4}\dd x
	\le 2\left(\int_{\R^{2}}|\nabla f(x)|^{2}\dd x\right)
	\left(\frac{\int_{\R^{2}}|f(x)|^{2}\dd x}{\int_{\R^{2}}|Q(x)|^{2}\dd x}\right),\quad \mbox{for any}\ f\in H^{1}(\R^{2}).
\end{equation*}
Following the above inequality, the conservation of energy and the blow-up criterion~\eqref{equ:blowCauchy} imply that any initial data $\phi_{0}\in H^{1}(\R^{2})$ with subcritical mass, \emph{i.e.} satisfying $\|\phi_{0}\|_{L^{2}}<\|Q\|_{L^{2}}$, generates a \emph{global and bounded} solution in $H^{1}(\R^{2})$.

\smallskip
To go beyond the threshold mass $\|Q\|_{L^{2}}$, it is natural to restrict to solutions with small supercritical mass, \emph{i.e.} satisfying
\begin{equation}\label{equ:massabove}
	\|Q\|_{L^{2}}<\|\phi_{0}\|_{L^{2}}<(1+\delta)\|Q\|_{L^{2}},\quad
	\mbox{for}\ \ 0<\delta\ll 1.
\end{equation}
Based on a rigidity property (see~\cite[Proposition 1.3]{FHRY}) of the ZK flow around the family of soliton, the first proof for the existence of blow-up solutions with small supercritical mass was given in Farah-Holmer-Roudenko-Yang~\cite[Theorem 1.1]{FHRY}. More precisely, they showed that, for any negative energy initial data $\phi_{0}\in H^{1}(\R^{2})$ satisfying~\eqref{equ:massabove}, the corresponding solution $\phi(t)$ blows up in finite/infinite forward time. Then, Chen-Lan-Yuan~\cite{CLY} revisited the blow-up analysis for the 2D  cubic ZK equation in light of previous developments related to the blow-up of the mass-critical generalized Korteweg-de Vries (gKdV) equation (see \S\ref{SS:Stable blow-up} for more discussion).

\smallskip
At the threshold mass $\|\phi_{0}\|_{L^{2}}=\|Q\|_{L^{2}}$, Chen-Lan-Yuan~\cite{CLYMINI} showed that there exists no \emph{minimal mass blow-up} solution in $H^{1}(\R^{2})$ for~\eqref{CP}. Here, we say that $\phi(t)$ is a minimal mass blow-up solution in $H^{1}(\R^{2})$ for~\eqref{CP} if $\|\phi_{0}\|_{L^{2}}=\|Q\|_{L^{2}}$ and there exists $0<T\le \infty$ such that 
\begin{equation*}
	\phi(t)\in C([0,T);H^{1}) \ \ \mbox{with}\ \
	\|\nabla \phi(t)\|_{L^{2}}\to \infty \ \ \mbox{as} \ \ t\uparrow T.
\end{equation*}
In the case of low regularity, Linares-Pastor~\cite{LinPasJFA} proved the global well-posedness in $H^{s}(\R^{2})$ for $s>\frac{53}{63}$, under the condition of subcritical mass for initial data. Later on, Bhattacharya-Farah-Roudenko~\cite{BGFR} used the $I$-method and obtained such global well-posedness in $H^{s}(\R^{2})$ for $s>\frac{3}{4}$.
In this article, we show that the critical mass condition is sharp by constructing a minimal mass blow-up solution in $H^{\frac{7}{9}}(\R^{2})$, \emph{i.e.} a solution of~\eqref{CP} which blows up in finite time in $H^{\frac{7}{9}}(\R^{2})$ with the threshold mass $\|Q\|_{L^{2}}$. Our main result is formulated as follows.

\begin{theorem}\label{thm:main}
    There exist $T_{0}>0$ and a solution $\mathcal{S}\in C((0,T_{0}];H^{\frac{7}{9}}(\R^{2}))$ to~\eqref{CP} with critical mass 
    $\|\mathcal{S}(t)\|_{L^{2}}=\|Q\|_{L^{2}}$
    such that 
    \begin{equation*}
        \mathcal{S}(t)-\frac{1}{\lambda(t)}Q\left(\frac{\cdot-x(t)}{\lambda(t)}\right)\to0,\quad \mbox{in}\ H^{\frac{7}{9}}(\R^{2})\ \ \mbox{as}\ \ t\downarrow 0.
    \end{equation*}
    Here, the functions $t\mapsto \lambda(t)$ and $t\mapsto x(t)$ satisfy
    \begin{equation*}
    \lambda(t)\sim t^{\frac{1}{3-\theta}}\quad \mbox{and}\quad x(t)\sim (-
    t^{-\frac{\theta-1}{3-\theta}},0
    )\quad \mbox{as}\ t\downarrow 0.
    \end{equation*}
\end{theorem}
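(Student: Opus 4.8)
The plan is to construct $\mathcal{S}$ as the limit of a sequence of approximate solutions $\phi_n$ emanating from final data at times $t_n \downarrow 0$, following the strategy of Martel–Pilod for mass-critical gKdV, adapted to the 2D ZK setting. The anisotropic parameter $\theta$ enters because the blow-up speed and the trajectory of the soliton center are governed by a modulation system whose leading dynamics are driven by a particular solvable profile equation — the same mechanism that produces the self-similar-type rates $\lambda(t) \sim t^{1/(3-\theta)}$ and $x_1(t) \sim -t^{-(\theta-1)/(3-\theta)}$. Concretely, I would:

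\emph{Step 1: Construction of an approximate profile.} Expand $\mathcal{S}$ near the soliton as
$\mathcal{S}(t,x) = \frac{1}{\lambda(t)}\bigl(Q + \varepsilon\bigr)\!\left(\frac{x_1 - x_1(t)}{\lambda(t)}, \frac{x_2 - x_2(t)}{\lambda(t)}\right)$
and derive the equation for $\varepsilon$ in renormalized variables. Seek a formal expansion $\varepsilon \approx b\, P_b + \cdots$ where $b = -\lambda_t \lambda$ (or a suitably chosen modulation combination), with the profile $P_b$ solving an elliptic-type ODE/PDE obtained from matching orders in $b$. The key algebraic input is that $\int Q^2$ is exactly critical, so the usual logarithmic obstruction to solving the profile equation in $L^2$ must be cancelled by the correct choice of the law for $b$; this forces the relation $b_t \sim c\, b^{\,?}$ and produces the exponent $\theta$. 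Because the equation is anisotropic, $P_b$ will not be radial and one must control its decay separately in $x_1$ and $x_2$; the scalar products with $Q$, $\Lambda Q = Q + y\cdot\nabla Q$, and $\partial_{x_1} Q$ yield the modulation ODEs for $\lambda$, $x_1$, $x_2$ and $b$.

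\emph{Step 2: Uniform backward estimates and energy/virial functional.} For the truncated data $\mathcal{S}_n(t_n) = \frac{1}{\lambda_n}(Q + P_{b_n})(\cdots)$ solved backward on $(t, t_n]$, I would set up a bootstrap on the modulation parameters (so that $\lambda$, $x$, $b$ stay close to their expected laws) and on a mixed energy–virial Lyapunov functional $\mathcal{F}$ controlling $\|\varepsilon\|_{H^1}$ (and then, at lower regularity, $\|\varepsilon\|_{H^{7/9}}$ via interpolation with an $L^2$ mass-conservation bound). The monotonicity of $\mathcal{F}$, combined with coercivity of the linearized operator $L = -\Delta + 1 - 3Q^2$ modulo its kernel and the negative direction, gives $\|\varepsilon(t)\|_{H^1} \lesssim \lambda(t)^{\kappa}$ for some $\kappa > 0$, uniformly in $n$. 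Mass conservation pins $\|\mathcal{S}_n\|_{L^2}$ to $\|Q\|_{L^2}$ up to an $O(\lambda^{\text{large}})$ error, which one arranges to vanish in the limit by choosing the final data's mass exactly critical.

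\emph{Step 3: Compactness and passage to the limit.} Pass $n \to \infty$: uniform-in-$n$ $H^1$ bounds on compact time intervals away from $0$, together with the ZK well-posedness and continuity in $H^s$ for $s \ge 1/4$, give a limit solution $\mathcal{S} \in C((0,T_0]; H^{7/9})$; the uniform decay estimate then yields the stated convergence to the modulated soliton as $t \downarrow 0$, and the modulation ODEs integrate to the asserted rates for $\lambda(t)$ and $x(t)$. The critical-mass property passes to the limit by lower semicontinuity plus the Gagliardo–Nirenberg sharp constant (which prevents the mass from dropping).

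The main obstacle I expect is \emph{Step 2 at low regularity}: the natural Lyapunov/virial machinery lives in $H^1$, but the target space is $H^{7/9}$, below $H^1$, so one cannot simply invoke $H^1$ conservation laws — there is no energy conservation available at this regularity, and the solution genuinely leaves $H^1$ as $t \downarrow 0$ (indeed $\|\nabla \mathcal{S}(t)\|_{L^2} \to \infty$). The resolution, as in Martel–Pilod, must be to work with a \emph{localized/rescaled} energy that is almost conserved along the flow with controlled error, to exploit the $I$-method-type almost-conservation (à la Bhattacharya–Farah–Roudenko) to propagate the $H^{7/9}$ bound on the remainder on each backward interval, and to choose the blow-up rate exponent $\theta$ precisely so that the accumulated error from these almost-conservation estimates is summable down to $t = 0$ — this balancing of the $I$-method error against the self-similar scaling is the delicate heart of the argument, and it is presumably what restricts the construction to $H^{7/9}$ rather than all $s > 3/4$.
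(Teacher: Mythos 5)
Your overall skeleton --- construct approximate solutions backward from final data at $T_n \downarrow 0$, bootstrap modulation and remainder estimates, pass to the limit by compactness --- matches the paper's strategy, but two of your central mechanisms are wrong in ways that would make the proof fail.

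First, the profile and modulation structure. You propose $\varepsilon \approx b P_b$ with $b$ satisfying an autonomous ODE of the form $b_t \sim c\, b^{?}$, from which $\theta$ would emerge by "cancelling the logarithmic obstruction." That is the gKdV mechanism, and it does not produce the ZK rate. Here the profile is an expansion in powers of $\lambda^{\theta}$: $V = \sum_{k=1}^{4} \lambda^{k\theta}(s) X_k(y) \Theta(s,y_1)$ with $X_1 = -\theta^{-1}P$ and $\partial_{y_1}\mathcal{L}P = \Lambda Q$, while $b$ is an \emph{auxiliary} parameter whose only role is to supply the extra orthogonality $(\varepsilon,Q)=0$. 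The constant $\theta$ is a spectral/Fourier constant, $\theta = 2\bigl(\int |\widehat{F}|^2/(1+|\xi|^2)\,d\xi\bigr)/\bigl(\int |\widehat{F}|^2 d\xi\bigr)$ where $F(y_2) = \int \Lambda Q\,dy_1$ --- it comes out of the nonlocal 2D geometry (the $y_2$-tail of $P$), not from an obstruction to solving the profile equation. The modulation system is $\lambda_s/\lambda + \Gamma + b = 0$ with $\Gamma = c_1\lambda^\theta + \cdots$ and $b_s - b\lambda^\theta \approx 0$, which gives $\lambda(s)\sim |s|^{-1/\theta}$. Also, the data are chosen even in $x_2$ so that only $x_1$ is modulated; your ansatz translates in $x_2$, which is unnecessary and would add spurious degrees of freedom.

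Second, the low-regularity control. You plan to get a uniform $\|\varepsilon\|_{H^1}\lesssim \lambda^\kappa$ from an energy--virial functional and then interpolate to $H^{7/9}$. This cannot work: by Chen--Lan--Yuan's nonexistence result (\cite{CLYMINI}), no minimal-mass blow-up element exists in $H^1(\mathbb{R}^2)$, and the paper explicitly notes that uniform $H^1$ control of $(\phi_n)$ must fail. The energy--virial functional in the paper only controls a \emph{local} weighted norm $\mathcal{N}_B(\varepsilon)$, not the global $H^1$ norm. You then suggest falling back on $I$-method almost-conservation à la \cite{BGFR}; the paper does not do this and it is not clear it closes. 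Instead, the paper controls the global $H^{7/9}$ norm of the remainder $w_n$ directly via the Duhamel formula together with a bank of ZK-specific linear estimates --- Kato-type local smoothing $\|\partial_{x_1}U(t)\phi_0\|_{L^\infty_{x_1}L^2_{x_2T}}\lesssim\|\phi_0\|_{L^2}$, maximal-function estimates $\|U(t)\phi_0\|_{L^2_{x_1}L^\infty_{x_2T}}\lesssim\|\phi_0\|_{H^r}$, Strichartz-type bounds, and fractional Leibniz/commutator estimates --- organized into the mixed norm $\vertiii{\cdot}_T$ and closed by a separate bootstrap (Proposition~\ref{Prop:bdd} and Lemma~\ref{L1}). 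This linear-estimates step is the technical heart that makes $H^{7/9}$ accessible despite the $H^1$ obstruction, and it is missing from your proposal.
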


\begin{remark}\label{re:constanttheta}
	The constant $\theta$ is defined by 
    \begin{equation*}
        \theta=2\bigg(\int_{\R}\frac{|\widehat{F}(\xi)|^{2}}{1+|\xi|^{2}}\dd \xi\bigg)\bigg/
    \bigg(\int_{\R}|\widehat{F}(\xi)|^{2}\dd \xi\bigg).
    \end{equation*}
    Here, we consider the following two smooth functions:
\begin{equation*}
    \widehat{F}(\xi)=\frac{1}{\sqrt{2\pi}}\int_{\R}F(y_{2})e^{-i\xi y_{2}}\dd y_{2}\quad \mbox{where}\quad F(y_{2})=\int_{\R}\Lambda Q(y_{1},y_{2})\dd y_{1}.
\end{equation*}
Note that, the value of $\theta$ plays a crucial role in our analysis, since it determines the blow-up rate of $\mathcal{S}(t)$. By an elementary numerical computation, we find $\theta \approx 1.66\in \left(\frac{8}{5},\frac{9}{5}\right)$. We refer to~\cite[Appendix A]{CLY} for the details of numerical computation.
\end{remark}

\begin{remark}\label{re:Skdv}
    For the mass-critical gKdV equation, which is the closest model related to~\eqref{CP}, the minimal mass blow-up solution $\mathcal{S}_{{\rm{KdV}}}$ is known to be \emph{global} for $t\in (0,\infty)$ and to be the \emph{unique} minimal mass blow-up element (see~\cite{MMR1} and \S\ref{SS:Comparison} for more discussion). For the ZK equation~\eqref{CP}, such properties are open problems.
\end{remark}

\begin{remark}\label{re:indexSobolev}
	   We expect that the index of the Sobolev space in Theorem~\ref{thm:main} can be improved. Some additional analyses, which are related to the linear estimate of the ZK flow, will be needed to obtain such improvement (see \S\ref{S:Endproof} for more discussion).
       We also expect that the optimal index is $\frac{\theta}{2}$, which matches the difference in size between ${(\lambda,b)}$ (see~\cite[Section 1.3]{CLYMINI} for more discussion).
\end{remark}

\subsection{Soliton dynamics for the ZK equation}\label{SS:Stable blow-up}
First, we briefly review the literature related to the singularity formation for the ZK models. As mentioned above, for the 2D cubic ZK equation~\eqref{CP}, Farah-Holmer-Roudenko-Yang~\cite{FHRY} obtained the first qualitative information on the flow for small supercritical mass initial data. In particular, they proved the existence of a finite/infinite time blow-up solution with negative energy. Then, Chen-Lan-Yuan~\cite{CLY} revisited the study of singularity formation and soliton dynamics for~\eqref{CP}. More precisely, for $H^{1}$ solution satisfying a suitable space decay property (see the definition of initial data set $\mathcal{A}$ in~\cite[Definition 1.1]{CLY}), Chen-Lan-Yuan~\cite{CLY} proved that only three possible behaviors can occur:

\begin{description}
    \item[Exit] The solution eventually exits any small neighborhood of the solitons.
    
    \item [Blow-up] The solution blows up in finite time with blow-up rate $(T-t)^{-\frac{1}{3-\theta}}$.

    \item [Soliton]: The solution is global and locally converges to a soliton.
\end{description}
In Chen-Lan-Yuan~\cite{CLYMINI}, building on the tools developed
in~\cite{CLY}, the authors also proved the nonexistence of the minimal mass blow-up solution in the energy space $H^{1}(\R^{2})$. Last, we refer to Bozgan-Ghoul-Masmoudi-Yang~\cite{BGMY} for a similar result as in~\cite{CLY}, which was proved independently. More importantly, the two proofs use different energy-virial Lyapunov functionals.

\smallskip
We now briefly review the literature related to the stability and asymptotic stability of solitons for the ZK models. Orbital stability of ZK solitons is well known since the work of de Bouard~\cite{DEZK}.
Then, C\^ote-Mu\~noz-Pilod-Simpson~\cite{CMPS} showed the asymptotic stability of solitons and stability of multi-solitons for the 2D quadratic ZK equation via a virial-type estimate. It is worth mentioning here that, this estimate relies on a simple sign condition which can be checked numerically. However, this numerical computation does not apply to the 3D case. Recently, Farah-Holmer-Roudenko-Yang~\cite{FHRY1} obtained a virial-type estimate for the 3D case via passage to a dual problem. This estimate relies on the numerical verification of a spectral condition with robust numerical analysis. We refer to Farah-Holmer-Roudenko~\cite{FHR,FHR1} for the results on the instability of solitons for the 2D mass-critical and mass-supercritical problems, respectively. Last, for the 2D and 3D quadratic ZK equations, we also refer to~\cite{PV2,PV1} for the results on the asymptotic stability of multi-solitons and a description of the collision of two nearly equal solitary waves.

\subsection{Comparison with NLS and gKdV equations}\label{SS:Comparison}
In the last twenty years, there have been significant progress in the study of the singularity formation for the mass-critical nonlinear Schr\"odinger (NLS) and generalized Korteweg-de Vries (gKdV) equations, in particular for the description of stable blow-up dynamics and the classification of minimal mass blow-up solutions. In what follows, we will briefly survey the literature related to the minimal mass blow-up of the mass-critical NLS and gKdV equations. We mention here that, the 2D cubic ZK equation is a natural extension of the mass-critical gKdV equations in two dimensions.

\smallskip
Consider the following mass-critical NLS equation:
\begin{equation*}
	i\partial_{t}\phi+\Delta \phi+|\phi|^{\frac{4}{N}}\phi=0,\quad (t,x)\in [0,\infty)\times \R^{N}.
\end{equation*}
Based on the structure of the NLS equation, the pseudo-conformal symmetry for this equation implies that if $\phi(t,x)$ is a solution of this equation then 
\begin{equation*}
	\Phi(t,x)=\frac{1}{t^{\frac{N}{2}}}\overline{\phi}\left(\frac{1}{t},\frac{x}{t}\right)e^{i\frac{|x|^{2}}{4t}},
\end{equation*}
is also a solution of this equation.
It is well-known (see~\cite{Caz}) that the pseudo-conformal symmetry generates an
\emph{explicit minimal mass blow up solution}
\begin{equation*}
	\mathcal{S}_{\rm{NLS}}(t,x)=\frac{1}{t^{\frac{N}{2}}}e^{-i\frac{|x|^{2}}{4t}-\frac{i}{t}}Q_{{\rm{NLS}}}\left(\frac{x}{t}\right),
\end{equation*}
defined for all $t> 0$ and blowing up as $t\downarrow 0$. In particular, we have 
\begin{equation*}
 \|\mathcal{S}_{{\rm{NLS}}}(t)\|_{L^{2}}= \|Q_{{\rm{NLS}}}\|_{L^{2}}\quad \mbox{and}\quad  \|\nabla \mathcal{S}_{{\rm{NLS}}}(t)\|_{L^{2}}\sim \frac{1}{t}\ \mbox{as}\ t\downarrow 0.
\end{equation*}
Here, $Q_{\rm{NLS}}\in H^{1}(\R^{N})$ is the unique radial ground state of the mass-critical NLS equation. Later on, using the pseudo-conformal symmetry, the pioneering work of Merle~\cite{Merlenls} showed that $\mathcal{S}_{\rm{NLS}}$ is the unique (up to the symmetries) minimal mass blow-up solution of the mass-critical NLS equation in the energy
space $H^{1}(\R^{N})$. Recently, Dodson~\cite{Dodson1,Dodson2} proved the uniqueness result in the critical space $L^2(\mathbb{R}^N)$ for $1\leq N\leq 15$. Note that, the \emph{existence and uniqueness} of minimal mass blow-up solution both rely on the explicit pseudo-conformal which is absent from the inhomogeneous NLS equation. The study of inhomogeneous problem was start by Merle~\cite{Merlenonnls}, and in particular, this work obtained a sufficient condition to ensure the \emph{nonexistence} of minimal mass blow-up elements for the inhomogeneous problem. Then, a novel methodology to construct minimal mass blow-up elements for an inhomogeneous NLS equation was developed in Rapha\"el-Szeftel~\cite{RaSz}. More precisely, they gave a necessary and sufficient condition to ensure the \emph{existence} of minimal mass blow-up elements, and gave a complete classification in the energy space for such elements.

\smallskip
We also consider the following mass-critical gKdV equation:
\begin{equation*}
	\partial_{t}\phi+\partial_{x}\left(\partial_{x}^{2}\phi+\phi^{5}\right)=0,\quad (t,x)\in [0,\infty)\times \R.
\end{equation*}
First, Martel-Merle~\cite{MMDUKE} obtained
the \emph{global existence} result for minimal mass solutions with decay on the right. In other words, minimal mass blow up is not compatible with the suitable decay on the right. Then, the \emph{existence and description} of the minimal mass blow-up solution were studied by Martel-Merle-Rapha\"el~\cite{MMR1}. More precisely, we denote by $Q_{\rm{KdV}}\in H^{1}(\R)$ the unique radial ground state of the mass-critical gKdV equation. Based on the work of~\cite{MMR1}, we know that there exists a unique solution $\mathcal{S}_{{\rm{KdV}}}(t)$ on $(0,\infty)$ such that 
\begin{equation*}
	\|\mathcal{S}_{{\rm{KdV}}}(t)\|_{L^{2}}= \|Q_{{\rm{KdV}}}\|_{L^{2}}\quad \mbox{and}\quad  \|\partial_{x} \mathcal{S}_{{\rm{KdV}}}(t)\|_{L^{2}}\sim \frac{1}{t}\ \mbox{as}\ t\downarrow 0.
\end{equation*}
Moreover, the sharp asymptotics, both in the time and space variables, were derived in~\cite{Comkdv}, for any order derivative of $\mathcal{S}_{\rm{KdV}}(t)$.
 We refer to ~\cite{Banica, Com, Krieger,LeCoz,MartelPilodAnn,Suminimal,TangXu} and the references therein for the existence of minimal mass blow-up solutions for other dispersive equations with local or nonlocal structures.

\smallskip
Two differences with the 2D cubic ZK case are thus apparent. First, Chen-Lan-Yuan~\cite{CLYMINI} showed that there exists \emph{no finite/infinite time blow-up solution} with minimal mass in the energy space $H^{1}(\R^{2})$. This is clearly different from the cases of the NLS and gKdV equations. Second, in this article, we prove the existence of a minimal mass blow-up solution in the \emph{low-regularity Sobolev space} $H^{\frac{7}{9}}(\R^{2})$ rather than in the standard energy space $H^{1}(\R^{2})$ as in the NLS and gKdV cases. Last, we refer to~\cite{Coll,SunZheng} and references therein for the study of the low regularity blow-up solution for the mass-critical NLS equation.  To our knowledge, Theorem~\ref{thm:main} proves the first statement of the existence of minimal mass blow-up solutions in the low-regularity Sobolev space which do not exist in standard energy space.

\subsection{A Roadmap of the proof}\label{SS:Sketch}
As usual in investigating blow-up phenomenon, we first introduce the rescaled space-time variables $(s,y)\in (-\infty,0)\times \R^{2}$ with 
\begin{equation*}
	\frac{\dd s}{\dd t}=\frac{1}{\lambda^{3}(t)}\quad \mbox{and}\quad 
	y=(y_{1},y_{2})=\left(\frac{x_{1}-x_{1}(t)}{\lambda(t)},\frac{x_{2}}{\lambda(t)}\right).
\end{equation*}
The time dependent functions $(\lambda,x_{1})$ are to be determined. We look for a solution $\phi$ of~\eqref{CP} blowing up at $(t,x)=(0,0)$ with $\phi(t,x)=\lambda^{-1}(s)\omega(s,y)$. From~\eqref{CP} and an elementary, we directly have
\begin{equation}\label{equ:rescal}
    \partial_{s}\omega+\partial_{y_{1}}\left(\Delta \omega-\omega+\omega^{3}\right)-\frac{\lambda_{s}}{\lambda}
    \Lambda \omega-\left(\frac{x_{1s}}{\lambda}-1\right)\partial_{y_{1}}\omega
    =0.
\end{equation}
We consider the solution of~\eqref{equ:rescal} close to a soliton, that is, $\omega=Q+\zeta$ where $\zeta$ is a small function. It follows directly from~\eqref{equ:rescal} that 
\begin{equation*}
    \partial_{s}\zeta+\partial_{y_{1}}\left(\Delta \zeta-\zeta+\zeta^{3}\right)-\frac{\lambda_{s}}{\lambda}
    \Lambda \zeta-\left(\frac{x_{1s}}{\lambda}-1\right)\partial_{y_{1}}\zeta
    ={\rm{L.O. T}}.
\end{equation*}
Recall that, in the previous work~\cite{CLY}, we consider $\zeta\sim bP_{b}$ where $P_{b}$ is a suitable localized profile, and such an approximate solution can help us obtain a stable blow-up solution around $Q$. However, in our current case, the minimal mass blow-up solution does not belong to the space $\mathcal{A}$ which means that the refined control of $b$ is not allowed.
Thus, we do not expect such an approximate solution to help us construct a minimal mass element with explicit asymptotic behavior.

\smallskip
Inspired by the previous work~\cite{MartelPilod} for the study of blow-up dynamics for the gKdV equation, we consider an expansion $\zeta=\zeta_{1}+\zeta_{2}+\cdots$ in powers of $\lambda^{\theta}$ up to a sufficiently high order. More precisely, we define the first term by $\zeta_{1}\sim \lambda^{\theta}P_{\lambda}$ where $P_{\lambda}$ is a suitable localized profiled related to the parameters $\lambda$. To obtain an additional decisive orthogonality condition for the remainder term, the approximate solution involve an extra term $bP_{b}$, that is, we should consider $\omega\sim Q+\lambda^{\theta}P_{\lambda}+bP_{b}$. Roughly speaking, the geometrical parameters $(\lambda,b)$ satisfy
\begin{equation*}
    \frac{\lambda_{s}}{\lambda}+\lambda^{\theta}+b=0\quad \mbox{and}\quad 
    b_{s}-b\lambda^{\theta}=0.
\end{equation*}
Note that, the above ODE admits a special blow-up solution 
\begin{equation*}
    \lambda(s)=|s|^{-\frac{1}{\theta}}\ \ \mbox{for} \ \ s\in (-\infty,0)
    \Longleftrightarrow \lambda(t)=t^{\frac{1}{3-\theta}}\ \ \mbox{for}\ \ t\in (0,\infty),
\end{equation*}
which gives a finite time blow-up solution of~\eqref{CP}.

\smallskip
We briefly discuss some technical aspects of the proof. First, from the above ODE argument and the energy-virial Lyapunov functional which was first introduced in~\cite{CLY}, we construct a sequence of approximate blow-up solutions $(\phi_{n})_{n\in \mathbb{N}^{+}}$ of~\eqref{CP} (see Section~~\ref{S:Modula}--\ref{S:EV} for more details). More precisely, for each $n\in \mathbb{N}^{+}$, the solution $\phi_{n}$ is well-defined on $[T_{n},T_{0}]$ with $T_{n}\to 0$ as $n\to \infty$ and the scaling parameter satisfies $\lambda_{n}(t)\sim t^{\frac{1}{3-\theta}}$. Compared to previous works on the construction of a blow-up solution (see \emph{e.g.}~\cite{Krieger,MMR1,MartelPilodAnn}), a significant problem encountered when studying the ZK equation is the lack of uniform control for the $H^{1}$ norm of $(\phi_{n})_{n\in \mathbb{N}^{+}}$. Actually, such uniform control should not exists since the noexistence of minimal mass element in $H^{1}(\R^{2})$. To overcome the difficulty, we employ some linear estimates and space-time mixed estimates to obtain a uniform control for the $H^{\frac{7}{9}}$ norm of $(\phi_{n})_{n\in \mathbb{N}^{+}}$ (see Section~\ref{SS:Uniform} and Appendix~\ref{App:linear} for more details). Based on such control and weak $H^{\frac{7}{9}}$ stability of ZK flow, we argue by compactness and obtain the minimal mass blow-up solution $\phi$ as the weak limit of the sequence of solutions $(\phi_{n})_{n\in \mathbb{N}^{+}}$.

\subsection{Notation and conventions}\label{SS:Nota}
For any $\beta=(\beta_{1},\beta_{2})\in \mathbb{N}^{2}$, we denote 
\begin{equation*}
    |\beta|=|\beta_{1}|+|\beta_{2}|\quad \mbox{and}\quad 
    \partial_{y}^{\beta}=\frac{\partial^{|\beta|}}{\partial_{y_{1}}^{\beta_{1}}\partial_{y_{2}}^{\beta_{2}}}.
\end{equation*}
For any $(r,j)\in (-1,\infty)\times \{1,2\}$ and regular function $f:\R^{2}\to \R$, we denote the fractional derivatives of $f$ by
\begin{equation*}
D_{x_{j}}^{r}f(x)=\left(\mathcal{F}^{-1}\left(|\xi_{j}|^{r}\mathcal{F}f\right)\right)(x),\quad \mbox{on}\ \R^{2}.
\end{equation*}

Denote by $\mathcal{Y}(\mathbb{R}^N)$ the set of smooth function $f$ on $\mathbb{R}^N$, such that for all $n\in\mathbb{N}$, there exist $r_n>0$ and $C_n>0$ such that
\begin{equation*}
\sum_{|\beta|=n}|\partial_{y}^{\beta}f(y)|\leq C_n(1+|y|)^{r_n}e^{-|y|},\quad \mbox{on}\ \R^{N}.
\end{equation*}

Denote also by $\mathcal{Z}(\R^{N})$ the set of smooth functions $f$ on $\R^{N}$, such that for all $n\in \mathbb{N}$, there exist $r_{n}>0$ and $C_{n}>0$ such that 
\begin{equation*}
    \sum_{|\beta|=n}\left|\partial_{y}^{\beta}f(y)\right|
    \le C_{n}(1+|y|)^{r_{n}}e^{-\frac{|y|}{2}},\quad \mbox{on}\ \R^{N}.
\end{equation*}
For any $f\in L^{2}(\R^{2})$ and $g\in L^{2}(\R^{2})$, we denote the $L^{2}$-scalar product by 
\begin{equation*}
    (f,g)=\int_{\R^{2}}f(x)g(x)\dd x.
\end{equation*}
In addition, for any $f\in L^{2}(\R^{2})$, we define the weighted norm
\begin{equation*}
	\|f\|_{L^{2}_{\rm{loc}}}=\left(\int_{\R^{2}}f^{2}(y)e^{-\frac{|y|}{10}}\dd y\right)^{\frac{1}{2}}.
\end{equation*}

Let $\chi:\R\to [0,1]$ be a $C^{\infty}$ nondecreasing function such that
\begin{equation*}
    \chi_{|(-\infty,-2)}\equiv 0\quad \mbox{and}\quad 
    \chi_{|(-1,\infty)}\equiv 1.
\end{equation*}

Let $\sigma:\R \to [0,1]$ be a $C^{\infty}$ cut-off function such that
\begin{equation*}
\sigma_{|(-1,1)}\equiv1\quad \mbox{and}\quad 
\sigma_{|(-\infty,-2)}\equiv\sigma_{|(2,\infty)}\equiv0.
\end{equation*}

Let ${\textbf{e}}_{1}$ be the first vector of the canonical basis of $\R^{2}$, that is, ${\textbf{e}}_{1}=(1,0)$.

\smallskip
We denote the scaling operator by
\begin{equation*}
\Lambda f= f+y\cdot \nabla f,\quad \mbox{for}\ f\in H^{1}(\R^{2}).
\end{equation*}
Moreover, we denote the linearized operator around $Q$ by
\begin{equation}\label{def:L}
	\mathcal{L}f=-\Delta f+f-3Q^{2}f,\quad \mbox{for}\ f\in H^{1}(\R^{2}).
\end{equation}

For all $(f_1,f_2)\in C^\infty(\mathbb{R})\times C^{\infty}(\R)$, we set
\begin{equation*}
    (f_1\otimes f_2)(y_1,y_2)=f_1(y_1)f_2(y_2),\quad \mbox{on}\ \R^{2}.
\end{equation*}

Recall that, we consider the following two smooth functions:
\begin{equation*}
    \widehat{F}(\xi)=\frac{1}{\sqrt{2\pi}}\int_{\R}F(y_{2})e^{-i\xi y_{2}}\dd y_{2}\quad \mbox{where}\quad F(y_{2})=\int_{\R}\Lambda Q(y_{1},y_{2})\dd y_{1}.
\end{equation*}
In addition, the constant $\theta$ is defined by 
\begin{equation*}
    \theta
    =2\bigg(\int_{\R}\frac{|\widehat{F}(\xi)|^{2}}{1+|\xi|^{2}}\dd \xi\bigg)\bigg/
    \bigg(\int_{\R}|\widehat{F}(\xi)|^{2}\dd \xi\bigg).
\end{equation*}
By using the Fast Fourier Transform in Mathematica, one has $\theta\approx 1.66032$. We refer to~\cite[Appendix A]{CLY}) for the numerical computation related to $\theta$.  

\smallskip
For a given small constant $\alpha$, we denote by $\varrho(\alpha)$ a generic small constant with 
\begin{equation*}
    \varrho(\alpha)\to 0,\quad \mbox{as}\ \ \alpha\to 0.
\end{equation*}

\smallskip
We denote by $h_{2}\in \mathcal{Y}(\R)$ the even solution of the following second-order ODE:
\begin{equation}\label{def:h2}
-h_{2}''(y_{2})+h_{2}(y_{2})=F''(y_{2}),\quad \mbox{on}\ \R.
\end{equation}
Then, we denote
\begin{equation*}
G: y_{1}\longmapsto \int_{\R}h_{2}(y_{2})Q(y_{1},y_{2})\dd y_{2}.
\end{equation*}
We fix a regular function $h_{1}\in \mathcal{Y}(\R)$ such that $\int_{\R}h_{1}(y_{1})\dd y_{1}=1$ and $h_{1}$ is orthogonal to $G$ in the $L^{2}(\R)$ sense. It follows that 
\begin{equation*}
\left(h_{1}\otimes h_{2},Q\right)=\int_{\R}h_{1}(y_{1})\left(\int_{\R}h_{2}(y_{2}) Q(y_{1},y_{2})\dd y_{2}\right)\dd y_{1}=0.
\end{equation*}

As in the previous works of Chen-Lan-Yuan~\cite{CLY,CLYMINI}, we must always carefully check the dependence on the large-scale constant $B$. The following conventions are set: the implied constants in $\lesssim$ and $O$ are \emph{independent of} $B$ from Section~\ref{S:Blow-up} to Section~\ref{S:EV} and can \emph{depend on} the large constant $B$ in Section~\ref{S:Endproof}.

\subsection*{Acknowledgments}
The authors would like to thank Prof. Yvan Martel for helpful discussions on topics related to this work.

\section{Blow-up profile}\label{S:Blow-up}
In this section, we deal with the construction of a suitable blow-up profile for~\eqref{equ:rescal}, that is, an approximate solution close to $Q$ of the rescaled equation~\eqref{equ:rescal} with the expected blow-up behavior and a sufficiently high order of precision.
\subsection{The linearized operator}
 In this subsection, we recall some standard properties of
 the linearized operator $\mathcal{L}$ and then introduce useful functions related to the construction of a suitable blow-up profile.
 \begin{proposition}
     [Spectral theory of $\mathcal{L}$]\label{prop:Spectral}
     The self-adjoint operator $\mathcal{L}$ on $L^{2}(\R^{2})$ defined by~\eqref{def:L} satisfies the following properties.

     \begin{enumerate}
         \item \emph{Spectrum of $\mathcal{L}$}. 
         The operator $\mathcal{L}$ has only one negative eigenvalue $-\mu_{0}$ with $\mu_{0}>0$, ${\rm{Ker}}\mathcal{L}={\rm{Span}}\left(\partial_{y_{1}}Q,\partial_{y_{2}}Q\right)$ and 
         $\sigma_{\rm{ess}}\left(\mathcal{L}\right)=[1,\infty)$.
We denote by $Y$ the $L^{2}$-normalized eigenvector of $\mathcal{L}$  corresponding to the eigenvalue $-\mu_{0}$. It holds, for all $\beta\in \mathbb{N}^{2}$,
\begin{equation*}
    \left|\partial_{y}^{\beta}Y(y)\right|\lesssim e^{-|y|},\quad \mbox{on}\ \R^{2}.
\end{equation*}
         \item \emph{Scaling identities.} It holds 
         \begin{equation*}
            \mathcal{L}\Lambda Q=-2Q\quad \mbox{and}\quad 
            (\Lambda Q, Q)=0.
         \end{equation*}

         \item \emph{Coercivity of $\mathcal{L}$.}
         There exists $\mu_{1}>0$ such that, for all $f\in H^{1}(\R^{2})$, 
         \begin{equation*}
         \begin{aligned}
             \left(\mathcal{L}f,f\right)&\ge 
             \mu_{1} \|f\|_{H^{1}}^{2}-\frac{1}{\mu_{1}}\big(\left(f,Y\right)^{2}+\left|\left(f,\nabla Q\right)\right|^{2}\big),\\
              \left(\mathcal{L}f,f\right)&\ge 
             \mu_{1} \|f\|_{H^{1}}^{2}-\frac{1}{\mu_{1}}\big(\left(f,Q^{3}\right)^{2}+\left|\left(f,\nabla Q\right)\right|^{2}\big).
             \end{aligned}
         \end{equation*}

         \item \emph{Inversion of $\mathcal{L}$.} 
         For any $g\in L^{2}(\R^{2})$ orthogonal to $\nabla Q$ in the $L^{2}$ sense, there exists a unique function $f\in H^{2}(\R^{2})$ orthogonal to $\nabla Q$ in the $L^{2}$ sense such that $\mathcal{L}f=g$. Moreover, if $g\in \mathcal{Y}(\R^{2})$, then we have $f\in \mathcal{Z}(\R^{2})$. In addition, if $g$ is even in either $y_{1}$ or $y_{2}$, then we have $f$ is also even in $y_{1}$ or $y_{2}$.
     \end{enumerate}
 \end{proposition}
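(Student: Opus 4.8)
The four assertions are the standard spectral facts for the operator linearized at the ground state; the plan is to assemble them from classical Schr\"odinger-operator theory together with the explicit scaling and parity structure of $Q$, the one subtle point being the precise spectral count in item~(i), discussed below.

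\emph{Spectrum.} Since $Q\in\mathcal{Y}(\R^{2})$, the potential $3Q^{2}$ is bounded and exponentially decaying, hence a relatively compact perturbation of $-\Delta+1$; Weyl's theorem then gives $\sigma_{\rm ess}(\mathcal{L})=\sigma_{\rm ess}(-\Delta+1)=[1,\infty)$, and below this threshold $\mathcal{L}$ has only finitely many eigenvalues of finite multiplicity. Differentiating $-\Delta Q+Q-Q^{3}=0$ in $y_{1}$ and $y_{2}$ gives $\mathcal{L}(\partial_{y_{1}}Q)=\mathcal{L}(\partial_{y_{2}}Q)=0$. To pin down the kernel and the negative part exactly, I would pass to polar coordinates and split $\mathcal{L}$ into angular Fourier sectors $\mathcal{L}_{k}=-\partial_{rr}-r^{-1}\partial_{r}+k^{2}r^{-2}+1-3Q^{2}$: since $Q>0$ is the radial ground state, $\mathcal{L}_{0}$ has a simple negative lowest eigenvalue $-\mu_{0}$ and trivial kernel; since $\partial_{r}Q<0$ on $(0,\infty)$ never vanishes, it is the ground state of $\mathcal{L}_{1}$, so $0$ is the simple bottom eigenvalue in each of the two $k=1$ sectors; and $\mathcal{L}_{k}\geq\mathcal{L}_{1}+(k^{2}-1)r^{-2}>0$ for $k\geq2$. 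This yields $\sigma(\mathcal{L})\cap(-\infty,0)=\{-\mu_{0}\}$ with one-dimensional eigenspace $\R Y$ and ${\rm Ker}\,\mathcal{L}={\rm Span}(\partial_{y_{1}}Q,\partial_{y_{2}}Q)$; the pointwise bounds on $Y$ and its derivatives follow from Agmon-type estimates for eigenfunctions below the essential spectrum together with elliptic bootstrap. The genuinely nontrivial input—that $\mathcal{L}_{0}$ has \emph{exactly one} negative eigenvalue and that the kernel is no larger—is the nondegeneracy of $Q$, which I would quote from the classical analysis of this ground state.

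\emph{Scaling identities and coercivity.} The function $Q^{(\lambda)}:=\lambda^{-1}Q(\cdot/\lambda)$ solves $-\Delta u+\lambda^{-2}u-u^{3}=0$; differentiating this at $\lambda=1$ and using $\frac{d}{d\lambda}\big|_{\lambda=1}Q^{(\lambda)}=-\Lambda Q$ gives $\mathcal{L}\Lambda Q=-2Q$, while $(\Lambda Q,Q)=\frac{1}{2}\frac{d}{d\lambda}\big|_{\lambda=1}\|Q^{(\lambda)}\|_{L^{2}}^{2}=0$ is infinitesimal mass-criticality (equivalently, integration by parts gives $(\Lambda Q,Q)=(1-\frac{N}{2})\|Q\|_{L^{2}}^{2}=0$ for $N=2$). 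For coercivity I decompose $f=aY+b\cdot\nabla Q+g$ with $g\perp Y,\nabla Q$; item~(i) yields a spectral gap $(\mathcal{L}g,g)\geq\mu\|g\|_{L^{2}}^{2}$, which I upgrade to $(\mathcal{L}g,g)\geq c_{0}\|g\|_{H^{1}}^{2}$ by a convex combination with $(\mathcal{L}g,g)\geq\|g\|_{H^{1}}^{2}-3\|Q^{2}\|_{L^{\infty}}\|g\|_{L^{2}}^{2}$. Since $Y$, $\partial_{y_{1}}Q$, $\partial_{y_{2}}Q$ are mutually $L^{2}$-orthogonal by parity, $a^{2}$ and $|b|^{2}$ are controlled by $(f,Y)^{2}$ and $|(f,\nabla Q)|^{2}$, which gives the first inequality after absorbing $-\mu_{0}a^{2}$. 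For the second, I replace $(f,Y)$ by $(f,Q^{3})$: since $Y>0$ one has $(Y,Q^{3})\neq0$ and $(\nabla Q,Q^{3})=0$ by parity, so $a^{2}\lesssim(f,Q^{3})^{2}+\|g\|_{L^{2}}^{2}$ and the argument repeats.

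\emph{Inversion.} Because $0$ is isolated in $\sigma(\mathcal{L})$ and $\mathcal{L}$ is self-adjoint, its range is closed and equals $({\rm Ker}\,\mathcal{L})^{\perp}=\{\nabla Q\}^{\perp}$; hence for $g\perp\nabla Q$ there is a solution, and subtracting its $L^{2}$-projection onto ${\rm Ker}\,\mathcal{L}$ gives the unique $f\perp\nabla Q$ with $\mathcal{L}f=g$ (uniqueness because a difference of two solutions lies in ${\rm Ker}\,\mathcal{L}\cap\{\nabla Q\}^{\perp}=\{0\}$). Reading $\mathcal{L}f=g$ as $-\Delta f+f=g+3Q^{2}f$ and bootstrapping gives $f\in H^{2}(\R^{2})\hookrightarrow L^{\infty}(\R^{2})$, and then $f\in H^{k}$ for all $k$. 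If $g\in\mathcal{Y}(\R^{2})$ the right-hand side is $O((1+|y|)^{r_{0}}e^{-|y|})$, and since $(-\Delta+1)e^{-|y|/2}=(\frac{3}{4}+\frac{1}{2|y|})e^{-|y|/2}$ makes $e^{-|y|/2}$ a supersolution of $-\Delta+1$ outside a large ball, the maximum principle (using $f\to0$ at infinity) forces $|f|\lesssim e^{-|y|/2}$; differentiating the equation and iterating Schauder / maximum-principle estimates propagates the bound to all derivatives, so $f\in\mathcal{Z}(\R^{2})$. Finally, if $g$ is even in $y_{1}$ (resp. $y_{2}$), then $f(-y_{1},y_{2})$ (resp. $f(y_{1},-y_{2})$) solves the same equation with the same orthogonality—using that $Q$ is even in each variable, so that $(f(-y_{1},y_{2}),\partial_{y_{1}}Q)=-(f,\partial_{y_{1}}Q)=0$—whence $f$ inherits the parity by uniqueness. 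Thus the main obstacle in the whole proposition is item~(i): once the negative subspace $\R Y$ and the kernel ${\rm Span}(\nabla Q)$ are correctly identified, the remaining items are bookkeeping around the scaling relation, the spectral gap, and the parity symmetries.
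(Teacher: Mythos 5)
Your reconstruction of items (i), (ii), and (iv), and of the first coercivity inequality in (iii), is sound and is the standard argument the paper defers to its references for (the paper's own proof is a citation of \cite{FHR} and \cite{CLY}). The scaling computation, the angular-sector decomposition (with the nondegeneracy of $\mathcal{L}_0$ quoted as classical), Agmon decay, the Fredholm-alternative inversion, and the parity transfer are all fine.

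There is, however, a genuine gap in the \emph{second} coercivity inequality. You propose to ``replace $(f,Y)$ by $(f,Q^3)$ and repeat,'' writing $f=aY+b\cdot\nabla Q+g$ with $g\perp Y,\nabla Q$ and using the constraint $(f,Q^3)=0$ to obtain $a^2\lesssim (f,Q^3)^2+\|g\|_{L^2}^2$. But then the lower bound becomes
\begin{equation*}
(\mathcal{L}f,f)\ \geq\ -\mu_0 a^2 + c_0\|g\|_{H^1}^2\ \geq\ c_0\|g\|_{H^1}^2 - \mu_0 C_3\|g\|_{L^2}^2 - \mu_0 C_3(f,Q^3)^2,
\end{equation*}
and one needs $c_0 > \mu_0 C_3$ for the $\|g\|_{L^2}^2$ term to be absorbed. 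These constants are independent of one another ($c_0$ is the spectral-gap constant, $\mu_0$ the negative eigenvalue, $C_3=\|Q^3\|_{L^2}^2/(Y,Q^3)^2$), and nothing in items (i)--(ii) forces the inequality to hold; in fact $(Y,Q^3)=\tfrac{\mu_0}{2}(Y,Q)$, so $\mu_0 C_3 = 4\|Q^3\|_{L^2}^2/(\mu_0(Y,Q)^2)$, which is not controlled by $c_0$. In a finite-dimensional analogue (a diagonal $\mathrm{diag}(-\mu_0,\lambda_1)$ restricted to a line $\{v\}^\perp$) this inequality can genuinely fail, so the argument ``and the argument repeats'' is not a proof.

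The correct route to the second inequality passes through the relation $\mathcal{L}Q=-2Q^3$, which you derived but did not use here. Either invoke the Grillakis--Shatah--Strauss/Weinstein criterion: since $\ker\mathcal{L}=\mathrm{Span}(\nabla Q)$, $\mathcal{L}$ has Morse index $1$, and $\mathcal{L}^{-1}Q^3=-\tfrac12 Q$ on $\{\nabla Q\}^\perp$ with $(Q^3,\mathcal{L}^{-1}Q^3)=-\tfrac12\int Q^4<0$, one concludes $\mathcal{L}\geq0$ on $\{Q^3,\nabla Q\}^\perp$. Or do the direct ``miracle sign'' computation: write $f=\alpha Q+h$ with $h\perp Q,\nabla Q$; then $(\mathcal{L}f,f)=-2\alpha^2\int Q^4-4\alpha(Q^3,h)+(\mathcal{L}h,h)$, and the constraint $(f,Q^3)=0$ forces $(Q^3,h)=-\alpha\int Q^4$, so the middle term flips sign and $(\mathcal{L}f,f)=2\alpha^2\int Q^4+(\mathcal{L}h,h)\geq 2\alpha^2\int Q^4+c\|h\|_{H^1}^2$, where the last step uses coercivity on $\{Q,\nabla Q\}^\perp$ (Weinstein's lemma). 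Either way, the sign of $\int Q^4$ and the identity $\mathcal{L}Q=-2Q^3$ are indispensable; a bare spectral-gap bound with the $Q^3$ constraint does not suffice.
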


 \begin{proof}
     The properties of $\mathcal{L}$ in (i)--(iv) are easily checked. We refer to~\cite[Theorem 3.1 and Lemma 3.2]{FHR} and~\cite[Proposition 2.1]{CLY} for the details of the proofs.
 \end{proof}

\smallskip
We now recall the existence of a non-localized profile 
from~\cite[Lemma 2.2]{CLY}.
\begin{lemma}[Non-localized profile]\label{le:Nonloca}
There exists a smooth function $P\in C^{\infty}(\R^{2})$ such that $\partial_{y_{1}}P\in \mathcal{Z}(\R^{2})$ with $P$ is even in $y_{2}$ and the following properties are true.
\begin{enumerate}
	\item \emph{First property of $P$.} We have 
\begin{equation*}
\partial_{y_{1}}\mathcal{L}P=\Lambda Q,\quad \lim_{y_{1}\to \infty}\partial_{y_{2}}^{\beta_{2}}P(y_{1},y_{2})=0,\ \forall \beta_{2}\in \mathbb{N},
\end{equation*}
\begin{equation*}
|(\nabla P,Q)|=0\quad \mbox{and}\quad (P,Q)=\frac{1}{4}\int_{\mathbb{R}}|F(y_{2})|^{2}\dd y_2.
\end{equation*}
In addition, we have 
\begin{equation*}
	\begin{aligned}
	\lim_{y_{1}\to -\infty}P(y_{1},y_{2})&=-F(y_{2})-h_{2}(y_{2}),\\
	\lim_{y_{1}\to -\infty}\partial_{y_{2}}P(y_{1},y_{2})&=-F'(y_{2})-h'_{2}(y_{2}).
	\end{aligned}
\end{equation*}

\item \emph{Second property of $P$.} 
For any $\beta=(\beta_{1},\beta_{2})\in \mathbb{N}^{2}$ with $\beta_{1}\ne 0$, we have 
\begin{equation*}
\left|\partial_{y}^{\beta}P(y_{1},y_{2})\right|\lesssim e^{-\frac{|y|}{3}},\quad \ \mbox{on}\ \mathbb{R}^{2}.
\end{equation*}
In addition, for any $\beta=(\beta_{1},\beta_{2})\in \mathbb{N}^{2}$, we have
\begin{equation*}
\begin{aligned}
\left|\partial_{y}^{\beta}P(y_{1},y_{2})\right|&\lesssim e^{-\frac{|y_{2}|}{3}},\quad \mbox{on}\ \R^{2},\\
\left|\partial_{y}^{\beta}P(y_{1},y_{2})\right|&\lesssim e^{-\frac{|y|}{3}},\quad \ \mbox{on}\ (0,\infty)\times \R.
\end{aligned}
\end{equation*}

\end{enumerate}
\end{lemma}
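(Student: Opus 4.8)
The plan is to construct $P$ by solving the transport-type equation $\partial_{y_1}\mathcal{L}P = \Lambda Q$ semi-explicitly, tracking the boundary behavior at $y_1 \to \pm\infty$ via the Fourier transform in the $y_2$ variable. First I would integrate once in $y_1$: since $\Lambda Q \in \mathcal{Y}(\R^2)$ and is even in $y_2$, set $R(y_1,y_2) = \int_{-\infty}^{y_1} \Lambda Q(z,y_2)\,dz$, so that $\mathcal{L}P = R$ with $R(y_1,\cdot) \to F$ as $y_1 \to +\infty$ (by definition $F(y_2)=\int_\R \Lambda Q$) and $R(y_1,\cdot)\to 0$ as $y_1\to-\infty$, with exponential convergence in the region $|y_1|\gtrsim 1$. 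The operator $\mathcal{L}$ does not act nicely on non-decaying functions, so I would split $R = R_0 + \chi(y_1/K)F(y_2)$ for a cutoff, where $R_0 \in \mathcal{Y}(\R^2)$, and handle the two pieces separately: for $R_0$ one inverts $\mathcal{L}$ using Proposition~\ref{prop:Spectral}(iv) after checking the orthogonality $(R_0,\nabla Q)=0$ (which follows from parity in $y_2$ for the $y_2$-component, and for the $y_1$-component from $(\mathcal{L}P,\partial_{y_1}Q) = (R,\partial_{y_1}Q)$ and an integration by parts using $\mathcal{L}\partial_{y_1}Q=0$; one must verify this pairing vanishes). For the non-decaying piece one solves $\mathcal{L}(\text{correction}) = \chi(y_1/K)F(y_2)$ asymptotically: as $y_1 \to +\infty$, formally $\mathcal{L} \to -\partial_{y_2}^2 + 1$ (the $Q^2$ term is negligible), so the solution tends to the solution of $(-\partial_{y_2}^2+1)g = F$; but we also want $P \to 0$ as $y_1\to+\infty$, which forces us instead to subtract off the right-end behavior — this is where the function $h_2$ defined by~\eqref{def:h2} enters, designed precisely so that the $y_2$-profile at $y_1\to-\infty$ becomes $-F-h_2$.

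The cleaner route, which I would actually follow, is to posit the ansatz $P = P_1 + \psi(y_1)\otimes(-F-h_2)(y_2) \cdot(1-\chi(y_1))$-type decomposition from the start, choosing the $y_1$-asymptotic profiles so that $P\to 0$ at $+\infty$ and $P \to -F-h_2$ at $-\infty$, then letting $P_1$ absorb the error, which will lie in $\mathcal{Y}(\R^2)$ or at least decay like $e^{-|y_2|/3}$ uniformly and like $e^{-|y|/3}$ for $y_1 \gtrsim 0$. Solving $\mathcal{L}P_1 = (\text{error})$ via Proposition~\ref{prop:Spectral}(iv) then gives $P_1 \in \mathcal{Z}(\R^2)$ and in particular $\partial_{y_1}P \in \mathcal{Z}(\R^2)$. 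The parity of $P$ in $y_2$ is inherited because $\Lambda Q$, $F$, $h_2$ are all even in $y_2$ and $\mathcal{L}$ preserves parity. For the identities in (i): $(\nabla P, Q)=0$ follows from $\mathcal{L}\nabla Q = 0$ and integration by parts against $\partial_{y_1}\mathcal{L}P = \Lambda Q$ together with $(\Lambda Q, Q)=0$ and $(\partial_{y_2}Q$-pairing$)$ vanishing by parity; and $(P,Q) = \frac14\int_\R |F|^2$ is the key quantitative identity — I would obtain it by writing $(P,Q) = (P, \mathcal{L}\Lambda Q)/(-2)$... no, rather by a careful integration by parts in $y_1$ using $\partial_{y_1}\mathcal{L}P = \Lambda Q$, $(\Lambda Q, P)$, and the non-decaying boundary terms at $y_1 = -\infty$, which produce exactly the quadratic expression in $F$ (this computation is done in~\cite[Lemma 2.2]{CLY} and I would reproduce it).

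The decay estimates in (ii) require localized elliptic regularity: for $\beta_1 \neq 0$, differentiating the equation in $y_1$ kills the non-decaying part (since $\partial_{y_1}[\chi(y_1/K)F(y_2)]$ is compactly supported in $y_1$), so $\partial_{y_1}P$ satisfies an $\mathcal{L}$-type equation with $\mathcal{Y}$ right-hand side, giving $e^{-|y|/3}$ decay (the $1/3$ rather than $1/2$ leaving room for the $Q^2$ perturbation of the constant-coefficient operator). The bound $|\partial_y^\beta P| \lesssim e^{-|y_2|/3}$ everywhere comes from the explicit $y_2$-profiles ($F, h_2 \in \mathcal{Y}(\R)$ decay like $e^{-|y_2|}$) plus the $\mathcal{Z}$-decay of $P_1$; and on $(0,\infty)\times\R$ we additionally have $y_1$-decay because there $P$ differs from the compactly-$y_1$-supported correction by $P_1 \in \mathcal{Z}$. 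The main obstacle is the bookkeeping of the non-decaying-in-$y_1$ part: one must choose the cutoffs and the $h_2$-correction so that (a) the residual error fed into $\mathcal{L}^{-1}$ genuinely lies in $\mathcal{Y}(\R^2)$ (or has the claimed anisotropic decay), (b) the solvability condition $(\text{error}, \nabla Q)=0$ holds, and (c) the boundary terms at $y_1 = -\infty$ reproduce exactly $-F - h_2$ and not some other profile — getting all three simultaneously is the delicate point, but it is forced once $h_2$ is defined by~\eqref{def:h2} as the paper has done. Since this is quoted verbatim from~\cite[Lemma 2.2]{CLY}, I would present the construction compactly and refer there for the routine verifications.
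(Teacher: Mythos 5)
Your proposal follows essentially the same route as the paper. The paper's proof of this lemma is a reference to \cite[Lemma 2.2]{CLY}, and a self-contained version of the same construction appears in Step 1 of the proof of Lemma~\ref{le:L1} (the case $k=-1$): one sets $P = \widetilde{P} - \int_{y_1}^\infty \Lambda Q(\tau,y_2)\,\dd\tau - \big(\int_{y_1}^\infty h_1(\tau)\,\dd\tau\big)h_2(y_2) - c\,\partial_{y_1}Q$, checks that after taking $\partial_{y_1}\mathcal{L}$ the residual right-hand side $R$ lies in $\mathcal{Y}(\R^2)$ with $(R,\nabla Q)=0$ (even in $y_2$), inverts $\mathcal{L}$ on $R$ via Proposition~\ref{prop:Spectral}(iv) to get $\widetilde{P}\in\mathcal{Z}(\R^2)$, and fixes $c$ to enforce $(P,\nabla Q)=0$. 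Your ``cleaner route'' ansatz $P=P_1+\psi(y_1)\otimes(-F-h_2)(y_2)$ is a minor repackaging of this: the paper keeps $\int_{y_1}^\infty\Lambda Q$ (which is not a tensor product) separate from the $h_2$ correction, whereas you fold both into a single product-form profile, but since the discrepancy $\int_{y_1}^\infty\Lambda Q - \psi\otimes F$ is itself in $\mathcal{Y}(\R^2)$ it is absorbed by $P_1$, so the two formulations are equivalent. One small sign/anchoring slip in your first (abandoned) route: integrating $\Lambda Q$ from $-\infty$ gives $R\to F$ at $y_1\to+\infty$, which is incompatible with $P\to 0$ there; the correct anchor is $\mathcal{L}P=-\int_{y_1}^\infty\Lambda Q\,\dd\tau$ so that $\mathcal{L}P\to 0$ at $+\infty$ and $\to -F$ at $-\infty$, whence the limit of $P$ at $-\infty$ solves $(-\partial_{y_2}^2+1)g=-F$, i.e.\ $g=-F-h_2$ with $h_2$ as in~\eqref{def:h2}. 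You notice and sidestep this in the end, so the overall argument stands.
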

\begin{proof}
    The proof relies on the definition of $(h_{1},h_{2})\in \mathcal{Y}(\R)\times \mathcal{Y}(\R)$ in Section~\ref{SS:Nota} and (iv) of Proposition~\ref{prop:Spectral} (see \emph{e.g.}~\cite[Lemma 2.2]{CLY}), and we omit it.
\end{proof}

\begin{remark}\label{re:deftheta}
Using integration by parts and (i) of Lemma~\ref{le:Nonloca},
	\begin{equation*}
		\begin{aligned}
		(P,\partial_{y_2}^2\partial_{y_1}P)&=\frac{1}{2}\int_{\mathbb{R}}\left(F'(y_2)+h'_2(y_2)\right)^2\dd y_2,\\
			\left(P,\partial_{y_{1}}^{3}P-\partial_{y_{1}}P\right)&=\frac{1}{2}\int_{\mathbb{R}}\left(F(y_2)+h_2(y_2)\right)^2\dd y_2.
		\end{aligned}
	\end{equation*}
	Taking the Fourier transform on the both sides of~\eqref{def:h2}, we obtain
	\begin{equation*}
	\widehat{h_{2}}(\xi)=-\frac{|\xi|^{2}}{	1+|\xi|^{2}}\widehat{F}(\xi)\Longrightarrow
		\widehat{F}(\xi)+\widehat{h_{2}}(\xi)=\frac{\widehat{F}(\xi)}{1+|\xi|^2}.
	\end{equation*}
	It follows from Plancherel theorem that
	\begin{equation*}
		(P, \Delta \partial_{y_{1}}P-\partial_{y_{1}}P)=
		\frac{1}{2}\int_{\mathbb{R}}(1+|\xi|^2)\left(\widehat{F}(\xi)+\widehat{h_{2}}(\xi)\right)^2\dd \xi=\frac{1}{2}\int_{\mathbb{R}}\frac{|\widehat{F}(\xi)|^2}{1+|\xi|^2}\,\dd \xi.
	\end{equation*}
	On the other hand, using Lemma~\ref{le:Nonloca} and integration by parts, we have
	\begin{equation*}
			(\Lambda P,Q)=-(P,\Lambda Q)
			=\left(P, \Delta \partial_{y_{1}}P-\partial_{y_{1}}P\right)-3\left(\partial_{y_{1}} \left(QP^2\right),Q\right).
	\end{equation*}
	Combining the above identities with Lemma~\ref{le:Nonloca} and the definition of $\theta$, we find
	\begin{equation*}
		\theta=
		2\bigg(\int_{\R}\frac{|\widehat{F}(\xi)|^{2}}{1+|\xi|^{2}}\dd \xi\bigg)\bigg/
		\bigg(\int_{\R}|\widehat{F}(\xi)|^{2}\dd \xi\bigg)
		=\left(
	\Lambda P+	3\partial_{y_{1}} \left(QP^2\right),Q
		\right)/(P,Q).
	\end{equation*}
	We mention here that, the constant $\theta$ plays a crucial role in our construction of the blow-up profile since it appears in the leading-order term related to the profile (see Proposition~\ref{prop:W}  and~\cite[Lemma 2.3]{CLY} for more details).
\end{remark}
We now introduce the following function space $\mathcal{Z}_{k}(\R^{2})$ which is a generalization of $\mathcal{Z}(\R^{2})$. Roughly speaking, the function belonging to $\mathcal{Z}_{k}(\R^{2})$ is allowed to have polynomial growth (degree not greater than $k$) as $y_{1}\to -\infty$.
\begin{definition}
For any $k\in \mathbb{N}$, we denote by $\mathcal{Z}_{k}(\R^{2})\subset C^{\infty}(\R^{2})$ the function space consisting of all functions $f\in C^{\infty}(\R^{2})$ which satisfy
\begin{equation*}
    \partial_{x_{1}}^{k+1}f\in \mathcal{Z}(\R^{2})\quad \mbox{and}\quad 
    \left(\partial_{x_{1}}^{\ell}f\right)\chi\in \mathcal{Z}(\R^{2})\ \ \mbox{for any}\ \ell\in \left\{0,1,\dots,k\right\}.
\end{equation*}
In addition, we denote $\mathcal{Z}_{-1}(\R^{2})=\mathcal{Z}(\R^{2})$.
    
\end{definition}

\begin{lemma}\label{le:L1}
Let $k\in \mathbb{Z}\cap [-1,\infty)$. Suppose $g\in \mathcal{Z}_{k}$ with $(g,Q)=0$ and $g$ is even in $y_2$.
Then there exists a smooth function $f\in\mathcal{Z}_{k+1}$ such that
\begin{equation*}
    \partial_{y_1}\mathcal{L}f=g\quad \mbox{and}\quad |(f,\nabla Q)|=0.
\end{equation*}
In addition, we have $f$ is also even in $y_2$.
\end{lemma}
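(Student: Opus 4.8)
The plan is to reduce the equation $\partial_{y_1}\mathcal{L}f=g$ to an inversion of $\mathcal{L}$ alone, treating separately the polynomial‑in‑$y_1$ growth that the space $\mathcal{Z}_k$ allows. Since $g\chi\in\mathcal{Z}$ and $\partial_{y_1}^{k+1}g\in\mathcal{Z}$, the function $g$ decays exponentially as $y_1\to+\infty$ and grows at most like a polynomial of degree $\le k$ as $y_1\to-\infty$; hence $G(y_1,y_2):=-\int_{y_1}^{+\infty}g(s,y_2)\,\dd s$ is well defined, smooth, even in $y_2$, satisfies $\partial_{y_1}G=g$, and lies in $\mathcal{Z}_{k+1}$. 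Integrating by parts in $y_1$ (the boundary terms vanish because $Q$ decays exponentially while $G$ only grows polynomially) gives $(G,\partial_{y_1}Q)=-(g,Q)=0$, and $(G,\partial_{y_2}Q)=0$ by parity; thus $(G,\nabla Q)=0$. It then suffices to produce $f\in\mathcal{Z}_{k+1}$, even in $y_2$, with $\mathcal{L}f=G$ and $(f,\nabla Q)=0$, since then $\partial_{y_1}\mathcal{L}f=\partial_{y_1}G=g$.

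Next I peel off the polynomial part. From $G\in\mathcal{Z}_{k+1}$ one extracts smooth, even, exponentially decaying functions $c_0,\dots,c_{k+1}$ of $y_2$ (the successive limits $\lim_{y_1\to-\infty}\tfrac{1}{j!}\partial_{y_1}^jG$) such that $G-(1-\chi(y_1))\sum_{j=0}^{k+1}c_j(y_2)y_1^{j}\in\mathcal{Z}$. Because $\mathcal{L}\to-\Delta+1$ as $y_1\to-\infty$, I look for a polynomial corrector $f_{\mathrm{poly}}(y_1,y_2)=\sum_{j=0}^{k+1}d_j(y_2)y_1^{j}$ with $(-\Delta+1)f_{\mathrm{poly}}=\sum_j c_j(y_2)y_1^{j}$; matching powers of $y_1$ reduces this to the recursively solvable one‑dimensional ODEs $-d_{k+1}''+d_{k+1}=c_{k+1}$, $-d_k''+d_k=c_k$, and $-d_j''+d_j=c_j+(j+1)(j+2)d_{j+2}$ for $j=k-1,\dots,0$, each admitting a unique even exponentially decaying solution (exactly as for $h_2$ in~\eqref{def:h2}). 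Set $f_1:=(1-\chi(y_1))f_{\mathrm{poly}}$. Expanding $\Delta\big((1-\chi)f_{\mathrm{poly}}\big)$ gives $\mathcal{L}f_1=(1-\chi)(-\Delta+1)f_{\mathrm{poly}}+R_1$ with $R_1=2\chi'\partial_{y_1}f_{\mathrm{poly}}+\chi''f_{\mathrm{poly}}-3Q^2(1-\chi)f_{\mathrm{poly}}\in\mathcal{Z}$ (the first two terms are compactly supported in $y_1$, the last decays because $Q^2$ does). Hence $\widetilde{G}:=G-\mathcal{L}f_1\in\mathcal{Z}$, it is even in $y_2$, and $(\widetilde{G},\nabla Q)=(G,\nabla Q)-(\mathcal{L}f_1,\nabla Q)=0$, because $(\mathcal{L}f_1,\nabla Q)=(f_1,\mathcal{L}\nabla Q)=0$ by self‑adjointness of $\mathcal{L}$ (legitimate since $\nabla Q$ decays exponentially) together with $\nabla Q\in\operatorname{Ker}\mathcal{L}$.

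Then I invert $\mathcal{L}$ on the exponentially decaying remainder and adjust. By (iv) of Proposition~\ref{prop:Spectral} (and its proof, which applies verbatim to a source in $\mathcal{Z}$), there is $f_2\in\mathcal{Z}$, even in $y_2$, with $\mathcal{L}f_2=\widetilde{G}$ and $(f_2,\nabla Q)=0$. Then $f_1+f_2$ solves $\mathcal{L}(f_1+f_2)=G$, lies in $\mathcal{Z}_{k+1}$ and is even in $y_2$, but a priori only satisfies $(f_1+f_2,\partial_{y_2}Q)=0$. Since $\partial_{y_1}Q\in\operatorname{Ker}\mathcal{L}$ and $(\partial_{y_1}Q,\partial_{y_2}Q)=0$, setting
\begin{equation*}
f:=f_1+f_2-\frac{(f_1+f_2,\partial_{y_1}Q)}{\|\partial_{y_1}Q\|_{L^2}^2}\,\partial_{y_1}Q
\end{equation*}
does not change $\mathcal{L}f=G$ and yields $(f,\nabla Q)=0$, with $f$ even in $y_2$ and $f\in\mathcal{Z}_{k+1}$; in particular $\partial_{y_1}\mathcal{L}f=g$, which is the claim.

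The main obstacle is the bookkeeping in the second step. One has to verify that the primitive $G$ genuinely lies in $\mathcal{Z}_{k+1}$ — here the Euclidean‑versus‑product form of the exponential weight forces the polynomial factors in the definition of $\mathcal{Z}$ to be used, and the cut‑off $\chi$ creates transition‑region terms on $\{-2\le y_1\le-1\}$ whose decay in $y_2$ must be established from the $\mathcal{Z}$‑bounds on $\partial_{y_1}^\ell g$ at $y_1=-1$ together with $\partial_{y_1}^{k+1}g\in\mathcal{Z}$ — and that the asymptotic coefficients $c_j$ are themselves even and exponentially decaying, so that $R_1$ and $\widetilde{G}$ indeed land in $\mathcal{Z}$. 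These checks are routine but delicate, and they are exactly the places where the definitions of $\mathcal{Z}$, $\mathcal{Z}_k$ and $\chi$ have been set up to make the argument close.
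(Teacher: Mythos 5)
Your proof is correct, and it takes a genuinely different route from the paper's. You first reduce $\partial_{y_1}\mathcal{L}f=g$ to the single equation $\mathcal{L}f=G$ with $G=-\int_{y_1}^{\infty}g$, by observing that for $f\in\mathcal{Z}_{k+1}$ the function $\mathcal{L}f$ decays as $y_1\to+\infty$ and therefore must coincide with the unique $y_1$-primitive of $g$ that vanishes there; you then peel off the \emph{entire} polynomial tail of $G$ in one shot with an ansatz $f_1=(1-\chi)f_{\mathrm{poly}}$ designed so that $(-\Delta+1)f_{\mathrm{poly}}$ matches the polynomial asymptotics, and finish by inverting $\mathcal{L}$ on the $\mathcal{Z}$-remainder $\widetilde G=G-\mathcal{L}f_1$ via Proposition~\ref{prop:Spectral}(iv) and adjusting with a multiple of $\partial_{y_1}Q$. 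The paper instead stays with the operator $\partial_{y_1}\mathcal{L}$ throughout and runs an induction on $k$: after subtracting a correction built from the degree-$(k{+}1)$ asymptotics, the residual source drops from $\mathcal{Z}_{k+1}$ to $\mathcal{Z}_{k-1}$ and the induction hypothesis is invoked; in particular the paper's one-dimensional ODEs $-G_{2\ell}''+G_{2\ell}=H_{2\ell}''$ are decoupled, whereas your $d_j$'s come in a short coupled cascade $-d_j''+d_j=c_j+(j{+}1)(j{+}2)d_{j+2}$. Both arguments thus rest on the same three ingredients (the primitive $G$, a polynomial correction solved via $(-\partial_{y_2}^2+1)$-ODEs, and Proposition~\ref{prop:Spectral}(iv)), but your non-inductive, $\mathcal{L}f=G$ reformulation is somewhat more economical and replaces the messier commutation of $\partial_{y_1}$ through $\mathcal{L}$ by the cleaner $(-\Delta+1)$-asymptotics computation. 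One small slip worth fixing: the formula $c_j=\lim_{y_1\to-\infty}\frac{1}{j!}\partial_{y_1}^jG$ is literally false for $j\le k$, since those derivatives diverge as $y_1\to-\infty$; the $c_j$ are the asymptotic Taylor coefficients, obtained iteratively by taking the limit at $j=k+1$ first and subtracting the higher-degree contributions before passing to the limit at each lower $j$. The intent is clear and the rest of the argument, including the orthogonality bookkeeping $(\widetilde G,\nabla Q)=(G,\nabla Q)-(f_1,\mathcal{L}\nabla Q)=0$ and the final adjustment by $\partial_{y_1}Q\in\operatorname{Ker}\mathcal{L}$, is sound.
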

\begin{proof}
We prove Lemma~\ref{le:L1} by induction.

\smallskip
{\textbf{Step 1.}} For $k=-1$. The proof is similar to~\cite[Lemma 2.2]{CLY}. Below, we provide a sketch of the proof
for the sake of completeness and for the readers’ convenience.
We set 
\begin{equation*}
	-G''_{2}(y_{2})+G_{2}(y_{2})=H''_{2}(y_{2}),\quad \mbox{where}\ \
	H_{2}(y_{2})=\int_{\R}g(\tau,y_{2})\dd \tau.
\end{equation*}
We consider $f\in C^{\infty}(\R)$ of the form
\begin{equation*}
	f(y_{1},y_{2})=\widetilde{f}(y_{1},y_{2})-\int_{y_{1}}^{\infty}g(\tau,y_{2})\dd \tau-\int_{y_{1}}^{\infty}h_{1}(\tau)G_{2}(y_{2})\dd \tau-c\partial_{y_{1}}Q.
\end{equation*}
Here, we consider $\widetilde{f}\in \mathcal{Z}_{-1}(\R^{2})$ and $c\in \R$ to be chosen later. By an elementary computation, we find $\partial_{y_{1}}\mathcal{L}f=g$ is equivalent to 
\begin{equation*}
\partial_{y_{1}}\mathcal{L}\widetilde{f}=g+\partial_{y_{1}}\mathcal{L}\int_{y_{1}}^{\infty}\left(g(\tau,y_{2})+h_{1}(\tau)G(y_{2})\right)\dd \tau=\partial_{y_{1}}R.
\end{equation*}
Here, we denote 
\begin{equation*}
	\begin{aligned}
	R(y_{1},y_{2})
	&=\partial_{y_{1}}g(y_{1},y_{2})
	-\int_{y_{1}}^{\infty}\partial_{y_{2}}^{2}g(\tau,y_{2})\dd \tau
	-3Q^{2}\int_{y_{1}}^{\infty}g(\tau,y_{2})\dd \tau\\
	&+h_{1}'(y_{1})G_{2}(y_{2})
	+\int_{y_{1}}^{\infty}h_{1}(\tau)H''_{2}(y_{2})\dd \tau
	-3Q^{2}\int_{y_{1}}^{\infty}h_{1}(\tau) G_{2}(y_{2})\dd \tau.
\end{aligned}
\end{equation*}
From $(g,Q)=0$ and the definition of $R$, we find $R\in \mathcal{Y}(\R^{2})$ with $\left|(R,\nabla Q)\right|=0$ and $R$ is even in $y_{2}$.
Therefore, using (iv) of Proposition~\ref{prop:Spectral}, there exists
$\widetilde{f}\in \mathcal{Z}(\R^{2})$ such that 
\begin{equation*}
\partial_{y_{1}}	\mathcal{L}\widetilde{f}=\partial_{y_{1}}R\  \mbox{and $\widetilde{f}$ is even in $y_{2}$} \Longrightarrow 
\partial_{y_{1}}	\mathcal{L}{f}=g\ \mbox{and $f$ is even in $y_{2}$}.
\end{equation*} 
From $\widetilde{f}\in \mathcal{Z}(\R^{2})$ and the definition of $f$, we know that $f\in \mathcal{Z}_{0}(\R^{2})$. Then, by choosing a suitable constant $c\in \R$, we obtain $\left|(f,\nabla Q)\right|=0$ which means that this Lemma is true for $k=-1$.

\smallskip
{\textbf{Step 2.}} Assume that this Lemma is true for $k\in \mathbb{Z}\cap [-1,\infty)$, we prove that it is also true for $k+1$. First, from the Fundamental theorem, for any $g\in \mathcal{Z}_{k+1}(\R^{2})$ with $(g,Q)=0$ and $g$ is even in $y_{2}$,
there exist even functions $(H_{2\ell})_{\ell=0}^{k+1}\subset \mathcal{Z}(\R)$ such that 
\begin{equation*}
    g-\sum_{\ell=0}^{k+1}H_{1\ell}\otimes H_{2\ell}\in \mathcal{Z}(\R^{2}),\quad \mbox{with}\ \  H_{1\ell}(y_{1})=(1-\chi(y_{1}))y_{1}^{\ell}.
\end{equation*}
Here, we use the fact that
\begin{equation*}
	\partial_{y_{1}}^{\ell}g(y_{1},y_{2})=\partial_{y_{1}}^{\ell}g(0,y_{2})+\int_{0}^{y_{1}}\partial_{y_{1}}^{\ell+1}g(\tau,y_{2})\dd \tau,\quad \mbox{for any}\ \ell\in \left\{0,1,\dots,k\right\}.
\end{equation*}
For any $\ell\in \left\{0,1,\dots,k+1\right\}$, we denote by $G_{2\ell}\in \mathcal{Z}(\R)$ the smooth solution of the following second-order ODE:
\begin{equation*}
   - G''_{2\ell}(y_{2})+G_{2\ell}(y_{2})=H''_{2\ell}(y_{2}),\quad \mbox{on}\ \ \R.
\end{equation*}
Note that, from $(H_{2\ell})_{\ell=0}^{k+1}$ are even functions, we obtain $(G_{2\ell})_{\ell=0}^{k+1}$ are also even functions.
We now consider $f\in C^{\infty}(\R^{2})$ of the form
\begin{equation*}
    f(y_{1},y_{2})=\widetilde{f}(y_{1},y_{2})-\int_{y_{1}}^{\infty}g(\tau,y_{2})\dd \tau
    -\sum_{\ell=0}^{k+1}\int_{y_{1}}^{\infty}H_{1\ell}(\tau)G_{2\ell}(y_{2})\dd \tau
    -c\partial_{y_{1}}Q.
\end{equation*}
Here, we consider $\widetilde{f}\in \mathcal{Z}_{k}(\R^{2})$ and $c\in \mathbb{R}$ to be chosen later. 
Note that, the equation $\partial_{y_{1}}\mathcal{L}f=g$ is equivalent to 
\begin{equation*}
        \partial_{y_{1}}\mathcal{L}\widetilde{f}
=
g+\partial_{y_{1}}\mathcal{L}\left(\int_{y_{1}}^{\infty}g(\tau,y_{2})\dd \tau\right)
        +\sum_{\ell=0}^{k+1}\partial_{y_{1}}\mathcal{L}
        \left(\int_{y_{1}}^{\infty}H_{1\ell}(\tau)G_{2\ell}(y_{2})\dd \tau\right)=\widetilde{g}.
\end{equation*}
On the one hand side, from $(g,Q)=0$ and $\partial_{y_{1}}Q\in {\rm{Ker}}\mathcal{L}$, we obtain $(\widetilde{g},Q)=0$. On the other hand, by an elementary computation, we find
\begin{equation*}
	\begin{aligned}
    \widetilde{g}&=\partial_{y_{1}}^{2}g+\partial_{y_{2}}^{2}\left(g-\sum_{\ell=0}^{k+1}H_{1\ell}\otimes H_{2\ell}\right)+\sum_{\ell=0}^{k+1}
        H''_{1\ell}\otimes H_{2\ell}\\
        &-3\partial_{y_{1}}\left(Q^{2}\int_{y_{1}}^{\infty}g(\tau,y_{2})\dd \tau\right)
        -3\partial_{y_{1}}\left(Q^{2}\int_{y_{1}}^{\infty}H_{1\ell}(\tau)G_{2\ell}(y_{2})\dd \tau\right).
        \end{aligned}
\end{equation*}
From the above identity, the definition of $(H_{1\ell}, H_{2\ell})$ and $g\in \mathcal{Z}_{k+1}(\R^{2})$, we deduce that $\widetilde{g}\in \mathcal{Z}_{k-1}(\R^{2})$. With the assumption of the Induction Hypothesis in hand, we know that there exists $\widetilde{f}\in \mathcal{Z}_{k}(\R^{2})$ such that $\partial_{y_{1}}\mathcal{L}\widetilde{{f}}=\widetilde{g}$ and $\widetilde{f}$ is even in $y_{2}$. Therefore, from the definition of $\widetilde{f}$ and ${f}$, there exists $f\in \mathcal{Z}_{k+2}(\R^{2})$ such that 
$\partial_{y_{1}}\mathcal{L}f=g$ and $f$ is even in $y_{2}$. By choosing a suitable constant $c\in \R$, we obtain $|(f,\nabla Q)|=0$ which means that this Lemma is true for $k+1$. By the induction argument, Lemma~\ref{le:L1} is proved for any $k\in \mathbb{Z}\cap [-1,\infty)$.
\end{proof}

\begin{remark}\label{re:decayZk}
Note that, from the definition of $\mathcal{Z}_{k}(\R^{2})$ and the Fundamental Theorem, for any $f\in \mathcal{Z}_{k}(\R^{2})$ with $k\in \mathbb{N}$ and $\beta=(\beta_{1},\beta_{2})\in \mathbb{N}^{2}$ with $\beta_{1}\ge k+1$, 
\begin{equation*}
    \left|\partial_{y}^{\beta}f(y_{1},y_{2})\right|\lesssim e^{-\frac{|y|}{3}},\quad \mbox{on}\ \ \R^{2}.
\end{equation*}
Note also that, for any $f\in \mathcal{Z}_{k}(\R^{2})$ with $k\in \mathbb{N}$ and $\beta=(\beta_{1},\beta_{2})\in \mathbb{N}^{2}$ with $0\le \beta_{1}\le k$, we have 
\begin{equation*}
\begin{aligned}
\left|\partial_{y}^{\beta}f(y_{1},y_{2})\right|&\lesssim e^{-\frac{|y|}{3}}+\left(1+|y_{1}|^{k-\beta_{1}}\right)e^{-\frac{|y_{2}|}{3}}\textbf{1}_{(-\infty,0)}(y_{1}),\quad \mbox{on}\ \R^{2}.
\end{aligned}
\end{equation*}

\end{remark}

\subsection{Construction of the Blow-up profile}\label{SS:Blowup}
Let $s_{0}<0$ with $|s_{0}|\gg 1$ to be chosen later and let $I\subset (-\infty,s_{0}]$ be a compact interval.
We assume $\lambda:I\to (0,\infty)$ is a $C^1$ real-valued function defined on $I$ such that 
  $0<\lambda(s)\ll 1$ for any $s\in I$.

\smallskip
We define
\begin{equation*}
    \Theta(s,y_1)=\chi\big(\lambda^\frac{8}{5}(s) y_1\big),\quad \mbox{on}\ \ (s,y_{1})\in I\times \R.
\end{equation*}
In addition, for smooth functions
\begin{equation}\label{equ:defA}
	(A_2,A_3,A_4)\in \mathcal{Z}_{1}(\R^{2})\times\mathcal{Z}_{2}(\R^{2})\times\mathcal{Z}_{3}(\R^{2}),
\end{equation}
 and real constants $(c_{1},c_2,c_3)\in \mathbb{R}^{3}$ to be chosen later, we define
\begin{equation*}
    X_1=c_1P, \quad X_2=c_2 P+A_2,\quad X_3=c_3P+A_3\quad \mbox{and}\quad X_4=A_4.
\end{equation*}
We now consider the following blow-up profile:
\begin{equation*}
    W(s,y)=Q(y)+V(s,y),\ \ \mbox{where}\ \ 
    V(s,y)=\sum_{k=1}^{4}V_{k}(s,y)=
    \sum_{k=1}^4\lambda^{k\theta}(s)X_k(y)\Theta(s,y_{1}).
\end{equation*}
\begin{lemma}\label{le:estW1}
    For any smooth functions $(A_2,A_3,A_4)$ satisfying~\eqref{equ:defA} and real constants $(c_{1},c_2,c_3)\in \mathbb{R}^{3}$, the following estimates hold.
    \begin{enumerate}

\item\emph{Closeness to $Q$.} It holds
\begin{equation*}
       \left| \int_{\R^{2}}W^{2}\dd y-\int_{\R^{2}}Q^{2}\dd y\right|\lesssim \lambda^{\theta}\quad \mbox{and}\quad \|W-Q\|_{H^{1}}\lesssim \lambda^{\theta-\frac{4}{5}}.
\end{equation*}
        \item \emph{Estimates of $W$.} For any $\beta=(\beta_{1},\beta_{2})\in \mathbb{N}^{2}$, we have 
        \begin{equation*}
\left|\partial_{y}^{\beta}W\right|+
\left|\partial_{y}^{\beta} \Lambda W\right|\lesssim
e^{-\frac{|y|}{4}}+
 \lambda^{\theta+\frac{8}{5}\beta_{1}}
e^{-\frac{|y_2|}{4}}
\mathbf{1}_{[-2,0]}(\lambda^{\frac{8}{5}} y_1).
\end{equation*}
    \end{enumerate}
\end{lemma}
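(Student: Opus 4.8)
The plan is to estimate each piece of $W = Q + V$ directly from the explicit structure $V = \sum_{k=1}^4 \lambda^{k\theta} X_k \Theta$, using the decay properties of $P$ (Lemma~\ref{le:Nonloca}) and of $A_2, A_3, A_4$ (which lie in $\mathcal{Z}_1, \mathcal{Z}_2, \mathcal{Z}_3$ respectively, hence obey the pointwise bounds recorded in Remark~\ref{re:decayZk}), together with the fact that $\Theta(s,y_1) = \chi(\lambda^{8/5} y_1)$ is supported in $y_1 \geq -2\lambda^{-8/5}$ and identically $1$ for $y_1 \geq -\lambda^{-8/5}$, with $|\partial_{y_1}^j \Theta| \lesssim \lambda^{8j/5}$ supported in the transition region $\lambda^{8/5} y_1 \in [-2,-1]$.

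For part (i), first property: write $\int W^2 - \int Q^2 = 2(Q,V) + (V,V)$. For the cross term, the dominant contribution is $\lambda^\theta(Q, X_1 \Theta) = \lambda^\theta c_1 (Q, P\Theta)$; since $Q$ is exponentially localized and $\Theta = 1$ on the support of $Q$ once $|s_0|$ is large, $(Q, X_1\Theta) = (Q,X_1) + O(\lambda^{100})$, which is $O(1)$, giving the $O(\lambda^\theta)$ bound; the higher $V_k$ and $(V,V)$ are smaller powers of $\lambda$, though one must note $P$ is only bounded (not decaying) as $y_1 \to -\infty$ so $\|P\Theta\|_{L^2}^2 \lesssim \lambda^{-8/5}$ — this is still fine since $\lambda^{2\theta - 8/5} \lesssim \lambda^\theta$. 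For the $H^1$ bound, $\|W-Q\|_{H^1} = \|V\|_{H^1} \lesssim \sum_k \lambda^{k\theta}\|X_k\Theta\|_{H^1}$; the worst term is $\lambda^\theta \|P\Theta\|_{H^1}$, and because $P$ is bounded and $\partial_{y_1} P$ decays while $\partial_{y_1}\Theta$ contributes $\lambda^{8/5}$ on a region of $y_1$-length $\sim \lambda^{-8/5}$, one gets $\|P\Theta\|_{H^1} \lesssim \lambda^{-4/5}$, hence $\lambda^{\theta - 4/5}$; the remaining terms carry higher powers of $\lambda$ and worse $\mathcal{Z}_k$-growth factors $|y_1|^{k-1}$, but since $\theta > 8/5$ these are all dominated.

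For part (ii), I would use the Leibniz rule $\partial_y^\beta V_k = \sum \binom{\beta}{\gamma} \lambda^{k\theta} (\partial_y^\gamma X_k)(\partial_{y_1}^{\beta_1 - \gamma_1}\Theta)$. When no derivative falls on $\Theta$ (i.e. $\gamma_1 = \beta_1$), the decay of $\partial_y^\beta X_k$ from Remark~\ref{re:decayZk} gives either $e^{-|y|/3}$ (absorbed into $e^{-|y|/4}$) or, on $y_1 < 0$, a term $(1+|y_1|^{k - \gamma_1}) e^{-|y_2|/3}$; multiplying by $\lambda^{k\theta}$ and noting $\Theta$ confines $y_1$ to $|y_1| \lesssim \lambda^{-8/5}$, the polynomial factor $|y_1|^{k-\gamma_1}$ costs at most $\lambda^{-8(k-\gamma_1)/5}$, and one checks $\lambda^{k\theta - 8(k-\gamma_1)/5} \lesssim \lambda^{\theta + 8\gamma_1/5}$ when $\theta > 8/5$ (this is the key numerical inequality, using $\theta \approx 1.66 > 8/5$). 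When at least one derivative falls on $\Theta$, we pick up $\lambda^{8(\beta_1 - \gamma_1)/5}$ restricted to $\lambda^{8/5} y_1 \in [-2,-1]$, where $X_k$ is bounded by $(1+|y_1|^{k-\gamma_1}) e^{-|y_2|/3} \lesssim \lambda^{-8(k-\gamma_1)/5} e^{-|y_2|/3}$, and the powers again combine favorably. The $\Lambda W = \Lambda Q + \Lambda V$ estimate is identical in spirit: $\Lambda V_k = V_k + y\cdot\nabla V_k$, and the extra factor $|y|$ is harmless against $e^{-|y|/4}$ (shrinking the exponent slightly from $1/3$) or against the $\mathbf{1}_{[-2,0]}(\lambda^{8/5}y_1)$ region where $|y_1| \lesssim \lambda^{-8/5}$ only costs another bounded power; one absorbs the $y_2$-growth from $|y|$ into the exponential as well.

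The main obstacle is bookkeeping the competition between the gain $\lambda^{k\theta}$ and the loss $\lambda^{-8/5}$ per unit of polynomial growth or per derivative on the cutoff: every term must be shown to fit under the envelope $e^{-|y|/4} + \lambda^{\theta + 8\beta_1/5} e^{-|y_2|/4}\mathbf{1}_{[-2,0]}(\lambda^{8/5} y_1)$, and the borderline cases are the $k=1$ term (where $X_1 = c_1 P$ is genuinely non-decaying as $y_1 \to -\infty$) and high-$\beta_1$ derivatives of the $k=4$ term. The inequality that makes everything work is $k\theta - \tfrac{8}{5}(k - \gamma_1) \geq \theta + \tfrac{8}{5}\gamma_1$ for $1 \le k \le 4$, $0 \le \gamma_1 \le \beta_1$, which reduces to $(k-1)\theta \geq \tfrac{8}{5}(k-1)$, i.e. $\theta \geq \tfrac{8}{5}$ — true by Remark~\ref{re:constanttheta}. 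Once this observation is in place the rest is a routine, if tedious, application of Leibniz and the decay lemmas.
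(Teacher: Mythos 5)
Your proposal follows the paper's proof essentially step for step: write $W=Q+V$, apply Leibniz to $\partial_y^\beta V_k$, combine the decay/growth bounds from Remark~\ref{re:decayZk} with $|y_1|\lesssim\lambda^{-8/5}$ on the support of $\Theta$, and close via the numerical fact $\theta>\tfrac{8}{5}$. One indexing slip worth flagging: since $X_k\in\mathcal{Z}_{k-1}$ (not $\mathcal{Z}_k$), the polynomial weight after $\gamma_1$ derivatives in $y_1$ is $|y_1|^{k-1-\gamma_1}$, not $|y_1|^{k-\gamma_1}$; with the exponent as you wrote it, the required inequality would reduce to $(k-1)\theta\geq\tfrac{8}{5}k$, which fails already at $k=1$, whereas the correct exponent yields $(k-1)\theta\geq\tfrac{8}{5}(k-1)$ --- precisely the reduction you state and the one that $\theta>\tfrac{8}{5}$ delivers.
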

\begin{proof}
    Proof of (i). From the definition of $V$ and $W$, we find 
    \begin{equation*}
    	\begin{aligned}
\left|W^{2}-Q^{2}\right|\lesssim \sum_{k=1}^{4}Q|V_{k}|+\sum_{k=1}^{4}V_{k}^{2}.
\end{aligned}
    \end{equation*}
   Using the fact that $\theta>\frac{8}{5}$, $Q\in \mathcal{Y}(\R^{2})$ and Remark~\ref{re:decayZk}, we deduce that 
    \begin{equation*}
      \sum_{k=1}^{4} \left |\int_{\R^{2}}Q
       \left|V_{k}\right|
       \dd y\right|\lesssim \lambda^{\theta}\int_{\R^{2}}\left(1+|y_{1}|^{3}\right)e^{-\frac{|y|}{2}}\dd y\lesssim \lambda^{\theta}.
    \end{equation*}
    Then, using again the fact that $\theta>\frac{8}{5}$ and  Remark~\ref{re:decayZk}, for any $k\in \left\{1,2,3,4\right\}$, 
\begin{equation*}
\begin{aligned}
    \int_{\R^{2}}V_{k}^{2}\dd y
    &\lesssim
    \lambda^{2k\theta}
    \int_{\R^{2}}\left(e^{-\frac{|y|}{3}}+\left(1+|y_{1}|^{2(k-1)}\right)\textbf{1}_{[-2,0]}\left(\lambda^{\frac{8}{5}}y_{1}\right)e^{-\frac{|y_{2}}{3}|}\right)\dd y\\
    &\lesssim \lambda^{2k\theta}+\lambda^{2k\theta-\frac{8}{5}(2k-1)}\lesssim
    \lambda^{2k\theta}+
    \lambda^{\theta+\left(\theta-\frac{8}{5}\right)(2k-1)}
    \lesssim \lambda^{\theta}.
    \end{aligned}
\end{equation*}
Using a similar argument as above, we also obtain 
\begin{equation*}
    \|W-Q\|_{H^{1}}\lesssim \sum_{k=1}^{4}\|V_{k}\|_{H^{1}}\lesssim \lambda^{\theta-\frac{4}{5}}.
\end{equation*}
Combining the above estimates, we complete the proof of (i).

\smallskip
Proof of (ii). By an elementary computation, for any $\beta=(\beta_{1},\beta_{2})$ and $y\in \R^{2}$,
\begin{equation*}
	\begin{aligned}
   \partial_{y}^{\beta}V_{k}
   &=\lambda^{k\theta}
   \sum_{\substack{0\le \beta_{11}\le k-1\\ \beta_{11}+\beta_{12}=\beta_{1}}}
   \left(\partial_{y_{2}}^{\beta_{2}}\partial_{y_{1}}^{\beta_{11}}X_{k}\right)
   \left(\partial_{y_{1}}^{\beta_{12}}\Theta\right)\\
   &+\lambda^{k\theta}
   \sum_{\substack{k\le \beta_{11}\le \beta_{1}\\ \beta_{11}+\beta_{12}=\beta_{1}}}
   \left(\partial_{y_{2}}^{\beta_{2}}\partial_{y_{1}}^{\beta_{11}}X_{k}\right)
   \left(\partial_{y_{1}}^{\beta_{12}}\Theta\right).
   \end{aligned}
\end{equation*}
It follows from $\theta>\frac{8}{5}$ and Remark~\ref{re:decayZk} that 
\begin{equation*}
	\begin{aligned}
	\left|\partial_{y}^{\beta}V_{k}\right|
	&\lesssim
	\lambda^{k\theta}
	\left(e^{-\frac{|y|}{3}}+\lambda^{\frac{8}{5}\beta_{12}}\langle y_{1}\rangle^{k-1-\beta_{11}}e^{-\frac{|y_2|}{3}}
	\mathbf{1}_{[-2,0]}(\lambda^{\frac{8}{5}} y_1)\right)\\
	&\lesssim \lambda^{k\theta}\left(e^{-\frac{|y|}{3}}+\lambda^{\frac{8}{5}\beta_{1}-\frac{8}{5}(k-1)}
	e^{-\frac{|y_2|}{3}}
	\mathbf{1}_{[-2,0]}(\lambda^{\frac{8}{5}} y_1)
	\right)\\
	&\lesssim 
\lambda^{k\theta}
	e^{-\frac{|y|}{3}}+
		\lambda^{\theta+\frac{8}{5}\beta_{1}}
	e^{-\frac{|y_2|}{3}}
	\mathbf{1}_{[-2,0]}(\lambda^{\frac{8}{5}} y_1).
	\end{aligned}
\end{equation*}
Combining the above estimates with $Q\in \mathcal{Y}(\R^{2})$, we complete the proof of (ii).
\end{proof}

Last, we consider the following error term related to the blow-up profile $W$:
\begin{equation*}
    \Gamma(s)=\sum_{k=1}^3 c_k\lambda^{ k\theta}(s)\quad
    \mbox{and}\quad 
\mathcal{E}(W)=\partial_sW+\partial_{y_1}\left(\Delta W-W+W^3\right)-\frac{\lambda_s}{\lambda}\Lambda W.
\end{equation*}

To simplify notation, we denote
\begin{equation*}
	\begin{aligned}
		V^{2}=\sum_{k=2}^{8}\lambda^{k\theta}N_{k}\Theta^{2}\quad 
		&	\mbox{and}\quad 
		V^{3}=\sum_{k=3}^{12}\lambda^{k\theta}M_{k}\Theta^{3},\\
		N_{k}=\sum_{k_{1}+k_{2}=k}X_{k_{1}}X_{k_{2}}\quad 
		&	\mbox{and}\quad 
		M_{k}=\sum_{k_{1}+k_{2}+k_{3}=k}X_{k_{1}}X_{k_{2}}X_{k_{3}}.
	\end{aligned}
\end{equation*}
We also denote 
\begin{equation*}
	\begin{aligned}
		\Psi_{W}&=\Psi_{1W}+\Psi_{2W}+\Psi_{3W}+\Psi_{4W}+\Gamma\Lambda Q(1-\Theta),\\
		\Psi_{\lambda}&=
		\sum_{k=1}^{4}\lambda^{k\theta}\left(\Lambda X_{k}-k\theta X_{k}\right)\Theta
		-\frac{3}{5}\sum_{k=1}^{4}\lambda^{k\theta}X_{k}(y_{1}\partial_{y_{1}}\Theta).
	\end{aligned}
\end{equation*}
Here, we set 
\begin{equation*}
	\begin{aligned}
		\Psi_{1W}&=-\frac{3\Gamma}{5}
		\sum_{k=1}^{4}\lambda^{k\theta}X_{k}\left(y_{1}\partial_{y_{1}}\Theta\right)+\sum_{k=5}^{7}\lambda^{k\theta}\left(\sum_{\ell=k-3}^{4}c_{k-\ell}\left(\Lambda X_{\ell}-\ell\theta X_{\ell}\right)\right)\Theta,\\
		\Psi_{2W}&=-\sum_{k=1}^{4}\lambda^{k\theta}
		\left(\mathcal{L}X_{k}-2\partial_{y_{1}}^{2}X_{k}\right)\partial_{y_{1}}\Theta+\sum_{k=1}^{4}\lambda^{k\theta}
		\left(3\left(\partial_{y_{1}}X_{k}\right)\partial_{y_{1}}^{2}\Theta+X_{k}\partial_{y_{1}}^{3}\Theta\right), 
	\end{aligned}
\end{equation*}
\begin{equation*}
	\begin{aligned}
		\Psi_{3W}&=\sum_{k=3}^{4}\lambda^{k\theta}
		\left(M_{k}\partial_{y_{1}}\Theta+
		\partial_{y_{1}}\left(M_{k}\left(\Theta^{3}-\Theta\right)\right)
		\right)+\sum_{k=5}^{12}\lambda^{k\theta}\partial_{y_{1}}\left(M_{k}\Theta^{3}\right),\\
		\Psi_{4W}&=3\sum_{k=2}^{4}\lambda^{k\theta}
		\left(QN_{k}\partial_{y_{1}}\Theta
		+\partial_{y_{1}}
		\left(QN_{k}\left(\Theta^{2}-\Theta\right)\right)
		\right)
		+3\sum_{k=5}^{8}\lambda^{k\theta}\partial_{y_{1}}\left(QN_{k}\Theta^{2}\right).
	\end{aligned}
\end{equation*}

\begin{proposition}\label{prop:W}
	Let $c_{1}=-\theta^{-1}$.
Then there exist smooth functions $\left(A_2,A_3,A_4\right)$ satisfying~\eqref{equ:defA} and real constants $(c_2,c_3)\in \R^{2}$ such that 
\begin{equation*}
\mathcal{E}(W)=-\left(\frac{\lambda_s}{\lambda }+{\Gamma}\right)
\left(\Lambda Q+\Psi_\lambda\right)+\Psi_W.
\end{equation*}
In addition, the error terms $\Psi_{\lambda}$ and $\Psi_{W}$ satisfy the following estimates.

\begin{enumerate}
	\item \emph{Estimates for $\Psi_{\lambda}$.} 
	For any $\beta=(\beta_{1},\beta_{2})$ with $\beta_{1}\in \left\{0,1,2,3\right\}$, we have 
	\begin{equation*}
		|\partial_{y}^{\beta} \Psi_\lambda|
	\lesssim 
	\lambda^{\theta}e^{-\frac{|y|}{4}}
	+
	\lambda^{\theta+\frac{8}{5}\beta_{1}}
	e^{-\frac{|y_2|}{4}}
	\mathbf{1}_{[-2,0]}(\lambda^{\frac{8}{5}} y_1).
	\end{equation*}
	\item \emph{Estimates for $\Psi_{W}$.}
For any $\beta=(\beta_{1},\beta_{2})$ with $\beta_{1}\in \left\{0,1,2,3\right\}$, we have 
		\begin{equation*}
			\begin{aligned}
			|\partial_{y}^{\beta}\Psi_W|
			&\lesssim
			\lambda^{5\theta}e^{-\frac{|y|}{4}}
			+\lambda^{\theta+\frac{8}{5}(1+\beta_{1})}
			e^{-\frac{|y_2|}{4}}
			\mathbf{1}_{[-2,-1]}(\lambda^{\frac{8}{5}} y_1)\\
			&+\lambda^{5\theta}\left( 1+y_{1} ^{3-\beta_{1}}\right)
			e^{-\frac{|y_2|}{4}}
			\mathbf{1}_{[-2,0]}(\lambda^{\frac{8}{5}} y_1).
		\end{aligned}
		\end{equation*}
\end{enumerate}
\end{proposition}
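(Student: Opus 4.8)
The plan is to construct the profile terms $A_2,A_3,A_4$ (and constants $c_2,c_3$) by the standard cascade argument: expand $\mathcal{E}(W)$ in powers of $\lambda^\theta$, collect the coefficient of each power $\lambda^{k\theta}$, and at each level $k=1,2,3,4$ choose $X_k$ (hence $A_k$) to cancel the leading non-error part. First I would substitute $W = Q + \sum_{k=1}^4 \lambda^{k\theta} X_k\Theta$ into $\mathcal{E}(W)=\partial_s W + \partial_{y_1}(\Delta W - W + W^3) - \frac{\lambda_s}{\lambda}\Lambda W$, using $-\Delta Q + Q - Q^3 = 0$ so the zeroth-order term vanishes. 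The $\partial_s$ acting on $\lambda^{k\theta}$ produces $k\theta\frac{\lambda_s}{\lambda}\lambda^{k\theta}X_k\Theta$, and acting on $\Theta$ produces the $y_1\partial_{y_1}\Theta$ contributions (since $\Theta = \chi(\lambda^{8/5}y_1)$ gives $\partial_s\Theta = \frac85\frac{\lambda_s}{\lambda}\lambda^{8/5}y_1\chi'$); these combine with $-\frac{\lambda_s}{\lambda}\Lambda W$ to give the $-(\frac{\lambda_s}{\lambda}+\Gamma)(\Lambda Q + \Psi_\lambda)$ structure after rewriting $\frac{\lambda_s}{\lambda} = -\Gamma + (\frac{\lambda_s}{\lambda}+\Gamma)$ and noting $\Gamma = \sum_{k=1}^3 c_k\lambda^{k\theta}$. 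The nonlinearity $W^3 = Q^3 + 3Q^2 V + 3Q V^2 + V^3$ contributes $3Q^2 V_k$ at each order plus the lower-order-in-$\lambda$ cross terms recorded in $N_k,M_k$.

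The heart of the construction is the order-by-order inversion. At order $\lambda^\theta$: the coefficient is $\Gamma$-free part $\partial_{y_1}\mathcal{L}X_1 - \text{(something)}\Lambda Q$; choosing $c_1 = -\theta^{-1}$ and $X_1 = c_1 P$ works because Lemma~\ref{le:Nonloca} gives $\partial_{y_1}\mathcal{L}P = \Lambda Q$, and Remark~\ref{re:deftheta} is exactly the computation ensuring the solvability condition (orthogonality to $Q$, equivalently to $\partial_{y_1}Q$ after one integration) holds with this value of $\theta$. At orders $\lambda^{2\theta},\lambda^{3\theta},\lambda^{4\theta}$: the coefficient takes the form $\partial_{y_1}\mathcal{L}X_k + (\text{known source } S_k)$, where $S_k$ is built from $\theta$-multiples of lower $X_j$, the quadratic/cubic terms $QN_k$, $M_k$, and scaling terms $\Lambda X_j - j\theta X_j$. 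I would check $S_k \in \mathcal{Z}_{k-1}(\R^2)$, is even in $y_2$, and — crucially — satisfies $(S_k, Q) = 0$ after adjusting $c_2,c_3$ (the free constants $c_k$ in $X_k = c_kP + A_k$ provide exactly the one-dimensional freedom needed to kill the $(\cdot,Q)$ obstruction at levels $k=2,3$, using $(P,Q)\ne 0$ from Lemma~\ref{le:Nonloca}; at level $k=4$ no constant is needed, consistent with $X_4 = A_4$, because the corresponding solvability condition will be automatic or absorbed). Then Lemma~\ref{le:L1} produces $A_k \in \mathcal{Z}_k(\R^2)$, even in $y_2$, with $\partial_{y_1}\mathcal{L}A_k = -S_k$ (appropriately normalized) and $(A_k,\nabla Q)=0$. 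Everything left over — terms of order $\lambda^{5\theta}$ and higher from the nonlinearity, all terms carrying a factor $\partial_{y_1}\Theta$, $\partial_{y_1}^2\Theta$ or $\partial_{y_1}^3\Theta$ (supported on $\lambda^{8/5}y_1\in[-2,-1]$), and the truncation mismatch $M_k(\Theta^3-\Theta)$, $QN_k(\Theta^2-\Theta)$ — is collected into $\Psi_W$, and the scaling remainder into $\Psi_\lambda$, matching the displayed formulas for $\Psi_{1W},\dots,\Psi_{4W},\Psi_\lambda$.

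For the estimates in (i)–(ii), I would argue pointwise. For $\Psi_\lambda$: each summand is $\lambda^{k\theta}(\Lambda X_k - k\theta X_k)\Theta$ with $k\ge 1$, so it is $\lesssim \lambda^\theta$ times a function in $\mathcal{Z}_{k-1}$ cut by $\Theta$; applying Remark~\ref{re:decayZk} (which converts $\mathcal{Z}_k$ membership into the $e^{-|y|/3}$ plus polynomial-times-$e^{-|y_2|/3}$ bound on $y_1<0$) and noting the polynomial growth $|y_1|^{k-1}$ is dominated on the support $\lambda^{8/5}|y_1|\lesssim 1$ by $\lambda^{-\frac85(k-1)}$, which is reabsorbed since $\theta>\frac85$ (so $\lambda^{k\theta-\frac85(k-1)} = \lambda^{\theta+(\theta-\frac85)(k-1)}\lesssim\lambda^\theta$); the $y_1\partial_{y_1}\Theta$ term contributes the $\lambda^{\theta+\frac85\beta_1}$-type bound with the characteristic function $\mathbf{1}_{[-2,0]}(\lambda^{8/5}y_1)$ because each extra $\partial_{y_1}$ on $\Theta$ yields a factor $\lambda^{8/5}$. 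For $\Psi_W$: the genuinely higher-order-in-$\lambda$ pieces (the $k\ge 5$ sums in $\Psi_{3W},\Psi_{4W}$ and the $k\ge 5$ part of $\Psi_{1W}$) give the $\lambda^{5\theta}$ terms; the $\partial_{y_1}^j\Theta$ pieces in $\Psi_{2W}$ and the $M_k(\Theta^3-\Theta)$, $QN_k(\Theta^2-\Theta)$ pieces — all supported where $\Theta\ne 1$, i.e. $\lambda^{8/5}y_1\le -1$ — give the $\lambda^{\theta+\frac85(1+\beta_1)}\mathbf{1}_{[-2,-1]}$ and $\lambda^{5\theta}(1+y_1^{3-\beta_1})\mathbf{1}_{[-2,0]}$ terms, the latter using that on $\{\lambda^{8/5}y_1\in[-2,0]\}$ one has $\lambda^{k\theta}|y_1|^{k-1}\lesssim\lambda^{5\theta}$ for the relevant range of $k$ once $\theta>\frac85$ is invoked repeatedly. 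The main obstacle is bookkeeping the solvability conditions at each level — verifying that the accumulated source $S_k$ is genuinely orthogonal to $Q$ after the single scalar adjustment $c_k$, which hinges on the algebraic identity of Remark~\ref{re:deftheta} propagating correctly through the quadratic and cubic interaction terms; this is where the precise value of $\theta$ and the structure $\mathcal{L}\Lambda Q = -2Q$, $(\Lambda Q,Q)=0$ are used in an essential, non-automatic way.
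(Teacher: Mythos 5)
Your proposal follows essentially the same route as the paper: expand $\mathcal{E}(W)$ in powers of $\lambda^\theta$, at each level $k$ solve $\partial_{y_1}\mathcal{L}A_k=F_k$ via Lemma~\ref{le:L1} once the solvability condition $(F_k,Q)=0$ is arranged, exploit the non-degeneracy $(P,Q)\neq 0$ together with Remark~\ref{re:deftheta} to enforce that condition by the scalar adjustments $c_j$, dump all truncation residue (the $\partial_{y_1}^j\Theta$ pieces, $M_k(\Theta^3-\Theta)$, $QN_k(\Theta^2-\Theta)$) and the $k\ge 5$ overflow into $\Psi_W$, and estimate pointwise using Remark~\ref{re:decayZk} and the absorption $\lambda^{-8/5(k-1)}|y_1|^{k-1}\lesssim 1$ on the cutoff support together with $\theta>\tfrac{8}{5}$. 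That is precisely Steps~1--3 of the paper's argument.

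One localized slip is worth correcting before you carry this out. You assert that $c_2,c_3$ kill the $(\cdot,Q)$ obstruction ``at levels $k=2,3$'' and that at $k=4$ the condition is ``automatic or absorbed.'' The indexing is off by one: since $X_k=c_kP+A_k$ and $F_{k+1}$ contains $c_k(\Lambda X_1-\theta X_1)$ together with $X_k$-dependent pieces of $QN_{k+1}$ and $M_{k+1}$, the constant $c_k$ controls $F_{k+1}$, not $F_k$. Concretely: $(F_2,Q)=0$ holds \emph{automatically} from $c_1=-\theta^{-1}$ and the definition of $\theta$ (Remark~\ref{re:deftheta}); $c_2$ is then chosen so that $(F_3,Q)=0$; $c_3$ so that $(F_4,Q)=0$. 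The level-$4$ condition is neither automatic nor absorbed, and the reason $X_4=A_4$ carries no $c_4P$ piece is simply that there is no level~$5$ to solve. A related off-by-one: the sources satisfy $F_k\in\mathcal{Z}_{k-2}$, so Lemma~\ref{le:L1} yields $A_k\in\mathcal{Z}_{k-1}$, exactly as in~\eqref{equ:defA}, not $A_k\in\mathcal{Z}_k$ as you write. Neither slip is fatal to the strategy---both would be forced straight the moment you write out $F_2,F_3,F_4$ explicitly---but as stated the level-$4$ step would leave $c_3$ unconstrained and the construction would not close.
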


\begin{proof}
	\textbf{Step 1.} General computation.
Note that
\begin{equation*}
\begin{aligned}
\partial_sW &=\frac{\lambda_s}{\lambda}\sum_{k=1}^4
\big(k\theta\lambda^{k\theta}X_{k}\Theta+\frac{8}{5}\lambda^{k\theta}X_k(y_1\partial_{y_1}\Theta)\big),\\
\frac{\lambda_s}{\lambda}\Lambda W
&=\frac{\lambda_s}{\lambda}\Lambda Q
+\frac{\lambda_{s}}{\lambda}\sum_{k=1}^{4}\lambda^{k\theta}
\left(\left(\Lambda X_{k}\right)\Theta+X_{k}\left(y_1\partial_{y_1}\Theta\right)\right).
\end{aligned}
\end{equation*}
Then, from Lemma~\ref{le:Nonloca} and the definition of $W$, we find
\begin{equation*}
	\begin{aligned}
		&\partial_{y_{1}}\left(\Delta W-W+W^{3}\right)\\
		&=-\Gamma (\Lambda Q)\Theta
		+\partial_{y_{1}}\left(3Q V^{2}+V^{3}\right)
		-\sum_{k=2}^{4}\lambda^{k\theta}
		\left(\partial_{y_{1}}\mathcal{L}A_{k}\right)\Theta\\
		&-\sum_{k=1}^{4}\lambda^{k\theta}\left(\mathcal{L}X_{k}-2\partial_{y_{1}}^{2}X_{k}\right)\partial_{y_{1}}\Theta
		+\sum_{k=1}^{4}\lambda^{k\theta}
		\left(3\left(\partial_{y_{1}}X_k\right)\partial_{y_1}^{2}\Theta+X_{k}\partial_{y_{1}}^{3}\Theta\right).
	\end{aligned}
\end{equation*}

Combining the above identities, we decompose
\begin{equation*}
\begin{aligned}
\mathcal{E}(W)&=-\left(\frac{\lambda_s}{\lambda }+\Gamma\right)(\Lambda Q+\Psi_\lambda)+\Psi_{W}+\sum_{k=2}^4\lambda^{k\theta}\left(F_k
-\partial_{y_1}\mathcal{L}A_k\right)\Theta.
\end{aligned}
\end{equation*}
Here, we denote
\begin{equation*}
	F_{2}=c_{1}\left(\Lambda X_{1}-\theta X_{1}\right)+3\partial_{y_{1}}(QN_{2}),\quad \quad \quad \quad \quad \quad \quad \quad 
	\end{equation*}
\begin{equation*}
\begin{aligned}
	F_{3}&=\sum_{\ell=1,2}c_{3-\ell}\left(\Lambda X_{\ell}-\ell\theta X_{\ell}\right)+3\partial_{y_{1}}(QN_{3})+\partial_{y_{1}}M_{3},\\
	F_{4}&=\sum_{\ell=1,2,3}c_{4-\ell}\left(\Lambda X_{\ell}-\ell\theta X_{\ell}\right)+3\partial_{y_{1}}(QN_{4})+\partial_{y_{1}}M_{4}.
	\end{aligned}
\end{equation*}

\textbf{Step 2.} Choice of $(c_{2},c_{3})$ and construction of $(A_{2},A_{3},A_{4})$.
We claim that, there exists smooth function $A_{2}\in \mathcal{Z}_{1}(\R^{2})$ such that $\partial_{y_{1}}\mathcal{L}A_{2}=F_{2}$ and $A_{2}$ is even in $y_{2}$.

\smallskip
Indeed, using the definition of $F_{2}$ and $X_{1}$, we directly have 
\begin{equation*}
	F_{2}=\theta^{-2}\left(\Lambda P
	+3\partial_{y_{1}}\left(QP^{2}\right)
	-\theta P\right)\Longrightarrow 
	F_{2}\in \mathcal{Z}_{0}(\R^{2}) \ \ \mbox{and   $F_{2}$ is even in $y_{2}$}.
\end{equation*}
Then, using Remark~\ref{re:deftheta}, we find
\begin{equation*}
	(F_{2},Q)=\theta^{-2}\left(\Lambda P
	+3\partial_{y_{1}}\left(QP^{2}\right)
	,Q\right)-\theta^{-1}(P,Q)=0.
\end{equation*}
Therefore, from Lemma~\ref{le:L1}, we complete the proof of the existence for $A_{2}\in \mathcal{Z}_{1}(\R^{2})$.

\smallskip
We claim that, there exist smooth functions $(A_{3},A_{4})\in \mathcal{Z}_{2}(\R^{2})\times \mathcal{Z}_{3}(\R^{2})$ and real constants $(c_{2},c_{3})\in \R^{2}$ such that $\partial_{y_{1}}\mathcal{L}A_{k}=F_{k}$ and $A_{k}$ is even in $y_{2}$ for $k=3,4$.

\smallskip
Indeed, using the definition of $F_{3}$, we decompose $F_{3}=-\theta^{-1}c_{2}F_{3,1}-\theta^{-1}F_{3,2}$ where
\begin{equation*}
\begin{aligned}
	&F_{3,1}=2\Lambda P-3\theta P+6\partial_{y_{1}}(QP^2),\\
	&F_{3,2}=\Lambda A_2-2\theta A_2+6\partial_{y_{1}}(QPA_2)+\theta^{-2}\partial_{y_{1}}(P^3).
\end{aligned}
\end{equation*}
It follows that $F_{3}\in \mathcal{Z}_{1}(\R^{2})$ and $F_{3}$ is even in $y_{2}$. Then, using again Remark~\ref{re:deftheta},
\begin{equation*}
	\left(F_{3,1},Q\right)=
	2\left(\Lambda P+3\partial_{y_{1}}(QP^{2}),Q\right)-3\theta(P,Q)=-\theta (P,Q)<0.
\end{equation*}
Based on the above identity, there exists $c_{2}\in \R$ such that $(F_{3},Q)=0$, and thus, using again Lemma~\ref{le:L1}, we complete the proof of the existence for $A_{3}\in \mathcal{Z}_{2}(\R^{2})$.

\smallskip
Using the definition of $F_{4}$, we decompose $F_{4}=-\theta^{-1}c_{3}F_{4,1}-\theta^{-1}F_{4,2}+F_{4,3}$ where 
\begin{equation*}
	\begin{aligned}
		F_{4,1}&=2\Lambda P-4\theta P+6\partial_{y_{1}}(QP^{2}),\\
		F_{4,2}&=\Lambda A_{3}-3\theta A_{3}+6\partial_{y_{1}}(QPA_{3}),\\
		F_{4,3}&=c_{2}\left(\Lambda X_{2}-2\theta X_{2}\right)
		+3\partial_{y_{1}}(QX_{2}^{2}+X_{1}^{2}X_{2}).
	\end{aligned}
\end{equation*}
It follows that $F_{4}\in \mathcal{Z}_{2}(\R^{2})$ and $F_{4}$ is even in $y_{2}$. Then, using again Remark~\ref{re:deftheta},
\begin{equation*}
	\left(F_{4,1},Q\right)=
	2\left(\Lambda P+3\partial_{y_{1}}(QP^{2}),Q\right)-4\theta(P,Q)=-2\theta (P,Q)<0.
\end{equation*}
Based on the above identity, there exists $c_{3}\in \R$ such that $(F_{4},Q)=0$, and thus, using again Lemma~\ref{le:L1}, we complete the proof of the existence for $A_{4}\in \mathcal{Z}_{3}(\R^{2})$.

\smallskip
From the choice of $(c_{2},c_{3})$ and construction of $(A_{2},A_{3},A_{4})$, we see that 
\begin{equation*}
	\sum_{k=2}^4\lambda^{k\theta}\left(F_k
	-\partial_{y_1}\mathcal{L}A_k\right)\Theta=0
	\Longrightarrow
\mathcal{E}(W)=-\left(\frac{\lambda_s}{\lambda }+\Gamma\right)(\Lambda Q+\Psi_\lambda)+\Psi_{W}.
\end{equation*}

\smallskip
\textbf{Step 3.} Conclusion. We now establish the estimates for $\Psi_{\lambda}$ and $\Psi_{W}$. First, by an elementary computation, for any $\beta=(\beta_{1},\beta_{2})\in \mathbb{N}^{2}$ with $\beta_{1}\in \left\{0,1,2,3\right\}$ and $y\in \mathbb{R}^{2}$, we directly have 
\begin{equation*}
	\begin{aligned}
	\partial_{y}^{\beta}\Psi_{\lambda}
	&=
	\sum_{k=1}^{4}\lambda^{k\theta}
	\sum_{\beta_{11}+\beta_{12}=\beta_{1}}
	\left(\partial_{y_{2}}^{\beta_{2}}\partial_{y_{1}}^{\beta_{11}}
	\left(\Lambda X_{k}-k\theta X_{k}\right)
	\right)
	\left(\partial_{y_{1}}^{\beta_{12}}\Theta\right)\\
	&-\frac{3}{5}\sum_{k=1}^{4}\lambda^{k\theta}
	\sum_{\beta_{11}+\beta_{12}=\beta_{1}}
	\left(\partial_{y_{2}}^{\beta_{2}}\partial_{y_{1}}^{\beta_{11}}
	X_{k}
	\right)
	\left(\partial_{y_{1}}^{\beta_{12}}\left(y_{1}\partial_{y_{1}}\Theta\right)\right).
	\end{aligned}
\end{equation*}
Therefore, using again Remark~\ref{re:decayZk} and a similar argument as in the proof for (ii) of Lemma~\ref{le:estW1}, we complete the proof for the estimate of $\Psi_{\lambda}$. 

\smallskip
Second, for any $\beta=(\beta_{1},\beta_{2})\in \mathbb{N}^{2}$ with $\beta_{1}\in \left\{0,1,2,3\right\}$ and $y\in \mathbb{R}^{2}$, we compute
\begin{equation*}
	\begin{aligned}
		\partial_{y}^{\beta}\Psi_{1W}&=-\frac{3\Gamma}{5}
		\sum_{k=1}^{4}\lambda^{k\theta}\sum_{\beta_{11}+\beta_{12}=\beta_{1}}
		\left(\partial_{y_{2}}^{\beta_{2}}\partial_{y_{1}}^{\beta_{11}}X_{k}\right)
	\left(	\partial_{y_{1}}^{\beta_{12}}
			\left(y_{1}\partial_{y_{1}}\Theta\right)\right)\\
		&+\sum_{k=5}^{7}\lambda^{k\theta}
	\sum_{\ell=k-3}^{4}c_{k-\ell}	\sum_{\beta_{11}+\beta_{12}=\beta_{1}}
		\left(\partial_{y_{2}}^{\beta_{2}}\partial_{y_{1}}^{\beta_{11}}\left(\Lambda X_{\ell}-\ell\theta X_{\ell}\right)\right)
		\left(\partial_{y_{1}}^{\beta_{12}}\Theta\right).
	\end{aligned}
\end{equation*}
Therefore, using again Remark~\ref{re:decayZk} and a similar argument as in the proof for (ii) of Lemma~\ref{le:estW1}, we obtain
\begin{equation*}
	\begin{aligned}
&\bigg|	\Gamma
	\sum_{k=1}^{4}\lambda^{k\theta}\sum_{\beta_{11}+\beta_{12}=\beta_{1}}
	\left(\partial_{y_{2}}^{\beta_{2}}\partial_{y_{1}}^{\beta_{11}}X_{k}\right)
	\left(	\partial_{y_{1}}^{\beta_{12}}
	\left(y_{1}\partial_{y_{1}}\Theta\right)\right)\bigg|\\
	&\lesssim
	\lambda^{2\theta+\frac{8}{5}\beta_{1}}e^{-\frac{|y_{2}|}{4}}\textbf{1}_{[-2,-1]}(\lambda^{\frac{8}{5}}y_{1})\lesssim 
	\lambda^{\theta+\frac{8}{5}(1+\beta_{1})}e^{-\frac{|y_{2}|}{4}}\textbf{1}_{[-2,-1]}(\lambda^{\frac{8}{5}}y_{1}).
	\end{aligned}
\end{equation*}
Similarly, from~\eqref{equ:defA} and Remark~\ref{re:decayZk}, we obtain
\begin{equation*}
\begin{aligned}
&\bigg|
\sum_{k=5}^{7}\lambda^{k\theta}
\sum_{\ell=k-3}^{4}c_{k-\ell}	\sum_{\beta_{11}+\beta_{12}=\beta_{1}}
\left(\partial_{y_{2}}^{\beta_{2}}\partial_{y_{1}}^{\beta_{11}}\left(\Lambda X_{\ell}-\ell\theta X_{\ell}\right)\right)
\left(\partial_{y_{1}}^{\beta_{12}}\Theta\right)
\bigg|\\
&\lesssim \lambda^{5\theta}e^{-\frac{|y|}{4}}+\lambda^{5\theta}
\left(1+
y_{1}^{3-\beta_{11}}\right)\lambda^{\frac{8}{5}\beta_{12}}\textbf{1}_{[-2,0]}(\lambda^{\frac{8}{5}}y_{1})\\
&\lesssim
\lambda^{5\theta}e^{-\frac{|y|}{4}}+\lambda^{5\theta}
\left(1+
y_{1}^{3-\beta_{1}}\right)\textbf{1}_{[-2,0]}(\lambda^{\frac{8}{5}}y_{1}).
\end{aligned}
\end{equation*}
Combining the above estimates, we deduce that 
	\begin{equation*}
	\begin{aligned}
		|\partial_{y}^{\beta}\Psi_{1W}|
		&\lesssim 	\lambda^{5\theta}e^{-\frac{|y|}{4}}
		+\lambda^{\theta+\frac{8}{5}(1+\beta_{1})}
		e^{-\frac{|y_2|}{4}}
		\mathbf{1}_{[-2,-1]}(\lambda^{\frac{8}{5}} y_1)\\
		&+\lambda^{5\theta}\left( 1+y_{1} ^{3-\beta_{1}}\right)
		e^{-\frac{|y_2|}{4}}
		\mathbf{1}_{[-2,0]}(\lambda^{\frac{8}{5}} y_1).
	\end{aligned}
\end{equation*}

Then, for any $\beta=(\beta_{1},\beta_{2})\in \mathbb{N}^{2}$ with $\beta_{1}\in \left\{0,1,2,3\right\}$ and $y\in \mathbb{R}^{2}$, we compute
\begin{equation*}
	\begin{aligned}
	\partial_{y}^{\beta}\Psi_{2W}
	&=\sum_{k=1}^{4}\lambda^{k\theta}
	\sum_{\beta_{11}+\beta_{12}=\beta_{1}}
	\left(\partial_{y_{2}}^{\beta_{2}}\partial_{y_{1}}^{\beta_{11}}X_{k}\right)\left(\partial_{y_{1}}^{3+\beta_{12}}\Theta\right)\\
	&+3\sum_{k=1}^{4}\lambda^{k\theta}
	\sum_{\beta_{11}+\beta_{12}=\beta_{1}}
	\left(\partial_{y_{2}}^{\beta_{2}}\partial_{y_{1}}^{1+\beta_{11}}X_{k}\right)\left(\partial_{y_{1}}^{2+\beta_{12}}\Theta\right)\\
	&-\sum_{k=1}^{4}
	\lambda^{k\theta}
	\sum_{\beta_{11}+\beta_{12}=\beta_{1}}
	\left(
	\partial_{y_{2}}^{\beta_{2}}\partial_{y_{1}}^{\beta_{11}}
	(\mathcal{L}X_{k}-2\partial_{y_{1}}^{2}X_{k})
	\right)\left(\partial_{y_{1}}^{1+\beta_{12}}\Theta\right).
	\end{aligned}
\end{equation*}
Therefore, using again Remark~\ref{re:decayZk} and a similar argument as in the proof for (ii) of Lemma~\ref{le:estW1}, we obtain
\begin{equation*}
	\left|\partial_{y}^{\beta}\Psi_{2W}\right|
	\lesssim \lambda^{\theta+\frac{8}{5}(1+\beta_{1})}e^{-\frac{|y_{2}|}{4}}
	\textbf{1}_{[-2,-1]}(\lambda^{\frac{8}{5}}y_{1}).
\end{equation*}
Here, we use the fact that 
\begin{equation*}
	\left|\partial_{y_{1}}^{1+\beta_{12}}\Theta\right|+	\left|\partial_{y_{1}}^{2+\beta_{12}}\Theta\right|+	\left|\partial_{y_{1}}^{3+\beta_{12}}\Theta\right|
	\lesssim \lambda^{\frac{8}{5}(1+\beta_{12})}\textbf{1}_{[-2,-1]}(\lambda^{\frac{8}{5}}y_{1}).
\end{equation*}
Based on a similar argument to the one in estimates of $\Psi_{1W}$ and $\Psi_{2W}$,
for any $\beta=(\beta_{1},\beta_{2})\in \mathbb{N}^{2}$ with $\beta_{1}\in \left\{0,1,2,3\right\}$ and $y\in \R^{2}$, 
 we deduce that 
\begin{equation*}
	\begin{aligned}
	\left|\partial_{y}^{\beta}\Psi_{3W}\right|
	+\left|\partial_{y}^{\beta}\Psi_{4W}\right|
	&\lesssim
\lambda^{5\theta}e^{-\frac{|y|}{4}}
+\lambda^{\theta+\frac{8}{5}(1+\beta_{1})}
e^{-\frac{|y_2|}{4}}
\mathbf{1}_{[-2,-1]}(\lambda^{\frac{8}{5}} y_1)\\
&+\lambda^{5\theta}\left( 1+y_{1} ^{3-\beta_{1}}\right)
e^{-\frac{|y_2|}{4}}
\mathbf{1}_{[-2,0]}(\lambda^{\frac{8}{5}} y_1).
\end{aligned}
\end{equation*}
Combining the above estimates with the definition of $\Psi_{W}$, we complete the proof for the estimates of $\Psi_{W}$, and thus, the proof of Proposition~\ref{prop:W} is complete.
\end{proof}

\subsection{Refined blow-up profile}\label{SS:Refined}
Recall that, in the previous works~\cite{CLY,CLYMINI}, we consider a refined blow-up profile $Q_{b}$, involving a small geometric parameter $b$ and a non-localized profile $P$. This additional term enables us to derive an extra orthogonality condition related to $Q$. We consider the following similar refinement for the blow-up profile $W$.
For any $|b|\ll 1$, we define the following refined blow-up profile:
\begin{equation*}
	W_{b(s),\lambda(s)}(y)=W(\lambda(s),y)+b(s)P_{b(s)}(y),
\end{equation*}
where 
\begin{equation*}
	\chi_{b}(y_{1})=\chi(|b|^{\frac{3}{4}}y_{1})\quad \mbox{and}\quad 
	P_{b}(y)=\chi_{b}(y_{1})P(y).
\end{equation*}
Let 
\begin{equation*}
	\Psi_{b}=\partial_{y_{1}}
	\left(b\Delta P_{b}-bP_{b}+W_{b,\lambda}^{3}-W^{3}\right)
	+b\Lambda Q+b\Gamma \Lambda P_{b}.
\end{equation*}
We now introduce the estimates related to the refined blow-up profile $W_{b,\lambda}$.
\begin{proposition}\label{prop:Wb}
There exists a small constant $0<b^{*}\ll 1$ such that for any $|b|<b^{*}$, the following estimates hold.

\begin{enumerate}
	\item \emph{Closeness to $W$.} It holds 
	\begin{equation*}
    \left|\int_{\R^{2}}W_{b,\lambda}^{2}\dd y-\int_{\R^{2}}W^{2}\dd y\right|\lesssim |b|+|b|^{\frac{1}{4}}\lambda^{\theta}\quad \mbox{and}\quad 
     \|W_{b,\lambda}-W\|_{H^1}\lesssim |b|^{\frac{5}{8}}.
	\end{equation*}
	
	\item \emph{Estimates of $W_{b,\lambda}$.} For any $\beta=(\beta_{1},\beta_{2})\in \mathbb{N}^{2}$, we have 
	\begin{equation*}
		\begin{aligned}
	\left|	\partial_{y}^{\beta}W_{b,\lambda}\right|+
		\left|	\partial_{y}^{\beta}\Lambda W_{b,\lambda}\right|
		&\lesssim 
		e^{-\frac{|y|}{4}}+
		\lambda^{\theta+\frac{8}{5}\beta_{1}}
		e^{-\frac{|y_2|}{4}}
		\mathbf{1}_{[-2,0]}(\lambda^{\frac{8}{5}} y_1)\\
		&+
		|b|^{1+\frac{3}{4}\beta_{1}}e^{-\frac{|y_{2}|}{4}}
		\mathbf{1}_{[-2,0]}(|b|^{\frac{3}{4}}y_{1}).
		\end{aligned}
	\end{equation*}
	
	\item \emph{Estimates for the error term.} For any $\beta=(\beta_{1},\beta_{2})\in \mathbb{N}^{2}$, we have
	\begin{equation*}
		\begin{aligned}
		\left|\partial_{y}^{\beta}\Psi_{b}\right|
		&\lesssim
		|b|(|b|+\lambda^{\theta})e^{-\frac{|y|}{4}}+|b|\lambda^{\theta}
		e^{-\frac{|y_2|}{4}}
		\mathbf{1}_{[-2,0]}(|b|^{\frac{3}{4}} y_1)\\
		&+|b|^{1+\frac{3}{4}(1+\beta_{1})}
		e^{-\frac{|y_2|}{4}}
		\mathbf{1}_{[-2,-1]}(|b|^{\frac{3}{4}} y_1).
		\end{aligned}
	\end{equation*}
	
	\item \emph{Scalar product with $Q$.} It holds
	\begin{equation*}
		\left|(\Psi_b,Q)+\sigma b\lambda^\theta (P,Q)\right|\lesssim
		|b|\left(|b|+\lambda^{2\theta}\right).
	\end{equation*}
	Here, the constant $\sigma$ is defined by 
	\begin{equation*}
		\sigma=\frac{\left(\Lambda P+6\partial_{y_{1}}(QP^{2}),Q\right)}
		{\left(\Lambda P+3\partial_{y_{1}}(QP^{2}),Q\right)}.
	\end{equation*}
\end{enumerate}
\end{proposition}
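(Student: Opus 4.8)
The plan is to treat $W_{b,\lambda}=W+bP_b$ as a perturbation of $W$ by the single localized term $bP_b$, and to exploit the properties of $P$ from Lemma~\ref{le:Nonloca} (in particular $\partial_{y_1}\mathcal{L}P=\Lambda Q$, the decay $\partial_{y_1}P\in\mathcal{Z}(\R^2)$, the pointwise bounds on $P$ in the half-plane $y_1>0$ and the $y_2$-exponential decay everywhere, together with the limits as $y_1\to-\infty$). First I would prove (i) by writing $W_{b,\lambda}^2-W^2=2bW P_b+b^2 P_b^2$: the cross term splits into the region $y_1>0$, where $P$ decays exponentially and contributes $O(|b|)$, and the region $y_1<0$, where $P$ is bounded but $W=Q+V$ decays like $Q$ in $y_1$ and like $e^{-|y_2|/4}$, giving a further $O(|b|)$, while the interaction of $P_b$ with the slowly-decaying tail $V$ of $W$ (supported where $|\lambda^{8/5}y_1|\le 2$, i.e. $|y_1|\lesssim\lambda^{-8/5}$) contributes $O(|b|^{1/4}\lambda^\theta)$ after integrating the $y_2$-Gaussian and using $\int_{|y_1|\lesssim\lambda^{-8/5}}\langle y_1\rangle^{\text{poly}}\,\dd y_1$ against the cutoff scale $|b|^{-3/4}$; the $b^2P_b^2$ term is $O(|b|^2\cdot|b|^{-3/4})=O(|b|^{5/4})$. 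For the $H^1$ bound, $\|bP_b\|_{L^2}\lesssim|b|\cdot|b|^{-3/8}=|b|^{5/8}$ and $\|\nabla(bP_b)\|_{L^2}\lesssim|b|^{5/8}$ as well (the $y_1$-derivative hitting $\chi_b$ costs $|b|^{3/4}$ but hits $P$ which is only $O(1)$, and hitting $P$ gives $\partial_{y_1}P\in\mathcal{Z}$ which is integrable; the worst case is again $|b|\cdot|b|^{-3/8}$). Part (ii) is immediate by combining (ii) of Lemma~\ref{le:estW1} with the Leibniz expansion of $\partial_y^\beta(bP_b)$, exactly as in the proof of (ii) of Lemma~\ref{le:estW1}, noting that derivatives of $\chi_b$ produce the factor $|b|^{3/4\beta_1}$ and the indicator $\mathbf 1_{[-2,0]}(|b|^{3/4}y_1)$ (or $\mathbf 1_{[-2,-1]}$ when at least one derivative hits $\chi_b$).

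For part (iii) I would expand $\Psi_b$ directly. Using $\partial_{y_1}\mathcal{L}P=\Lambda Q$ and $P_b=\chi_b P$, the term $\partial_{y_1}(b\Delta P_b-bP_b)+b\Lambda Q$ collapses, modulo commutators of $\partial_{y_1}$ and $\Delta$ with $\chi_b$, to $b(\Lambda Q)(1-\chi_b)$ plus terms carrying derivatives of $\chi_b$; the former is supported in $\{y_1\lesssim-|b|^{-3/4}\}$ where $\Lambda Q$ is $O(e^{-|y|/4})$, hence negligible at size $|b|e^{-|y|/4}$ after absorbing the small factor, and the $\chi_b'$-terms live on the annulus $\{|b|^{3/4}y_1\in[-2,-1]\}$ and produce the $|b|^{1+\frac34(1+\beta_1)}$-contribution (one derivative of $\chi_b$ already present, more from $\partial_y^\beta$). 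The cubic term $\partial_{y_1}(W_{b,\lambda}^3-W^3)=\partial_{y_1}(3W^2 bP_b+3W(bP_b)^2+(bP_b)^3)$ contributes its main piece $3b\,\partial_{y_1}(Q^2 P_b)$; since $3\partial_{y_1}(Q^2P)$ is a fixed $\mathcal{Z}$-function this is $O(|b|)e^{-|y|/4}$ on $\{y_1>0\}$ but only $O(|b|)e^{-|y_2|/4}$ on $\{y_1<0\}$ — this is the source of the middle term $|b|\lambda^\theta e^{-|y_2|/4}\mathbf 1_{[-2,0]}(|b|^{3/4}y_1)$ once one pairs it against the region where it actually matters (where $V\ne 0$), and of $|b|^2$-type contributions from the $W$–$V$ cross terms; the $b^2$ and $b^3$ powers in $W_{b,\lambda}^3-W^3$ give the $|b|^2 e^{-|y|/4}$ piece; and $b\Gamma\Lambda P_b$ is $O(|b|\lambda^\theta)$, supported near $Q$ up to $\chi_b$, hence again of the stated form. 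Collecting these yields the bound in (iii).

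Part (iv) is the delicate one: one must extract the \emph{exact} coefficient $-\sigma b\lambda^\theta(P,Q)$ of the leading term, which requires tracking the linear-in-$\lambda^\theta$ interaction between $bP_b$ and $V_1=\lambda^\theta X_1\Theta=-\theta^{-1}\lambda^\theta P\Theta$ inside the cubic nonlinearity. Pairing $\Psi_b$ with $Q$ and integrating by parts, the term $\partial_{y_1}(b\Delta P_b-bP_b)+b\Lambda Q$ pairs to $O(|b|^2)$ because $(\partial_{y_1}\mathcal{L}P,Q)=(\Lambda Q,Q)=0$ and the $\chi_b$-commutators are exponentially small against $Q$; the genuinely linear contribution comes from $(3\,\partial_{y_1}(W^2 bP_b),Q)$, where replacing $W^2$ by $Q^2+2QV_1+O(\lambda^{2\theta}+\text{exp. small})$ and using that on the support of $Q$ we have $\chi_b\equiv1$ and $\Theta\equiv1$, one is left with $3b(\partial_{y_1}(Q^2P),Q)+6b\lambda^\theta X_1(\partial_{y_1}(QP^2),Q)+O(|b|\lambda^{2\theta})$; note $(\partial_{y_1}(Q^2P),Q)=-(Q^2P,\partial_{y_1}Q)$ and combine with the contribution $b\Lambda Q$ — actually the cleanest route is to recognize, exactly as in Remark~\ref{re:deftheta}, that $(\Psi_b,Q)$ reduces to $b\lambda^\theta$ times $-\theta^{-1}(\Lambda P+6\partial_{y_1}(QP^2),Q)$, and then divide and multiply by $(\Lambda P+3\partial_{y_1}(QP^2),Q)=\theta(P,Q)$ to obtain precisely $-\sigma b\lambda^\theta(P,Q)$ with the stated $\sigma$; the remainder is $O(|b|(|b|+\lambda^{2\theta}))$ from the higher-order terms $V_{\ge2}$, the $b^2P_b^2$ and $b^3P_b^3$ pieces, and $b\Gamma\Lambda P_b$ (whose pairing with $Q$ is $O(|b|\lambda^{\theta})\cdot$ but actually $(\Lambda P_b,Q)=(\Lambda P,Q)+O(\text{exp. small})$ is nonzero — so care is needed: $b\Gamma(\Lambda P_b,Q)=b(c_1\lambda^\theta+O(\lambda^{2\theta}))(\Lambda P,Q)$, which is \emph{also} linear in $b\lambda^\theta$ and must be folded into the coefficient). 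I expect reconciling these two linear-in-$b\lambda^\theta$ sources — the cubic cross term and the $b\Gamma\Lambda P_b$ term — into the single clean coefficient $-\sigma$ to be the main obstacle, and it is resolved precisely by the identities of Remark~\ref{re:deftheta} relating $(\Lambda P+3\partial_{y_1}(QP^2),Q)$ to $\theta(P,Q)$; everything else is the same cutoff-region bookkeeping used in Lemma~\ref{le:estW1} and Proposition~\ref{prop:W}.
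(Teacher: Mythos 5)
Your overall strategy matches the paper's proof: expand $W_{b,\lambda}=W+bP_b$, exploit $\partial_{y_1}\mathcal{L}P=\Lambda Q$, track the cutoff regions of $\Theta$ and $\chi_b$, and resolve the coefficient $\sigma$ in (iv) via the identities of Remark~\ref{re:deftheta}. Parts (i) and (ii) are correct and essentially identical to the paper's argument; part (iv) identifies the right pair of linear-in-$b\lambda^\theta$ sources (the $b\Gamma\Lambda P_b$ term and the cubic cross term against $V_1$) and reconciles them correctly in the end.

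However, there is a genuine gap in your treatment of (iii), and it also infects an intermediate claim in (iv). You assert that $\partial_{y_1}(b\Delta P_b - bP_b) + b\Lambda Q$ ``collapses, modulo commutators, to $b\Lambda Q(1-\chi_b)$''. This is false for that expression alone: since $\mathcal{L}=-\Delta+1-3Q^2$, the identity $\partial_{y_1}\mathcal{L}P=\Lambda Q$ gives
\[
\partial_{y_1}\bigl(b\Delta P_b - bP_b + 3Q^2 bP_b\bigr) + b\Lambda Q = b(1-\chi_b)\Lambda Q - b\bigl(\chi_b'(\mathcal{L}P-2\partial_{y_1}^2P)-3\chi_b''\partial_{y_1}P-\chi_b'''P\bigr),
\]
so the $3\partial_{y_1}(Q^2 bP_b)$ piece of the cubic nonlinearity must be \emph{absorbed} into this cancellation. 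The surviving cubic contribution is therefore $3b\,\partial_{y_1}\bigl((W^2-Q^2)P_b\bigr)$, which is $O(|b|\lambda^\theta)$ because $W^2-Q^2=2QV+V^2=O(\lambda^\theta)$. Instead, you keep $3b\,\partial_{y_1}(Q^2 P_b)$ as a surviving ``main piece'' and estimate it as $O(|b|)e^{-|y|/4}$; that is larger by a factor of $(|b|+\lambda^\theta)$ than what the bound in (iii) allows, so as written your argument does not close. (Incidentally, the side claim that $\partial_{y_1}(Q^2P)$ is ``only $O(e^{-|y_2|/4})$ on $\{y_1<0\}$'' is also wrong: $Q$ and $\partial_{y_1}Q$ decay exponentially in every direction, so $\partial_{y_1}(Q^2P)$ decays like $e^{-|y|}$ uniformly.) The same slip appears in (iv), where you claim the pairing of $\partial_{y_1}(b\Delta P_b-bP_b)+b\Lambda Q$ with $Q$ is $O(|b|^2)$ ``because $(\partial_{y_1}\mathcal{L}P,Q)=0$''; that reasoning again silently invokes the $3Q^2$ piece that you then double-count in the cubic. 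Your final formula for (iv) is correct because you ultimately pivot to Remark~\ref{re:deftheta}, but the intermediate bookkeeping is inconsistent. The clean fix for both parts is to write out $\Psi_b$ explicitly as the paper's identity (their equation (2.4)) \emph{before} estimating, so that the $3Q^2 bP_b$ piece is algebraically gone and only $3b\,\partial_{y_1}\bigl((W^2-Q^2)P_b+bWP_b^2+b^2P_b^3/3\bigr)$ remains from the cubic.
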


\begin{proof}
	Proof of (i). Expanding $W_{b,\lambda}=W+bP_{b}$, we find
	\begin{equation*}
		\int_{\R^{2}}W_{b,\lambda}^{2}\dd y=\int_{\R^{2}}W^{2}\dd y+2b\int_{\R^{2}}WP_{b}\dd y+b^{2}\int_{\R^{2}}P_{b}^{2}\dd y.
	\end{equation*}
	Using Lemma~\ref{le:Nonloca} and (ii) of Lemma~\ref{le:estW1}, we obtain 
	\begin{equation*}
	\left|b\int_{\R^{2}}WP_{b}\dd y\right|\lesssim |b|+|b|^{\frac{1}{4}}\lambda^{\theta}\quad \mbox{and}\quad 
b^{2}\int_{\R^{2}}P_{b}^{2}\dd y\lesssim |b|^{\frac{5}{4}}.
	\end{equation*}
	Using a similar argument as above, we also obtain 
    \begin{equation*}
        \|W_{b,\lambda}-W\|_{H^{1}}\lesssim |b|\|P_{b}\|_{H^{1}}\lesssim |b|^{\frac{5}{8}}.
    \end{equation*}
    Combining the above estimates, we complete the proof of (i).
	
	\smallskip
	Proof of (ii). Note that, these estimates follow directly from Lemma~\ref{le:Nonloca}, (ii) of Lemma~\ref{le:estW1} and the definition of $P_{b}$ and $W_{b,\lambda}$.
	
	\smallskip
	Proof of (iii). By an elementary computation and $\partial_{y_{1}}\mathcal{L}P=\Lambda Q$, we find
	\begin{equation}\label{equ:Psib}
		\begin{aligned}
		\Psi_{b}
		&=b\Gamma \Lambda P_{b}
		-b
		\left(\chi'_{b}\left(\mathcal{L}P-2\partial_{y_{1}}^{2}P\right)
		-3\chi''_{b}\partial_{y_{1}}P-\chi'''_{b}P
		\right)\\
		&+b(1-\chi_{b})\Lambda Q
		+b\partial_{y_{1}}\left(3(W^{2}-Q^{2})P_{b}+3bWP_{b}^{2}
		+b^{2}P_{b}^{3}\right).
		\end{aligned}
	\end{equation}
	Let $\beta=(\beta_{1},\beta_{2})\in \mathbb{N}^{2}$.
	From the decay property of $Q$ and $P$, we have 
	\begin{equation*}
		\begin{aligned}
				\left|
			b\Gamma\partial_{y}^{\beta}\Lambda P_{b}
			\right|&\lesssim 
			|b|\lambda^{\theta}
			\left(e^{-\frac{|y|}{4}}+
			e^{-\frac{|y_{2}|}{4}}
			\mathbf{1}_{[-2,0]}(|b|^{\frac{3}{4}}y_{1})\right),\\
				\left|b\partial_{y}^{\beta}
		\left((1-\chi_{b})\Lambda Q\right)
		\right|
		&\lesssim |b|
		e^{-\frac{|y|}{2}}
		\mathbf{1}_{[-2,-1]}(|b|^{\frac{3}{4}}y_{1})\lesssim b^{2}e^{-\frac{|y|}{4}}.
		\end{aligned}
	\end{equation*}
	Then, using again the decay property of $P$ and the definition of $\chi_{b}$, 
	\begin{equation*}
		\begin{aligned}
		\left|b\partial_{y}^{\beta}
		\left(\chi'_{b}
		\left(\mathcal{L}P-2\partial_{y_{1}}^{2}P\right)\right)\right|
&	\lesssim 	|b|^{1+\frac{3}{4}(1+\beta_{1})}
		e^{-\frac{|y_2|}{4}}
		\mathbf{1}_{[-2,-1]}(|b|^{\frac{3}{4}} y_1),\\
		\left|b\partial_{y}^{\beta}
		\left(
		\chi''_{b}\partial_{y_{1}}P\right)\right|+\left|b\partial_{y}^{\beta}
		\left(\chi'''_{b}P\right)\right|
		&\lesssim |b|^{1+\frac{3}{4}(1+\beta_{1})}
		e^{-\frac{|y_2|}{4}}
		\mathbf{1}_{[-2,-1]}(|b|^{\frac{3}{4}} y_1).
		\end{aligned}
	\end{equation*}
	Recall that, in Section~\ref{SS:Blowup}, we define
	\begin{equation*}
		W=Q+V\quad \mbox{and}\quad W^{2}-Q^{2}=2QV+V^{2}.
	\end{equation*}
	It follows from $\theta>\frac{8}{5}$ and Remark~\ref{re:decayZk} that
	\begin{equation*}
			\left|
			b\partial_{y_{2}}^{\beta_{2}}\partial_{y_{1}}^{1+\beta_{1}}
			(W^{2}-Q^{2})P_{b}
			\right|\lesssim |b|\lambda^{\theta}
			\left(
			e^{-\frac{|y|}{4}}
			+e^{-\frac{|y_{2}|}{4}}\mathbf{1}_{[-2,0]}(|b|^{\frac{3}{4}}y_{1})
			\right).
	\end{equation*}
	Last, using a similar argument as above, we find
	\begin{equation*}
		\begin{aligned}
			\left|b^{3}\partial_{y_{2}}^{\beta_{2}}\partial_{y_{1}}^{
			1+\beta_{1}}P_{b}^{3}
		\right|&\lesssim
		|b|^{3}
		\left(e^{-\frac{|y|}{4}}
		+|b|^{\frac{3}{4}(1+\beta_{1})}e^{-\frac{|y_{2}|}{4}}\mathbf{1}_{[-2,-1]}(|b|^{\frac{3}{4}}y_{1})
		\right),\\
		\left|b^{2}\partial_{y_{2}}^{\beta_{2}}\partial_{y_{1}}^{1+\beta_{1}}\left(WP_{b}^{2}\right)\right|&\lesssim 
		b^{2}e^{-\frac{|y|}{4}}+
		b^{2}\lambda^{\theta}
		\left(
		e^{-\frac{|y|}{4}}
		+e^{-\frac{|y_{2}|}{4}}\mathbf{1}_{[-2,0]}(|b|^{\frac{3}{4}}y_{1})
		\right).
	\end{aligned}
	\end{equation*}
	Combining the above estimates with~\eqref{equ:Psib}, we complete the proof of (iii).
	
	\smallskip
	Proof of (iv). First, using again the definition of $\chi_{b}$ and the decay property of $Q$, 
	\begin{equation*}
		\begin{aligned}
		\left|b
		\left(\chi'_{b}\left(\mathcal{L}P-2\partial_{y_{1}}^{2}P\right)
		-3\chi''_{b}\partial_{y_{1}}P-\chi'''_{b}P,Q
		\right)\right|
			&\lesssim b^{2},\\
				\left|b\left((1-\chi_{b})\Lambda Q,Q\right)\right|+	\left|b^{2}
			\left(\partial_{y_{1}}\left(WP_{b}^{2}\right),Q\right)\right|
			+\left|b^{3}\left(\partial_{y_{1}}P_{b}^{3},Q\right)\right|
			&\lesssim b^{2}.
		\end{aligned}
	\end{equation*}
	Recall that, in Section~\ref{SS:Blowup}, we define
	\begin{equation*}
		\Gamma(s)=c_{1}\lambda^{\theta}(s)+c_{2}\lambda^{2\theta}(s)+c_{3}\lambda^{3\theta}(s)\quad \mbox{and}\quad c_{1}=-\theta^{-1}.
	\end{equation*}
	It follows from the decay property of $Q$ that 
	\begin{equation*}
		\left|b\Gamma \left(\Lambda P_{b},Q\right)+\theta^{-1}b\lambda^{\theta}\left(\Lambda P,Q\right)\right|\lesssim |b|\left(|b|+\lambda^{2\theta}\right).
	\end{equation*}
	Recall also that, in Section~\ref{SS:Blowup}, we define
		\begin{equation*}
			V(s,y)=
			\sum_{k=1}^4\lambda^{k\theta}(s)X_k(y)\Theta(s,y_{1})
			\quad \mbox{and}\quad 
			X_{1}(y)=-\theta^{-1}P(y).
		\end{equation*}
		Therefore, using again the decay property of $Q$ again, we find 
		\begin{equation*}
			\left|b\left(
			3\partial_{y_{1}}\left(W^{2}-Q^{2}\right)P_{b},Q
			\right)
			+\theta^{-1}b\lambda^{\theta}
			\left(6\partial_{y_{1}}\left(QP^{2}\right),Q\right)
			\right|\lesssim |b|\left(|b|+\lambda^{2\theta}\right).
		\end{equation*}
		Combining the above estimates with Remark~\ref{re:deftheta}, we complete the proof of (iv).
		\end{proof}

\section{Modulation estimates}\label{S:Modula}
\subsection{Geometric decomposition}\label{SS:Geo}
In this subsection, we introduce the geometric decomposition of solutions that are even in $x_{2}$ and close to the refined blow-up profile. 
We recall the following result for solutions of~\eqref{CP} which are even in $x_2$.
\begin{proposition}\label{Prop:decomposition}
Let $0<T<T_0<1$ and $\frac{7}{9}\le s\le 1$. Assume that there exist $(\overline{\lambda}(t),\overline{x}_1(t),\overline{\varepsilon}(t))\in(0,\infty)\times\mathbb{R}\times H^{s}\left(\R^{2}\right)$ such that, for all $t\in[T,T_0]$, the solution $u(t)$ of \eqref{CP} satisfies
\begin{equation}\label{est:decomposition}
u(t,x)=\frac{1}{\overline{\lambda}(t)}\left[{Q}+\bar{\varepsilon}(t)\right]\left(\frac{x-(\bar{x}_1(t),0)}{\bar{\lambda}(t)}\right),\quad \mbox{with}\quad \|\bar{\varepsilon}(t)\|_{L^2}\le \kappa\le \kappa_{1},
\end{equation}
where $0<\kappa_{1}\ll1$ is small enough universal constant. Then there exist unique $C^1$ functions $(\lambda(t),x_1(t),b(t))\in(0,\infty)\times\mathbb{R}^2$ such that, for all $t\in [T,T_{0}]$, $\varepsilon(t)$ being defined by
\begin{equation}\label{equ:decom}
\varepsilon(t,y)=\lambda(t)u\left(t,\lambda(t)y+(x_1(t),0)\right)-W_{b(t),\lambda(t)}(y),
\end{equation}
it satisfies the orthogonality conditions
\begin{equation}\label{equ:ortho}
(\varepsilon(t),Q)=(\varepsilon(t), Q^3)=(\varepsilon(t),\partial_{y_1} Q)=0.
\end{equation}
In addition, we have
\begin{equation*}
\|\varepsilon(t)\|_{L^2}+|b(t)|+\bigg|1-\frac{\overline{\lambda}(t)}{\lambda(t)}\bigg|\lesssim\varrho(\kappa)\quad \mbox{and}\quad 
\|\varepsilon(t)\|_{H^{s}}\lesssim \varrho
\left(\|{\varepsilon}(0)\|_{H^{s}}\right),
\end{equation*}
where $\lim_{\alpha\rightarrow 0^+}\rho(\alpha)=0$.
\end{proposition}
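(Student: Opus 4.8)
The plan is to set up the modulation via a standard implicit function theorem argument. Consider the map $\Phi: (\lambda,x_1,b)\mapsto \big((\varepsilon,Q),(\varepsilon,Q^3),(\varepsilon,\partial_{y_1}Q)\big)$, where $\varepsilon=\varepsilon(t;\lambda,x_1,b)$ is defined by~\eqref{equ:decom} with $u(t)$ held fixed. At the ``trivial'' point $(\lambda,x_1,b)=(\overline{\lambda},\overline{x}_1,0)$ the decomposition~\eqref{est:decomposition} gives $\varepsilon=\overline{\varepsilon}$, which by assumption is $O(\kappa)$ in $L^2$, so $\Phi$ is $O(\kappa)$ there. First I would compute the Jacobian of $\Phi$ with respect to $(\lambda,x_1,b)$ at $b=0$, $\varepsilon=0$: differentiating~\eqref{equ:decom}, the variation of $W_{b,\lambda}$ in $\lambda$ produces (to leading order) $-\Lambda Q$, the variation in $x_1$ produces $-\partial_{y_1}Q$, and the variation in $b$ produces $-P$ (from Lemma~\ref{le:Nonloca}). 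Pairing against the test functions $(Q,Q^3,\partial_{y_1}Q)$ and using $(\Lambda Q,Q)=0$ together with $(\partial_{y_1}Q,Q)=(\partial_{y_1}Q,Q^3)=0$ (oddness), $(\partial_{y_1}Q,\partial_{y_1}Q)>0$, $(\Lambda Q,Q^3)=-\tfrac14\Lambda\!\int Q^4\neq 0$ (since $Q$ is the ground state, $\int Q^4$ is strictly positive and scales nontrivially), and $(P,Q)=\tfrac14\int F^2\neq 0$, the Jacobian matrix is (after reordering) triangular with nonzero diagonal, hence invertible with bounds independent of $\kappa$. By the quantitative implicit function theorem there exist unique $(\lambda,x_1,b)$, $C^1$ in $t$ by the $C^1$ dependence of the data and the flow, close to $(\overline{\lambda},\overline{x}_1,0)$ at scale $\varrho(\kappa)$, solving $\Phi=0$, which are precisely~\eqref{equ:ortho}.

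Next I would extract the quantitative bounds. From $|\lambda-\overline{\lambda}|+|x_1-\overline{x}_1|+|b|\lesssim\varrho(\kappa)$ one gets $|1-\overline{\lambda}/\lambda|\lesssim\varrho(\kappa)$, and then $\varepsilon = \lambda u(\lambda\cdot+(x_1,0))-W_{b,\lambda}$ is compared to the rescaled $\overline{\varepsilon}$: the difference between the two rescalings contributes $O(\varrho(\kappa))$ in $L^2$ (since $L^2$ is scaling- and translation-invariant, this is continuity of the rescaling action in the small parameters applied to $Q+\overline\varepsilon$), and $\|W_{b,\lambda}-W\|_{L^2}\lesssim|b|^{5/8}$ by Proposition~\ref{prop:Wb}(i) while $\|W-Q\|_{L^2}\lesssim\lambda^{\theta/2}$-type bounds follow from Lemma~\ref{le:estW1}(i); all of these are $\varrho(\kappa)$, giving $\|\varepsilon\|_{L^2}\lesssim\varrho(\kappa)$.

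The more delicate point is the $H^s$ bound $\|\varepsilon(t)\|_{H^s}\lesssim\varrho(\|\varepsilon(0)\|_{H^s})$, which is \emph{not} uniform in $t$ in the same way (the constant does not blow up, but one only controls it in terms of the initial $H^s$ size, reflecting the absence of an $H^1$ a priori bound). Here I would argue as follows: the modulation parameters are already determined, so $\varepsilon(t,y)=\lambda(t)u(t,\lambda(t)y+(x_1(t),0))-W_{b(t),\lambda(t)}(y)$ with $\lambda(t)$ bounded below on the compact interval $[T,T_0]$ (by $\lambda(t)\gtrsim\overline\lambda(t)\gtrsim$ const, using the blow-up criterion and that the solution exists on $[T,T_0]$). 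Since on a fixed compact time interval with $\lambda$ bounded above and below the rescaling $u\mapsto\lambda u(\lambda\cdot+x)$ is a bounded (and boundedly invertible) operator on $H^s$ with constants depending only on $\sup\lambda,\inf\lambda$, one has $\|\varepsilon(t)\|_{H^s}\lesssim \|u(t)\|_{H^s}+\|W_{b,\lambda}\|_{H^s}$, and the profile term $\|W_{b,\lambda}\|_{H^s}$ is bounded by $\|Q\|_{H^s}+\varrho(\kappa)$ via Lemma~\ref{le:estW1} and Proposition~\ref{prop:Wb}. It then remains to propagate the $H^s$ size of $u$: at $t=0^+$ (or the left endpoint) $u$ rescales to $Q+\varepsilon(0)$, and continuity of the $H^s$ flow on $[T,T_0]$ — this is where the low-regularity local well-posedness theory and the global-in-$H^s$ machinery of~\cite{BGFR}, together with the uniform estimates to be proved later in the paper, enter — gives $\|\varepsilon(t)\|_{H^s}\lesssim\varrho(\|\varepsilon(0)\|_{H^s})$. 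The main obstacle is precisely this last step: unlike the classical $H^1$ setting, there is no conservation-law control of the $H^s$ norm, so one cannot get a clean uniform bound, and the statement must be phrased in terms of the initial $H^s$ size, with the actual propagation deferred to the bootstrap/uniform-estimate sections of the paper.
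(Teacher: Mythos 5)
Your implicit-function-theorem setup is exactly what the paper's one-line proof intends (it defers to~\cite[Proposition 3.1]{CLY} and simply notes that the same argument works with $W_{b,\lambda}$ in place of $Q_b$). Your Jacobian computation is correct: at leading order the variation of $\lambda u(\lambda\cdot+(x_1,0))$ in $\lambda$ (not of $W_{b,\lambda}$, which contributes only $O(\lambda^{\theta})$) gives $\Lambda Q$, the variation in $x_1$ gives $\partial_{y_1}Q$, and the variation of $W_{b,\lambda}$ in $b$ at $b=0$ gives $P$; pairing with $(Q,Q^3,\partial_{y_1}Q)$, the parity relations $(\Lambda Q,\partial_{y_1}Q)=(\partial_{y_1}Q,Q)=(\partial_{y_1}Q,Q^3)=0$, Lemma~\ref{le:Nonloca} (which gives $(\nabla P,Q)=0$ and $(P,Q)\neq 0$), and $(\Lambda Q,Q^3)=\tfrac12\int Q^4>0$ (by a scaling integration by parts in $\mathbb{R}^2$) make the matrix block-triangular after a column permutation, hence invertible with constants uniform in $\kappa$. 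The $L^2$ and $|b|$ bounds then follow as you say from the continuity of the IFT solution map.

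The one place your proposal goes off the rails is the $H^s$ paragraph. The estimate $\|\varepsilon(t)\|_{H^s}\lesssim\varrho(\|\varepsilon(0)\|_{H^{s}})$ is not a statement about \emph{propagation} of the $H^s$ norm along the ZK flow, and interpreting it that way introduces a circularity: you appeal to the ``uniform estimates to be proved later in the paper,'' but this proposition is a preliminary tool used \emph{before} and \emph{inside} the bootstrap, so it cannot rely on the bootstrap's conclusions. The estimate should instead be read as a same-time continuity statement for the modulation map (the ``$0$'' in ``$\varepsilon(0)$'' is best understood as the initial guess $\bar\varepsilon(t)$ of the IFT iteration, not time zero — which is not even in the interval $[T,T_0]$). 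Once $(\lambda,x_1,b)$ are known to differ from $(\bar\lambda,\bar x_1,0)$ by $\varrho(\kappa)$, one writes $\varepsilon(t)=\mathcal{R}_{\lambda/\bar\lambda,\,(x_1-\bar x_1)/\bar\lambda}\big(Q+\bar\varepsilon(t)\big)-W_{b,\lambda}$ where $\mathcal{R}$ is the rescaling/translation operator near the identity; since $\mathcal{R}$ acts continuously on $H^s$ with constants depending only on the size of the parameter shift, and $\|W_{b,\lambda}-Q\|_{H^s}\lesssim\lambda^{\theta-4/5}+|b|^{5/8}\lesssim\varrho(\kappa)$ by Lemma~\ref{le:estW1} and Proposition~\ref{prop:Wb}, the $H^s$ control on $\varepsilon(t)$ follows directly from that on $\bar\varepsilon(t)$, with no use of the equation. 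Your appeal to $\lambda$ being bounded below on $[T,T_0]$ and to low-regularity well-posedness is therefore unnecessary here (though it is of course what is needed later, in Section~\ref{SS:Uniform}, to get a bound uniform as $T\downarrow 0$).
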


\begin{proof}
Recall that,
the proof of the above decomposition proposition relies on a standard argument based on Proposition~\ref{prop:Spectral} and the implicit function Theorem. See more details in the proof of~\cite[Proposition 3.1]{CLY} for the case where the blow-up profile is $Q_{b}$. We mention here that, the same proof applies to the current case where the blow-up profile is $W_{b,\lambda}$ instead of $Q_{b}$.
\end{proof}

We now introduce the following change of variables:
\begin{equation*}
    y=\frac{x-(x_1(t),0)}{\lambda(t)}\quad 
    \mbox{and}\quad 
    s=S+\int_{T}^{t}\frac{\dd \tau}{\lambda^3(\tau)}\in I.
\end{equation*}
Here, we denote 
\begin{equation*}
    I=[S,s_0], \quad S=-\left(\frac{\theta}{(3-\theta) T}\right)^{\frac{\theta}{3-\theta}}\quad 
    \mbox{and}\quad 
    s_0=S+\int_{T}^{T_0}\frac{\dd \tau}{\lambda^3(\tau)}.
\end{equation*}
By choosing $0<T<T_0<1$ to be small enough, we have 
\begin{equation*}
    -\infty<S<s_{0}<0 \quad \mbox{with}\quad  |s_{0}|\gg 1.
\end{equation*}

\smallskip
Note that, from~\eqref{equ:rescal},~\eqref{equ:decom}, the definition of $\mathcal{E}(W)$ in Section~\ref{SS:Blowup} and the definition of $\Psi_{b}$ in Section~\ref{SS:Refined}, we directly have 
\begin{equation}\label{equ:e}
\begin{aligned}
	&\partial_{s}\varepsilon+\partial_{y_{1}}\left(\Delta \varepsilon-\varepsilon+
	\left(\left(W_{b,\lambda}+\varepsilon\right)^{3}-W_{b,\lambda}^{3}\right)
		\right)\\
	&=\frac{\lambda_{s}}{\lambda}\Lambda \varepsilon+\left(\frac{x_{1s}}{\lambda}-1\right)\partial_{y_{1}}\varepsilon-\mathcal{E}_{b}.
    \end{aligned}
\end{equation}
Here, we denote 
\begin{equation*}
\mathcal{E}_b=\mathcal{E}(W)-b\Lambda Q-\left(\frac{\lambda_{s}}{\lambda}+\Gamma\right)(b\Lambda P_b)-\left(\frac{x_{1s}}{\lambda}-1\right)\partial_{y_1}W_{b,\lambda}+\partial_{s}(bP_{b})+\Psi_b.
\end{equation*}
To simplify notation, we set 
\begin{equation*}
{\rm{Mod}}=\left(\frac{\lambda_s}{\lambda}+\Gamma+b\right)\Lambda Q+\left(\frac{x_{1s}}{\lambda}-1\right)\partial_{y_1}Q.
\end{equation*}
Note also that, using Proposition~\ref{prop:W}, we can rewrite 
\begin{align*}
\mathcal{E}_b=
-{\rm{Mod}}
-\Psi_M+\Psi_{N}+\Psi_W+\Psi_b.
\end{align*}
Here, we denote 
\begin{equation*}
	\begin{aligned}
		\Psi_{N}&=b(\Psi_\lambda+b\Lambda P_b)+b_s\left(\chi_{b}+\frac{3}{4}y_{1}\partial_{y_{1}}\chi_{b}\right)P,\\
		\Psi_{M}&=
		\left(\frac{\lambda_s}{\lambda}+\Gamma+b\right)
		\left(\Psi_\lambda+b\Lambda P_b\right)
	+	\left(\frac{x_{1s}}{\lambda}-1\right)
		\left(\partial_{y_1}V+b\partial_{y_1}P_b\right).
	\end{aligned}
\end{equation*}

To state the control of parameters $(\lambda,b,x_{1})$, we consider the weighted $H^{1}$ norm of $\varepsilon$ with specific weight norm. We follow the notation and presentation of~\cite[Section 3.1]{CLY}.
Consider the smooth even function $\zeta\in C^{\infty}(\R)$ with $\zeta\in (0,1]$ as follows,
\begin{equation*}
	\begin{aligned}
		\zeta(y_1)=
		\begin{cases}
			\exp({2y_{1}}),&\mbox{for}\ y_1\in(-\infty,-\frac{1}{6}),\\
			1,&\mbox{for}\ y_1\in(-\frac{1}{10},\frac{1}{10}),\\
			\exp({-2y_{1}}),&\mbox{for}\ y_1\in(\frac{1}{6},\infty),\\
		\end{cases}
		\quad \int_{\R}\zeta (y_{1})\dd y_{1}=1.
	\end{aligned}
\end{equation*}
In addition, we consider the smooth function $\vartheta\in C^{\infty}(\R)$ as follows,
\begin{equation*}
    \vartheta(y_{1})=\begin{cases}
			\frac{1}{2},&\mbox{for}\ y_1\in(-\infty,\frac{1}{2}),\\
			y_{1}^{7},&\mbox{for}\ y_1\in(1,\infty),\\
		\end{cases}
        \quad 
		\vartheta'(y_{1})\ge 0, \ \ \mbox{on}\ \R.
\end{equation*}
Let $B>1$ be a large enough universal constant to be chosen later. 
We consider two weight functions $\vartheta_{B}\in C^{\infty}(\R)$ and  $\varphi_{B}\in C^{\infty}(\R)$ with
\begin{equation*}
\vartheta_{B}(y_{1})=\vartheta\left(\frac{y_{1}}{B^{10}}\right)\quad \mbox{and}
\quad 
\varphi_{B}(y_{1})=\sqrt{2\psi_{B}(y_{1})}\vartheta_{B}(y_{1}),\quad \mbox{on}\ \ \R.
\end{equation*}
Here we set 
\begin{equation*}
	\lim_{y_1\rightarrow-\infty}\psi_B(y_1)=0 \ \ \mbox{and}\ \ 
	\psi_{B}'(y_1)=
	\begin{cases}
		\frac{1}{B}\zeta\left(\frac{y_{1}}{B}+\frac{1}{3}-\frac{1}{2}B^{-\frac{1}{3}}\right),&\text{ for }y_1<-\frac{1}{3}B,\\
		\frac{1}{B}\zeta\left(\frac{y_1}{B^{\frac{2}{3}}}+\frac{1}{3}B^{\frac{1}{3}}\right),&\text{ for }y_1\geq-\frac{1}{3}B.
	\end{cases}
\end{equation*}

\begin{lemma}\label{le:psiphi}
	For all large enough $B>1$, the following estimates hold.
	\begin{enumerate}
		\item We know that $\psi_B$ is strictly increasing and $\psi_B(y_1)\to\frac{1}{2}$ as $y_{1}\uparrow \infty$.
		
		\item For all $y_1\in (-\infty,-B)$, we have 
		\begin{equation*}
        \begin{aligned}
			\exp\left(\frac{y_{1}}{B}\right)&\le \sqrt{\psi_{B}(y_{1})}\le \sqrt{2}\exp\left(\frac{y_{1}}{B}\right),\\
             \exp\left(\frac{y_{1}}{B}\right)&\le \sqrt{2\varphi^{2}_{B}(y_{1})}\le \sqrt{2}\exp\left(\frac{y_{1}}{B}\right).
            \end{aligned}
		\end{equation*}
		
		\item For all $y_{1}\in \left(-\frac{B}{4},\frac{B}{4}\right)$, we have
		\begin{equation*}
			\begin{aligned}
			\psi_B'(y_1)+\left|\psi_{B}(y_{1})-\frac{1}{2}\right|
			&\lesssim \exp\left({-\frac{1}{6}B^{\frac{1}{3}}}\right),\\
		\varphi'_{B}(y_{1})+\left|\varphi_{B}(y_1)-\frac{1}{2}\right|
			&\lesssim \exp\left({-\frac{1}{6}B^{\frac{1}{3}}}\right).
			\end{aligned}
		\end{equation*}
		In addition, for all $y_1\in\mathbb{R}$, we have $\psi_B(y_1) \le \varphi_{B}(y_1)$.
	\end{enumerate}
\end{lemma}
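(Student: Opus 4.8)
The plan is to establish all three parts by integrating the piecewise formula for $\psi_B'$ explicitly, reducing each quantity to an elementary integral of $\zeta$ through an affine change of variables, and then using the explicit structure of $\zeta$: it equals $e^{\pm 2y_1}$ on $\{|y_1|>\tfrac16\}$, equals $1$ on $(-\tfrac1{10},\tfrac1{10})$, is even, and integrates to $1$. The preliminary point is that, once $B$ is large enough that $\tfrac12 B^{-1/3}<\tfrac1{10}$, the argument of $\zeta$ in either branch of $\psi_B'$ lies in $(-\tfrac1{10},\tfrac1{10})$ on a whole neighborhood of the junction $y_1=-\tfrac B3$, so $\psi_B'\equiv\tfrac1B$ there and the two branches match together with all derivatives; thus $\psi_B\in C^\infty(\mathbb{R})$. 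Since $\zeta>0$, one has $\psi_B'>0$ on all of $\mathbb{R}$, which is the strict monotonicity in (i).

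For the limit in (i) I would write $\psi_B(y_1)=\int_{-\infty}^{-B/3}\psi_B'+\int_{-B/3}^{y_1}\psi_B'$. In the first integral the substitution $u=\tau/B+\tfrac13-\tfrac12 B^{-1/3}$ gives $\int_{-\infty}^{-\frac12 B^{-1/3}}\zeta(u)\,du=\tfrac12-\tfrac12 B^{-1/3}$, using evenness of $\zeta$, $\int\zeta=1$, and $\zeta\equiv1$ near $0$; in the second, $v=\tau/B^{2/3}+\tfrac13 B^{1/3}$ produces a Jacobian $B^{-1/3}$ and yields $B^{-1/3}\int_0^{\,y_1 B^{-2/3}+\frac13 B^{1/3}}\zeta(v)\,dv\to B^{-1/3}\cdot\tfrac12$ as $y_1\uparrow\infty$. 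Summing the two gives $\psi_B(y_1)\to\tfrac12$; the shifts $\tfrac13-\tfrac12 B^{-1/3}$ and $\tfrac13 B^{1/3}$ are tuned exactly so that the $B^{-1/3}$ corrections cancel.

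For (ii), if $y_1<-B$ one stays in the first branch and the shifted variable $u=y_1/B+\tfrac13-\tfrac12 B^{-1/3}$ remains $<-\tfrac16$ over the whole range of integration (for $B$ large), so $\zeta(u)=e^{2u}$ and $\psi_B(y_1)=\int_{-\infty}^{u_1}e^{2u}\,du=\tfrac12 e^{2u_1}$, an explicit positive constant multiple of $e^{2y_1/B}$; reading off the two-sided bound on $\sqrt{\psi_B}$ from this, and noting $|y_1|<B<\tfrac12 B^{10}$ forces $\vartheta_B(y_1)=\vartheta(y_1/B^{10})=\tfrac12$ and hence $2\varphi_B^2(y_1)=\psi_B(y_1)$, the same bound holds for $\sqrt{2\varphi_B^2}$. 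For (iii), on $(-\tfrac B4,\tfrac B4)$ one is in the second branch and the argument $y_1 B^{-2/3}+\tfrac13 B^{1/3}$ exceeds $\tfrac1{12}B^{1/3}>\tfrac16$, so $\psi_B'(y_1)=\tfrac1B e^{-2(y_1B^{-2/3}+\frac13B^{1/3})}\le\tfrac1B e^{-\frac16 B^{1/3}}$; integrating this same exponential from $y_1$ to $+\infty$ and using $\psi_B(+\infty)=\tfrac12$ gives $|\psi_B(y_1)-\tfrac12|=\tfrac12 B^{-1/3}e^{-\frac23 B^{1/3}}e^{-2y_1 B^{-2/3}}\le\tfrac12 B^{-1/3}e^{-\frac16 B^{1/3}}$, which together prove the $\psi_B$ part. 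On this interval $\vartheta_B\equiv\tfrac12$, so $\varphi_B=\sqrt{\psi_B/2}$; since $\psi_B$ is there bounded below away from $0$, $\varphi_B'\lesssim\psi_B'$, while $|\varphi_B-\tfrac12|=|\sqrt{\psi_B/2}-\sqrt{1/4}|\lesssim|\psi_B-\tfrac12|$ by $|\sqrt{a}-\sqrt{b}|\le|a-b|/(\sqrt{a}+\sqrt{b})$. Lastly $\psi_B\le\varphi_B$ on $\mathbb{R}$ holds because $\psi_B\le\tfrac12$ (strictly increasing with limit $\tfrac12$) while $\vartheta_B\ge\tfrac12$ everywhere ($\vartheta$ equals $\tfrac12$ on $(-\infty,\tfrac12)$, is nondecreasing, and is $\ge1$ on $(1,\infty)$), so $\sqrt{\psi_B/2}\le\tfrac12\le\vartheta_B$, i.e. $\psi_B\le\sqrt{2\psi_B}\,\vartheta_B=\varphi_B$.

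I do not expect a genuine obstacle here: every estimate is an elementary integral once the substitution is fixed. The only care required is in choosing $B$ large enough to push the relevant $\zeta$-arguments into the exponential (resp. constant) regime and to secure the smooth matching at $y_1=-\tfrac B3$, and in tracking the Jacobian $B^{-1/3}$ coming from the second branch, which is precisely what pins $\psi_B(+\infty)$ to $\tfrac12$; this normalization is the one place where the exact form of the shifts really matters.
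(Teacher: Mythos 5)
Your general strategy---integrate $\psi_B'$ explicitly via the natural affine change of variables in each branch, exploit the explicit exponential/plateau/exponential structure of $\zeta$, and read off each estimate---is the right one, and it is exactly the sort of ``elementary computation'' the paper is alluding to when it cites~\cite[Lemma 3.3]{CLY} and omits the proof. Parts (i) and (iii) check out: the smooth matching of the two branches at $y_1=-\tfrac{B}{3}$ once $\tfrac12 B^{-1/3}<\tfrac1{10}$, the cancellation of the $B^{-1/3}$ corrections yielding $\psi_B(+\infty)=\tfrac12$, the exponential smallness of $\psi_B'$ and $|\psi_B-\tfrac12|$ on $(-\tfrac B4,\tfrac B4)$, the reduction $\varphi_B=\sqrt{\psi_B/2}$ there, and the chain $\sqrt{\psi_B/2}\le\tfrac12\le\vartheta_B$ for $\psi_B\le\varphi_B$ are all correct.

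The gap is precisely at the point you describe as ``reading off the two-sided bound'' in part (ii). For $y_1<-B$ your own change of variables gives
\[
\psi_B(y_1)=\tfrac12\,e^{2\left(y_1/B+\tfrac13-\tfrac12 B^{-1/3}\right)}=\tfrac12\,e^{2/3-B^{-1/3}}\,e^{2y_1/B},
\]
so the stated two-sided bound is equivalent to $1\le\tfrac12 e^{2/3-B^{-1/3}}\le 2$. But $\tfrac12 e^{2/3-B^{-1/3}}<\tfrac12 e^{2/3}\approx 0.974<1$ for every $B>1$, so the left inequality $e^{2y_1/B}\le\psi_B(y_1)$ (equivalently $e^{y_1/B}\le\sqrt{\psi_B(y_1)}$) does \emph{not} follow from this computation; indeed it fails, for all $B$, by a small but definite margin (the upper bound holds comfortably). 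The constant cannot simply be ``read off''---it must be computed, and once one does, the lower bound as literally stated is seen to require either a slightly larger shift than $\tfrac13$ in the definition of $\psi_B'$ or a slightly smaller lower constant (e.g.\ $\tfrac12 e^{y_1/B}\le\sqrt{\psi_B(y_1)}$, which does hold). A proof that silently asserts the stated bound has a genuine hole here. One minor further slip: to get $\vartheta_B\equiv\tfrac12$ on $(-\infty,-B)$ you invoke ``$|y_1|<B$,'' which is the opposite of the hypothesis; what you actually want is simply that $y_1<0$, hence $y_1/B^{10}<\tfrac12$ and $\vartheta(y_1/B^{10})=\tfrac12$.
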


\begin{proof}
The proof relies on some elementary computation based on the definition of $\psi_{B}$ and $\varphi_{B}$ (see \emph{e.g.}~\cite[Lemma 3.3]{CLY}), and we omit it.
\end{proof}

We now introduce the following technical lemma related to the pointwise estimates $\psi_{B}$ and $\varphi_{B}$ and their derivatives. 
The proof is based on the above Lemma and the definition of $\psi_{B}$ and $\varphi_{B}$.
We refer to~\cite[Lemma 3.4]{CLY} for the details of the proof.
\begin{lemma}\label{le:psiphi2}
	The following estimates hold.
\begin{enumerate}
    \item \emph{First-type estimates for the derivatives of $\psi_{B}$.}
    We have 
    \begin{equation*}
        B^{\frac{2}{3}}\left|\psi''_{B}\right|
        +B^{\frac{4}{3}}\left|\psi'''_{B}\right|\lesssim \psi'_{B},\quad \mbox{on}\ \R.
    \end{equation*}

    \item \emph{Second-type estimates for the derivatives of $\psi_{B}$.} 
    We have 
    \begin{equation*}
        \sqrt{B\psi'_{B}}\lesssim \psi_{B}+B\varphi'_{B}\lesssim \varphi_{B},\quad \mbox{on}\ \R.
    \end{equation*}

    \item \emph{Third-type estimates for the derivative of $\psi_{B}$.} 
    We have 
    \begin{equation*}
        |y_{1}|\psi'_{B}\lesssim \sqrt{\psi_{B}}\lesssim \psi_{B}+B\varphi'
        _{B},\quad \mbox{on}\ \R.
    \end{equation*}

    \item \emph{First-type estimates for the derivative of $\varphi_{B}$.}
    We have 
    \begin{equation*}
        \begin{aligned}
            \left|\varphi''_{B}\right|&\lesssim B^{-\frac{2}{3}}\varphi'_{B}+B^{-20}\psi_{B},\quad \mbox{on} \ \R,\\
             \left|\varphi'''_{B}\right|&\lesssim B^{-\frac{4}{3}}\varphi'_{B}+B^{-30}\psi_{B},\quad \mbox{on} \ \R,
        \end{aligned}
    \end{equation*}

    \item \emph{Second-type estimates for the derivative of $\varphi_{B}$.}
    We have 
    \begin{equation*}
        \varphi_{B}\lesssim \psi_{B}+B\varphi'_{B}+|y_{1}|\varphi'_{B}{\mathbf{1}}_{[B^{10},\infty)},\quad \mbox{on}\ \R.
    \end{equation*}
\end{enumerate}
\end{lemma}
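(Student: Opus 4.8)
The plan is to reduce all five estimates to the explicit formula for $\psi_B'$ and to the relation $\varphi_B=\sqrt{2\psi_B}\,\vartheta_B$, and then to argue region by region. The structural input is that $\zeta$ is a fixed smooth strictly positive function with $|\zeta^{(k)}|\lesssim_k\zeta$ on $\R$ (clear from $\zeta\equiv1$ near $0$ and $\zeta(v)=e^{\mp2v}$ for $\pm v>\tfrac16$): hence every $y_1$-derivative of $\psi_B'$ costs a factor $B^{-1}$ on $y_1<-\tfrac13B$, where $\psi_B'(y_1)=\tfrac1B\zeta(\tfrac{y_1}B+\tfrac13-\tfrac12B^{-1/3})$, and a factor $B^{-2/3}$ on $y_1\ge-\tfrac13B$, where $\psi_B'(y_1)=\tfrac1B\zeta(\tfrac{y_1}{B^{2/3}}+\tfrac13B^{1/3})$; moreover these two formulas glue to a $C^\infty$ function, since near $y_1=-\tfrac13B$ both arguments lie within $\tfrac1{10}$ of $0$ for $B$ large, so $\psi_B'\equiv\tfrac1B$ there. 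Item (i) then follows at once, the slower scale $B^{2/3}$ on the right being the worst case: $|\psi_B''|\lesssim B^{-2/3}\psi_B'$ and $|\psi_B'''|\lesssim B^{-4/3}\psi_B'$.

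For (ii) and (iii) I would first record two pointwise comparisons, consequences of Lemma~\ref{le:psiphi} and the above formulas: $B\psi_B'\lesssim\psi_B$ on $\R$ (on $y_1\le-B$ one has $\psi_B'=\tfrac1Be^{2v}$ and $\psi_B=\tfrac12e^{2v}$ with $v=\tfrac{y_1}B+\tfrac13-\tfrac12B^{-1/3}$; elsewhere $\psi_B\gtrsim1$ and $\psi_B'\le\tfrac1B$), and $\psi_B\sim e^{2y_1/B}$ with $\psi_B'\sim\psi_B/B$ on the far left. Writing $g:=\sqrt{2\psi_B}$, so $g'=\psi_B'/g\ge0$, and using $\vartheta_B\ge\tfrac12$, $\vartheta_B'\ge0$, one gets $\varphi_B'\ge g'\vartheta_B\ge\tfrac12g'$, hence $B\varphi_B'\gtrsim B\psi_B'/g\gtrsim\sqrt{\psi_B}$ on the far left; on the complementary range $\psi_B\gtrsim1$, so there $\sqrt{B\psi_B'}\lesssim1\lesssim\psi_B$ and $|y_1|\psi_B'\lesssim1\lesssim\sqrt{\psi_B}$, while on the far left $|y_1|\psi_B'\lesssim B|v|\cdot\tfrac1Be^{2v}=|v|e^{2v}\lesssim e^v\sim\sqrt{\psi_B}$. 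This yields $\sqrt{B\psi_B'}\lesssim\psi_B+B\varphi_B'$ and $|y_1|\psi_B'\lesssim\sqrt{\psi_B}\lesssim\psi_B+B\varphi_B'$. For the remaining bound $\psi_B+B\varphi_B'\lesssim\varphi_B$ I would use $\psi_B\le\sqrt{2\psi_B}\le2\varphi_B$ and split $B\varphi_B'=Bg'\vartheta_B+Bg\vartheta_B'$ with $Bg'\vartheta_B=\tfrac{B\psi_B'}g\vartheta_B\lesssim\tfrac{\psi_B}g\vartheta_B=\tfrac12\varphi_B$ and $Bg\vartheta_B'\lesssim g\vartheta_B=\varphi_B$, the latter since $B\vartheta_B'\lesssim\vartheta_B$ on the support $y_1\ge\tfrac12B^{10}$ of $\vartheta_B'$ (with $B^{-9}$ to spare, as $\vartheta'(z)\lesssim\vartheta(z)$ for $z\ge\tfrac12$).

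For (iv) I would Leibniz-expand $\varphi_B=g\vartheta_B$. The $g$-derivatives are controlled by (i) together with $B\psi_B'\lesssim\psi_B$, which is what tames the superficially singular terms: from $g''=\psi_B''/g-(\psi_B')^2/g^3$ and $(\psi_B')^2/g^3\lesssim(\psi_B/B)\psi_B'/g^3\sim\psi_B'/(Bg)=g'/B$ one gets $|g''|\lesssim B^{-2/3}g'$, and similarly $|g'''|\lesssim B^{-4/3}g'$. The weight derivatives satisfy $|\vartheta_B^{(k)}|\lesssim B^{-10k}\vartheta_B$ on their support $y_1\gtrsim B^{10}$ (from $|\vartheta^{(k)}(z)|\lesssim z^{-k}\vartheta(z)$ for $z\ge1$ and boundedness for $z\in[\tfrac12,1]$). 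Inserting these into $\varphi_B''=g''\vartheta_B+2g'\vartheta_B'+g\vartheta_B''$ and its third-order analogue, the terms carrying $g''$ or $g'''$ and all cross terms are absorbed into $B^{-2/3}\varphi_B'$ (resp. $B^{-4/3}\varphi_B'$) via $g'\vartheta_B\le\varphi_B'$ and the spare powers of $B^{-10}$, whereas the pure weight terms $g\vartheta_B^{(k)}$, $k=2,3$, are split once more: for $\tfrac12B^{10}\le y_1\le B^{10}$ one has $g,\psi_B\sim1$ and $|\vartheta_B^{(k)}|\lesssim B^{-10k}\lesssim B^{-10k}\psi_B$ (i.e. $B^{-20}\psi_B$ for $k=2$ and $B^{-30}\psi_B$ for $k=3$), whereas for $y_1>B^{10}$ one has $\vartheta_B^{(k)}\sim y_1^{-(k-1)}\vartheta_B'$, so $g\vartheta_B^{(k)}\lesssim y_1^{-1}\varphi_B'\lesssim B^{-10}\varphi_B'$. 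This gives the two estimates in (iv). Finally, (v) is immediate from the previous items: for $y_1\le B^{10}$ the weight obeys $\vartheta_B\le1$, so $\varphi_B\le\sqrt{2\psi_B}\lesssim\sqrt{\psi_B}\lesssim\psi_B+B\varphi_B'$ by (iii), while for $y_1>B^{10}$ one has $y_1\vartheta_B'=7\vartheta_B$ and $\varphi_B'\ge g\vartheta_B'$, hence $|y_1|\varphi_B'\ge7\varphi_B$, which is the claimed $|y_1|\varphi_B'\mathbf{1}_{[B^{10},\infty)}$ contribution.

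The only real obstacle here is bookkeeping: simultaneously tracking the three natural scales---$B$ for the left exponential tail of $\psi_B$, $B^{2/3}$ for its right plateau, and $B^{10}$ for the polynomial weight $\vartheta_B$---and checking that every $B$-dependent constant points in the direction that makes the inequalities close for $B$ large; once $B\psi_B'\lesssim\psi_B$ is in hand there is no analytic subtlety beyond Lemma~\ref{le:psiphi}. We refer to~\cite[Lemma 3.4]{CLY} for the complete computation.
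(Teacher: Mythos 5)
Your proof is correct, and since the paper simply cites \cite[Lemma~3.4]{CLY} without reproducing the argument, there is no alternative route to compare it against; what you have written is the natural computation from the definitions of $\psi_B$, $\varphi_B$ and Lemma~\ref{le:psiphi}. The two structural observations that drive everything are exactly the right ones: the explicit two-scale formula for $\psi_B'$ together with $|\zeta^{(k)}|\lesssim_k\zeta$ gives (i) immediately, and the pointwise bound $B\psi_B'\lesssim\psi_B$ (which you derive from $\psi_B=\tfrac12 e^{2v}$, $\psi_B'=\tfrac1Be^{2v}$ on the far left and from $\psi_B\gtrsim1$, $\psi_B'\le\tfrac1B$ elsewhere) tames the denominators in $g'=\psi_B'/g$, $g''$, $g'''$ so that the Leibniz expansion of $\varphi_B=g\vartheta_B$ in (iv) closes. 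One step you treat slightly elliptically is the bound $|y_1|\psi_B'\lesssim1$ on $y_1>-B$ in (iii): for $y_1\gtrsim B^{2/3}$ this is not just $|y_1|\cdot\tfrac1B$ but requires the exponential decay $\psi_B'\sim\tfrac1B e^{-2y_1/B^{2/3}}$ on the right, which caps $y_1\psi_B'$ at $O(B^{-1/3})$ at its maximum $y_1\sim B^{2/3}$; this follows from the same formula for $\psi_B'$ and is worth making explicit, but it is not a gap in the argument. Everything else, including the observation $y_1\vartheta_B'=7\vartheta_B$ on $(B^{10},\infty)$ that gives (v), is exactly what the computation requires.
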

From the definition of $W_{b,\lambda}$ in Section~\ref{SS:Refined}, we have the following pointwise estimate.
\begin{lemma}\label{le:psiphi3}
For any $\beta=(\beta_{1},\beta_{2})\in \mathbb{N}^{2}$ with $0\le \beta_{1}+\beta_{2}\le 2$, we have 
    \begin{equation*}
    \begin{aligned}
\left|\partial_{y}^{\beta}W_{b,\lambda}\right|\left(\psi'_{B}+\varphi'_{B}+\left|\psi_{B}-\varphi_{B}\right|\right) \lesssim
\left(B^{-30}+\lambda^{\theta}+|b|\right)
\left(\psi_{B}+
       B\varphi'_{B}
       \right).
        \end{aligned}
    \end{equation*}
\end{lemma}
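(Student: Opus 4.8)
\textbf{Proof proposal for Lemma~\ref{le:psiphi3}.}

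The plan is to decompose $W_{b,\lambda}=Q+V+bP_b$ as in Section~\ref{SS:Refined} and estimate the contribution of each piece against the weight factor $\psi'_B+\varphi'_B+|\psi_B-\varphi_B|$ separately, always reducing the right-hand side to a multiple of $\psi_B+B\varphi'_B$. The key point is that the weight factor is, by Lemma~\ref{le:psiphi2}(iii)--(v) and Lemma~\ref{le:psiphi}, controlled by $B^{-1}(\psi_B+B\varphi'_B)$ uniformly outside a bounded region, while near the origin the profiles $\partial_y^\beta Q$ decay exponentially but $\psi_B,\varphi_B$ stay comparable to $\tfrac12$. So the strategy is: where the profile factor is exponentially small we exploit that smallness; where the weight factor is small (i.e. $\psi'_B,\varphi'_B,|\psi_B-\varphi_B|$ are all $\lesssim B^{-30}$ or carry a gain in $B$) we exploit that smallness.

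First I would handle the $Q$ contribution. Since $|\partial_y^\beta Q(y)|\lesssim e^{-|y|}$ for $|\beta|\le 2$, we split $\R$ into $|y_1|\le \tfrac14 B$ and $|y_1|>\tfrac14 B$. On $\{|y_1|\le \tfrac14 B\}$, Lemma~\ref{le:psiphi}(iii) gives $\psi'_B+\varphi'_B+|\psi_B-\varphi_B|\lesssim e^{-\frac16 B^{1/3}}\lesssim B^{-30}$, and since $\psi_B\gtrsim e^{y_1/B}\gtrsim 1$ on the relevant strip (or at worst $\psi_B+B\varphi'_B\gtrsim$ a positive constant near the soliton core) we obtain a bound $B^{-30}(\psi_B+B\varphi'_B)$ after absorbing $e^{-|y|}\le 1$. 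On $\{|y_1|>\tfrac14 B\}$ I would instead keep the factor $e^{-|y|}$, which is then $\le e^{-\frac14 B}$, and bound the weight factor crudely by $\psi'_B+\varphi'_B+|\psi_B-\varphi_B|\lesssim \varphi_B\lesssim \psi_B+B\varphi'_B$ using Lemma~\ref{le:psiphi2}(v) together with $\psi_B\le\varphi_B$; the exponential gain $e^{-\frac14 B}\lesssim B^{-30}$ finishes this region.

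Next the $V$ and $bP_b$ contributions. By Lemma~\ref{le:estW1}(ii) (or directly the bounds on $V_k$) and Lemma~\ref{le:Nonloca}, for $|\beta|\le 2$ one has $|\partial_y^\beta V|\lesssim \lambda^{\theta}(e^{-|y|/4}+\mathbf{1}_{[-2,0]}(\lambda^{8/5}y_1)e^{-|y_2|/4})$ and $|b\,\partial_y^\beta P_b|\lesssim |b|(e^{-|y|/3}+\mathbf{1}_{[-2,0]}(|b|^{3/4}y_1)e^{-|y_2|/3})$. For the $e^{-|y|/4}$-type pieces the same dichotomy as for $Q$ applies, now with the extra scalar prefactor $\lambda^\theta$ or $|b|$ already producing the desired $(\lambda^\theta+|b|)$ gain, so a crude weight bound $\psi'_B+\varphi'_B+|\psi_B-\varphi_B|\lesssim \psi_B+B\varphi'_B$ suffices everywhere. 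For the non-localized tails supported where $\lambda^{8/5}y_1\in[-2,0]$ (resp. $|b|^{3/4}y_1\in[-2,0]$), i.e. $y_1$ of size $\lambda^{-8/5}$ (resp. $|b|^{-3/4}$) and very negative, we are deep in the region $y_1<-B$ (for $\lambda,|b|$ small), where Lemma~\ref{le:psiphi}(ii) gives $\psi_B(y_1),\varphi_B(y_1),\psi'_B(y_1),\varphi'_B(y_1)$ all comparable to $e^{y_1/B}$, which is bounded; since $\psi_B+B\varphi'_B\gtrsim e^{y_1/B}$ on this set, the ratio of the weight factor to $\psi_B+B\varphi'_B$ is $O(1)$, and the scalar prefactor $\lambda^\theta$ or $|b|$ again supplies the gain. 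Collecting the three contributions and using $B^{-30}+\lambda^\theta+|b|$ as the common small factor completes the proof.

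The main obstacle I anticipate is the bookkeeping near $y_1\sim -B$: there the weight factor $\psi'_B$ is not small in $B$ (it is $\sim B^{-1}$ at best from Lemma~\ref{le:psiphi2}(ii), not exponentially small), and $\psi_B$ itself is only $\sim e^{y_1/B}$ rather than $\sim\tfrac12$, so one must be careful that the profile decay $e^{-|y|/4}$ (or the support constraint from $\chi_b$, $\Theta$) really does force $|y_1|$ either bounded — where Lemma~\ref{le:psiphi}(iii) rescues us — or large enough that $e^{-|y|/4}\lesssim B^{-30}$, with no intermediate regime left uncovered. Checking that these two regimes overlap (say at $|y_1|\sim B/4$ versus $|y_1|\sim 120 B\log B$) and that on their union the quantity $\psi_B+B\varphi'_B$ is bounded below by the relevant exponential is the one genuinely delicate verification; everything else is routine application of Lemmas~\ref{le:psiphi} and~\ref{le:psiphi2}.
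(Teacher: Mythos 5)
Your proposal uses essentially the same argument as the paper: a pointwise bound on $|\partial_y^{\beta}W_{b,\lambda}|$ (from Proposition~\ref{prop:Wb}(ii), or equivalently the decomposition $Q+V+bP_b$) together with a three-region dichotomy on $y_1$ relative to $\pm B/4$ for the weight factor $\psi'_B+\varphi'_B+|\psi_B-\varphi_B|$, and the correct observation that the profile decay supplies the $B^{-30}$ gain where the weights are not small and vice versa.

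Two small slips to fix. (a) For $y_1<-B$, Lemma~\ref{le:psiphi}(ii) gives $\sqrt{\psi_B}\sim e^{y_1/B}$, hence $\psi_B\sim e^{2y_1/B}$ while $\varphi_B\sim e^{y_1/B}$; so the quantities $\psi_B,\varphi_B,\psi'_B,\varphi'_B$ are \emph{not} all comparable to $e^{y_1/B}$ as you assert. The argument still closes because what one actually needs is that the weight factor is $\lesssim\varphi_B\sim e^{y_1/B}$ and that $\psi_B+B\varphi'_B\gtrsim e^{y_1/B}$, which both hold. (b) For $y_1>B^{10}$, the bound $\varphi_B\lesssim\psi_B+B\varphi'_B$ that you invoke from Lemma~\ref{le:psiphi2}(v) is not valid: that item carries the extra term $|y_1|\varphi'_B\mathbf{1}_{[B^{10},\infty)}$, and indeed $|\psi_B-\varphi_B|\sim\varphi_B$ is \emph{not} $\lesssim\psi_B+B\varphi'_B$ there (the ratio grows like $|y_1|/B$). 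This is harmless for the final conclusion because the only piece of $|\partial_y^\beta W_{b,\lambda}|$ that reaches $y_1>0$ is $e^{-|y|/4}$, and its superexponential decay overwhelms the polynomial growth of $\varphi_B$; you already make this observation for the $Q$ contribution, and the $\lambda^{\theta}$ and $|b|$ tails are supported in $y_1\le 0$ anyway. But the intermediate weight inequality as written should be restricted to $y_1<B^{10}$ or replaced by the explicit $e^{-|y|/4}\varphi_B\lesssim B^{-30}(\psi_B+B\varphi'_B)$ on $[B^{10},\infty)$.
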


\begin{proof}
    First, from Proposition~\ref{prop:Wb}, we directly have 
    \begin{equation*}
        \left|\partial_{y}^{\beta}W_{b,\lambda}\right|\lesssim e^{-\frac{|y|}{4}}+\lambda^{\theta}e^{-\frac{|y_{2}|}{4}}+|b|e^{-\frac{|y_{2}|}{4}}.
    \end{equation*}
    Second, from Lemma~\ref{le:psiphi}--\ref{le:psiphi2}, we also have 
    \begin{equation*}
    \begin{aligned}
        \psi'_{B}+\varphi'_{B}+\left|\psi_{B}-\varphi_{B}\right|
        &\lesssim B^{-30}\left(\psi_{B}+B\varphi'_{B}\right)\mathbf{1}_{\left[-\frac{B}{4},\frac{B}{4}\right]}\left(y_{1}\right)\\
        &+\left(\psi_{B}+B\varphi'_{B}\right)\mathbf{1}_{\left(-\infty,-\frac{B}{4}\right)}\left(y_{1}\right)
        +\left(\psi_{B}+B\varphi'_{B}\right)\mathbf{1}_{\left(\frac{B}{4},\infty\right)}\left(y_{1}\right).
        \end{aligned}
    \end{equation*}
    Combining the above two estimates, we complete the proof of Lemma~\ref{le:psiphi3}.
\end{proof}

We now consider the following weighted $H^{1}$ norm related to the remainder term $\varepsilon$:
\begin{equation*}
	\mathcal{N}_{B}(\varepsilon)=\left(\int_{\R^{2}}\left(|\nabla \varepsilon|^{2}\psi_{B}+\varepsilon^{2}\varphi_{B}\right)\dd y\right)^{\frac{1}{2}}.
\end{equation*}

Indeed, it is easy to check that 
\begin{equation}\label{est:locNb}
	\|\varepsilon\|_{L^{2}_{\rm{loc}}}\lesssim
	\left(\int_{\R^{2}}\varepsilon^{2}e^{-\frac{|y|}{B}}\dd y\right)^{\frac{1}{2}}
	\lesssim
	\left(\int_{\R^{2}}\varepsilon^{2}\varphi_{B}\dd y\right)^{\frac{1}{2}}
	\lesssim \mathcal{N}_{B}(\varepsilon).
\end{equation}

Let $0<\kappa\ll 1$ be a small enough universal constant. In what follows, we still assume the following scaling invariant bounds hold for all $s\in I$:
\begin{equation}\label{est:NBbound}
\lambda(s)+	|b(s)|+\mathcal{N}_{B}(s)+\|\varepsilon(s)\|_{L^{2}}<\kappa.
\end{equation}

We now deduce the estimates for parameters $(\lambda,b,x_{1})$ from the equation of $\varepsilon$.
\begin{lemma}\label{le:refined}
	For all $s\in I$, the following estimates hold.
	\begin{enumerate}
		
		\item \emph{Estimates for $(\lambda,x_{1})$.} It holds
		\begin{equation*}
		\left|\frac{\lambda_s}{\lambda}+\Gamma+b\right|+\left|\frac{x_{1s}}{\lambda}-1\right|\lesssim
		\lambda^{5\theta}+|b|\left(|b|+\lambda^{\theta}\right)
		+	\|\varepsilon\|_{L^{2}_{\rm{loc}}}.
		\end{equation*}
		
	\item \emph{Estimate for $b$.} It holds 
	\begin{equation*}
		\left|b_{s}- b\lambda^{\theta}\right|\lesssim 
		\lambda^{5\theta}
	+|b|\left(|b|+\lambda^{2\theta}\right)
	+	\|\varepsilon\|_{L^{2}_{\rm{loc}}}\left(	\|\varepsilon\|_{L^{2}_{\rm{loc}}}+
	|b|+\lambda^{\theta}
	\right).
	\end{equation*}
	\end{enumerate}
\end{lemma}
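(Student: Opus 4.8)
The plan is to differentiate the three orthogonality conditions in~\eqref{equ:ortho} with respect to $s$ and, inserting~\eqref{equ:e} together with the decomposition $\mathcal{E}_b=-{\rm{Mod}}-\Psi_M+\Psi_N+\Psi_W+\Psi_b$, to extract from the three resulting identities $(\partial_s\varepsilon,Z)=0$, $Z\in\{Q,Q^3,\partial_{y_1}Q\}$, a nearly triangular linear system for the unknowns $\frac{\lambda_s}{\lambda}+\Gamma+b$, $\frac{x_{1s}}{\lambda}-1$ and $b_s$. First I would record the scalar products that govern the diagonal: by Proposition~\ref{prop:Spectral}(ii), the radial symmetry of $Q$ and integration by parts,
\begin{equation*}
(\Lambda Q,Q)=(\partial_{y_1}Q,Q)=(\partial_{y_1}Q,Q^3)=(\Lambda Q,\partial_{y_1}Q)=0,
\end{equation*}
whereas $(\Lambda Q,Q^3)=\tfrac12\|Q\|_{L^4}^4\neq0$ and $(\partial_{y_1}Q,\partial_{y_1}Q)=\|\partial_{y_1}Q\|_{L^2}^2\neq0$; moreover by Lemma~\ref{le:Nonloca} one has $(P,\partial_{y_1}Q)=-(\partial_{y_1}P,Q)=0$ and $(P,Q)=\tfrac14\int_\R F^2\neq0$. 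Consequently the $Z=Q$ identity is essentially solved by $b_s$ (whose coefficient, coming from the $b_s$-term of $\Psi_N$, equals $((\chi_b+\tfrac34y_1\partial_{y_1}\chi_b)P,Q)=(P,Q)+O(e^{-|b|^{-3/4}})$, while ${\rm{Mod}}$ does not contribute), the $Z=Q^3$ identity by $\frac{\lambda_s}{\lambda}+\Gamma+b$, and the $Z=\partial_{y_1}Q$ identity by $\frac{x_{1s}}{\lambda}-1$; all other couplings are either $O(\lambda^\theta+|b|+\|\varepsilon\|_{L^{2}_{\rm{loc}}})$-small or exponentially small in $|b|^{-3/4}$ and $\lambda^{-8/5}$, so the coefficient matrix is invertible with a uniformly bounded inverse under~\eqref{est:NBbound}.

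Next I would estimate the source terms. Pairing $\Psi_W$ and $\Psi_b$ against the exponentially decaying functions $Z$, Proposition~\ref{prop:W}(ii) and Proposition~\ref{prop:Wb}(iii) show that the pieces supported near $\lambda^{8/5}y_1\in[-2,-1]$, resp. $|b|^{3/4}y_1\in[-2,-1]$, are negligible, leaving $O(\lambda^{5\theta})$ from $\Psi_W$ and $O(|b|(|b|+\lambda^\theta))$ from $\Psi_b$ (for $Z\neq Q$; for $Z=Q$ the sharper Proposition~\ref{prop:Wb}(iv) is used below). By Proposition~\ref{prop:W}(i) and Lemma~\ref{le:Nonloca}, the term $(\Psi_M,Z)$ is $O((\lambda^\theta+|b|)(|\tfrac{\lambda_s}{\lambda}+\Gamma+b|+|\tfrac{x_{1s}}{\lambda}-1|))$ and the non-$b_s$ part of $(\Psi_N,Z)$ is $O(|b|(|b|+\lambda^\theta))$. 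For the linear-in-$\varepsilon$ part, with $N(\varepsilon)=(W_{b,\lambda}+\varepsilon)^3-W_{b,\lambda}^3$, the term $-(\partial_{y_1}(\Delta\varepsilon-\varepsilon+N(\varepsilon)),Z)$ is $\lesssim\|\varepsilon\|_{L^{2}_{\rm{loc}}}$ for $Z\in\{Q^3,\partial_{y_1}Q\}$ after one integration by parts, the quadratic and cubic parts of $N(\varepsilon)$ producing $O(\|\varepsilon\|_{L^{2}_{\rm{loc}}}^2)$ via Hölder and the two-dimensional Gagliardo--Nirenberg inequality; finally $\frac{\lambda_s}{\lambda}(\Lambda\varepsilon,Z)$ and $(\frac{x_{1s}}{\lambda}-1)(\partial_{y_1}\varepsilon,Z)$ are $O((|\tfrac{\lambda_s}{\lambda}+\Gamma+b|+\lambda^\theta+|b|)\|\varepsilon\|_{L^{2}_{\rm{loc}}})$ and $O(|\tfrac{x_{1s}}{\lambda}-1|\,\|\varepsilon\|_{L^{2}_{\rm{loc}}})$, using also $(\partial_{y_1}\varepsilon,Q)=0$. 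Solving the system and absorbing the small self-couplings then gives
\begin{equation*}
\Big|\tfrac{\lambda_s}{\lambda}+\Gamma+b\Big|+\Big|\tfrac{x_{1s}}{\lambda}-1\Big|+|b_s|\lesssim\lambda^{5\theta}+|b|(|b|+\lambda^\theta)+\|\varepsilon\|_{L^{2}_{\rm{loc}}},
\end{equation*}
which is part~(i).

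For part~(ii) I would return to the $Z=Q$ identity, now using part~(i) to bound $(\Psi_M,Q)$, $\frac{\lambda_s}{\lambda}(\Lambda\varepsilon,Q)$ and $(\frac{x_{1s}}{\lambda}-1)(\partial_{y_1}\varepsilon,Q)$ by errors $\lesssim(\lambda^\theta+|b|)(|\tfrac{\lambda_s}{\lambda}+\Gamma+b|+|\tfrac{x_{1s}}{\lambda}-1|)+(\lambda^\theta+|b|+\|\varepsilon\|_{L^{2}_{\rm{loc}}})\|\varepsilon\|_{L^{2}_{\rm{loc}}}$, all of which are $\lesssim\lambda^{5\theta}+|b|(|b|+\lambda^{2\theta})+\|\varepsilon\|_{L^{2}_{\rm{loc}}}(\|\varepsilon\|_{L^{2}_{\rm{loc}}}+|b|+\lambda^\theta)$. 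The decisive point is the exact matching of the $b\lambda^\theta$-order terms: Proposition~\ref{prop:Wb}(iv) gives $(\Psi_b,Q)=-\sigma b\lambda^\theta(P,Q)+O(|b|(|b|+\lambda^{2\theta}))$, while the leading term of $\Psi_\lambda$ (recall $X_1=-\theta^{-1}P$) gives $b(\Psi_\lambda,Q)=b\lambda^\theta(-\theta^{-1}\Lambda P+P,Q)+O(|b|\lambda^{2\theta})$; combining Remark~\ref{re:deftheta} (which reads $\theta(P,Q)=(\Lambda P+3\partial_{y_1}(QP^2),Q)$) with the definition of $\sigma$ yields the algebraic identity $(\sigma-1)(P,Q)+\theta^{-1}(\Lambda P,Q)=(P,Q)$, so that $-(\Psi_b,Q)-b(\Psi_\lambda,Q)$ contributes \emph{exactly} $b\lambda^\theta(P,Q)$, which is matched by moving $b\lambda^\theta$ across from $b_s((\chi_b+\tfrac34y_1\partial_{y_1}\chi_b)P,Q)$. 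For the linear-in-$\varepsilon$ contribution I would use the cancellation $(\partial_{y_1}\mathcal{L}\varepsilon,Q)=-(\varepsilon,\mathcal{L}\partial_{y_1}Q)=0$: writing $N(\varepsilon)=3Q^2\varepsilon+3(W_{b,\lambda}^2-Q^2)\varepsilon+3W_{b,\lambda}\varepsilon^2+\varepsilon^3$, the combination $\Delta\varepsilon-\varepsilon+3Q^2\varepsilon=-\mathcal{L}\varepsilon$ drops out, leaving $O((\lambda^\theta+|b|)\|\varepsilon\|_{L^{2}_{\rm{loc}}}+\|\varepsilon\|_{L^{2}_{\rm{loc}}}^2)$ since $|W_{b,\lambda}^2-Q^2|\lesssim\lambda^\theta+|b|$ on the support of $\partial_{y_1}Q$. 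Collecting all errors and dividing by $(P,Q)$ gives part~(ii).

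\textbf{Main obstacle.} The genuinely delicate step is part~(ii): one must produce the $b\lambda^\theta$-term on the right-hand side with exactly the coefficient $(P,Q)$ that multiplies $b_s$ on the left, so that $b_s-b\lambda^\theta$ — and not $b_s-c\,b\lambda^\theta$ for some spurious constant $c$ — is the small quantity. This forces one to track the leading term of $\Psi_\lambda$ and the scalar product $(\Psi_b,Q)$ simultaneously and to exploit the precise algebraic relations among $\theta$, $\sigma$, $(P,Q)$, $(\Lambda P,Q)$ and $(\partial_{y_1}(QP^2),Q)$ recorded in Remark~\ref{re:deftheta} and Proposition~\ref{prop:Wb}(iv); likewise the cancellation $(\varepsilon,\mathcal{L}\partial_{y_1}Q)=0$ is what upgrades the crude $\|\varepsilon\|_{L^{2}_{\rm{loc}}}$ bound to the sharper $\|\varepsilon\|_{L^{2}_{\rm{loc}}}(\|\varepsilon\|_{L^{2}_{\rm{loc}}}+|b|+\lambda^\theta)$ demanded by~(ii).
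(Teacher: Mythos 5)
Your argument is correct and follows essentially the same route as the paper: differentiate the three orthogonality conditions, extract $\frac{\lambda_s}{\lambda}+\Gamma+b$ and $\frac{x_{1s}}{\lambda}-1$ from the $Q^3$- and $\partial_{y_1}Q$-pairings (the paper's estimate~\eqref{est:Mod1}, which retains a $|b_s|$ term on the right), and then extract $b_s-b\lambda^\theta$ from the $Q$-pairing, using $(\partial_{y_1}\mathcal L\varepsilon,Q)=-(\varepsilon,\mathcal L\partial_{y_1}Q)=0$, the identity $(\Lambda X_1-\theta X_1,Q)=\theta^{-1}(\theta P-\Lambda P,Q)$ from Remark~\ref{re:deftheta}, and Proposition~\ref{prop:Wb}(iv). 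The only difference is presentational: you phrase it as inverting a nearly triangular $3\times3$ system, while the paper first proves~\eqref{est:Mod1} and then substitutes it into the $b_s$-estimate; the algebra behind the decisive cancellation, which you verify as $(\sigma-1)(P,Q)+\theta^{-1}(\Lambda P,Q)=(P,Q)$, is equivalent to the paper's computation $(\Lambda X_1-\theta X_1,Q)-\sigma(P,Q)=-(P,Q)$ via $\theta(P,Q)=(\Lambda P+3\partial_{y_1}(QP^2),Q)$.
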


\begin{proof}
First, from the definition of $\mathcal{L}$ in~\eqref{def:L}, we rewrite 
\begin{equation}\label{equ:e2}
	\partial_{s}\varepsilon=\partial_{y_{1}}\mathcal{L}\varepsilon
	+\widetilde{{\rm{Mod}}}-\left(\Gamma+b\right)\Lambda \varepsilon-\partial_{y_{1}}R_{1}-\partial_{y_{1}}R_{2}+\Psi_{M}
	-\Psi_{N}-\Psi_{W}-\Psi_{b}.
\end{equation}
Here, we denote 
\begin{equation*}
	\begin{aligned}
		R_{1}&=\left(\left(W_{b,\lambda}+\varepsilon\right)^{3}-W_{b,\lambda}^{3}-3W_{b,\lambda}^{2}\varepsilon\right),\quad R_{2}=3(W_{b,\lambda}^{2}-Q	^{2})\varepsilon,\\
	\widetilde{{\rm{Mod}}}&=\left(\frac{\lambda_s}{\lambda}+\Gamma+b\right)\left(\Lambda Q+\Lambda \varepsilon\right)+\left(\frac{x_{1s}}{\lambda}-1\right)
	\left(\partial_{y_1}Q+\partial_{y_{1}}\varepsilon\right).
	\end{aligned}
\end{equation*}
Second, differentiating the orthogonality conditions $(\varepsilon,Q^{3})=(\varepsilon,\partial_{y_{1}}Q)=0$, and then using 
~\eqref{est:NBbound}--\eqref{equ:e2} and
Proposition~\ref{prop:W}--\ref{prop:Wb}, we find 
\begin{equation*}
	\begin{aligned}
	&\left(1+O\left(|b|+\lambda^{\theta}+\|\varepsilon\|_{L^{2}_{\rm{loc}}}\right)\right)
	\left(\left|\frac{\lambda_{s}}{\lambda}+\Gamma+b\right|+
	\left|\frac{x_{1s}}{\lambda}-1\right|\right)\\
	&\lesssim
	\lambda^{5\theta}+
	 |b|(|b|+\lambda^{\theta})+|b_{s}|
	 +\int_{\R^{2}}\left(|\varepsilon|+\varepsilon^{2}+|\varepsilon|^{3}\right)e^{-\frac{|y|}{2}}\dd y.
	\end{aligned}
\end{equation*}
Note that, from 2D Gagliardo-Nirenberg inequality and \eqref{est:NBbound},
\begin{equation*}
	\begin{aligned}
	&\int_{\R^{2}}\left(|\varepsilon|+\varepsilon^{2}+|\varepsilon|^{3}\right)e^{-\frac{|y|}{2}}\dd y\\
	&\lesssim 
	\big\|\varepsilon e^{-\frac{|y|}{4}}\big\|_{L^{2}}
	\left(
	\big\|e^{-\frac{|y|}{4}}\big\|_{L^{2}}
	+\big\|\varepsilon e^{-\frac{|y|}{8}}\big\|_{L^{4}}^{2}
	\right)\\
	&\lesssim \big\|\varepsilon e^{-\frac{|y|}{4}}\big\|_{L^{2}}
	\left(1+\big\|\varepsilon e^{-\frac{|y|}{8}}\big\|_{L^{2}}
	\big\|\nabla \big(\varepsilon e^{-\frac{|y|}{8}}\big)\big\|_{L^{2}}
	\right)\lesssim \|\varepsilon\|_{L^{2}_{\rm{loc}}}.
	\end{aligned}
\end{equation*}
Based on the above two estimates and~\eqref{est:NBbound}, we obtain 
\begin{equation}\label{est:Mod1}
		\left(\left|\frac{\lambda_{s}}{\lambda}+\Gamma+b\right|+
		\left|\frac{x_{1s}}{\lambda}-1\right|\right)\lesssim
		\lambda^{5\theta}+
		|b|(|b|+\lambda^{\theta})+|b_{s}|+\|\varepsilon\|_{L^{2}_{\rm{loc}}}.
\end{equation}
On the other hand, differentiating the orthogonality condition $(\varepsilon, Q)=0$, and then using $(\partial_{y_{1}}\mathcal{L}\varepsilon,Q)=-(\mathcal{L}\varepsilon,\partial_{y_{1}}Q)=0$ and \eqref{equ:e2}, we find 
\begin{equation*}
	\begin{aligned}
	\left|\left(\Psi_{N}+\Psi_{b},Q\right)\right|
	&\lesssim
\left|\left(\widetilde{{\rm{Mod}}},Q\right)\right|+\left|\Gamma+b\right|\left|\left(\Lambda \varepsilon,Q\right)\right|
+\left|(\Psi_{M},Q)\right|\\
&+\left|(R_{1},\partial_{y_{1}}Q)\right|
+\left|(R_{2},\partial_{y_{1}}Q)\right|
+\left|(\Psi_{W},Q)\right|.
	\end{aligned}
\end{equation*}

Note that, from Lemma~\ref{le:Nonloca} and Proposition~\ref{prop:W},
we find
\begin{equation*}
	\left(\Psi_{N},Q\right)=b_{s}(1+O(b^{2}))(P,Q)
	+b\lambda^{\theta}\left(\Lambda X_{1}-\theta X_{1},Q\right)
	+O\left(|b|\left(|b|+\lambda^{2\theta}\right)\right).
\end{equation*}
Using again Remark~\ref{re:deftheta}, the definition of $X_{1}$ in Section~\ref{SS:Blowup} and $c_{1}=-\theta^{-1}$, 
\begin{equation*}
	(\Lambda X_{1}-\theta X_{1},Q)=\theta^{-1}\left(\theta P-\Lambda P,Q\right)
	=\frac{\left(3\partial_{y_{1}}(QP^{2}),Q\right)\left(P,Q\right)}{\left(\Lambda P+3\partial_{y_{1}}(QP^{2}),Q\right)}.
\end{equation*}
It follows from (iv) of Proposition~\ref{prop:Wb} that 
\begin{equation*}
	\left(\Psi_{N}+\Psi_{b},Q\right)
	=\left(b_{s}(1+O(b^{2}))-b\lambda^{\theta}\right)(P,Q)+O\left(|b|\left(|b|+\lambda^{2\theta}\right)\right).
\end{equation*}
Then, using again 2D Gagliardo-Nirenberg inequality and \eqref{est:NBbound},
\begin{equation*}
\left|(R_{1},\partial_{y_{1}}Q)\right|+\left|(R_{2},\partial_{y_{1}}Q)\right|\lesssim 
\|\varepsilon\|_{L^{2}_{\rm{loc}}}
\left(\|\varepsilon\|_{L^{2}_{\rm{loc}}}
+|b|+\lambda^{\theta}\right).
\end{equation*}
Next, using again Proposition~\ref{prop:W} and the definition of $V$ in Section~\ref{SS:Blowup}, 
\begin{equation*}
	\left|(\Psi_{M},Q)\right|+
	\left|(\Psi_{W},Q)\right|\lesssim
	\lambda^{5\theta}
	+
	 \left(\left|\frac{\lambda_{s}}{\lambda}+\Gamma+b\right|
	 +\left|\frac{x_{1s}}{\lambda}-1\right|
	 \right)
	\left(|b|+\lambda^{\theta}\right).
\end{equation*}
Last, using the fact that $(\Lambda Q,Q)=(\partial_{y_{1}}Q,Q)=0$, we find
\begin{equation*}
		\left|\left(\widetilde{{\rm{Mod}}},Q\right)\right|
	+\left|\Gamma+b\right|\left|(\Lambda \varepsilon,Q)\right|
	\lesssim \left(\left|\frac{\lambda_{s}}{\lambda}+\Gamma+b\right|
	+\left|\Gamma+b\right|+\left|\frac{x_{1s}}{\lambda}-1\right|
	\right)\|\varepsilon\|_{L^{2}_{\rm{loc}}}.
\end{equation*}
Based on the above estimates, we obtain 
\begin{equation*}
	\begin{aligned}
	\left|b_{s}-b\lambda^{\theta}\right|
	&\lesssim 
\lambda^{5\theta}+
	\left(\left|\frac{\lambda_{s}}{\lambda}+\Gamma+b\right|
	+\left|\Gamma+b\right|+\left|\frac{x_{1s}}{\lambda}-1\right|
	\right)\|\varepsilon\|_{L^{2}_{\rm{loc}}}
	\\
	&+\left(\left|\frac{\lambda_{s}}{\lambda}+\Gamma+b\right|
	+\left|\frac{x_{1s}}{\lambda}-1\right|\right)
	\left(|b|+\lambda^{\theta}\right)
	+\|\varepsilon\|_{L^{2}_{\rm{loc}}}
	\left(\|\varepsilon\|_{L^{2}_{\rm{loc}}}
	+|b|+\lambda^{\theta}\right).
\end{aligned}
\end{equation*}
Combining the above estimate with~\eqref{est:Mod1} and $\|\varepsilon\|_{L^{2}}\ll 1$, we complete the proof.
\end{proof}

\subsection{Bootstrap assumption}\label{SS:Boot}
For any $n\in \mathbb{N}^{+}$ large enough, we set 
\begin{equation*}
	T_{n}=\frac{\theta}{(3-\theta)n^{\frac{3-\theta}{\theta}}}\quad \mbox{and}\quad S_{n}=-n.
\end{equation*}
Recall that, we define the rescaled time variable:
\begin{equation*}
	s=s(t)=S_{n}+\int_{T_{n}}^{t}\frac{\dd \tau}{\lambda_{n}^{3}(\tau)}.
\end{equation*}
Here, $\lambda_{n}(t)$ is a real-valued function that will be chosen later. From now on, any time-dependent function will be seen either as a function of $t$ or as a function of $s$.

We consider the solution $\phi_{n}$ of~\eqref{CP} with initial data 
\begin{equation*}
	\phi_{n}(T_{n})=\frac{1}{\lambda_{n}(T_{n})}W_{b_n,\lambda_n}
	\left(T_{n},\frac{x_{1}-x_{1n}(T_{n})}{\lambda_{n}(T_{n})},\frac{x_{2}}{\lambda_{n}(T_{n})}\right).
\end{equation*}
Here, we set 
\begin{equation}\label{equ:deflambdan}
	\lambda_{n}(T_{n})=G^{-1}(n),\quad x_{1n}(T_{n})=-\frac{\theta}{\theta-1}n^{1-\frac{1}{\theta}},
	\quad b_{n}(T_{n})=0,
\end{equation}
with 
\begin{equation*}
	G(\lambda)=\int_{\lambda}^{\lambda_{0}}
	\frac{\dd \tau}{\tau^{1+\theta}\left(\frac{1}{\theta}-c_2\tau^\theta-c_3\tau^{2\theta}\right)},\quad \mbox{for any}\ \lambda\in (0,\lambda_{0}].
\end{equation*}

\begin{remark}\label{re:G}
	The function $G(\lambda)$ is well-defined for any $\lambda\in (0,\lambda_{0}]$, where $\lambda_{0}$ is small enough. Moreover, this function is decreasing and one-to-one from $(0,\lambda_{0}]$ to $[0,\infty)$. From the Taylor expansion, we find 
	\begin{equation*}
		\frac{1}{
		\tau^{1+\theta}\left(\frac{1}{\theta}-c_2\tau^\theta-c_3\tau^{2\theta}\right)}=\theta \tau^{-1-\theta}+c_{2}\theta^{2}\tau^{-1}+O\left(\tau^{-1+\theta}\right).
	\end{equation*}
	From the definition of $G(\lambda)$, we deduce that 
	\begin{equation*}
		G(\lambda)=\lambda^{-\theta}-c_{2}\theta^{2}\log \lambda+O(1),\quad \mbox{as}\ \ \lambda\downarrow 0.
	\end{equation*}
	Based on the above identity and the Taylor expansion, for any $n\in \mathbb{N}^{+}$ large enough, 
	\begin{equation*}
		G^{-1}(n)=\left(\frac{1}{n+O(\log n)}\right)^{\frac{1}{\theta}}=n^{-\frac{1}{\theta}}
		+O\left(n^{-1-\frac{1}{\theta}}\log n \right).
	\end{equation*}
\end{remark}

\smallskip
For $t\ge T_{n}$, as long as $\phi_{n}(t)$ is well-defined in $H^{1}$ and 
remains close to the soliton $Q$ up to rescaling and translation, we decompose $\phi_{n}(t)$ as in Section~\ref{SS:Geo}
\begin{equation*}
	\phi_{n}(t,x)=\frac{1}{\lambda_{n}(t)}
	\left[
	W_{b_{n},\lambda_{n}}\left(t,\frac{x_{1}-x_{1n}(t)}{\lambda_{n}(t)},\frac{x_{2}}{\lambda_{n}(t)}\right)
	+\varepsilon_{n}\left(t,\frac{x_{1}-x_{1n}(t)}{\lambda_{n}(t)},\frac{x_{2}}{\lambda_{n}(t)}\right)
	\right],
\end{equation*}
where $\varepsilon_{n}$ satisfies~\eqref{equ:ortho}. At $t=T_{n}$, this decompose satisfies~\eqref{equ:deflambdan} and $\varepsilon_{n}(T_{n})=0$.

\smallskip
Let $C_{1}>1$ be a large constant to be chosen later.
We now introduce the following bootstrap estimates:
\begin{equation}\label{est:Boot}
	\left\{
	\begin{aligned}
		|b_{n}(s)|
		\le C_{1}|s|^{-4}&,\ \ \mathcal{N}_{B}(s)\leq |s|^{-\frac{17}{4}},\\
		\left|\lambda_n(s)-|s|^{-\frac{1}{\theta}}\right|
		\le& C_{1}|s|^{-1-\frac{1}{\theta}}\log |s|
        \\
		\left|x_{1n}(s)+\frac{\theta}{\theta-1}|s|^{1-\frac{1}{\theta}}\right|\le& C_{1}|s|^{-\frac{1}{\theta}}\log |s|
        \ \ \mbox{and}\ \
         \|\varepsilon_n(s)\|_{L^2}^2\le  C_1 |s|^{-1}.
	\end{aligned}
	\right.
\end{equation}
For $s_{0}\ll -1$ (independent with $n$), we define $S_{n}^{\star}\in [S_{n},s_{0}]$ by 
\begin{equation*}
	S_{n}^{\star}=\sup\left\{s\in [S_{n},s_{0}] \ \mbox{such that}~\eqref{est:NBbound} \ \mbox{and} \ \eqref{est:Boot}\ \mbox{hold on}\ [S_{n},S_{n}^{\star}]\right\}.
\end{equation*}
Note that, from~\eqref{equ:deflambdan}, Remark~\ref{re:G} and $\varepsilon_{n}(T_{n})=0$, we find $S_{n}^{\star}\in \left(S_{n},s_{0}\right]$. The following Proposition is the main part of the proof for Theorem~\ref{thm:main}.
\begin{proposition}\label{prop:Sn}
There exist $C_{1}>1$, $B>1$ and $s_{0}<-1$, independent of $n$, such that for all $n\in \mathbb{N}^{+}$ large enough, we have $S_{n}^\star=s_{0}$.
\end{proposition}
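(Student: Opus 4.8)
The plan is a standard bootstrap/continuity argument. By definition of $S_n^\star$, on $[S_n,S_n^\star]$ all of \eqref{est:NBbound} and the five bootstrap bounds \eqref{est:Boot} hold; we must show that the closed bootstrap bounds can in fact be strictly improved on $[S_n,S_n^\star]$, so that $S_n^\star$ cannot be strictly less than $s_0$ (the bound \eqref{est:NBbound} is then automatic since $s_0\ll-1$ makes all right-hand sides $\ll\kappa$). The improvement splits into four pieces: (a) the parameter laws for $(\lambda_n,b_n,x_{1n})$; (b) the energy--virial monotonicity giving $\mathcal N_B(\varepsilon_n)$; (c) the $L^2$-mass bound on $\varepsilon_n$; (d) re-integration of the ODEs to close $\lambda_n$ and $x_{1n}$.

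First I would feed the bootstrap bounds into Lemma~\ref{le:refined}. Since $\lambda_n(s)\sim|s|^{-1/\theta}$ and $|b_n|\le C_1|s|^{-4}$, we get $\lambda_n^\theta\sim|s|^{-1}$, $\lambda_n^{5\theta}\sim|s|^{-5}$, $|b_n|(|b_n|+\lambda_n^\theta)\lesssim|s|^{-5}$, and $\|\varepsilon_n\|_{L^2_{\rm loc}}\le\mathcal N_B(\varepsilon_n)\le|s|^{-17/4}$ by \eqref{est:locNb}. Hence
\begin{equation*}
\Bigl|\tfrac{\lambda_{ns}}{\lambda_n}+\Gamma+b_n\Bigr|+\Bigl|\tfrac{x_{1ns}}{\lambda_n}-1\Bigr|\lesssim|s|^{-17/4},\qquad
\bigl|b_{ns}-b_n\lambda_n^\theta\bigr|\lesssim|s|^{-5}+|s|^{-17/4}\bigl(|s|^{-17/4}+|s|^{-1}\bigr)\lesssim|s|^{-21/4}.
\end{equation*}
The second estimate, together with $b_n(T_n)=0$ and a Gronwall-type integration (the factor $\lambda_n^\theta\sim|s|^{-1}$ is harmless over $[S_n,s]$), yields $|b_n(s)|\lesssim|s|^{-17/4}\ll|s|^{-4}$, improving the $b_n$-bound; here one must check carefully that the constant is $n$-independent, which it is because the implied constants are $B$-independent in this section. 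The $\lambda_n$ and $x_{1n}$ bounds are closed by substituting $\Gamma(s)=c_1\lambda_n^\theta+c_2\lambda_n^{2\theta}+c_3\lambda_n^{3\theta}$, comparing with the exact solution $|s|^{-1/\theta}$ of $\tfrac{\lambda_s}{\lambda}=-\theta^{-1}\lambda^\theta-c_2\lambda^{2\theta}-c_3\lambda^{3\theta}$ (whose integrated form is exactly $G(\lambda)=|s|$, cf.\ Remark~\ref{re:G}), and integrating the error $O(|s|^{-17/4})+O(|b_n|)$; the $\log|s|$ in \eqref{est:Boot} absorbs the contribution coming from the $c_2\theta^2\log\lambda$ term in $G$, and since the initial data \eqref{equ:deflambdan} is chosen to match $G^{-1}(n)$ and $-\tfrac{\theta}{\theta-1}n^{1-1/\theta}$ exactly, the integration constant is $O(1)$, so with $C_1$ large these two bounds are strictly improved as well.

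The heart of the matter is step (b): improving $\mathcal N_B(\varepsilon_n)\le|s|^{-17/4}$. This is where the energy--virial Lyapunov functional of \cite{CLY} enters (Sections~\ref{S:Modula}--\ref{S:EV} of the paper, which I may invoke). One builds a functional $\mathcal F_B$ comparable to $\mathcal N_B(\varepsilon_n)^2$, whose time derivative along \eqref{equ:e} (equivalently \eqref{equ:e2}) is controlled: the linear part contributes a coercive negative term $-\tfrac{c}{|s|}\mathcal N_B(\varepsilon_n)^2$ after using the orthogonality conditions \eqref{equ:ortho} and the coercivity in Proposition~\ref{prop:Spectral}(iii), Lemma~\ref{le:psiphi3} handling the $W_{b,\lambda}$-dependent error terms; the source terms $\Psi_W,\Psi_b,\Psi_M,\Psi_N$ contribute (via Cauchy--Schwarz against $\varepsilon_n$ and the estimates in Propositions~\ref{prop:W} and \ref{prop:Wb}) something like $|s|^{-5}\mathcal N_B(\varepsilon_n)+|s|^{-10}$; and the $\frac{\lambda_s}{\lambda}\Lambda$-term and modulation terms are absorbed using Lemma~\ref{le:refined}. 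Integrating the resulting differential inequality $\tfrac{d}{ds}\mathcal F_B\le -\tfrac{c}{|s|}\mathcal F_B+C|s|^{-10}$ (the sign: $s<0$ and increasing, so $ds$ points toward $0$) from $S_n$ (where $\varepsilon_n(S_n)=0$) gives $\mathcal F_B(s)\lesssim|s|^{-9}$, hence $\mathcal N_B(\varepsilon_n)\lesssim|s|^{-9/2}\ll|s|^{-17/4}$, strictly improving the bound. I expect \textbf{this monotonicity estimate to be the main obstacle}: one must choose $B$ large (but $n$-independent) so that the coercivity defeats all the weight-derivative error terms, and one must verify that no borderline term of size $|s|^{-17/4}$ or larger without a compensating negative sign survives — in particular the interaction of $\Lambda\varepsilon_n$ with the slowly-decaying tails of $V$ and $P_b$ must be shown to be genuinely lower order.

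Finally, step (c): differentiating $\tfrac{d}{ds}\|\varepsilon_n\|_{L^2}^2$ using \eqref{equ:e} (or \eqref{equ:e2}), the antisymmetric $\partial_{y_1}\mathcal L$-part drops, the conservation of mass $\|\phi_n\|_{L^2}=\|W_{b_n,\lambda_n}+\varepsilon_n\|_{L^2}$ constant pins down the evolution of $\|\varepsilon_n\|_{L^2}^2$ up to the variation of $\|W_{b_n,\lambda_n}\|_{L^2}^2$ and the cross term $(\varepsilon_n,W_{b_n,\lambda_n})$, which by \eqref{equ:ortho} and Proposition~\ref{prop:Wb}(i) is $O(|b_n|+|b_n|^{1/4}\lambda_n^\theta)+$ lower order. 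Together with the parameter laws this gives $\bigl|\tfrac{d}{ds}\|\varepsilon_n\|_{L^2}^2\bigr|\lesssim|s|^{-5/4}$ roughly, hence $\|\varepsilon_n(s)\|_{L^2}^2\lesssim|s|^{-1/4}\ll C_1|s|^{-1}$ once $C_1$ is large — wait, more carefully one uses that the mass defect is controlled by $|b_n|\lesssim|s|^{-17/4}$ plus $\mathcal N_B$-terms, yielding $\|\varepsilon_n\|_{L^2}^2\lesssim|s|^{-17/4}+\lambda_n^\theta\|\varepsilon_n\|_{L^2_{\rm loc}}\ll C_1|s|^{-1}$, improving the last bound. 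Having strictly improved all five bootstrap bounds and \eqref{est:NBbound} on $[S_n,S_n^\star]$, a standard continuity argument forces $S_n^\star=s_0$, which is the claim. The constants $C_1$, then $B$, then $s_0$ are chosen in that order so that each improvement beats its bootstrap hypothesis by a fixed factor, all independently of $n$.
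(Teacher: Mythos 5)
Your overall strategy matches the paper's: feed the bootstrap bounds into the modulation ODEs, integrate the energy--virial monotonicity from the zero initial data to improve $\mathcal N_B$, use mass conservation for the $L^2$ bound, re-integrate the parameter ODEs, and close by continuity. Steps (b) and (d) are essentially the paper's: the monotonicity gives $\mathcal W(s)\lesssim C_{14}\int_{S_n}^s|\tau|^{-10}\,\dd\tau$, hence $\mathcal N_B\lesssim C_{14}^{1/2}|s|^{-9/2}$, and the $\lambda_n$, $x_{1n}$ bounds follow from $\tfrac{\dd}{\dd s}G(\lambda_n)=-1+O(|s|^{-2})$ together with the prescribed data $G(\lambda_n(S_n))=n$.

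Step (c) contains a genuine gap. From mass conservation with $\varepsilon_n(S_n)=b_n(S_n)=0$ and $(\varepsilon_n,Q)=0$, one has
\begin{equation*}
\|\varepsilon_n(s)\|_{L^2}^2=\|W(S_n)\|_{L^2}^2-\|W_{b_n,\lambda_n}(s)\|_{L^2}^2-2\int_{\R^2}\bigl(W_{b_n,\lambda_n}(s)-Q\bigr)\varepsilon_n(s)\,\dd y,
\end{equation*}
and the term your argument omits is the initial mass surplus $\|W(S_n)\|_{L^2}^2-\|Q\|_{L^2}^2=O(\lambda_n^\theta(S_n))=O(|S_n|^{-1})$ from Lemma~\ref{le:estW1}(i): the initial profile $W$ does not have critical mass, and since $|S_n|=n\ge|s|$ on $[S_n,S_n^\star]$, this surplus is $O(|s|^{-1})$ and is in fact comparable to $|s|^{-1}$ when $s$ is near $S_n$. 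The best achievable bound is therefore $\|\varepsilon_n(s)\|_{L^2}^2\lesssim|s|^{-1}$ with a universal constant --- an improvement in the constant only, beaten by taking $C_1$ large --- not the $|s|^{-17/4}$ you assert. You also misattribute Proposition~\ref{prop:Wb}(i): it controls $\|W_{b,\lambda}\|_{L^2}^2-\|W\|_{L^2}^2$, not the cross term, and since $W_{b_n,\lambda_n}-Q$ is not localized in $y_1$, the cross term must be estimated by Cauchy--Schwarz against the full $\|\varepsilon_n\|_{L^2}$, giving $O\bigl((\lambda_n^{\theta-4/5}+|b_n|^{5/8})\|\varepsilon_n\|_{L^2}\bigr)$ rather than $O(\lambda_n^\theta\|\varepsilon_n\|_{L^2_{\rm loc}})$.

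A smaller inaccuracy in step (a): from Lemma~\ref{le:refined}(ii) and \eqref{est:Boot} the source term is $|b_{ns}-b_n\lambda_n^\theta|\lesssim|s|^{-5}$ (not $|s|^{-21/4}$; note $|s|^{-5}>|s|^{-21/4}$ for $|s|>1$), and the Duhamel factor $\exp\int_\tau^s\lambda_n^\theta\,\dd\sigma\sim|\tau|/|s|$ is not harmless --- it is of size $n/|s|\to\infty$ at $\tau=S_n$. The Gronwall integral still closes because $|\tau|\cdot|\tau|^{-5}$ is integrable, but it only yields $|b_n(s)|\lesssim|s|^{-4}$, again an improvement in constant, not exponent. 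The paper sidesteps the divergent growth factor by estimating $\tfrac{\dd}{\dd s}(s b_n)$ directly, using the cancellation $1+s\lambda_n^\theta=O(C_1|s|^{-1}\log|s|)$ to neutralize the unstable feedback $b_n\lambda_n^\theta$.
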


The rest of the article is organized as follows. First, in Section~\ref{S:EV}, we devote to the proof of Proposition~\ref{prop:Sn} based on the standard ODE argument and the energy-virial estimate. Then, in Section~\ref{S:Endproof}, we prove Theorem~\ref{thm:main} from Proposition~\ref{prop:Sn} via the control of the $H^{\frac{7}{9}}$ norm and the compactness argument. 

\smallskip
For simplicity of notation, we drop the $n$ index of the function $(W_{b_{n},\lambda_{n}},\lambda_{n},b_{n},x_{1n},\varepsilon_{n})$ in the following part of the article. In what follows, we work on the time interval $\mathcal{I}=[S_{n},S_{n}^{\star}]$ where the bootstrap estimates~\eqref{est:NBbound} and~\eqref{est:Boot} hold. In every step of the proof, the implied constants in $\lesssim$ and $O$ do not depend on the constant $C_{1}$ appearing in the bootstrap assumption~\eqref{est:Boot}.

\smallskip
Note that, from Lemma~\ref{le:refined} and the bootstrap estimates~\eqref{est:NBbound} and~\eqref{est:Boot},
\begin{equation}\label{est:Geomain}
	\begin{aligned}
		\left|b_{s}-b\lambda^{\theta}\right|&\lesssim
		|s|^{-1}\|\varepsilon\|_{L^{2}_{\rm{loc}}}+
		 |s|^{-5}+C_{1}^{10}|s|^{-6},\\
	\left|\frac{\lambda_s}{\lambda}+\Gamma+b\right|+\left|\frac{x_{1s}}{\lambda}-1\right|&\lesssim
	\|\varepsilon\|_{L^{2}_{\rm{loc}}}+
	 C_{1}|s|^{-5}+C_{1}^{10}|s|^{-6}.
	\end{aligned}
\end{equation}
Note also that, from ~\eqref{est:locNb},~\eqref{est:Boot} and the above estimate, 
\begin{equation}\label{est:Geo}
\begin{aligned}
		\left|b_{s}-b\lambda^{\theta}\right|&\lesssim |s|^{-5}+C_{1}^{10}|s|^{-6},\\
		\left|\frac{\lambda_s}{\lambda}+\Gamma+b\right|+\left|\frac{x_{1s}}{\lambda}-1\right|&\lesssim |s|^{-\frac{17}{4}}+C_{1}^{10}|s|^{-5}.
\end{aligned}
\end{equation}

\section{Energy-Virial Estimate}\label{S:EV}
To complete the proof of Proposition~\ref{prop:Sn}, we improve the bootstrap estimates~\eqref{est:Boot} using a variant of the energy-virial functional first introduced in~\cite[Section 4]{CLY}.

\subsection{Energy estimate}\label{SS:Energy}
In this subsection, we first introduce the weighted energy estimate for the remainder term $\varepsilon$. We denote
\begin{equation*}
	\mathcal{F}=\int_{\mathbb{R}^2}\left(|\nabla\varepsilon|^2\psi_B+\varepsilon^2\varphi_B-\frac{1}{2}\psi_{B}\left((W_{b,\lambda}+\varepsilon)^4-W_{b,\lambda}^4-4W_{b,\lambda}^3\varepsilon\right)\right)\dd y.
\end{equation*}

The following qualitative estimate of the time variation of $\mathcal{F}$ is true.
\begin{proposition}\label{prop:energy}
	There exist some universal constants $B>1$ large enough, $s_{0}<-1$ with $|s_{0}|$ large enough and $0<\kappa_{1}<B^{-1}$ small enough such that the following holds. Assume that for all $s\in [S_{n},S_{n}^{\star}]$, the solution $\phi(t)$ satisfies the bootstrap estimates~\eqref{est:NBbound} and~\eqref{est:Boot} with $0<\kappa<\kappa_{1}$. Then for all $s\in [S_{n},S_{n}^{\star}]$, we have 
\begin{equation}\label{est:dsF}
	\frac{\dd \mathcal{F}}{\dd s}+\frac{1}{8}\int_{\R^{2}}\left(|\nabla \varepsilon|^2+\varepsilon^2\right)\varphi'_{B}\dd y\le \frac{C_2}{B^{30}}
    \int_{\R^{2}}\left(|\nabla \varepsilon|^2+\varepsilon^2\right)\psi_{B}\dd y+\frac{C_3}{|s|^{10}}.
	\end{equation}
	Here, $C_{2}>1$ is a universal constant independent of $B$ and $C_{3}=C_{3}(B,C_{1})>1$ is a constant depending only on $B$ and $C_{1}$.
\end{proposition}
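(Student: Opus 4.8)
The plan is to compute $\frac{\dd\mathcal{F}}{\dd s}$ along the flow written in the form \eqref{equ:e2} and then control each resulting term. The guiding observation is that $\mathcal{F}$ is a weighted variant of the (time-dependent) Hamiltonian governing the $\varepsilon$-equation: if the weights $\psi_B,\varphi_B$ were replaced by a single constant and the right-hand side of \eqref{equ:e} were dropped, $\frac{\dd\mathcal{F}}{\dd s}$ would vanish by the Hamiltonian structure. Hence, after substituting \eqref{equ:e2} and integrating by parts, the contributions to $\frac{\dd\mathcal{F}}{\dd s}$ split into: (a) the \emph{virial terms} produced by commuting $\partial_{y_1}\mathcal{L}$ with the non-constant weights (and by the mismatch between $\psi_B$ and $\varphi_B$ on the $\varepsilon^2$-term of $\mathcal{F}$), carrying factors $\psi_B',\psi_B'',\psi_B''',\varphi_B',\varphi_B'',\varphi_B''',\psi_B-\varphi_B$; (b) the \emph{modulation terms} carrying $\frac{\lambda_s}{\lambda}+\Gamma+b$, $\frac{x_{1s}}{\lambda}-1$ and $b_s-b\lambda^\theta$ (from $\widetilde{{\rm{Mod}}}$, $\Psi_M$, $\Psi_N$, and from the $\partial_sW_{b,\lambda}$-variation of the quartic part of $\mathcal{F}$); (c) the \emph{profile-error terms} built from $\Psi_W$ and $\Psi_b$; (d) the \emph{potential-mismatch term} coming from $R_2=3(W_{b,\lambda}^2-Q^2)\varepsilon$; and (e) the genuinely \emph{nonlinear terms} $\partial_{y_1}R_1=\partial_{y_1}(3W_{b,\lambda}\varepsilon^2+\varepsilon^3)$ together with the cubic and quartic terms in the $\partial_s$-derivative of the quartic part of $\mathcal{F}$. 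The quartic weight $-\tfrac12\psi_B\big((W_{b,\lambda}+\varepsilon)^4-W_{b,\lambda}^4-4W_{b,\lambda}^3\varepsilon\big)$ in $\mathcal{F}$ is chosen precisely so that, after this integration by parts, the most dangerous quadratic-in-$\varepsilon$ part of group (e) cancels against a corresponding term in group (a).

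Group (a) is the heart of the argument. After integrating by parts, its leading contribution is a ZK virial quadratic form in the derivatives of $\varepsilon$ weighted by $\psi_B'$ — coercive in $\nabla\varepsilon$ and, since $\mathcal{F}$ carries one derivative of $\varepsilon$, also in $\Delta\varepsilon$ and $\nabla\partial_{y_1}\varepsilon$ — plus a sign-indefinite potential piece of the form $-\int 3Q^2\varepsilon^2\psi_B'\,\dd y$, plus remainders carrying $\psi_B'',\psi_B''',\varphi_B',|\psi_B-\varphi_B|$. Using Lemma~\ref{le:psiphi2} (the $B^{2/3},B^{4/3}$ gains on $\psi_B'',\psi_B'''$, the relations $\sqrt{B\psi_B'}\lesssim\psi_B+B\varphi_B'\lesssim\varphi_B$, and the $B^{-2/3},B^{-4/3}$ gains on $\varphi_B'',\varphi_B'''$), Lemma~\ref{le:psiphi} (the $e^{-B^{1/3}/6}$ smallness of $\psi_B-\tfrac12$, $\varphi_B-\tfrac12$, $\psi_B'$, $\varphi_B'$ on the core and the exponential control away from it), and Lemma~\ref{le:psiphi3} (for group (d) and for the passage $Q\rightsquigarrow W_{b,\lambda}$ in the potential), all these remainders are absorbed into $\tfrac{C_2}{2B^{30}}\int(|\nabla\varepsilon|^2+\varepsilon^2)\psi_B\,\dd y$. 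It then remains to dominate the indefinite potential piece: I would invoke the coercivity of $\mathcal{L}$ from Proposition~\ref{prop:Spectral}(iii) in \emph{localized} form — testing the quadratic form against $\sqrt{\psi_B'}\,\varepsilon$ and controlling the loss of the orthogonality conditions \eqref{equ:ortho} created by the localization through the $e^{-B^{1/3}/6}$-type quantities of Lemma~\ref{le:psiphi} — and then convert the resulting $\psi_B'$-weighted gain into the term $-\tfrac14\int(|\nabla\varepsilon|^2+\varepsilon^2)\varphi_B'\,\dd y$ of \eqref{est:dsF} via $B\varphi_B'\lesssim\varphi_B$ and $\sqrt{B\psi_B'}\lesssim\psi_B+B\varphi_B'$. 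This localized-coercivity step, reconciling the monotone virial weight with the spectral gap of $\mathcal{L}$ while keeping the $B$-dependence explicit, is the main obstacle.

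The remaining groups are handled by Cauchy--Schwarz and Young's inequality, always splitting a constant as $\delta=B^{-30}$, exactly as in the energy estimate of~\cite[Section 4]{CLY}. For group (b), Lemma~\ref{le:refined} and the bootstrap consequence \eqref{est:Geo} give $|\frac{\lambda_s}{\lambda}+\Gamma+b|+|\frac{x_{1s}}{\lambda}-1|\lesssim|s|^{-\frac{17}{4}}+C_1^{10}|s|^{-5}$ and $|b_s-b\lambda^\theta|\lesssim|s|^{-5}+C_1^{10}|s|^{-6}$; each such factor multiplies either a rapidly decaying profile paired with $\varepsilon$ (estimated by $\|\varepsilon\|_{L^2_{\rm loc}}\lesssim\mathcal{N}_B$ via \eqref{est:locNb}) or, after an integration by parts, a quantity of the form $\int|y_1|\varepsilon^2\varphi_B'$ or $\int\varepsilon^2\varphi_B$ handled using Lemma~\ref{le:psiphi2}(v) and $|s_0|$ large; together with \eqref{est:NBbound}--\eqref{est:Boot} this yields a bound $\le\varrho\int(|\nabla\varepsilon|^2+\varepsilon^2)(\psi_B+\varphi_B')+C_3|s|^{-10}$ with $\varrho$ arbitrarily small. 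For group (c) and the $\partial_sW_{b,\lambda}$-part of (b), Propositions~\ref{prop:W} and~\ref{prop:Wb} bound the relevant weighted $L^2$ norms of $\Psi_W,\Psi_b$ by $O(\lambda^{5\theta}+|b|(|b|+\lambda^\theta))=O(|s|^{-5})$; paired with $\mathcal{N}_B$ (using $e^{-|y|/4}\lesssim\psi_B$ for $B$ large, so the $\varepsilon^2$ contribution falls under $\int\varepsilon^2\psi_B$, the far-field part being super-exponentially small in $s$), Young's inequality gives $\le\tfrac{C_2}{8B^{30}}\int(|\nabla\varepsilon|^2+\varepsilon^2)\psi_B+C_3|s|^{-10}$. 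The genuinely nonlinear contributions of group (e) are estimated by the two-dimensional Gagliardo--Nirenberg inequality as in the proof of Lemma~\ref{le:refined}, using $\|\varepsilon\|_{L^2}\le\varrho(\kappa)\ll1$ and the exponential localization of $W_{b,\lambda}$, so that they are $\le\varrho(\kappa)\int(|\nabla\varepsilon|^2+\varepsilon^2)\varphi_B'$ plus terms already controlled; and group (d) is handled directly by Lemma~\ref{le:psiphi3}. Collecting all contributions and fixing first $B$ large, then $|s_0|$ large and $\kappa_1$ small yields \eqref{est:dsF} with $C_2$ independent of $B$ and $C_3=C_3(B,C_1)$.
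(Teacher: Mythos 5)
Your overall organization — differentiate $\mathcal{F}$, substitute the flow \eqref{equ:e2}, integrate by parts, and sort the resulting pieces into virial/modulation/profile/potential/nonlinear groups — does match the paper's computation (which writes $\tfrac{\dd\mathcal{F}}{\dd s}=\mathcal{J}_1+\mathcal{J}_2+\mathcal{J}_3$ and then decomposes $\mathcal{J}_3$ according to the terms of $\mathcal{E}_b$ and the nonlinearity). However, your handling of the step you yourself flag as ``the main obstacle'' diverges from the paper, and in its stated form it does not close.

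You assert that after the virial integration by parts there is an indefinite piece of the form $-\int 3Q^2\varepsilon^2\psi_B'\,\dd y$ which must be dominated by a localized version of the $\mathcal{L}$-coercivity from Proposition~\ref{prop:Spectral}(iii), tested against $\sqrt{\psi_B'}\,\varepsilon$. The paper never invokes $\mathcal{L}$-coercivity in the proof of Proposition~\ref{prop:energy}. Instead, the decomposition is arranged so that the dangerous $\psi_B'$-weighted block sits inside the term
\begin{equation*}
\mathcal{J}_{3,2,4}=-\int_{\R^2}\Big(\big(-\Delta\varepsilon+\varepsilon-(W_{b,\lambda}+\varepsilon)^3+W_{b,\lambda}^3\big)^2-\big(-\Delta\varepsilon+\varepsilon\big)^2\Big)\psi_B'\,\dd y,
\end{equation*}
i.e.\ the $\psi_B'$-weight always multiplies a difference of perfect squares, and the cross term it produces is of the form $\int W_{b,\lambda}^2\varepsilon(-\Delta\varepsilon+\varepsilon)\psi_B'$. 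This is controlled pointwise by Lemma~\ref{le:psiphi3}: since $Q$ (and $W_{b,\lambda}$) is exponentially concentrated on $|y_1|\lesssim 1$ while $\psi_B'\lesssim e^{-B^{1/3}/6}$ on $[-B/4,B/4]$ (Lemma~\ref{le:psiphi}(iii)), one gets $|W_{b,\lambda}|\psi_B'\lesssim(B^{-30}+\lambda^\theta+|b|)(\psi_B+B\varphi_B')$ directly, and the cross term is absorbed into $\tfrac{C_2}{B^{30}}\int(|\nabla\varepsilon|^2+\varepsilon^2)\psi_B$. No spectral-gap argument is needed; the good coercive term $-\tfrac14\int(|\nabla\varepsilon|^2+\varepsilon^2)\varphi_B'$ already comes out of the \emph{linear} virial block $\mathcal{J}_{3,2,1}$, whose coefficients $\varphi_B'+\psi_B'-\psi_B'''$ on $|\nabla\varepsilon|^2$ and $\varphi_B'-\varphi_B'''$ on $\varepsilon^2$ are manifestly of favorable sign up to the $\psi_B''',\varphi_B'''$ corrections of Lemma~\ref{le:psiphi2}.

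Beyond being a different route, your proposed localized-coercivity step has a concrete difficulty you have not resolved. What the coercivity would give is control of $\|\sqrt{\psi_B'}\varepsilon\|_{H^1}^2\approx\int(|\nabla\varepsilon|^2+\varepsilon^2)\psi_B'$, but the quantity you must produce on the left of \eqref{est:dsF} is $\int(|\nabla\varepsilon|^2+\varepsilon^2)\varphi_B'$. The inequality $\sqrt{B\psi_B'}\lesssim\psi_B+B\varphi_B'$ that you cite from Lemma~\ref{le:psiphi2} goes in the wrong direction for this conversion (it upper-bounds $\sqrt{\psi_B'}$, not lower-bounds it), and $\psi_B'\gtrsim\varphi_B'$ fails in the region $y_1\gg B$ where $\varphi_B'\sim y_1^6/B^{70}$ while $\psi_B'$ decays exponentially. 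Also, tracking the loss of the orthogonality conditions \eqref{equ:ortho} under multiplication by $\sqrt{\psi_B'}$ and showing the errors are $O(e^{-cB^{1/3}})$ would require separate estimates you do not supply. In short: the idea is recognizable but unproven, it is more elaborate than necessary, and it is not what the paper does. The Kato-type ``perfect square'' structure of $\mathcal{J}_{3,2}$ (which is precisely why the quartic piece $-\tfrac12\psi_B((W_{b,\lambda}+\varepsilon)^4-\cdots)$ is included in $\mathcal{F}$) replaces the spectral step entirely.

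Your treatment of groups (b)--(e) and the use of the 2D weighted Sobolev estimates from Lemma~\ref{le:2DSobolev} is consistent with the paper's Steps 1--3. One additional feature you do not mention: the paper uses the negative second-derivative term $\mathcal{J}_{3,2,2}=-\int(3(\partial_{y_1}^2\varepsilon)^2+4(\partial_{y_1}\partial_{y_2}\varepsilon)^2+(\partial_{y_2}^2\varepsilon)^2)\psi_B'\,\dd y$ explicitly to absorb the positive quartic piece $\int(\partial_{y_1}\varepsilon)^2\varepsilon^2\psi_B'$ arising from $\mathcal{J}_{3,2,3}$ via the second half of Lemma~\ref{le:2DSobolev}; this is a necessary ingredient, not merely a Gagliardo--Nirenberg application on $\|\varepsilon\|_{L^2}\ll1$.
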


To complete the proof of Proposition~\ref{prop:energy}, we first recall the following 2D weighted Sobolev estimates introduced in~\cite[Section 4.1]{CLY}. The proof relies on a standard argument based on the Fundamental Theorem and the Cauchy-Schwarz inequality (see \emph{e.g.}~\cite[Lemma 4.3]{CLY}), and we omit it.

\begin{lemma}[\cite{CLY}]\label{le:2DSobolev}
	Let $\gamma:\R^{2}\to (0,\infty)$ be a $C^{2}$ function such that 
	\begin{equation*}
		\left\|\frac{\nabla \gamma}{\gamma}\right\|_{L^{\infty}}
		+\sum_{|\beta|=2}\left\|\frac{\partial_{y}^{\beta} \gamma}{\gamma}\right\|_{L^{\infty}}
		\lesssim 1.
	\end{equation*}
		Then, for all $f\in H^{1}(\R^{2})$, we have 
	\begin{equation*}
		\|f^{2}\sqrt{\gamma}\|_{L^{2}}
		\lesssim \|f\|_{L^{2}}\left(\int_{\R^{2}}\left(
		|\nabla f|^{2}+f^{2}
		\right)\gamma\dd y\right)^{\frac{1}{2}}.
	\end{equation*}
	In addition, for all $f\in H^{2}(\R^{2})$, we have 
	\begin{equation*}
		\begin{aligned}
		\|f^{2}\sqrt{\gamma}\|_{L^{\infty}}
		&
		\lesssim \|f\|_{L^{2}}\left(\int_{\R^{2}}\left|
		\frac{\partial^{2}f}{\partial y_{1}\partial {y_{2}}}
		\right|^{2}\gamma \dd y\right)^{\frac{1}{2}}+\left(\int_{\R^{2}}
		\left(|\nabla f|^{2}+f^{2}\right)\sqrt{\gamma}\dd y
		\right).
		\end{aligned}
	\end{equation*}
\end{lemma}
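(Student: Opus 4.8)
\emph{Overall strategy.} The plan is to run a bootstrap (continuity) argument on $[S_n,S_n^\star]$: assuming \eqref{est:NBbound} and \eqref{est:Boot} hold there, I will show that the \emph{strengthened} versions hold on the same interval, namely every $C_1$-bound improved to $\tfrac12C_1$ and $\mathcal{N}_B(s)\le\tfrac12|s|^{-17/4}$. Since $(\lambda,b,x_1,\varepsilon)$ depend continuously on $s$ and \eqref{est:NBbound} is a consequence of \eqref{est:Boot} once $|s_0|$ is large, this rules out $S_n^\star<s_0$, so $S_n^\star=s_0$. The constants are fixed in the order: $B$ first (large enough for Proposition~\ref{prop:energy} and for the coercivity of $\mathcal{F}$ used below), then $C_1$, then $s_0$ with $|s_0|$ large depending on $B,C_1$, and finally $n$ large (so that $-n<s_0$ and the construction at $t=T_n$ makes sense, using Remark~\ref{re:G}). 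Throughout, $\|\varepsilon\|_{L^2_{\rm loc}}\lesssim\mathcal N_B(\varepsilon)$ by \eqref{est:locNb}, so the modulation estimates \eqref{est:Geomain}--\eqref{est:Geo} are available.

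\emph{Closing the $L^2$ bound via mass conservation.} Since $M(\phi_n)$ is conserved and $\phi_n(T_n)=\lambda_n(T_n)^{-1}W_{b_n,\lambda_n}(T_n)(\cdot/\lambda_n(T_n))$ with $b_n(T_n)=0$, scaling invariance of the mass together with Lemma~\ref{le:estW1}(i) and Remark~\ref{re:G} give $|M(\phi_n)-M(Q)|\lesssim\lambda_n(T_n)^\theta\lesssim|S_n|^{-1}$. Expanding $\lambda(s)^2M(\phi_n)=M(W_{b,\lambda})+2(W_{b,\lambda},\varepsilon)+\|\varepsilon\|_{L^2}^2$, using the orthogonality $(\varepsilon,Q)=0$ from \eqref{equ:ortho} to replace $W_{b,\lambda}$ by $W_{b,\lambda}-Q$ in the cross term, Lemma~\ref{le:estW1} and Proposition~\ref{prop:Wb} to bound $|M(W_{b,\lambda})-M(Q)|\lesssim|s|^{-1}$ and $|(W_{b,\lambda}-Q,\varepsilon)|\lesssim|s|^{-1}\|\varepsilon\|_{L^2}$, and $|S_n|\ge|s|$, one gets $\|\varepsilon(s)\|_{L^2}^2\lesssim|s|^{-1}$ with a universal constant; choosing $C_1$ large closes this estimate.

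\emph{Closing the $\mathcal{N}_B$ bound --- the main point.} First one needs a coercivity estimate $\tfrac1C\mathcal{N}_B^2\le\mathcal{F}\le C\mathcal{N}_B^2$: expanding the quartic term in $\mathcal{F}$, replacing $W_{b,\lambda}^2$ by $Q^2$ modulo $O(\lambda^\theta+|b|)$ errors, using $\psi_B\le\varphi_B$ with Lemma~\ref{le:psiphi}(iii), the localized version of the coercivity in Proposition~\ref{prop:Spectral}(iii) together with the orthogonality conditions $(\varepsilon,Q)=(\varepsilon,Q^3)=(\varepsilon,\partial_{y_1}Q)=0$, and Lemma~\ref{le:2DSobolev} and the $L^2$-smallness of $\varepsilon$ to absorb the cubic and quartic remainders. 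Next I integrate the differential inequality \eqref{est:dsF} of Proposition~\ref{prop:energy} from $s=S_n$, where $\varepsilon(S_n)=0$ hence $\mathcal{F}(S_n)=0$. The only dangerous term on its right-hand side is $\tfrac{C_2}{B^{30}}\int(|\nabla\varepsilon|^2+\varepsilon^2)\psi_B$; the plan is to absorb it into the dissipative term $\tfrac18\int(|\nabla\varepsilon|^2+\varepsilon^2)\varphi_B'$ on the left --- if necessary after adding to $\mathcal{F}$ a small multiple of a Kato-type localized virial functional comparable to $\mathcal N_B^2$ --- so that, using the coercivity above, one reaches a \emph{dissipative} inequality $\tfrac{d}{ds}\widetilde{\mathcal F}\le-\tfrac{c}{B^{31}}\widetilde{\mathcal F}+\tfrac{C_3}{|s|^{10}}$ with $\widetilde{\mathcal F}\sim\mathcal N_B^2$. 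Integrating this from $S_n$ and using the dissipative sign (which prevents any growth in $s-S_n$) yields $\mathcal N_B(s)^2\lesssim_{B,C_1}|s|^{-10}$, which for $|s_0|$ large is bounded by $\tfrac14|s|^{-17/2}$, closing and vastly improving the bootstrap. Extracting this dissipative structure, i.e.\ controlling the full $\psi_B$-weighted norm by the $\varphi_B'$-weighted one up to acceptable errors, is the main obstacle.

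\emph{Closing the bounds on $(b,\lambda,x_1)$ by ODE comparison.} By \eqref{est:Geo}, $|b_s-b\lambda^\theta|\lesssim|s|^{-5}+C_1^{10}|s|^{-6}$ and $\lambda^\theta\sim|s|^{-1}$; integrating the linear ODE for $b$ by Duhamel with integrating factor $\exp(-\int^s\lambda^\theta)\sim|s|$, starting from $b(S_n)=0$, gives $|b(s)|\lesssim|s|^{-4}$ with a universal constant once $|s_0|\ge C_1^{10}$, beating $C_1|s|^{-4}$ for $C_1$ large. For $\lambda$ I compare $\tfrac{\lambda_s}{\lambda}=-\Gamma-b+O(|s|^{-17/4}+C_1^{10}|s|^{-5})$ with the model ODE $\tfrac{\lambda_s}{\lambda}=-\Gamma$, whose solution $G(\lambda(s))=|s|$ has, by Remark~\ref{re:G}, $\lambda(s)=|s|^{-1/\theta}+O(|s|^{-1-1/\theta}\log|s|)$; since the initial data \eqref{equ:deflambdan} is tuned to $\lambda_n(T_n)=G^{-1}(n)$, a Gronwall estimate for $\log\lambda$ minus its model controls the deviation by $\tfrac12C_1|s|^{-1-1/\theta}\log|s|$. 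Finally $\tfrac{x_{1s}}{\lambda}=1+O(|s|^{-17/4}+C_1^{10}|s|^{-5})$ integrates, using $x_{1n}(T_n)=-\tfrac{\theta}{\theta-1}n^{1-1/\theta}$ and $\int_{S_n}^s\lambda\,ds'=\tfrac{\theta}{\theta-1}(|S_n|^{1-1/\theta}-|s|^{1-1/\theta})+O(|s|^{-1/\theta}\log|s|)$, to $|x_1(s)+\tfrac{\theta}{\theta-1}|s|^{1-1/\theta}|\le\tfrac12C_1|s|^{-1/\theta}\log|s|$ for $C_1$ large. Collecting the improved bounds on $(b,\mathcal N_B,\lambda,x_1,\|\varepsilon\|_{L^2})$ and invoking continuity yields $S_n^\star=s_0$.
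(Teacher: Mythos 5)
Your proposal does not prove the statement at hand. The statement is Lemma~\ref{le:2DSobolev}, a weighted two-dimensional Sobolev (Ladyzhenskaya-type) inequality for a fixed function $f$ and a weight $\gamma$ with bounded logarithmic derivatives: $\|f^{2}\sqrt{\gamma}\|_{L^{2}}\lesssim\|f\|_{L^{2}}\bigl(\int(|\nabla f|^{2}+f^{2})\gamma\bigr)^{1/2}$ together with an $L^{\infty}$ variant involving $\partial_{y_1}\partial_{y_2}f$. What you have written instead is a sketch of the bootstrap argument for Proposition~\ref{prop:Sn} (mass conservation for the $L^{2}$ bound, integration of the energy--virial monotonicity for $\mathcal{N}_B$, ODE comparison for $b,\lambda,x_1$). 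None of that bears on the purely analytic inequality you were asked to prove; indeed Lemma~\ref{le:2DSobolev} is one of the \emph{tools} used inside the proof of Proposition~\ref{prop:energy}, which in turn feeds into Proposition~\ref{prop:Sn}, so your argument is circular relative to the statement as well as off-target.

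For the record, the intended proof is elementary and is the one the paper alludes to (``Fundamental Theorem and Cauchy--Schwarz''). For the first inequality, write
\begin{equation*}
\int_{\R^{2}}f^{4}\gamma\,\dd y\le\int_{\R}\Bigl(\sup_{y_{1}}\bigl(f^{2}\gamma\bigr)(\cdot,y_{2})\Bigr)\dd y_{2}\cdot\int_{\R}\Bigl(\sup_{y_{2}}f^{2}(y_{1},\cdot)\Bigr)\dd y_{1},
\end{equation*}
bound $\sup_{y_{1}}(f^{2}\gamma)\le\int_{\R}|\partial_{y_{1}}(f^{2}\gamma)|\,\dd y_{1}\lesssim\int_{\R}\bigl(|f||\partial_{y_{1}}f|+f^{2}\bigr)\gamma\,\dd y_{1}$ using $|\partial_{y_{1}}\gamma|\lesssim\gamma$, bound $\sup_{y_{2}}f^{2}\le 2\int_{\R}|f||\partial_{y_{2}}f|\,\dd y_{2}$, and conclude by Cauchy--Schwarz. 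The $L^{\infty}$ estimate follows similarly from $f^{2}\sqrt{\gamma}(y)\le\int_{\R^{2}}\bigl|\partial_{y_{1}}\partial_{y_{2}}\bigl(f^{2}\sqrt{\gamma}\bigr)\bigr|\,\dd y$, expanding the mixed derivative, using the hypotheses on $\nabla\gamma/\gamma$ and $\partial_{y}^{\beta}\gamma/\gamma$, and applying Cauchy--Schwarz to the term containing $\partial_{y_{1}}\partial_{y_{2}}f$. You need to supply an argument of this kind; the bootstrap material belongs to a different proposition.
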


Note that, from the definition of $\psi_{B}$ and $\varphi_{B}$, we have 
\begin{equation}\label{est:pointpsiBphiB}
	\begin{aligned}
	\left\|\frac{\nabla \psi_{B}}{\psi_{B}}\right\|_{L^{\infty}}
    +\left\|\frac{\nabla \sqrt{\psi'_{B}}}{\sqrt{\psi'_{B}}}\right\|_{L^{\infty}}
	+	\left\|\frac{\nabla \varphi_{B}}{\varphi_{B}}\right\|_{L^{\infty}}
	&\lesssim 1,\\
	\sum_{|\beta|=2}	\left\|\frac{\partial_{y}^{\beta} \psi_{B}}{\psi_{B}}\right\|_{L^{\infty}}
    +\sum_{|\beta|=2}	\left\|\frac{\partial_{y}^{\beta} \sqrt{\psi'_{B}}}{\sqrt{\psi'_{B}}}\right\|_{L^{\infty}}
	+	\sum_{|\beta|=2}	\left\|\frac{\partial_{y}^{\beta} \varphi_{B}}{\varphi_{B}}\right\|_{L^{\infty}}
	&\lesssim 1.
\end{aligned}
\end{equation}

Second, we introduce the following estimates relate to $\left(\Psi_{M},\Psi_{N},\Psi_{W},\Psi_{b}\right)$.
\begin{lemma}\label{le:PsiMNWb}
	For all $s\in [S_{n},S_{n}^{\star}]$, the following estimates hold.
	\begin{enumerate}
		\item \emph{Pointwise estimates.} It holds 
		\begin{equation*}
\left|\Psi_{M}\right|+\left|\Psi_{N}\right|+\left|\Psi_{W}\right|+\left|\Psi_{b}\right|\lesssim |s|^{-1}.
		\end{equation*}
        \item \emph{Weighted $L^{2}$ estimate.} It holds 
        \begin{equation*}
            \int_{\R}\int_{B^{10}}^{\infty}y_{1}^{2}
            \left(
            \Psi_{M}^{2}+\Psi_{N}^{2}+\Psi_{W}^{2}+\Psi_{b}^{2}
            \right)\varphi'_{B}\dd y_{1}\dd y_{2}\lesssim C_{1}^{2}|s|^{-10}.
        \end{equation*}
        \item \emph{Weighted $H^{1}$ estimates.} It holds
        \begin{equation*}
		\mathcal{N}_{B}\left(\Psi_{M}\right)
		+\mathcal{N}_{B}\left(\Psi_{N}\right)
+\mathcal{N}_{B}\left(\Psi_{W}\right)+\mathcal{N}_{B}\left(\Psi_{b}\right)\le C_{4}|s|^{-5}.
	\end{equation*}
    Here, $C_{4}=C_{4}\left(B,C_{1}\right)>1$ is a constant depending only on $B$ and $C_{1}$.
		\end{enumerate}
\end{lemma}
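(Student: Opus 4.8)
The plan is to reduce each of the three estimates to the pointwise bounds on the building blocks already available --- the bounds on $\Psi_\lambda$ and $\Psi_W$ from Proposition~\ref{prop:W}, on $\Psi_b$, $W_{b,\lambda}$ and $\Lambda P_b$ from Proposition~\ref{prop:Wb}, and on $W$, $V$, $P$ from Lemma~\ref{le:estW1} and Lemma~\ref{le:Nonloca} --- and then to feed in the bootstrap consequences. The arithmetic used throughout is: from~\eqref{est:Boot}, $\lambda^\theta\sim|s|^{-1}$, $|b|\lesssim C_1|s|^{-4}$ and $\mathcal{N}_B(\varepsilon)+\|\varepsilon\|_{L^2_{\mathrm{loc}}}\lesssim|s|^{-17/4}$; from~\eqref{est:Geo}, $|b_s|\lesssim|b|\lambda^\theta+|s|^{-5}+C_1^{10}|s|^{-6}$ and $|\tfrac{\lambda_s}{\lambda}+\Gamma+b|+|\tfrac{x_{1s}}{\lambda}-1|\lesssim|s|^{-17/4}+C_1^{10}|s|^{-5}$. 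Consequently each summand of $\Psi_N$, $\Psi_M$, $\Psi_b$ is either of size $\lambda^{5\theta}\sim|s|^{-5}$ or a product of one of the small scalars $b$, $b_s$, $\tfrac{\lambda_s}{\lambda}+\Gamma+b$, $\tfrac{x_{1s}}{\lambda}-1$ with a profile of size $O(\lambda^\theta)=O(|s|^{-1})$; after choosing $|s_0|$ large relative to $C_1$, the worst such product is $\lesssim C_1|s|^{-5}$, and this is what drives parts (ii) and (iii).

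\textbf{Part (i).} I would simply plug in the pointwise bounds. Using $|\Psi_\lambda|\lesssim\lambda^\theta$, $|\Lambda P_b|\lesssim 1$, $|P|\lesssim 1$ and $|y_1\partial_{y_1}\chi_b|\lesssim 1$ gives $|\Psi_N|\lesssim|b|\lambda^\theta+b^2+|b_s|\lesssim|s|^{-1}$; the same accounting, now with the bounds on $\partial_{y_1}V$, $\partial_{y_1}P_b$ and on $|\tfrac{\lambda_s}{\lambda}+\Gamma+b|$, $|\tfrac{x_{1s}}{\lambda}-1|$, yields $|\Psi_M|\lesssim|s|^{-1}$, and Proposition~\ref{prop:Wb}(iii) gives $|\Psi_b|\lesssim|b|(|b|+\lambda^\theta)+|b|^{7/4}\lesssim|s|^{-1}$. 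For $\Psi_W$ the only term needing care is $\lambda^{5\theta}(1+|y_1|^{3-\beta_1})e^{-|y_2|/4}\mathbf{1}_{[-2,0]}(\lambda^{8/5}y_1)$: on its support $|y_1|\lesssim\lambda^{-8/5}$, so it is $\lesssim\lambda^{5\theta-24/5}$, and since $\theta>\tfrac65$ this is $\lesssim\lambda^\theta\sim|s|^{-1}$.

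\textbf{Part (ii).} The decisive observation is that on $\{y_1>B^{10}\}$ every cut-off appearing in Propositions~\ref{prop:W}--\ref{prop:Wb} vanishes, since $\mathbf{1}_{[-2,0]}(\lambda^{8/5}y_1)$, $\mathbf{1}_{[-2,-1]}(\lambda^{8/5}y_1)$, $\mathbf{1}_{[-2,0]}(|b|^{3/4}y_1)$, $\mathbf{1}_{[-2,-1]}(|b|^{3/4}y_1)$ all force $y_1\le0$; moreover for $y_1>0$ one has $\Theta\equiv1$, so $\Lambda P_b=\Lambda P$, $\partial_{y_1}P_b=\partial_{y_1}P$ and $\partial_{y_1}V$ are all $O(e^{-|y|/3})$ by Lemma~\ref{le:Nonloca} and Lemma~\ref{le:estW1}. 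Hence on $\{y_1>B^{10}\}$ each of $\Psi_M,\Psi_N,\Psi_W,\Psi_b$ is bounded by $c\,e^{-|y|/4}$ with $c\lesssim C_1|s|^{-5}$ (for $\Psi_M$ because $|s|^{-17/4}\cdot|s|^{-1}=|s|^{-21/4}\lesssim|s|^{-5}$, for $\Psi_W$ because $\lambda^{5\theta}\sim|s|^{-5}$). Since $\varphi'_B$ grows at most polynomially, $\int_\R\int_{B^{10}}^\infty y_1^2 e^{-|y|/2}\varphi'_B\,\dd y_1\dd y_2\lesssim1$ (indeed exponentially small in $B$), and multiplying out gives the claim $\lesssim C_1^2|s|^{-10}$.

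\textbf{Part (iii).} Here the implied constants may depend on $B$. I would bound $\mathcal{N}_B$ of each building block separately: profiles with full $e^{-|y|/4}$ decay contribute $\lesssim_B1$; profiles carrying only $e^{-|y_2|/4}$ decay times a $y_1$-cut-off of width $\lesssim\lambda^{-8/5}$ or $\lesssim|b|^{-3/4}$, as well as the non-localized profiles $P$, $\Lambda P_b$, $\partial_{y_1}V$, are handled using $\varphi_B(y_1)\le\sqrt2\,e^{y_1/B}$ for $y_1<-B$ (Lemma~\ref{le:psiphi}), which gives $\int_{-\infty}^0(1+|y_1|)^m\varphi_B\,\dd y_1\lesssim_B1$ uniformly in $\lambda$ and $b$; this yields $\mathcal{N}_B(\Psi_\lambda)\lesssim_B\lambda^\theta$, $\mathcal{N}_B(\Lambda P_b)\lesssim_B1$, $\mathcal{N}_B(\partial_{y_1}V)\lesssim_B\lambda^\theta$. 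Combining with the scalar bounds above, $\mathcal{N}_B(\Psi_N)\lesssim_B|b|\lambda^\theta+|b_s|$, $\mathcal{N}_B(\Psi_M)\lesssim_B(|\tfrac{\lambda_s}{\lambda}+\Gamma+b|+|\tfrac{x_{1s}}{\lambda}-1|)\lambda^\theta$, $\mathcal{N}_B(\Psi_b)\lesssim_B|b|(|b|+\lambda^\theta)$ and $\mathcal{N}_B(\Psi_W)\lesssim_B\lambda^{5\theta}$, each of which is $\le C_4|s|^{-5}$ for a suitable $C_4=C_4(B,C_1)$ once $|s_0|$ is large. The main obstacle is bookkeeping discipline rather than a single hard estimate: one must check that in $\Psi_M$ the coupling of the modulation parameters (controlled only at order $|s|^{-17/4}$) with the $O(|s|^{-1})$ profiles still lands at $\lesssim|s|^{-5}$ --- which uses $\tfrac{21}{4}>5$ --- and that the weighted norms of the non-localized profile $P$ and its relatives stay $O_B(1)$, which is exactly where the exponential decay of $\varphi_B$ toward $y_1=-\infty$ is essential; the threshold $\theta>\tfrac65$ in part (i) is the other place where the precise value of $\theta$ enters.
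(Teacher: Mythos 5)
Your proposal is correct and takes essentially the same route as the paper: part (i) is a direct pointwise check (with the right observation that the cut-off term of $\Psi_W$ contributes $\lesssim\lambda^{5\theta-24/5}$ on its support, harmless since $\theta>8/5>6/5$); part (ii) exploits that all cut-offs vanish on $\{y_1>B^{10}\}$ so that $(|\Psi_M|+|\Psi_N|+|\Psi_W|+|\Psi_b|)\mathbf{1}_{[B^{10},\infty)}\lesssim C_1|s|^{-5}e^{-|y|/4}$, exactly the paper's key display, followed by integration against the polynomially growing $y_1^2\varphi'_B$; and part (iii) extracts the $|s|^{-5}$ scalar and uses the exponential decay $\varphi_B\lesssim e^{y_1/B}$ on $y_1<-B$ to bound the residual weighted integral by a constant depending only on $B$ (and hence $C_4$ on $B,C_1$). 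The only difference is cosmetic: you carry out the bookkeeping by first bounding $\mathcal{N}_B$ of each building block ($\Psi_\lambda$, $\Lambda P_b$, $\partial_{y_1}V$, $P$) and then multiplying by the scalar coefficients, whereas the paper writes the final weighted integral directly; the two are the same computation in a different order.
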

\begin{proof}
    Proof of (i). These estimates follow directly from~\eqref{est:Boot}--\eqref{est:Geo}, Proposition~\ref{prop:W}--\ref{prop:Wb} and the definition of $\left(\Psi_{M},\Psi_{N},\Psi_{W},\Psi_{b}\right)$.

    \smallskip
    Proof of (ii). Note that, from~\eqref{est:Boot},~\eqref{est:Geo}, Lemma~\ref{le:Nonloca} and Proposition~\ref{prop:W}, 
    \begin{equation*}
        \left(|\Psi_{M}|+|\Psi_{N}|+|\Psi_{W}|+|\Psi_{b}|\right)
        {\textbf{1}}_{[B^{10},\infty)}(y_{1})\lesssim C_{1}|s|^{-5}e^{-\frac{|y|}{4}}.
    \end{equation*}
    Integrating the above estimate over $[B^{10},\infty)\times\R$, we complete the proof of (ii).

\smallskip
    Proof of (iii). Using again~\eqref{est:Boot},~\eqref{est:Geo}, Lemma~\ref{le:Nonloca} and Proposition~\ref{prop:W}, 
    \begin{equation*}
        \begin{aligned}
            \mathcal{N}_{B}\left(\Psi_{M}\right)
            &=\left(\int_{\R^{2}}\left(|\nabla \Psi_{M}|^{2}\psi_{B}+|\Psi_{M}|^{2}\varphi_{B}\right)\dd y\right)^{\frac{1}{2}}\\
            &\lesssim |s|^{-5}\left(\int_{\R^{2}}
            \left(e^{-\frac{|y|}{4}}+e^{-\frac{|y_{2}|}{2}}e^{\frac{y_{1}}{B}}{\textbf{1}}_{(-\infty,0)}(y_{1})\right)
            \dd y\right)^{\frac{1}{2}}
            \lesssim B^{\frac{1}{2}}|s|^{-5},\\
                \mathcal{N}_{B}\left(\Psi_{N}\right)
            &=\left(\int_{\R^{2}}\left(|\nabla \Psi_{N}|^{2}\psi_{B}+|\Psi_{N}|^{2}\varphi_{B}\right)\dd y\right)^{\frac{1}{2}}\\
            &\lesssim C_{1}|s|^{-5}\left(\int_{\R^{2}}
            \left(e^{-\frac{|y|}{4}}+e^{-\frac{|y_{2}|}{2}}e^{\frac{y_{1}}{B}}{\textbf{1}}_{(-\infty,0)}(y_{1})\right)
            \dd y\right)^{\frac{1}{2}}
            \lesssim C_{1}B^{\frac{1}{2}}|s|^{-5}.
        \end{aligned}
    \end{equation*}
   Based on a similar argument as above, we deduce that 
    \begin{equation*}
    \begin{aligned}
         \mathcal{N}_{B}\left(\Psi_{W}\right)
          &=\left(\int_{\R^{2}}\left(|\nabla \Psi_{W}|^{2}\psi_{B}+|\Psi_{W}|^{2}\varphi_{B}\right)\dd y\right)^{\frac{1}{2}}\\
            &\lesssim |s|^{-5}\left(\int_{\R^{2}}
            \left(e^{-\frac{|y|}{4}}+\langle y_{1}\rangle^{3} e^{-\frac{|y_{2}|}{2}}e^{\frac{y_{1}}{2B}}{\textbf{1}}_{(-\infty,0)}(y_{1})\right)
            \dd y\right)^{\frac{1}{2}}
            \lesssim B^{2}|s|^{-5},\\
             \mathcal{N}_{B}\left(\Psi_{b}\right)
            &=\left(\int_{\R^{2}}\left(|\nabla \Psi_{b}|^{2}\psi_{B}+|\Psi_{b}|^{2}\varphi_{B}\right)\dd y\right)^{\frac{1}{2}}\\
            &\lesssim C_{1}|s|^{-5}\left(\int_{\R^{2}}
            \left(e^{-\frac{|y|}{4}}+e^{-\frac{|y_{2}|}{2}}e^{\frac{y_{1}}{2B}}{\textbf{1}}_{(-\infty,0)}(y_{1})\right)
            \dd y\right)^{\frac{1}{2}}
            \lesssim C_{1}B^{\frac{1}{2}}|s|^{-5}.
         \end{aligned}
    \end{equation*}
    Combining the above estimates, we complete the proof of (iii).
  \end{proof}

We now give a complete proof of Proposition~\ref{prop:energy}.

\begin{proof}[Proof of Proposition~\ref{prop:energy}]
By an elementary computation, we decompose
\begin{equation}\label{equ:dsFij}
\frac{\dd \mathcal{F}}{\dd s}=\mathcal{J}_{1}+\mathcal{J}_{2}+\mathcal{J}_{3},
\end{equation}
where
\begin{equation*}
	\begin{aligned}
	\mathcal{J}_{1}&=
	-2\int_{\R^{2}}\psi_{B}\left(
	\left(W_{b,\lambda}+\varepsilon\right)^{3}-W_{b,\lambda}^{3}-3W_{b,\lambda}^{2}\varepsilon
	\right)\partial_{s}W_{b,\lambda}\dd y,
	\\
	\mathcal{J}_{2}&=
	2\frac{\lambda_s}{\lambda}\int_{\R^{2}}\Lambda\varepsilon
	\left(-\nabla\cdot(\psi_B\nabla\varepsilon)+\varphi_{B}\varepsilon-\psi_B\left((W_{b,\lambda}+\varepsilon)^3-W_{b,\lambda}^3\right)\right)\dd y,
	\\
	\mathcal{J}_{3}&=2\int_{\R^{2}}
	\left(\partial_{s}\varepsilon-\frac{\lambda_s}{\lambda}\Lambda\varepsilon\right)\left(-\nabla\cdot(\psi_B\nabla\varepsilon)+\varphi_{B}\varepsilon-\psi_B\left((W_{b,\lambda}+\varepsilon)^3-W_{b,\lambda}^3\right)\right)\dd y.
\end{aligned}
\end{equation*}

\textbf{Step 1.} Estimate on $\mathcal{J}_{1}$. We claim that, there exist some universal constants $B>1$ large enough, $0<\kappa_{1}<B^{-1}$ small enough and $|s_{0}|$ large enough (possible depending on $B$ and $C_{1}$) such that 
\begin{equation}\label{est:J1}
	\mathcal{J}_{1}\lesssim B^{-30}
	\int_{\R^{2}}\left(|\nabla \varepsilon|^2+\varepsilon^2\right)\psi_{B}\dd y.
\end{equation}
Indeed, from the definition of $W_{b,\lambda}$ in Section~\ref{SS:Refined}, we compute 
\begin{equation*}
	\begin{aligned}
	\partial_{s}W_{b,\lambda}=\frac{\lambda_{s}}{\lambda}\sum_{k=1}^{4}k\theta \lambda^{k\theta}X_{k}\Theta
	+\frac{8}{5}\frac{\lambda_{s}}{\lambda}\sum_{k=1}^{4}\lambda^{k\theta}X_{k}\left(y_{1}\partial_{y_{1}}\Theta\right)+b_{s}\left(
	\chi_{b}+\frac{3}{4}y_{1}\chi'_{b}
	\right)P.
	\end{aligned}
\end{equation*}
From~\eqref{est:Boot} and~\eqref{est:Geo}, for $|s_{0}|$ large enough (depending on $C_{1}$), we obtain 
\begin{equation*}
	\left|\partial_{s}W_{b,\lambda}\right|\lesssim \lambda^{\theta}\left(\lambda^{\theta}+|b|+|s|^{-\frac{17}{4}}\right)
	+|b|\lambda^{\theta}+|s|^{-5}\lesssim |s|^{-2}.
\end{equation*}
Then, using again~\eqref{est:Boot}, for $|s_{0}|$ large enough (depending on $C_{1}$), we find
\begin{equation*}
	|W_{b,\lambda}|\lesssim e^{-\frac{|y|}{2}}+\lambda^{\theta}+|b|\lesssim e^{-\frac{|y|}{2}}+|s|^{-1}.
\end{equation*}
Based on the above estimate,~\eqref{est:NBbound},~\eqref{est:pointpsiBphiB} and Lemma~\ref{le:2DSobolev}, we deduce that 
\begin{equation*}
	\begin{aligned}
		\mathcal{J}_{1}
		&\lesssim |s|^{-2}\int_{\R^{2}}\varepsilon^{2}\left(e^{-\frac{|y|}{2}}+|s|^{-1}+|\varepsilon|\right)
		\psi_{B}
		\dd y\\
		&\lesssim B^{-30}\int_{\R^{2}}\varepsilon^2\psi_{B}\dd y
		+B^{-30}\big\|\varepsilon\sqrt{\psi_{B}}\big\|_{L^{2}}
		\big\|\varepsilon^{2}\sqrt{\psi_{B}}\big\|_{L^{2}}
		\\
		&\lesssim B^{-30}\int_{\R^{2}}\varepsilon^2\psi_{B}\dd y+B^{-30}\int_{\R^{2}}\left(|\nabla \varepsilon|^{2}+ \varepsilon^2\right)\psi_{B}\dd y.
	\end{aligned}
\end{equation*}
We see that~\eqref{est:J1} follows from the above estimate.

\smallskip
\textbf{Step 2.} Estimate on $\mathcal{J}_{2}$. We claim that, 
there exist some universal constants $B>1$ large enough, $0<\kappa_{1}<B^{-1}$ small enough and $|s_{0}|$ large enough (possible depending on $B$ and $C_{1}$) such that 
\begin{equation}\label{est:J2}
	\mathcal{J}_{2}\lesssim 	B^{-30}\int_{\R^{2}}
    \left(|\nabla \varepsilon|^2+\varepsilon^{2}\right)\left(\psi_{B}+B\varphi'_{B}\right)\dd y+|s|^{-10}.
\end{equation}
Indeed, by integration by parts, we decompose
\begin{equation*}
	\mathcal{J}_{2}=\mathcal{J}_{2,1}+\mathcal{J}_{2,2}+\mathcal{J}_{2,3},
\end{equation*}
where
\begin{equation*}
	\begin{aligned}
		\mathcal{J}_{2,1}&=2\frac{\lambda_{s}}{\lambda}\int_{\R^{2}}\psi_{B}\Lambda W_{b,\lambda}\left(3W_{b,\lambda}\varepsilon^{2}+\varepsilon^{3}\right)\dd y,\\
			\mathcal{J}_{2,2}&=\frac{\lambda_{s}}{\lambda}\int_{\R^{2}}
		\left(2\psi_{B}-y_{1}\psi'_{B}\right)|\nabla \varepsilon|^{2}\dd y-\frac{\lambda_{s}}{\lambda}\int_{\R^{2}}y_{1}\varphi'_{B}\varepsilon^{2}\dd y,\\
		\mathcal{J}_{2,3}&=-\frac{1}{2}\frac{\lambda_{s}}{\lambda}\int_{\R^{2}}
		\left(2\psi_{B}-y_{1}\psi'_{B}\right)(6W^{2}_{b}\varepsilon^{2}+4W_{b,\lambda}\varepsilon^{3}+\varepsilon^{4})\dd y.
	\end{aligned}
\end{equation*}

\emph{Estimate on $\mathcal{J}_{2,1}$.} From~\eqref{est:NBbound},~\eqref{est:Boot},~\eqref{est:Geo},~\eqref{est:pointpsiBphiB} and Lemma~\ref{le:2DSobolev}, we find 
\begin{equation}\label{est:J21}
	\begin{aligned}
	\mathcal{J}_{2,1}
	&\lesssim 
	|s|^{-1}\int_{\R^{2}}\varepsilon^2\psi_{B}\dd y
	+|s|^{-1}\big\|\varepsilon\sqrt{\psi_{B}}\big\|_{L^{2}}
	\big\|\varepsilon^{2}\sqrt{\psi_{B}}\big\|_{L^{2}}
	\\
	&\lesssim B^{-30}\int_{\R^{2}}\varepsilon^2\psi_{B}\dd y+B^{-30}\int_{\R^{2}}\left(|\nabla \varepsilon|^{2}+ \varepsilon^2\right)\psi_{B}\dd y.
	\end{aligned}
\end{equation}

\emph{Estimate on $\mathcal{J}_{2,2}$.}
First, using~\eqref{est:Boot},~\eqref{est:Geo} and Lemma~\ref{le:psiphi2}, we deduce that 
\begin{equation*}
	\frac{\lambda_{s}}{\lambda}\int_{\R^{2}}
	\left(2\psi_{B}-y_{1}\psi'_{B}\right)|\nabla \varepsilon|^{2}\dd y\lesssim B^{-30}\int_{\R^{2}}|\nabla \varepsilon|^2
    \left(\psi_{B}+B\varphi'_{B}\right)\dd y.
\end{equation*}
Second, using again~\eqref{est:Boot} and~\eqref{est:Geo}, we have 
\begin{equation*}
	\frac{\lambda_{s}}{\lambda}=(\theta|s|)^{-1}+O(|s|^{-2})>0\Longrightarrow 
-\frac{\lambda_{s}}{\lambda}\int_{\R}\int_{0}^{\infty}y_{1}\varphi'_{B}\varepsilon^{2}\dd y_{1}\dd y_{2}\le 0.
\end{equation*}
Then, for any $B$ large enough and $y_{1}<0$, we have 
\begin{equation*}
	|y_{1}\varphi'_{B}|\lesssim \left|B\varphi'_{B}\right|^{\frac{99}{100}}
	\Longrightarrow
	-\frac{\lambda_{s}}{\lambda}\int_{\R}\int_{-\infty}^{0}y_{1}\varphi'_{B}\varepsilon^{2}\dd y_{1}\dd y_{2}\lesssim B^{-30}\int_{\R^{2}}\varepsilon^{2}\varphi'_{B}\dd y+|s|^{-10}.
\end{equation*}
Combining the above estimates, we obtain 
\begin{equation}\label{est:J22}
	\mathcal{J}_{2,2}\lesssim
	B^{-30}\int_{\R^{2}}\left(|\nabla \varepsilon|^2
    +\varepsilon^{2}\right)
    \left(\psi_{B}+B\varphi'_{B}\right)\dd y+|s|^{-10}.
\end{equation}

\emph{Estimate on $\mathcal{J}_{2,3}$.}
Using again~\eqref{est:Boot},~\eqref{est:Geo},~\eqref{est:pointpsiBphiB}, Lemma~\ref{le:psiphi2} and Lemma~\ref{le:2DSobolev},
\begin{equation}\label{est:J23}
	\begin{aligned}
	\mathcal{J}_{2,3}
	&\lesssim |s|^{-1}\int_{\R^{2}}\left(\varepsilon^{2}+\varepsilon^{4}\right)
	\left(\psi_{B}+\sqrt{\psi_{B}}\right)
	\dd y\\
	&\lesssim B^{-30}\int_{\R^{2}}
    \left(|\nabla \varepsilon|^2+\varepsilon^{2}\right)\left(\psi_{B}+B\varphi'_{B}\right)\dd y.
\end{aligned}
\end{equation}

We see that~\eqref{est:J2} follows from~\eqref{est:J21},~\eqref{est:J22} and~\eqref{est:J23}.

\smallskip
\textbf{Step 3.} Estimate on $\mathcal{J}_{3}$. We claim that, 
there exist some universal constants $B>1$ large enough, $0<\kappa_{1}<B^{-1}$ small enough and $|s_{0}|$ large enough (possible depending on $B$ and $C_{1}$) such that
\begin{equation}\label{est:J3}
\begin{aligned}
&\mathcal{J}_{3}+\frac{1}{4}\int_{\R^{2}}
\left(|\nabla \varepsilon|^{2}+\varepsilon^{2}\right)\varphi'_{B}\dd y\le
 \frac{C_{5}}{B^{30}}\int_{\R^{2}}\left(|\nabla \varepsilon|^{2}+\varepsilon^{2}\right)\psi_{B}\dd y+
    \frac{C_{6}}{|s|^{10}}.
\end{aligned}
\end{equation}
Here, $C_{5}>1$ is a universal constant independent of $B$ and $C_{6}=C_{6}(B,C_{1})>1$ is a constant depending only on $B$ and $C_{1}$.

Indeed, from the equation of $\varepsilon$ in~\eqref{equ:e}, we decompose
\begin{equation*}
\mathcal{J}_{3}=\mathcal{J}_{3,1}+\mathcal{J}_{3,2}+\mathcal{J}_{3,3},
\end{equation*}
where
\begin{equation*}
\begin{aligned}
\mathcal{J}_{3,1}&=-2\int_{\R^{2}}
\mathcal{E}_{b}\left(-\nabla\cdot(\psi_B\nabla\varepsilon)+\varphi_{B}\varepsilon-\psi_B\left((W_{b,\lambda}+\varepsilon)^3-W_{b,\lambda}^3\right)\right)\dd y,\\
\mathcal{J}_{3,2}&=-2\int_{\R^{2}}
\partial_{y_{1}}\left(\Delta \varepsilon-\varepsilon+(W_{b,\lambda}+\varepsilon)^{3}-W_{b,\lambda}^{3}\right)\\
&\quad \quad\quad \quad  \times
\left(-\nabla\cdot(\psi_B\nabla\varepsilon)+\varphi_{B}\varepsilon-\psi_B\left((W_{b,\lambda}+\varepsilon)^3-W_{b,\lambda}^3\right)\right)\dd y
,\\
\mathcal{J}_{3,3}&=2\left(\frac{x_{1s}}{\lambda}-1\right)\int_{\R^{2}}
\left(\partial_{y_1}\varepsilon\right)
\left(-\nabla\cdot(\psi_B\nabla\varepsilon)+\varphi_{B}\varepsilon-\psi_B\left((W_{b,\lambda}+\varepsilon)^3-W_{b,\lambda}^3\right)\right)\dd y.
\end{aligned}
\end{equation*}
\emph{Estimate on $\mathcal{J}_{3,1}$.} From the definition of $\mathcal{E}_{b}$ in Section~\ref{SS:Geo}, we decompose 
\begin{equation*}
	\mathcal{J}_{3,1}=\mathcal{J}_{3,1,1}+\mathcal{J}_{3,1,2}+\mathcal{J}_{3,1,3}+\mathcal{J}_{3,1,4},
\end{equation*}
where
\begin{equation*}
\begin{aligned}
	\mathcal{J}_{3,1,1}&=2\int_{\mathbb{R}^2}
	\left(\Psi_{M}-\Psi_{N}\right)
	\left(-\nabla\cdot(\psi_B\nabla\varepsilon)+\varphi_{B}\varepsilon-\psi_B\left((W_{b,\lambda}+\varepsilon)^3-W_{b,\lambda}^3\right)\right)\dd y,\\
    \mathcal{J}_{3,1,2}&=2\int_{\mathbb{R}^2}
\left(-\Psi_{W}-\Psi_{b}\right)
\left(-\nabla\cdot(\psi_B\nabla\varepsilon)+\varphi_{B}\varepsilon-\psi_B\left((W_{b,\lambda}+\varepsilon)^3-W_{b,\lambda}^3\right)\right)\dd y,\\
\mathcal{J}_{3,1,3}&=2\left(\frac{\lambda_{s}}{\lambda}+\Gamma+b\right)\int_{\mathbb{R}^2}\Lambda Q\left(-\nabla\cdot(\psi_B\nabla\varepsilon)+\varphi_{B}\varepsilon-\psi_B\left((W_{b,\lambda}+\varepsilon)^3-W_{b,\lambda}^3\right)\right)\dd y,\\
\mathcal{J}_{3,1,4}&=2\left(\frac{x_{1s}}{\lambda}-1\right)\int_{\mathbb{R}^2}\left(\partial_{y_{1}} Q\right)
\left(-\nabla\cdot(\psi_B\nabla\varepsilon)+\varphi_{B}\varepsilon-\psi_B\left((W_{b,\lambda}+\varepsilon)^3-W_{b,\lambda}^3\right)\right)\dd y.
\end{aligned}
\end{equation*}
First, from Lemma~\ref{le:psiphi}, Lemma~\ref{le:psiphi2},  
Lemma~\ref{le:2DSobolev} and Cauchy-Schwarz inequality, 
\begin{equation*}
    \begin{aligned}
        \mathcal{J}_{3,1,1}
        &\lesssim
        \mathcal{N}_{B}\left(\Psi_{M}-\Psi_{N}\right)
   \left( \int_{\R^{2}}\left(|\nabla \varepsilon|^{2}+\varepsilon^{2}\right)
   \left(\psi_{B}+B\varphi'_{B}\right)\dd y\right)^{\frac{1}{2}}\\
   &+\left(\int_{\R}\int_{B^{10}}^{\infty}y_{1}^{2}\left(\Psi_{M}^{2}+\Psi_{N}^{2}\right)\varphi'_{B}\dd y_{1}\dd y_{2}\right)^{\frac{1}{2}}\left(\int_{\R^{2}}\varepsilon^{2}\varphi'_{B}\dd y\right)^{\frac{1}{2}}\\
   &+\left(\|\Psi_{M}\|_{L^{\infty}}+\|\Psi_{N}\|_{L^{\infty}}\right)
\left(\int_{\R^{2}}\varepsilon^{2}\psi_{B}\dd y\right)^{\frac{1}{2}}
\left(\int_{\R^{2}}\varepsilon^{4}\psi_{B}\dd y\right)^{\frac{1}{2}},
    \end{aligned}
\end{equation*}
\begin{equation*}
    \begin{aligned}
        \mathcal{J}_{3,1,2}
        &\lesssim
        \mathcal{N}_{B}\left(\Psi_{W}+\Psi_{B}\right)
   \left( \int_{\R^{2}}\left(|\nabla \varepsilon|^{2}+\varepsilon^{2}\right)
   \left(\psi_{B}+B\varphi'_{B}\right)\dd y\right)^{\frac{1}{2}}\\
   &+\left(\int_{\R}\int_{B^{10}}^{\infty}y_{1}^{2}\left(\Psi_{W}^{2}+\Psi_{b}^{2}\right)\varphi'_{B}\dd y_{1}\dd y_{2}\right)^{\frac{1}{2}}\left(\int_{\R^{2}}\varepsilon^{2}\varphi'_{B}\dd y\right)^{\frac{1}{2}}\\
   &+\left(\|\Psi_{W}\|_{L^{\infty}}+\|\Psi_{b}\|_{L^{\infty}}\right)
\left(\int_{\R^{2}}\varepsilon^{2}\psi_{B}\dd y\right)^{\frac{1}{2}}
\left(\int_{\R^{2}}\varepsilon^{4}\psi_{B}\dd y\right)^{\frac{1}{2}}.
    \end{aligned}
\end{equation*}
It follows from Lemma~\ref{le:PsiMNWb} that 
\begin{equation*}
\begin{aligned}
\mathcal{J}_{3,1,1}&\lesssim B^{-30}\int_{\R^{2}}\left(|\nabla \varepsilon|^{2}+\varepsilon^{2}\right)\left(\psi_{B}+B\varphi'_{B}\right)\dd y
+\left(C_{1}^{2}+C_{4}^{2}\right)B^{30}|s|^{-10},\\
\mathcal{J}_{3,1,2}&\lesssim B^{-30}\int_{\R^{2}}\left(|\nabla \varepsilon|^{2}+\varepsilon^{2}\right)\left(\psi_{B}+B\varphi'_{B}\right)\dd y
+\left(C_{1}^{2}+C_{4}^{2}\right)B^{30}|s|^{-10}.
\end{aligned}
\end{equation*}

Second, by an elementary computation, we find 
\begin{equation*}
	\begin{aligned}
	&2\left(-\nabla\cdot(\psi_B\nabla\varepsilon)+\varphi_{B}\varepsilon-\psi_B\left((W_{b,\lambda}+\varepsilon)^3-W_{b,\lambda}^3\right)\right)\\
	&=\mathcal{L}\varepsilon+\left(2\psi_{B}-1\right)
	\left(-\Delta \varepsilon-3W_{b,\lambda}^{2}\varepsilon-3W_{b,\lambda}\varepsilon^{2}-\varepsilon^{3}\right)\\
	&+\left(2\varphi_{B}-1\right)\varepsilon-
	\left(3(W_{b,\lambda}^{2}-Q^{2})\varepsilon+3W_{b,\lambda}\varepsilon^{2}+\varepsilon^{3}\right)-2\psi'_{B}\partial_{y_{1}}\varepsilon.
	\end{aligned}
\end{equation*}
Therefore, from~\eqref{equ:ortho},~\eqref{est:Geomain}, Proposition~\ref{prop:Spectral} and Lemma~\ref{le:psiphi}, we deduce that 
\begin{equation*}
	\begin{aligned}
	\mathcal{J}_{3,1,3}+\mathcal{J}_{3,1,4}
	&\lesssim B^{-30}
	\|\varepsilon\|_{L^{2}_{\rm{loc}}}
	\left(\|\varepsilon\|_{L^{2}_{\rm{loc}}}+C_{1}|s|^{-5}+C_{1}^{10}|s|^{-6}\right)\\
&\lesssim B^{-30}\int_{\R^{2}}\left(|\nabla \varepsilon|^{2}+\varepsilon^{2}\right)\psi_{B}\dd y+C_{1}^{2}|s|^{-10}+C_{1}^{20}|s|^{-12}.
\end{aligned}
\end{equation*}
Combining the above estimates, we conclude that 
\begin{equation}\label{est:J31}
\begin{aligned}
    \mathcal{J}_{3,1}&\lesssim
    B^{-30}\int_{\R^{2}}\left(|\nabla \varepsilon|^{2}+\varepsilon^{2}\right)\left(\psi_{B}+B\varphi'_{B}\right)\dd y\\
    &+
    C_{1}^{20}|s|^{-10}+C_{1}^{2}
    B^{30}|s|^{-10}
    +C_{4}^{2}B^{30}|s|^{-10}.
    \end{aligned}
\end{equation}

\emph{Estimate on $\mathcal{J}_{3.2}$.} 
By integration by parts, we decompose 
\begin{equation*}
\mathcal{J}_{3,2}=\mathcal{J}_{3,2,1}+\mathcal{J}_{3,2,2}+\mathcal{J}_{3,2,3}+\mathcal{J}_{3,2,4},
\end{equation*}
where
\begin{equation*}
\begin{aligned}
\mathcal{J}_{3,2,1}&=-\int_{\R^{2}}|\nabla \varepsilon|^{2}\left(\varphi'_{B}+\psi'_{B}-
\psi'''_{B}\right)\dd y\\
&-2\int_{\R^{2}}(\partial_{y_{1}}\varepsilon)^{2}\varphi'_{B}\dd y
-\int_{\R^{2}}\varepsilon^{2}\left(\varphi'_{B}-\varphi'''_{B}\right)\dd y,\\
\mathcal{J}_{3,2,2}&=-\int_{\R^{2}}
\left(3(\partial^{2}_{y_{1}}\varepsilon)^{2}+4(\partial_{y_{1}}\partial_{y_{2}}\varepsilon)^{2}+(\partial^{2}_{y_{2}}\varepsilon)^{2}\right)
\psi'_{B}
\dd y,\quad \qquad \qquad \
\end{aligned}
\end{equation*}
\begin{equation*}
    \begin{aligned}
    \mathcal{J}_{3,2,3}
&=-2\int_{\R^{2}}(\partial_{y_{1}}W_{b,\lambda})\left(3W_{b,\lambda}\varepsilon^{2}+\varepsilon^{3}\right)
\left(\varphi_{B}-\psi_{B}\right)
\dd y\\
&+\frac{1}{2}\int_{\R^{2}}(6W^{2}_{b}\varepsilon^{2}+8W_{b,\lambda}\varepsilon^{3}+3\varepsilon^{4})
\left(\varphi'_{B}-\psi'_{B}\right)
\dd y\\
&+6\int_{\R^{2}}\left(\partial_{y_{1}}\varepsilon\right)
\left((\partial_{y_{1}}W_{b,\lambda})(2W_{b,\lambda}\varepsilon+\varepsilon^{2})+(\partial_{y_{1}}\varepsilon)(W_{b,\lambda}+\varepsilon)^{2}\right)
\psi'_{B}
\dd y,\\
\mathcal{J}_{3,2,4}&=-
\int_{\R^{2}}\left(\left(-\Delta \varepsilon+\varepsilon-(W_{b,\lambda}+\varepsilon)^{3}+W_{b,\lambda}^{3}\right)^{2}-
\left(-\Delta \varepsilon+\varepsilon\right)^{2}\right)
\psi'_{B}\dd y.
\end{aligned}
\end{equation*}
First, from Lemma~\ref{le:psiphi2}, we deduce that 
\begin{equation*}
\begin{aligned}
    \mathcal{J}_{3,2,1}
    +\frac{1}{2}\int_{\R^{2}}\left(|\nabla \varepsilon|^{2}+\varepsilon^{2}\right)\varphi'_{B}\dd y
    \lesssim B^{-30}\int_{\R^{2}}\left(|\nabla \varepsilon|^{2}+\varepsilon^{2}\right)\left(\psi_{B}+B\varphi'_{B}\right)\dd y.
    \end{aligned}
\end{equation*}

Second, from the definition of $\mathcal{J}_{3,2,2}$, we have
\begin{equation*}
    \mathcal{J}_{3,2,2}\le -\int_{\R^{2}}
\left((\partial^{2}_{y_{1}}\varepsilon)^{2}+(\partial_{y_{1}}\partial_{y_{2}}\varepsilon)^{2}+(\partial^{2}_{y_{2}}\varepsilon)^{2}\right)
\psi'_{B}
\dd y\le -\sum_{|\beta|=2}\int_{\R^{2}}\left|\partial_{y}^{\beta}\varepsilon\right|^{2}\psi'_{B}\dd y.
\end{equation*}

Then, using~\eqref{est:Boot},~\eqref{est:pointpsiBphiB}, Lemma~\ref{le:psiphi2}--\ref{le:psiphi3} and Lemma~\ref{le:2DSobolev}, we find 
\begin{equation*}
    \mathcal{J}_{3,2,3}\lesssim B^{-30}\int_{\R^{2}}\left(|\nabla \varepsilon|^{2}+\varepsilon^{2}\right)\left(\psi_{B}+B\varphi'_{B}\right)\dd y+\int_{\R^{2}}(\partial_{y_{1}} \varepsilon)^{2}\varepsilon^{2}\psi'_{B}\dd y.
\end{equation*}
Note that, from~\eqref{est:pointpsiBphiB} and Lemma~\ref{le:2DSobolev},
\begin{equation*}
    \begin{aligned}
        \int_{\R^{2}}(\partial_{y_{1}} \varepsilon)^{2}\varepsilon^{2}\psi'_{B}\dd y
        &\lesssim
        \left\|\varepsilon^{2}\sqrt{\psi'_{B}}\right\|_{L^{\infty}}
       \left( \int_{\R^{2}}|\nabla \varepsilon|^{2}\sqrt{\psi'_{B}}\dd y\right)\\
        &\lesssim \|\varepsilon\|_{L^{2}}\left(\int_{\R^{2}}|\nabla \varepsilon|^{2}\sqrt{\psi'_{B}}\dd y\right)
        \left(\int_{\R^{2}}\left|\frac{\partial^{2}\varepsilon}{\partial y_{1}\partial y_{2}}\right|^{2}\psi'_{B}\dd y\right)^{\frac{1}{2}}\\
        &+\left(\int_{\R^{2}}|\nabla \varepsilon|^{2}\sqrt{\psi'_{B}}\dd y\right)\left(\int_{\R^{2}}\left(|\nabla \varepsilon|^{2}+\varepsilon^{2}\right)\sqrt{\psi'_{B}}\dd y\right).
    \end{aligned}
\end{equation*}
Note also that, from Lemma~\ref{le:psiphi2} and integration by parts, 
\begin{equation*}
    \begin{aligned}
       \int_{\R^{2}}\varepsilon^{2}\sqrt{\psi'_{B}}\dd y&\lesssim
       \int_{\R^{2}}\varepsilon^{2}\left(B^{-\frac{1}{2}}\psi_{B}+B^{\frac{1}{2}}\varphi'_{B}\right)\dd y\lesssim \mathcal{N}_{B}^{2}\left(\varepsilon\right),\\
       \int_{\R^{2}}|\nabla \varepsilon|^{2}\sqrt{\psi'_{B}}\dd y&\lesssim
       \|\varepsilon\|_{L^{2}}
       \left(\int_{\R^{2}}|\Delta \varepsilon|^{2}\psi'_{B}\dd y\right)^{\frac{1}{2}}+\int_{\R^{2}}\varepsilon^{2}\sqrt{\psi'_{B}}\dd y.
    \end{aligned}
\end{equation*}
Based on the above estimates, we deduce that 
\begin{equation*}
    \mathcal{J}_{3,2,3}\lesssim B^{-30}\int_{\R^{2}}\left(|\nabla \varepsilon|^{2}+\varepsilon^{2}\right)\left(\psi_{B}+B\varphi'_{B}\right)\dd y+B^{-30}\sum_{|\beta|=2}\int_{\R^{2}}\left|\partial_{y}^{\beta}\varepsilon\right|^{2}\psi'_{B}\dd y.
\end{equation*}
Using a similar argument as above, we also deduce that 
\begin{equation*}
    \mathcal{J}_{3,2,4}\lesssim B^{-30}\int_{\R^{2}}\left(|\nabla \varepsilon|^{2}+\varepsilon^{2}\right)\left(\psi_{B}+B\varphi'_{B}\right)\dd y+B^{-30}\sum_{|\beta|=2}\int_{\R^{2}}\left|\partial_{y}^{\beta}\varepsilon\right|^{2}\psi'_{B}\dd y.
\end{equation*}
Here, we use the fact that 
\begin{equation*}
    \left\|\varepsilon^{2}\sqrt{\psi'_{B}}\right\|_{L^{\infty}}^{2}\lesssim
    \|\varepsilon\|_{L^{2}}^{2}\int_{\R^{2}}\left|\Delta \varepsilon\right|^{2}\psi'_{B}\dd y+\mathcal{N}_{B}^{2}(\varepsilon)\int_{\R^{2}}\varepsilon^{2}\sqrt{\psi'_{B}}\dd y.
\end{equation*}
Combining the above estimates, we conclude that 
\begin{equation}\label{est:J32}
    \mathcal{J}_{3,2}+\frac{1}{4}\int_{\R^{2}}\left(|\nabla \varepsilon|^{2}+\varepsilon^{2}\right)\varphi'_{B}\dd y
    \lesssim B^{-30}\int_{\R^{2}}\left(|\nabla \varepsilon|^{2}+\varepsilon^{2}\right)\psi_{B}\dd y.
\end{equation}

\emph{Estimate on $\mathcal{J}_{3,3}$.} 
By integration by parts, we compute 
\begin{equation*}
\begin{aligned}
   &2 \int_{\R^{2}}
\left(\partial_{y_1}\varepsilon\right)
\left(-\nabla\cdot(\psi_B\nabla\varepsilon)+\varphi_{B}\varepsilon-\psi_B\left((W_{b,\lambda}+\varepsilon)^3-W_{b,\lambda}^3\right)\right)\dd y\\
&=-\int_{\R^{2}}\left(|\nabla \varepsilon|^{2}\psi'_{B}+\varepsilon^{2}\varphi'_{B}
-3\varepsilon^{2}\partial_{y_{1}}(W^{2}_{b}\psi_{B})-2\varepsilon^{3}\partial_{y_{1}}(W_{b,\lambda}\psi_{B})-\frac{1}{2}\varepsilon^{4}\psi'_{B}\right)\dd y.
\end{aligned}
\end{equation*}
It follows from~\eqref{est:Geo},~\eqref{est:pointpsiBphiB} and Lemma~\ref{le:2DSobolev} that 
\begin{equation}\label{est:J33}
    \mathcal{J}_{3,3}\lesssim B^{-30}\int_{\R^{2}}\left(|\nabla \varepsilon|^{2}+\varepsilon^{2}\right)\left(\psi_{B}+B\varphi'_{B}\right)\dd y.
\end{equation}
We see that~\eqref{est:J3} follows from~\eqref{est:J31},~\eqref{est:J32} and~\eqref{est:J33}.

\smallskip
\textbf{Step 4.} Conclusion. Combining the estimates~\eqref{est:J1},~\eqref{est:J2} and~\eqref{est:J3}, we complete the proof of~\eqref{est:dsF} by taking $B$ large enough.
\end{proof}

\subsection{Virial estimate}\label{SS:Virial}
In this subsection, we introduce the virial estimate for the solution of~\eqref{CP}. As in the previous works~\cite{CLY,CLYMINI}, the Schr\"odinger operator appearing in the virial-type estimate does not yet have a suitable coercivity. Therefore, we should introduce a transform, which was first studied in Kenig-Martel~\cite{KenigMartel}, to perform the virial estimate after converting the original problem into a dual one. The following Schr\"odinger operator will appear in the estimate of the dual problem:
\begin{equation*}
\begin{aligned}
    \mathcal{H}f
    &=-\frac{1}{2}\Delta f-\partial_{y_{1}}^{2}f+\frac{1}{2}f-\frac{3}{2}\left(Q^{2}+2y_{1}Q\partial_{y_{1}}Q\right)f\\
    &+\frac{3}{(Q,Q)}\left((f,Q^{2}\partial_{y_{1}}Q)y_{1}Q+(f,y_{1}Q)Q^{2}\partial_{y_{1}}Q\right),\quad \mbox{for any}\ f\in H^{1}(\R^{2}).
    \end{aligned}
\end{equation*}
Recall that, the following coercivity of the Schr\"odinger operator $\mathcal{H}$ has been verified in~\cite[Section 16]{FHRY} via the numerical computation.
\begin{lemma}[\cite{FHRY}]\label{le:coerH}
    There exists $\mu_{2}>0$ such that, for all $f\in H^{1}(\R^{2})$,
    \begin{equation*}
        \left(\mathcal{H}f,f\right)\ge \mu_{2}\|f\|_{H^{1}}^{2}-\frac{1}{\mu_{2}}\left((f,Q)^{2}+(f,\partial_{y_{1}}Q)^{2}+(f,\partial_{y_{2}}Q)^{2}\right).
    \end{equation*}
\end{lemma}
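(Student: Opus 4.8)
The plan is to treat $\mathcal{H}$ as a relatively compact perturbation of an explicit positive operator, reduce the coercivity to a finite-dimensional spectral computation, and then invoke the rigorous numerical verification carried out in~\cite[Section 16]{FHRY}. First I would write $\mathcal{H}=\mathcal{H}_{0}+\mathcal{K}$, where $\mathcal{H}_{0}f=-\tfrac12\Delta f-\partial_{y_{1}}^{2}f+\tfrac12 f$ and $\mathcal{K}$ collects the remaining terms: the multiplication by $-\tfrac32(Q^{2}+2y_{1}Q\partial_{y_{1}}Q)=-\tfrac32\partial_{y_{1}}(y_{1}Q^{2})$ together with the two rank-one operators built from $y_{1}Q$ and $Q^{2}\partial_{y_{1}}Q$. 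Since $Q\in\mathcal{Y}(\R^{2})$, all coefficients entering $\mathcal{K}$ decay exponentially (after absorbing the polynomial weight $y_{1}$), so $\mathcal{K}$ is $\mathcal{H}_{0}$-compact; by Weyl's theorem $\sigma_{\mathrm{ess}}(\mathcal{H})=\sigma_{\mathrm{ess}}(\mathcal{H}_{0})=[\tfrac12,\infty)$, hence below $\tfrac12$ the spectrum of $\mathcal{H}$ consists of at most finitely many eigenvalues of finite multiplicity.

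Next I would identify the degenerate and negative directions. Differentiating the elliptic equation for $Q$ and using the algebraic identities for $(\partial_{y_{j}}Q,Q^{2}\partial_{y_{1}}Q)$ and $(\partial_{y_{j}}Q,y_{1}Q)$ obtained by integration by parts, one checks that $\partial_{y_{1}}Q$ and $\partial_{y_{2}}Q$ belong to $\mathrm{Ker}\,\mathcal{H}$ — the rank-one corrections (coming from the Kenig--Martel dual transform, cf.~\cite{KenigMartel,CLY}) are precisely designed so that these kernel directions are retained. One then determines the dimension of the maximal negative subspace, which the decay of $Q$ reduces to inspecting a finite matrix of inner products; the expected picture is a one-dimensional negative direction not orthogonal to $Q$. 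Granting this, the min-max principle shows that imposing the three orthogonality conditions against $Q,\partial_{y_{1}}Q,\partial_{y_{2}}Q$ removes all non-positive directions, so there is $c>0$ with $(\mathcal{H}f,f)\ge c\|f\|_{L^{2}}^{2}$ for every $f$ orthogonal to $\{Q,\partial_{y_{1}}Q,\partial_{y_{2}}Q\}$. To pass from $L^{2}$ to $H^{1}$, note that the potential part of $\mathcal{H}$ is a bounded multiplication plus bounded rank-one terms, so $(\mathcal{H}f,f)\ge\tfrac12\|\nabla f\|_{L^{2}}^{2}+\tfrac12\|f\|_{L^{2}}^{2}-C_{Q}\|f\|_{L^{2}}^{2}$ with $C_{Q}$ depending only on $Q$; taking a convex combination of this bound with the $L^{2}$ lower bound and choosing the weight small yields $(\mathcal{H}f,f)\gtrsim\|f\|_{H^{1}}^{2}$ on the orthogonal complement, hence the stated estimate for general $f$ with the loss absorbed into the $\tfrac{1}{\mu_{2}}$ term, in the same manner as the coercivity estimates in Proposition~\ref{prop:Spectral}.

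The main obstacle is the step identifying the number of negative eigenvalues and checking that the span $\{Q,\partial_{y_{1}}Q,\partial_{y_{2}}Q\}$ is large enough: unlike the 2D quadratic ZK problem, where the relevant obstruction is a single scalar sign condition that is numerically transparent, here one must verify rigorously that a matrix of explicit integrals built from $Q$ and its derivatives is positive definite on the relevant subspace. This requires a robust (interval-arithmetic) numerical analysis rather than a hand computation, and I would not reproduce it, referring instead to~\cite[Section 16]{FHRY} (see also~\cite{FHR}), where exactly this spectral condition is established.
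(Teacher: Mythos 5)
The paper offers no proof of this lemma: it is cited directly from~\cite[Section 16]{FHRY}, with the sentence preceding it merely noting that the coercivity of $\mathcal{H}$ ``has been verified \ldots\ via the numerical computation.'' Your sketch supplies plausible scaffolding around that same citation---the decomposition $\mathcal{H}=\mathcal{H}_0+\mathcal{K}$ with $\mathcal{K}$ relatively $\mathcal{H}_0$-compact, Weyl's theorem to place $\sigma_{\mathrm{ess}}(\mathcal{H})=[\tfrac12,\infty)$, a min-max reduction to a finite-dimensional verification, and the standard convexity upgrade from $L^2$ to $H^1$ coercivity---and you correctly identify that the finite-dimensional spectral check must be done by robust numerics and taken from~\cite{FHRY}. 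In that sense your route matches the paper's, which simply treats the result as a black box.

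One caveat is the claim that $\partial_{y_1}Q,\partial_{y_2}Q\in\mathrm{Ker}\,\mathcal{H}$. Inserting $f=\partial_{y_j}Q$ and using $\mathcal{L}\partial_{y_j}Q=0$ to cancel the $-\tfrac12\Delta+\tfrac12-\tfrac32Q^2$ part leaves $-\partial_{y_1}^2\partial_{y_j}Q-3y_1Q(\partial_{y_1}Q)(\partial_{y_j}Q)$ plus the two rank-one contributions, which does not obviously vanish without a further identity tied to how $\mathcal{H}$ arises from the Kenig--Martel dual transform. You should either carry out that cancellation carefully (it requires matching the extra $-\partial_{y_1}^2$ piece and the $y_1$-weighted potential against the rank-one projectors, and the bookkeeping is not free), or phrase the spectral picture more cautiously: the lemma only asserts coercivity modulo the span of $Q,\partial_{y_1}Q,\partial_{y_2}Q$, and the min-max argument can be run directly from the numerically established positivity condition in~\cite[Section 16]{FHRY} without committing to $\nabla Q$ being exact eigenvectors of $\mathcal{H}$.
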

To state the virial estimate for the solution of~\eqref{CP},  we should first consider the following transformed problem:
\begin{equation*}
    \eta=(1-\delta \Delta)^{-1}\mathcal{L}\varepsilon,\quad \mbox{for all}\ s\in [S_{n},S_{n}^{\star}].
\end{equation*}
Here, $0<\delta<1$ is a small enough constant (depending on $B$) to be chosen later.

\smallskip
To simplify notation, we denote
\begin{equation*}
R_{NL}=\left(W_{b,\lambda}+\varepsilon\right)^{3}-W_{b,\lambda}^{3}-3Q^{2}\varepsilon \quad \mbox{and}
\quad 
    {\rm{Mod}}_{\eta}=\frac{\lambda_s}{\lambda}\Lambda\varepsilon+\left(\frac{x_{1s}}{\lambda}-1\right)\partial_{y_{1}}\varepsilon.
\end{equation*}

\smallskip
By an elementary computation and equation of $\varepsilon$ in~\eqref{equ:e}, we have the following equation of $\eta$ and the identity related to $\left(\Lambda\varepsilon, \Lambda \eta\right)$.
\begin{lemma}\label{le:equeta}
The following identities hold.
\begin{enumerate}
    \item \emph{Equation of $\eta$.} It holds
    \begin{equation*}
\begin{aligned}
\partial_{s}\eta
&=\mathcal{L}\partial_{y_{1}}\eta-3\delta(1-\delta\Delta)^{-1}\left(\Delta(Q^{2})\partial_{y_{1}}\eta+4Q\nabla Q\cdot \nabla \partial_{y_{1}}\eta\right)\\
&+(1-\delta\Delta)^{-1}\mathcal{L}{\rm{Mod}}_{\eta}
-2\left(\frac{\lambda_{s}}{\lambda}+\Gamma+b\right)(1-\delta\Delta)^{-1}Q\\
&+(1-\delta\Delta)^{-1}{\mathcal{L}}(\Psi_{M}-\Psi_{N}-\Psi_{W}-\Psi_{b}-\partial_{y_{1}}R_{NL}).
\end{aligned}
\end{equation*}
In addition, the function $\eta$ satisfies the following orthogonality conditions
\begin{equation*}
\left(\eta,(1-\delta\Delta)Q\right)=\left(\eta,(1-\delta\Delta)\partial_{y_{1}}Q\right)=\left(\eta,(1-\delta\Delta)\partial_{y_{2}}Q\right)=0.
\end{equation*}

\item \emph{{Identity related to $\left(\Lambda \varepsilon,\Lambda \eta\right)$}.}
It holds 
\begin{equation*}
\begin{aligned}
    (1-\delta \Delta)^{-1}\mathcal{L}\Lambda \varepsilon
    &=\Lambda \eta 
    +3(1-\delta\Delta)^{-1}\left(\varepsilon y\cdot\nabla \left(Q^{2}\right)\right)\\
&+2\delta(1-\delta \Delta)^{-2}\Delta\mathcal{L}\varepsilon-2(1-\delta\Delta)^{-1}\Delta\varepsilon.
    \end{aligned}
\end{equation*}
\end{enumerate}
    \end{lemma}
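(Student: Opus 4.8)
The plan is to obtain both identities by applying the time‑independent, self‑adjoint Fourier multiplier $(1-\delta\Delta)^{-1}$ together with $\mathcal{L}$ to the equation~\eqref{equ:e2} for $\varepsilon$, and then commuting these operators past the variable‑coefficient operators $\mathcal{L}$ and $\Lambda$. The computational backbone is the elementary identity
\begin{equation*}
[A,(1-\delta\Delta)^{-1}]=\delta\,(1-\delta\Delta)^{-1}[A,\Delta](1-\delta\Delta)^{-1},
\end{equation*}
valid for any differential operator $A$, which I would use with $A=-3Q^{2}$ and with $A=\Lambda$. Since $[Q^{2},\Delta]g=-\Delta(Q^{2})g-4Q\nabla Q\cdot\nabla g$, this gives
\begin{equation*}
[\mathcal{L},(1-\delta\Delta)^{-1}]g=3\delta\,(1-\delta\Delta)^{-1}\big(\Delta(Q^{2})(1-\delta\Delta)^{-1}g+4Q\nabla Q\cdot\nabla(1-\delta\Delta)^{-1}g\big),
\end{equation*}
while $[\Lambda,\Delta]=-2\Delta$ yields $[\Lambda,(1-\delta\Delta)^{-1}]=-2\delta\,\Delta(1-\delta\Delta)^{-2}$. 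I would also use $\mathcal{L}\Lambda=\Lambda\mathcal{L}-2\Delta+3\,(y\cdot\nabla(Q^{2}))$, together with $\mathcal{L}\Lambda Q=-2Q$, $\mathcal{L}\partial_{y_{1}}Q=\mathcal{L}\partial_{y_{2}}Q=0$, $\mathcal{L}Q=-2Q^{3}$, and the fact that $\partial_{y_{1}}$ commutes with both $(1-\delta\Delta)^{-1}$ and $\Delta$.

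For part (i), I would first rewrite~\eqref{equ:e2}: regrouping the modulation terms and invoking $\mathcal{L}\Lambda Q=-2Q$ and $\mathcal{L}\partial_{y_{1}}Q=0$, one checks that $\widetilde{{\rm{Mod}}}-(\Gamma+b)\Lambda\varepsilon=(\frac{\lambda_{s}}{\lambda}+\Gamma+b)\Lambda Q+(\frac{x_{1s}}{\lambda}-1)\partial_{y_{1}}Q+{\rm{Mod}}_{\eta}$, and that the cubic remainders satisfy $R_{1}+R_{2}=R_{NL}$ (the cross term $3W_{b,\lambda}^{2}\varepsilon$ cancels, leaving $3Q^{2}\varepsilon$ subtracted). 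Applying $(1-\delta\Delta)^{-1}\mathcal{L}$ to~\eqref{equ:e2} — which commutes with $\partial_{s}$, so the left‑hand side becomes $\partial_{s}\eta$ — and commuting as in the previous paragraph on the leading term,
\begin{equation*}
(1-\delta\Delta)^{-1}\mathcal{L}\partial_{y_{1}}\mathcal{L}\varepsilon=\mathcal{L}\partial_{y_{1}}\eta-3\delta\,(1-\delta\Delta)^{-1}\big(\Delta(Q^{2})\partial_{y_{1}}\eta+4Q\nabla Q\cdot\nabla\partial_{y_{1}}\eta\big),
\end{equation*}
the modulation part produces $(1-\delta\Delta)^{-1}\mathcal{L}\,{\rm{Mod}}_{\eta}-2(\frac{\lambda_{s}}{\lambda}+\Gamma+b)(1-\delta\Delta)^{-1}Q$ (the $\partial_{y_{1}}Q$ contribution being annihilated by $\mathcal{L}$), and the source terms give $(1-\delta\Delta)^{-1}\mathcal{L}(\Psi_{M}-\Psi_{N}-\Psi_{W}-\Psi_{b}-\partial_{y_{1}}R_{NL})$; collecting these is exactly the claimed equation. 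For the orthogonality conditions, self‑adjointness of $(1-\delta\Delta)^{\pm1}$ gives, for $g\in\{Q,\partial_{y_{1}}Q,\partial_{y_{2}}Q\}$, $(\eta,(1-\delta\Delta)g)=(\mathcal{L}\varepsilon,g)=(\varepsilon,\mathcal{L}g)$, which equals $-2(\varepsilon,Q^{3})$, $0$, $0$ respectively, all vanishing by~\eqref{equ:ortho}.

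For part (ii), I would start from $\Lambda\eta=\Lambda(1-\delta\Delta)^{-1}\mathcal{L}\varepsilon$, insert $[\Lambda,(1-\delta\Delta)^{-1}]=-2\delta\,\Delta(1-\delta\Delta)^{-2}$, then replace $\Lambda\mathcal{L}$ by $\mathcal{L}\Lambda+2\Delta-3(y\cdot\nabla(Q^{2}))$, obtaining
\begin{equation*}
\Lambda\eta=(1-\delta\Delta)^{-1}\mathcal{L}\Lambda\varepsilon+2(1-\delta\Delta)^{-1}\Delta\varepsilon-3(1-\delta\Delta)^{-1}\big(\varepsilon\,y\cdot\nabla(Q^{2})\big)-2\delta\,\Delta(1-\delta\Delta)^{-2}\mathcal{L}\varepsilon,
\end{equation*}
which rearranges into the stated identity. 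The main obstacle is not conceptual but organizational: one must track the commutators between the nonlocal smoothing operator $(1-\delta\Delta)^{-1}$ and the variable‑coefficient operators $\mathcal{L}$ and $\Lambda$ with the correct placement of the inner $(1-\delta\Delta)^{-1}$ factors, confirm the reassembly $R_{1}+R_{2}=R_{NL}$, and keep all $\delta$‑dependent factors intact (they matter when $\delta$ is later sent to $0$).
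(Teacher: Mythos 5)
Your proof is correct and follows essentially the same route as the paper: the key inputs for (i) are the commutator $[Q^{2},1-\delta\Delta]=\delta\Delta(Q^{2})+4\delta Q\nabla Q\cdot\nabla$ (the paper states exactly this identity and then invokes "elementary computation"), together with $\mathcal{L}\Lambda Q=-2Q$, $\mathcal{L}\nabla Q=0$ and the regroupings $\widetilde{{\rm{Mod}}}-(\Gamma+b)\Lambda\varepsilon=(\frac{\lambda_s}{\lambda}+\Gamma+b)\Lambda Q+(\frac{x_{1s}}{\lambda}-1)\partial_{y_1}Q+{\rm{Mod}}_{\eta}$ and $R_{1}+R_{2}=R_{NL}$, all of which you verify correctly; for (ii) the paper's manipulation of $y\cdot\nabla\mathcal{L}\varepsilon-(1-\delta\Delta)(y\cdot\nabla\eta)$ is the same computation as your use of $[\Lambda,\Delta]=-2\Delta$ and $\mathcal{L}\Lambda=\Lambda\mathcal{L}-2\Delta+3(y\cdot\nabla Q^{2})$, just organized from the other end. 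The only trivial slip is attributing the vanishing of $(\varepsilon,\mathcal{L}\partial_{y_{1}}Q)$ and $(\varepsilon,\mathcal{L}\partial_{y_{2}}Q)$ to~\eqref{equ:ortho}, when in fact these are zero simply because $\nabla Q\in\ker\mathcal{L}$; the orthogonality conditions are only needed for the $(\varepsilon,Q^{3})$ term.
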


\begin{proof}
Proof of (i). Note that,
\begin{equation*}
    \left[Q^{2},\left(1-\delta\Delta\right)\right]=\delta \Delta (Q^{2})+4\delta Q\nabla Q\cdot \nabla.
\end{equation*}
Based on the above identity, the equation of $\varepsilon$ in~\eqref{equ:e}, Proposition~\ref{prop:Spectral} and an elementary computation, we complete the proof of (i).

    \smallskip
Proof of (ii). From the definition of $\eta$, we deduce that 
\begin{equation*}
    \mathcal{L}\Lambda \varepsilon=(1-\delta\Delta)\Lambda \eta 
    +y\cdot \nabla \mathcal{L}\varepsilon-(1-\delta\Delta)(y\cdot\nabla \eta)-2\Delta\varepsilon
+3\varepsilon y\cdot\nabla \left(Q^{2}\right).
\end{equation*}
Using again the definition of $\eta$, we find 
\begin{equation*}
\begin{aligned}
  & y\cdot \nabla \mathcal{L}\varepsilon-(1-\delta\Delta)(y\cdot\nabla \eta)\\
  &=y\cdot \nabla \mathcal{L}\varepsilon-(1-\delta\Delta)y\cdot
  (\nabla(1-\delta\Delta)^{-1} \mathcal{L}\varepsilon)=2\delta(1-\delta\Delta)^{-1}\Delta\mathcal{L}\varepsilon.
    \end{aligned}
\end{equation*}
Combining the above two identities, we complete the proof of (ii).
\end{proof}

Recall that, we define the smooth cut-off function $\sigma\in [0,1]$ as follows:
\begin{equation*}
\sigma_{|(-1,1)}\equiv1\quad \mbox{and}\quad 
\sigma_{|(-\infty,-2)}\equiv \sigma_{|(2,\infty)}\equiv 0.
\end{equation*}

In addition, we define the following smooth functions $\psi_{0}\in (0,1]$
and $\psi_{1}\in (0,\infty)$:
\begin{equation*}
\psi_{0}(y_1)=
\begin{cases}
\exp\left({6y_1}\right),&\text{ for }y_1<-1,\\
\frac{1}{2},&\text{ for }y_1>-\frac{1}{2},
\end{cases}
\quad \mbox{with}\ \  \psi_0'(y_1)\ge 0,\ \ \mbox{on}\ \mathbb{R},
\end{equation*}
\begin{equation*}
\psi_{1}(y_1)=
\begin{cases}
\exp\left({10y_1}\right),&\text{ for }y_1<-1,\\
1+y_{1},&\text{ for }y_1>-\frac{1}{2},
\end{cases}
\quad \mbox{with}\ \  \psi_1'(y_1)> 0,\ \ \mbox{on}\ \mathbb{R}.
\end{equation*}
Let $B>1$ be a large enough universal constant to be chosen later. We consider the following two weight functions:
\begin{equation*}
    \psi_{0,B}(y_{1})=\psi_{0}\left(\frac{y_{1}}{B}\right)
    \quad \mbox{and}\quad 
    \psi_{1,B}(y_{1})=\psi_{1}\left(\frac{y_{1}}{B}\right).
\end{equation*}
We also define
\begin{equation*}
\rho_B(y_1)=
\begin{cases}
\sigma\left(\frac{y_1}{2B}\right)\int_0^{y_1}\frac{2}{B}\psi_{0,B}(\tau)\dd \tau,\quad\quad  \mbox{for}\ y_{1}\le 0,\\
\sigma\left(\frac{y_1}{10B^{10}}\right)\int_0^{y_1}\frac{2}{B}\psi_{0,B}(\tau)\dd \tau,\quad \mbox{for}\ y_{1}> 0.
\end{cases}
\end{equation*}
By the definition of the weight functions, we have the following pointwise estimate.

\begin{lemma}\label{le:psi0Bpsi1B}
The following estimates hold.
\begin{enumerate}
    \item \emph{Estimates on $\varphi_{B}$.} It holds
    \begin{equation*}
\begin{aligned}
|\varphi''_{B}|&\lesssim B^{-\frac{2}{3}}\varphi'_{B}+B^{-20}\psi_{0,B},\quad \mbox{on}\ \R,\\
|\varphi'''_{B}|&\lesssim B^{-\frac{4}{3}}\varphi'_{B}+B^{-30}\psi_{0,B},\quad \mbox{on}\ \R.
\end{aligned}
\end{equation*}
\item \emph{Estimate on $\rho_{B}$ and $\psi_{0,B}$.} It holds
\begin{equation*}
\begin{aligned}
 B|\rho'_{B}|+B^{2}|\rho''_{B}|+B^{3}|\rho'''_{B}|&\lesssim \psi_{0,B},\quad \mbox{on}\ \R\\
    B|\psi'_{0,B}|+B^{2}|\psi''_{0,B}|+B^{3}|\psi'''_{0,B}|&\lesssim \psi_{0,B},\quad \mbox{on}\ \R.
    \end{aligned}
\end{equation*}
\item \emph{First-type estimate on $\rho_{B}$.} It holds 
\begin{equation*}
    \rho_{B}\lesssim \min
    \left(B^{9}\psi_{0,B},\psi_{1,B}\sqrt{\psi_{0,B}}\right),\quad \mbox{on}\ \R.
\end{equation*}
\item \emph{Second-type estimate on $\rho_{B}$.} It holds 
\begin{equation*}
\begin{aligned}
\left|\rho'_{B}-\frac{2}{B}\psi_{0,B}\right|&\lesssim \min\left(B^{9}\varphi'_{B},\mathbf{1}_{\left(-\infty,-\frac{B}{2}\right]}(y_{1})+\mathbf{1}_{\left[B^{10},\infty\right)}(y_{1})\right), \ \ \mbox{on}\ \R,\\
\left|\rho_{B}-\frac{2y_{1}}{B}\psi_{0,B}\right|&\lesssim \left(\mathbf{1}_{\left(-\infty,-\frac{B}{2}\right]}(y_{1})+\mathbf{1}_{\left[B^{10},\infty\right)}(y_{1})\right)|y_{1}|,\quad \quad \quad \ \ \mbox{on}\ \R.
\end{aligned}
\end{equation*}
\end{enumerate}
\end{lemma}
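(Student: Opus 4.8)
All four functions $\varphi_B,\psi_{0,B},\psi_{1,B},\rho_B$ are piecewise explicit, so the plan is to split $\mathbb{R}$ into the finitely many intervals on which they (and the cut-offs $\sigma(\cdot/2B)$, $\sigma(\cdot/10B^{10})$, $\vartheta_B$, $\zeta$) are given by closed formulas, together with the associated transition windows — essentially $(-\infty,-B)$, the window around $y_1\approx-\tfrac13 B$ coming from the definition of $\psi_B$, the bulk $(-\tfrac14 B,\tfrac12 B^{10})$ on which $\psi_B$ has already stabilised near $\tfrac12$ and $\vartheta_B\equiv\tfrac12$, the window around $B^{10}$ where $\vartheta_B$ switches on, $(B^{10},\infty)$, and the two $\sigma$-transition windows near $-2B$ and $10B^{10}$ — and to estimate each quantity directly on every piece. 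The only analytic inputs are: the elementary self-bounds $|\psi_0^{(j)}|\lesssim\psi_0$, $|\psi_1^{(j)}|\lesssim\psi_1$ for $j=1,2,3$ (immediate from the formulas $e^{6y_1},\tfrac12,e^{10y_1},1+y_1$ plus compactness on the bounded transition intervals); the chain rule $\psi_{0,B}^{(j)}(y_1)=B^{-j}\psi_0^{(j)}(y_1/B)$ and likewise for $\psi_{1,B}$; and, for $\varphi_B=\sqrt{2\psi_B}\,\vartheta_B$, the already-established Lemmas~\ref{le:psiphi} and~\ref{le:psiphi2}.

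For part (i), I would observe that the two inequalities are exactly Lemma~\ref{le:psiphi2}(iv) with $\psi_{0,B}$ in place of $\psi_B$ on the right. On $(-\infty,-B)$ one has $\vartheta_B\equiv\tfrac12$, so $\varphi_B=\tfrac1{\sqrt2}\sqrt{\psi_B}$ and, using the explicit form $\psi_B(y_1)\asymp e^{2y_1/B}$ from Lemma~\ref{le:psiphi}(ii), all of $\varphi_B,\varphi_B',\varphi_B'',\varphi_B'''$ are comparable to $B^{-k}e^{y_1/B}$, so $|\varphi_B''|\lesssim B^{-2/3}\varphi_B'$ and $|\varphi_B'''|\lesssim B^{-4/3}\varphi_B'$ hold with no $\psi$-term needed. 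In the window around $y_1\approx-\tfrac13 B$ one has $\psi_B\asymp\psi_{0,B}\asymp 1$ and $\varphi_B'\asymp\psi_B'\asymp B^{-1}$, so again the $\varphi_B'$-term absorbs the $O(B^{-5/3})$ size of $\varphi_B''$. In the bulk, $\psi_B'\lesssim e^{-B^{1/3}/6}$ (Lemma~\ref{le:psiphi}(iii)) and $\vartheta_B\equiv\tfrac12$, so $\varphi_B''\lesssim e^{-B^{1/3}/6}\ll B^{-20}=B^{-20}\cdot 2\psi_{0,B}$ and the $\psi_{0,B}$-term does the job; for $y_1\ge\tfrac12 B^{10}$ we have $\varphi_B\approx\vartheta_B$ and $\vartheta_B''/\vartheta_B'\lesssim B^{-10}$, so $|\varphi_B''|\lesssim B^{-2/3}\varphi_B'$ once more. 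Thus only the bookkeeping of which term is active where is at stake.

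For part (ii) the point is that, away from the $\sigma$-transition windows, $\rho_B$ is a genuine primitive of $\tfrac2B\psi_{0,B}$: where $\sigma(y_1/2B)\equiv1$ (resp. $\sigma(y_1/10B^{10})\equiv1$) one gets $\rho_B'=\tfrac2B\psi_{0,B}$, $\rho_B''=\tfrac2{B^2}\psi_0'(y_1/B)$, $\rho_B'''=\tfrac2{B^3}\psi_0''(y_1/B)$, and the bound $B|\rho_B'|+B^2|\rho_B''|+B^3|\rho_B'''|\lesssim\psi_{0,B}$ follows from $|\psi_0^{(j)}|\lesssim\psi_0$; on the $\sigma$-transition windows the additional terms carry a factor $B^{-1}$ (resp. $B^{-10}$) times the bounded quantity $\int_0^{y_1}\tfrac2B\psi_{0,B}$, while $\psi_{0,B}$ is there bounded below by a fixed positive constant, so these are absorbed for $B$ large. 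The $\psi_{0,B}$ self-bounds in (ii) are the same chain-rule computation. For the size bounds, I would use $\psi_{0,B}\le\tfrac12$ to get $|\rho_B(y_1)|\le\tfrac1B|y_1|$, then combine this with the $\sigma$-localisation (support of $\rho_B$ in $|y_1|\lesssim B$ on the left and $y_1\lesssim B^{10}$ on the right) for the $B^9$-type bounds of (iii), and with the explicit exponential forms of $\psi_{0,B},\psi_{1,B}$ on $(-\infty,-B)$ for the comparison with $\psi_{1,B}\sqrt{\psi_{0,B}}$. For part (iv), the key simplification is that on $[-\tfrac12 B,\,B^{10}]$ one has $\psi_{0,B}\equiv\tfrac12$ and $\sigma\equiv1$, hence $\rho_B=\tfrac{y_1}{B}=\tfrac{2y_1}{B}\psi_{0,B}$ and $\rho_B'=\tfrac2B\psi_{0,B}$ there exactly, so both differences $\rho_B'-\tfrac2B\psi_{0,B}$ and $\rho_B-\tfrac{2y_1}{B}\psi_{0,B}$ are supported in $(-\infty,-\tfrac12 B)\cup(B^{10},\infty)$; on that complement $|y_1|\gtrsim B$ and the indicator is $\gtrsim 1$, while $\rho_B$, $\tfrac{2y_1}{B}\psi_{0,B}$ and $\rho_B'-\tfrac2B\psi_{0,B}$ are each $O(1)$, so the desired inequalities drop out.

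The main obstacle is purely one of organisation rather than of analysis: one must keep the three distinct scales $B^{1/3}$ (from $\psi_B$), $B$ (from $\psi_{0,B},\psi_{1,B},\rho_B$) and $B^{10}$ (from $\vartheta_B$ and the right-hand $\sigma$-truncation) consistent across all the regions, and check in each that the prescribed powers $B^{-20},B^{-30},B^9$ are compatible with the elementary self-bounds above. There is no further difficulty, and the argument is a direct adaptation of the corresponding computations in \cite[Lemmas~3.3--3.4]{CLY}.
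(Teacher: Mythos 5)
Your overall plan — split $\mathbb{R}$ into the finitely many intervals where each weight has a closed formula plus the transition windows, and verify each inequality piece by piece using the self-bounds $|\psi_0^{(j)}|\lesssim\psi_0$, $|\psi_1^{(j)}|\lesssim\psi_1$, the chain rule scalings, and Lemmas~\ref{le:psiphi}--\ref{le:psiphi2} — is exactly the kind of direct verification the paper implicitly refers to when it says the proof ``is directly based on Lemma~\ref{le:psiphi}, Lemma~\ref{le:psiphi2} and the definition of $\rho_B$ and $\psi_{0,B}$'' and omits it. Parts (ii), (iii) and the bulk of (i) are handled correctly.

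There is, however, a small but genuine slip in part (i), in the region $y_1\ge\tfrac12 B^{10}$. You assert $\vartheta_B''/\vartheta_B'\lesssim B^{-10}$ uniformly there, but this can only be guaranteed on $[B^{10},\infty)$ where $\vartheta(x)=x^7$ explicitly gives $\vartheta_B''/\vartheta_B'=6/y_1$. On the transition window $[\tfrac12 B^{10},\,B^{10}]$, $\vartheta$ is only known to be smooth, nondecreasing, and equal to $\tfrac12$ on $(-\infty,\tfrac12 B^{10}]$; since a $C^\infty$ function that is constant on a half-line has all derivatives vanishing at the junction, $\vartheta_B'$ vanishes there while $\vartheta_B''/\vartheta_B'$ is in general unbounded near $\tfrac12 B^{10}$. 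The correct cover on that window is the other term of the inequality: $|\vartheta_B^{(j)}(y_1)|=B^{-10j}|\vartheta^{(j)}(y_1/B^{10})|\lesssim B^{-10j}$ by compactness, $\psi_B\approx\tfrac12$ with super-exponentially small derivatives there, and $\psi_{0,B}\equiv\tfrac12$, so $|\varphi_B''|\lesssim B^{-20}\lesssim B^{-20}\psi_{0,B}$ and $|\varphi_B'''|\lesssim B^{-30}\lesssim B^{-30}\psi_{0,B}$. A similar remark applies to part (iv), where you assert that $\rho_B$ and $\tfrac{2y_1}{B}\psi_{0,B}$ are ``each $O(1)$'' on the complement of $[-\tfrac12 B, B^{10}]$: for $y_1\in[10B^{10},20B^{10}]$ they are of size $\asymp B^9$, not $O(1)$; the inequality nevertheless holds because the right-hand side carries the factor $|y_1|\gtrsim B^{10}\gg B^9\gtrsim \tfrac{|y_1|}{B}$. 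Once these two local bookkeeping points are corrected, the proof is complete.
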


\begin{proof}
    The proof is directly based on Lemma~\ref{le:psiphi}, Lemma~\ref{le:psiphi2} and the definition of $\rho_{B}$ and $\psi_{0,B}$, and we omit it.
\end{proof}

We denote
\begin{equation*}
\mathcal{P}(s)=\int_{\mathbb{R}^2}\eta^{2}(s,y)\rho_{B}(y_{1})\dd y,\quad \mbox{for any}\ s\in \left[S_{n},S_{n}^{\star}\right].
\end{equation*}

We now state the virial estimate of $\eta$. Let $B>1$ be a large enough constant to be chosen later and $\delta=B^{-3}$. Then the following qualitative estimate of the time variation of $\mathcal{P}$ is true.

\begin{proposition}\label{prop:virial}
	There exist some universal constants $B>1$ large enough, $s_{0}<-1$ with $|s_{0}|$ large enough and $0<\kappa_{1}<B^{-1}$ small enough such that the following holds. Assume that for all $s\in [S_{n},S_{n}^{\star}]$, the solution $\phi(t)$ satisfies the bootstrap estimates~\eqref{est:NBbound} and~\eqref{est:Boot} with $0<\kappa<\kappa_{1}$. Then for all $s\in [S_{n},S_{n}^{\star}]$, we have 
\begin{equation}\label{est:dsP}
    	\begin{aligned}
	&\frac{\dd \mathcal{P}}{\dd s}+\frac{\mu_{2}}{B}\int_{\R^{2}}\left(|\nabla \eta|^2+\eta^2\right)\psi_{0,B}\dd y\\
&\le \frac{C_{7}}{B^{8}}\int_{\R^{2}}\left(|\nabla \varepsilon|^2+\varepsilon^2\right)\left(\psi_{0,B}+B^{23}\varphi'_{B}\right)\dd y+\frac{C_{8}}{|s|^{10}}.
	\end{aligned}
	\end{equation}
    Here, $C_{7}>1$ is a universal constant independent of $B$ and $C_{8}=C_{8}(B,C_{1})>1$ is a constant depending only on $B$ and $C_{1}$.
\end{proposition}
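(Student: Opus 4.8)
The plan is to differentiate $\mathcal{P}(s)=\int_{\R^{2}}\eta^{2}\rho_{B}\,\dd y$ in time and substitute the equation for $\eta$ from Lemma~\ref{le:equeta}(i). This produces several groups of terms: (a) the \emph{leading virial term} coming from $\mathcal{L}\partial_{y_{1}}\eta$, which after integration by parts against $\rho_{B}$ (whose derivative is essentially $\tfrac{2}{B}\psi_{0,B}$, by the construction of $\rho_{B}$ and Lemma~\ref{le:psi0Bpsi1B}) should reproduce, up to controllable errors, a multiple of $(\mathcal{H}\eta,\eta)$ localized by $\psi_{0,B}$; (b) the \emph{$\delta$-correction terms} $-3\delta(1-\delta\Delta)^{-1}(\Delta(Q^{2})\partial_{y_{1}}\eta+4Q\nabla Q\cdot\nabla\partial_{y_{1}}\eta)$, which are $O(\delta)=O(B^{-3})$ and hence absorbed into the right-hand side; (c) the \emph{modulation term} $(1-\delta\Delta)^{-1}\mathcal{L}\,\mathrm{Mod}_{\eta}$ together with the scaling-defect term $-2(\tfrac{\lambda_s}{\lambda}+\Gamma+b)(1-\delta\Delta)^{-1}Q$; and (d) the \emph{source/nonlinear terms} $(1-\delta\Delta)^{-1}\mathcal{L}(\Psi_{M}-\Psi_{N}-\Psi_{W}-\Psi_{b}-\partial_{y_{1}}R_{NL})$.

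For (a), after integrating by parts I would rewrite the quadratic form in $\eta$ in terms of $\mathcal{H}$ using the definition of $\mathcal{H}$ given just before Lemma~\ref{le:coerH}; the nonlocal rank-two piece of $\mathcal{H}$ is produced by carefully tracking the $y_1Q$ and $Q^2\partial_{y_1}Q$ contributions. The weight $\rho_B$ is designed precisely so that $\rho_B'\sim \tfrac{2}{B}\psi_{0,B}$, $|\rho_B''|,|\rho_B'''|\lesssim B^{-2}\psi_{0,B}$ (Lemma~\ref{le:psi0Bpsi1B}(ii)), so the ``bad'' commutator terms from moving derivatives across $\rho_B$ are either of order $B^{-1}$ relative to the main term or supported where $\psi_{0,B}$ is comparable to $\varphi_B'$ (the far regions $y_1\le -B/2$ and $y_1\ge B^{10}$, handled via Lemma~\ref{le:psi0Bpsi1B}(iv)). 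Then Lemma~\ref{le:coerH} gives coercivity $(\mathcal{H}\eta,\eta)\gtrsim \mu_2\|\eta\|_{H^1}^2$ modulo the three scalar products $(\eta,Q)^2+(\eta,\partial_{y_1}Q)^2+(\eta,\partial_{y_2}Q)^2$, and those vanish — or are negligible — by the orthogonality conditions $(\eta,(1-\delta\Delta)Q)=(\eta,(1-\delta\Delta)\partial_{y_j}Q)=0$ from Lemma~\ref{le:equeta}(i), since e.g. $(\eta,Q)=(\eta,(1-\delta\Delta)Q)+\delta(\eta,\Delta Q)=O(\delta)\|\eta\|_{L^2_{\mathrm{loc}}}$. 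After multiplying by the right constant this yields the localized dissipative term $+\tfrac{\mu_2}{B}\int(|\nabla\eta|^2+\eta^2)\psi_{0,B}$ on the left-hand side.

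For (c), using the geometric estimates~\eqref{est:Geo} (which bound $|\tfrac{\lambda_s}{\lambda}+\Gamma+b|+|\tfrac{x_{1s}}{\lambda}-1|$ by $|s|^{-17/4}+C_1^{10}|s|^{-5}$) together with $\|\varepsilon\|_{L^2_{\mathrm{loc}}}\lesssim\mathcal{N}_B(\varepsilon)\le|s|^{-17/4}$ from the bootstrap~\eqref{est:Boot}, every modulation contribution is bounded by $\varrho$-small times $\|\eta\|_{H^1}$ localized, hence absorbable into $B^{-8}\int(|\nabla\varepsilon|^2+\varepsilon^2)(\psi_{0,B}+B^{23}\varphi_B')$ plus $C_8|s|^{-10}$ after Cauchy–Schwarz; here one uses that $(1-\delta\Delta)^{-1}$ is bounded on the relevant weighted spaces and that $\mathcal{L}$ applied to the exponentially localized $\Lambda\varepsilon,\partial_{y_1}\varepsilon,Q$ keeps things localized. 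For (d), the source terms $\Psi_M,\Psi_N,\Psi_W,\Psi_b$ are handled exactly as in the energy estimate via Lemma~\ref{le:PsiMNWb}, giving $\mathcal{N}_B$-type bounds of size $C_4|s|^{-5}$, so after pairing with $\eta\rho_B$ and Cauchy–Schwarz they contribute $O(C_8|s|^{-10})$; the genuinely nonlinear term $\partial_{y_1}R_{NL}$, with $R_{NL}=(W_{b,\lambda}+\varepsilon)^3-W_{b,\lambda}^3-3Q^2\varepsilon$, splits into a term linear in $\varepsilon$ with coefficient $3(W_{b,\lambda}^2-Q^2)=O(\lambda^\theta+|b|)$ — small and localized — and genuinely quadratic/cubic pieces $3W_{b,\lambda}\varepsilon^2+\varepsilon^3$, which are controlled by the 2D weighted Sobolev inequalities of Lemma~\ref{le:2DSobolev} (bounding $\|\varepsilon^2\sqrt{\gamma}\|_{L^2}\lesssim\|\varepsilon\|_{L^2}(\int(|\nabla\varepsilon|^2+\varepsilon^2)\gamma)^{1/2}$ with $\gamma=\psi_{0,B}$ or $\psi_{1,B}$) combined with the smallness $\|\varepsilon\|_{L^2}\ll1$; the loss from $\rho_B\lesssim\psi_{1,B}\sqrt{\psi_{0,B}}$ (Lemma~\ref{le:psi0Bpsi1B}(iii)) is what forces the factor $B^{23}$ in front of $\varphi_B'$ on the right-hand side, and one must check the powers of $B$ carefully so that the main dissipative term at scale $B^{-1}$ strictly dominates. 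I expect the \textbf{main obstacle} to be step (a): correctly reproducing the nonlocal part of $\mathcal{H}$ from the local computation and bookkeeping all the commutator error terms generated by $\rho_B$, $\psi_{0,B}$ and the operator $(1-\delta\Delta)^{-1}$ so that they are genuinely lower order — in particular verifying that the $\delta$-dependent terms are $O(B^{-3})$ uniformly and that the far-field pieces are swallowed by $\varphi_B'$ — since this is where the precise choice of weights and of $\delta=B^{-3}$ is used most delicately.
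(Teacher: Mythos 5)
Your overall plan matches the paper's (differentiate $\mathcal{P}$, substitute the $\eta$-equation from Lemma~\ref{le:equeta}, integrate by parts to expose the Schr\"odinger operator $\mathcal{H}$, invoke Lemma~\ref{le:coerH} for coercivity, use Lemmas~\ref{le:psi0Bpsi1B},~\ref{le:2DSobolevvirial},~\ref{le:viresteeta} to control weights and $(1-\delta\Delta)^{-1}$, and handle sources via Lemma~\ref{le:PsiMNWb}). You also correctly flag step (a) as the hard part. However, there is a genuine gap there, and it is not a bookkeeping issue: the rank-two nonlocal piece of $\mathcal{H}$ is \emph{not} produced by the integration-by-parts computation. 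The raw leading term
\begin{equation*}
2\int_{\R^{2}}\left(\mathcal{L}\partial_{y_{1}}\eta\right)\eta\,\rho_{B}\,\dd y
\end{equation*}
yields only the local part of $\mathcal{H}$ (together with commutator errors from $\rho_{B}$). To make $-\tfrac{4}{B}(\mathcal{H}(\eta\sqrt{\psi_{0,B}}),\eta\sqrt{\psi_{0,B}})$ appear one must add and subtract the term
$-\tfrac{12}{B(Q,Q)}(\eta,y_{1}Q)(\eta,Q^{2}\partial_{y_{1}}Q)$, and the subtracted copy is of the same order $B^{-1}\mathcal{N}_{B}^{2}$ as the dissipative term you are trying to produce, so it cannot be absorbed by Cauchy–Schwarz, smallness of $\varepsilon$, or orthogonality. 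Your claim that the three scalar products ``vanish or are negligible by the orthogonality conditions'' only applies to the error term $(\eta,Q)^{2}+(\eta,\partial_{y_{1}}Q)^{2}+(\eta,\partial_{y_{2}}Q)^{2}$ in Lemma~\ref{le:coerH}; the quantity $(\eta,y_{1}Q)$ is not controlled by the orthogonality conditions at all.

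The missing mechanism is precisely why the scaling-defect term
$-2(\tfrac{\lambda_{s}}{\lambda}+\Gamma+b)(1-\delta\Delta)^{-1}Q$
is present in the $\eta$-equation: pairing the leading term and this term against $(1-\delta\Delta)Q$, and using the orthogonality $(\eta,(1-\delta\Delta)Q)=0$, yields the algebraic identity
\begin{equation*}
3\,\frac{(\eta,Q^{2}\partial_{y_{1}}Q)}{(Q,Q)}
=\frac{\lambda_{s}}{\lambda}+\Gamma+b
+O\!\left(\delta\,\mathcal{N}_{B}(\eta)\right)+O\!\left(\mathcal{N}_{B}(\Psi_{M})+\dots\right),
\end{equation*}
so the dangerous subtracted term $\tfrac{12}{B(Q,Q)}(\eta,Q^{2}\partial_{y_{1}}Q)(\eta,y_{1}Q)$ cancels against $-4(\tfrac{\lambda_{s}}{\lambda}+\Gamma+b)\int((1-\delta\Delta)^{-1}Q)\eta\rho_{B}$ up to genuinely lower-order errors (using that $\rho_{B}=y_{1}/B$ on $[-B/2,B/2]$ and $(1-\delta\Delta)^{-1}Q=Q+O(\delta)$). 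Your treatment of group (c) as a generic modulation contribution absorbed by Cauchy–Schwarz against the dissipation misses this cancellation and the proof would not close; this is the content of the paper's decomposition $\mathcal{J}_{7}=\mathcal{J}_{7,1}+\mathcal{J}_{7,2}+\mathcal{J}_{7,3}$ and in particular of the estimate of $\mathcal{J}_{7,2,3}$.
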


To complete the proof of Proposition~\ref{prop:virial}, we first recall the following 2D weighted Sobolev estimates introduced in~\cite[Section 4.2]{CLY}. The proof relies on a standard argument based on the Fourier transform and the Cauchy-Schwarz inequality (see \emph{e.g.}~\cite[Lemma 4.11]{CLY}), and we omit it.

\begin{lemma}[\cite{CLY}]\label{le:2DSobolevvirial}
	Let $\gamma:\R^{2}\to (0,\infty)$ be a $C^{2}$ function such that 
	\begin{equation*}
		\left\|\frac{\nabla \gamma}{\gamma}\right\|_{L^{\infty}}
		+\sum_{|\beta|=2}\left\|\frac{\partial_{y}^{\beta} \gamma}{\gamma}\right\|_{L^{\infty}}
		\lesssim 1.
	\end{equation*}
		Then, for all $f\in H^{2}(\R^{2})$ and $\ell\in \left\{0,1,2\right\}$, we have
	\begin{equation*}
	\sum_{|\beta|=\ell}\left\|\gamma (1-\delta\Delta)^{-1}\partial_{y}^{\beta}f\right\|_{L^{2}}\lesssim \delta^{-\frac{\ell}{2}}\left\| f\gamma\right\|_{L^{2}}.
	\end{equation*}
\end{lemma}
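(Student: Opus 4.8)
The plan is to realize $(1-\delta\Delta)^{-1}$ as convolution with the two–dimensional Bessel potential and to run a \emph{weighted Schur test}, treating $\ell=2$ separately because of the Calderón--Zygmund singularity of the kernel. By density it suffices to prove the bound for Schwartz $f$. Recall that on $\R^{2}$ one has $(1-\delta\Delta)^{-1}h=K_{\delta}*h$ with $K_{\delta}(w)=\delta^{-1}K(w/\sqrt{\delta})$, where $K=(1-\Delta)^{-1}\delta_{0}=\tfrac1{2\pi}K_{0}(|\cdot|)$; thus $K>0$ and for every $\beta$ there are $c,C>0$ with $|\partial_{y}^{\beta}K(u)|\le Ce^{-c|u|}$ for $|u|\ge1$, while near the origin $|\partial_{y}^{\beta}K(u)|\lesssim 1+|\log|u||,\ |u|^{-1},\ |u|^{-2}$ for $|\beta|=0,1,2$. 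Consequently $\partial_{y}^{\beta}(1-\delta\Delta)^{-1}$ has (possibly distributional) kernel $\partial_{y}^{\beta}K_{\delta}(w)=\delta^{-1-\ell/2}(\partial_{y}^{\beta}K)(w/\sqrt{\delta})$, $\ell=|\beta|$. From the hypothesis, $|\nabla\log\gamma|=|\nabla\gamma|/\gamma\lesssim1$, so integrating along segments gives $\gamma(x)/\gamma(z)\le e^{C_{0}|x-z|}$ with $C_{0}$ depending only on $\gamma$, and moreover $|\gamma(x)-\gamma(z)|\lesssim|x-z|\,\gamma(z)$ when $|x-z|\le1$. Writing $M_{w}$ for multiplication by $w$ and using $f=\gamma^{-1}(\gamma f)$ pointwise, one has $\gamma(1-\delta\Delta)^{-1}\partial_{y}^{\beta}f=\mathcal A_{\beta}(\gamma f)$, where $\mathcal A_{\beta}=M_{\gamma}\,\partial_{y}^{\beta}(1-\delta\Delta)^{-1}\,M_{\gamma^{-1}}$; it is enough to show $\|\mathcal A_{\beta}\|_{L^{2}\to L^{2}}\lesssim\delta^{-\ell/2}$.

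For $\ell\in\{0,1\}$ the kernel $\partial_{y}^{\beta}K_{\delta}$ is an honest $L^{1}$ function and $\mathcal A_{\beta}$ has kernel $\gamma(x)\partial_{y}^{\beta}K_{\delta}(x-z)\gamma(z)^{-1}$; Schur's test gives
\[
\int|\gamma(x)\,\partial_{y}^{\beta}K_{\delta}(x-z)\,\gamma(z)^{-1}|\,\dd z
\le\delta^{-1-\ell/2}\!\int|\partial_{y}^{\beta}K|(w/\sqrt{\delta})\,e^{C_{0}|w|}\,\dd w
=\delta^{-\ell/2}\!\int|\partial_{y}^{\beta}K(u)|\,e^{C_{0}\sqrt{\delta}|u|}\,\dd u,
\]
and since $|\partial_{y}^{\beta}K|$ decays exponentially and has at most a locally integrable singularity in $\R^{2}$ for $|\beta|\le1$, the last integral is $O(1)$ once $C_{0}\sqrt{\delta}$ is small — which is the relevant regime, $\delta=B^{-3}$ and $C_{0}\lesssim B^{-1}$. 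The bound symmetric in $x$ is identical, whence $\|\mathcal A_{\beta}\|_{L^{2}\to L^{2}}\lesssim\delta^{-\ell/2}$.

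For $\ell=2$ the kernel carries the non–integrable $|u|^{-2}$ singularity, so I split $\partial_{y}^{\beta}K_{\delta}=\mathbf 1_{|w|>1}\partial_{y}^{\beta}K_{\delta}+\mathbf 1_{|w|\le1}\partial_{y}^{\beta}K_{\delta}=:K^{\mathrm{far}}_{\delta}+K^{\mathrm{near}}_{\delta}$. The far piece is treated as above: $\|K^{\mathrm{far}}_{\delta}\|_{L^{1}}\lesssim\delta^{-2}\int_{|w|>1}e^{-c|w|/\sqrt{\delta}}\,\dd w\lesssim\delta^{-1}$, and the weighted version is the same once $C_{0}\sqrt{\delta}\le c/2$, so the contribution of $K^{\mathrm{far}}_{\delta}$ to $\mathcal A_{\beta}$ has operator norm $\lesssim\delta^{-1}$. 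For the near piece I use: (a) Plancherel, $\|\partial_{y}^{\beta}(1-\delta\Delta)^{-1}\|_{L^{2}\to L^{2}}\le\sup_{r}r^{2}/(1+\delta r^{2})\lesssim\delta^{-1}$, hence $\|K^{\mathrm{near}}_{\delta}*\cdot\|_{L^{2}\to L^{2}}\le\|\partial_{y}^{\beta}(1-\delta\Delta)^{-1}\|+\|K^{\mathrm{far}}_{\delta}*\cdot\|\lesssim\delta^{-1}$; and (b) the weight correction, $M_{\gamma}(K^{\mathrm{near}}_{\delta}*\cdot)M_{\gamma^{-1}}-(K^{\mathrm{near}}_{\delta}*\cdot)$ has kernel $\gamma(z)^{-1}(\gamma(x)-\gamma(z))\mathbf 1_{|x-z|\le1}\partial_{y}^{\beta}K_{\delta}(x-z)$, whose modulus is $\lesssim|x-z|\,|\partial_{y}^{\beta}K_{\delta}(x-z)|\mathbf 1_{|x-z|\le1}$; the gain of the factor $|x-z|$ converts the $|u|^{-2}$ singularity into the locally integrable $|u|^{-1}$, producing an $L^{1}$ kernel of mass $\lesssim\delta^{-1}$, so Schur's test bounds this correction operator by $\lesssim\delta^{-1}$. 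Summing the three contributions yields $\|\mathcal A_{\beta}\|_{L^{2}\to L^{2}}\lesssim\delta^{-1}$, completing the case $\ell=2$.

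The main obstacle is exactly this $\ell=2$ case: $\partial_{y}^{\beta}(1-\delta\Delta)^{-1}$ is, up to the factor $\delta^{-1}$, a Calderón--Zygmund operator whose kernel is not absolutely integrable near the diagonal, so a global Schur test is impossible. The device that unlocks it is the slow variation of $\gamma$: the cancellation $|\gamma(x)-\gamma(z)|\lesssim|x-z|\,\gamma(z)$ on the unit scale supplies precisely the extra factor $|x-z|$ needed to make the short–range weighted kernel integrable, so the weight can be commuted through the singular part at the cost of lower–order, Schur–estimable terms, while the unweighted singular operator is handled by Plancherel; away from the diagonal the exponential decay of $K_{\delta}$ at scale $\sqrt{\delta}$ dominates the at most exponential growth $e^{C_{0}|x-z|}$ of the weight ratio.
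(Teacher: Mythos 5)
Your proof is correct, and it is essentially the standard argument the paper alludes to (the paper omits the proof, citing \cite{CLY}): realize $(1-\delta\Delta)^{-1}$ as convolution with the rescaled Bessel kernel and run a weighted Schur test, which is exactly ``Fourier transform plus Cauchy--Schwarz.'' Your treatment of $\ell=2$ via the commutator gain $|\gamma(x)-\gamma(z)|\lesssim|x-z|\,\gamma(z)$ plus Plancherel for the unweighted singular part is the right device and the scaling bookkeeping ($\delta^{-1-\ell/2}$ for the kernel, $\delta$ from the change of variables) is accurate.

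Two remarks. First, you correctly observe that the estimate requires $\sqrt{\delta}\,\|\nabla\gamma/\gamma\|_{L^{\infty}}$ to be small; this is not cosmetic, since conjugating by $\gamma=e^{C_{0}x_{1}}$ shows the multiplier $\left(1-\delta C_{0}^{2}+\delta|\xi|^{2}+2i\delta C_{0}\xi_{1}\right)^{-1}$ degenerates when $\delta C_{0}^{2}\ge 1$, so the lemma as stated is only valid in the regime the paper actually uses it ($\delta=B^{-3}$, weights varying at scale $B$); flagging this explicitly is a plus. Second, in step (b) the object $\mathbf{1}_{|w|\le 1}\partial_{y}^{\beta}K_{\delta}$ for $|\beta|=2$ is a distribution (a principal value plus a multiple of $\delta_{0}$), so the ``kernel of the correction operator'' should be justified by an $\epsilon$-excision limit: the delta part cancels because the factor $\gamma(x)/\gamma(z)-1$ vanishes on the diagonal, and the principal-value part becomes absolutely convergent thanks to the extra factor $|x-z|$. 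This is routine but worth one sentence in a final write-up.
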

Note that, the functions $\psi_{0,B}$ and $\psi_{1,B}$ satisfy
\begin{equation}\label{est:pointpsi0Bpsi1B}
	\begin{aligned}
	\left\|\frac{\nabla \psi_{1,B}}{\psi_{1,B}}\right\|_{L^{\infty}}
  +\sum_{|\beta|=2}	\left\|\frac{\partial_{y}^{\beta} \psi_{1,B}}{\psi_{1,B}}\right\|_{L^{\infty}}
	&\lesssim 1,\\
    \left\|\frac{\nabla \sqrt{\psi_{0,B}}}{\sqrt{\psi_{0,B}}}\right\|_{L^{\infty}}
  +\sum_{|\beta|=2}	\left\|\frac{\partial_{y}^{\beta} \sqrt{\psi_{0,B}}}{\sqrt{\psi_{0,B}}}\right\|_{L^{\infty}}
	&\lesssim 1.
\end{aligned}
\end{equation}
Second, we introduce the following relations between the weighted norm of $(\varepsilon,\eta)$.
\begin{lemma}\label{le:viresteeta}
    Let $B>1$ be a large enough constant and $0<\delta<1$ be a small enough constant. Then the following estimates hold.
    \begin{enumerate}
    \item {\emph{First-type estimates.}} It holds
        \begin{equation*}
        \begin{aligned}
        \int_{\R^{2}}\left(|\nabla \varepsilon|^{2}+\varepsilon^{2}\right)\psi_{0,B}\dd y&\lesssim 
        \int_{\R^{2}}\left(\delta^{2}|\nabla \eta|^{2}+\eta^{2}\right)\psi_{0,B}\dd y,\\
        \int_{\R^{2}}\left(\delta|\nabla \eta|^{2}+\eta^{2}\right)\psi_{0,B}\dd y&\lesssim 
        \int_{\R^{2}}\left(\delta^{-1}|\nabla \varepsilon|^{2}+\varepsilon^{2}\right)\psi_{0,B}\dd y,\\
        \int_{\R^{2}}\left(\delta|\nabla \eta|^{2}+\eta^{2}\right)\psi_{1,B}\dd y&\lesssim 
        \int_{\R^{2}}\left(\delta^{-1}|\nabla \varepsilon|^{2}+\varepsilon^{2}\right)\psi_{1,B}\dd y.
        \end{aligned}
    \end{equation*}
    Here, the implied constants in $\lesssim$ are independent of $B$ and $\delta$.

    \item  \emph{Second-type estimate.} It holds
    \begin{equation*}
    \begin{aligned}
        \int_{\R^{2}}\left(\delta |\nabla \eta|^{2}+\eta^{2}\right)\varphi'_{B}\dd y&\lesssim 
        \int_{\R^{2}}\left(
        \delta^{-1}|\nabla \varepsilon|^{2}+\varepsilon^{2}
        \right)\varphi'_{B}\dd y\\
        &+B^{-20}\int_{\R^{2}}\left(
        |\nabla \varepsilon|^{2}+\varepsilon^{2}+\eta^{2}
        \right)\psi_{0,B}\dd y.
        \end{aligned}
    \end{equation*}
    Here, the implied constants in $\lesssim$ are independent of $B$ and $\delta$.
    \end{enumerate}
\end{lemma}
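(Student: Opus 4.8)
The plan is to deduce all four inequalities from the single algebraic relation $\mathcal{L}\varepsilon=(1-\delta\Delta)\eta$ (the definition of $\eta$) together with the weighted resolvent bounds of Lemma~\ref{le:2DSobolevvirial} and the weight algebra of Lemma~\ref{le:psiphi2}--Lemma~\ref{le:psi0Bpsi1B}. Writing $\mathcal{L}=(1-\Delta)-3Q^{2}$ and applying $(1-\Delta)^{-1}$, and using $(1-\Delta)^{-1}(1-\delta\Delta)=\delta I+(1-\delta)(1-\Delta)^{-1}$, one gets $\varepsilon=\delta\eta+(1-\delta)(1-\Delta)^{-1}\eta+3(1-\Delta)^{-1}(Q^{2}\varepsilon)$; substituting $\varepsilon=\mathcal{L}^{-1}(1-\delta\Delta)\eta$ in the last term yields the equivalent \emph{closed} form $\varepsilon=\delta\eta+(1-\delta)\mathcal{L}^{-1}\eta+3\delta\,\mathcal{L}^{-1}(Q^{2}\eta)$, with $\mathcal{L}^{-1}$ understood on $\{\nabla Q\}^{\perp}$ (legitimate, since by the orthogonality of $\eta$ its component along $\ker\mathcal{L}={\rm{Span}}(\nabla Q)$ has size $O(\delta\|\eta\|_{L^{2}_{\rm{loc}}})$). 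The ingredients needed are then: (a) the $\delta^{-|\beta|/2}$-loss bounds for $(1-\delta\Delta)^{-1}$ from Lemma~\ref{le:2DSobolevvirial}; (b) the derivative-gaining bounds for $(1-\Delta)^{-1}$ (the same lemma with $\delta=1$); (c) the $B$-independent boundedness of $\mathcal{L}^{-1}$ and $\nabla\mathcal{L}^{-1}$ against the weights $\sqrt{\psi_{0,B}},\sqrt{\psi_{1,B}}$; and (d) the pointwise comparison $e^{-|y|/10}\lesssim\psi_{0,B}\le\psi_{1,B}$, valid for $B$ large, which lets one absorb all exponentially localized errors (those carrying a $Q^{2}$ factor) into $\|\varepsilon\|_{L^{2}_{\rm{loc}}}$ and then into the weighted right-hand side.

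\textbf{The ``$\eta$ controlled by $\varepsilon$'' estimates (second and third lines of (i)).} Here I would write $\eta=(1-\delta\Delta)^{-1}(-\Delta\varepsilon)+(1-\delta\Delta)^{-1}(\varepsilon-3Q^{2}\varepsilon)$ and $-\Delta\varepsilon=\nabla\cdot\nabla\varepsilon$, and apply Lemma~\ref{le:2DSobolevvirial} with $|\beta|=1,2$ to the function $\nabla\varepsilon$ and the weights $\gamma\in\{\sqrt{\psi_{0,B}},\sqrt{\psi_{1,B}}\}$ (admissible by \eqref{est:pointpsi0Bpsi1B}). This gives $\|\gamma\eta\|_{L^{2}}\lesssim\delta^{-1/2}\|\gamma\nabla\varepsilon\|_{L^{2}}+\|\gamma\varepsilon\|_{L^{2}}+\|\gamma Q^{2}\varepsilon\|_{L^{2}}$ and $\|\gamma\nabla\eta\|_{L^{2}}\lesssim\delta^{-1}\|\gamma\nabla\varepsilon\|_{L^{2}}+\delta^{-1/2}\|\gamma\varepsilon\|_{L^{2}}+\delta^{-1/2}\|\gamma Q^2\varepsilon\|_{L^2}$, and $\|\gamma Q^{2}\varepsilon\|_{L^{2}}\lesssim\|\varepsilon\|_{L^{2}_{\rm{loc}}}\lesssim\|\gamma\varepsilon\|_{L^{2}}$ by (d). Squaring and multiplying the second inequality by $\delta$ (so that $\delta\cdot\delta^{-2}=\delta^{-1}$) yields exactly $\int(\delta|\nabla\eta|^{2}+\eta^{2})\psi_{\cdot,B}\lesssim\int(\delta^{-1}|\nabla\varepsilon|^{2}+\varepsilon^{2})\psi_{\cdot,B}$.

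\textbf{The ``$\varepsilon$ controlled by $\eta$'' estimate (first line of (i)) and estimate (ii).} From the closed form, $\nabla\varepsilon=\delta\nabla\eta+(1-\delta)\nabla\mathcal{L}^{-1}\eta+3\delta\,\nabla\mathcal{L}^{-1}(Q^{2}\eta)$, so by (b)--(c), $\|\sqrt{\psi_{0,B}}\,\nabla\mathcal{L}^{-1}\eta\|_{L^{2}}\lesssim\|\sqrt{\psi_{0,B}}\,\eta\|_{L^{2}}$ and $\|\sqrt{\psi_{0,B}}\,\nabla\mathcal{L}^{-1}(Q^{2}\eta)\|_{L^{2}}\lesssim\|\sqrt{\psi_{0,B}}\,Q^{2}\eta\|_{L^{2}}\lesssim\|\eta\|_{L^{2}_{\rm{loc}}}\lesssim\|\sqrt{\psi_{0,B}}\,\eta\|_{L^{2}}$; hence $\|\sqrt{\psi_{0,B}}\,\nabla\varepsilon\|_{L^{2}}\lesssim\delta\|\sqrt{\psi_{0,B}}\,\nabla\eta\|_{L^{2}}+\|\sqrt{\psi_{0,B}}\,\eta\|_{L^{2}}$, and the same computation without gradients gives $\|\sqrt{\psi_{0,B}}\,\varepsilon\|_{L^{2}}\lesssim\|\sqrt{\psi_{0,B}}\,\eta\|_{L^{2}}$. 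Squaring produces the first line of (i); the single factor $\delta$ in front of $\nabla\eta$ — inherited from the leading term $\delta\eta$ — is precisely what yields the $\delta^{2}|\nabla\eta|^{2}$, and no $\|\varepsilon\|_{L^{2}_{\rm{loc}}}$ reappears because the closed form is written purely in $\eta$. For (ii) the computation is identical with $\gamma=\sqrt{\varphi'_{B}}$, except that $\varphi'_{B}$ is not slowly varying, so one cannot feed it to Lemma~\ref{le:2DSobolevvirial} directly; using $|\varphi''_{B}|\lesssim B^{-2/3}\varphi'_{B}+B^{-20}\psi_{0,B}$ and the analogue for $\varphi'''_{B}$ (Lemma~\ref{le:psi0Bpsi1B}(i), Lemma~\ref{le:psiphi2}), the commutator of the resolvents with $\sqrt{\varphi'_{B}}$ splits into a $\sqrt{\varphi'_{B}}$-weighted part and a $B^{-20}$-small $\psi_{0,B}$-weighted part, the latter absorbed by the already-proven $\psi_{0,B}$ estimates for $\varepsilon$ \emph{and} $\eta$; this is exactly the source of the term $B^{-20}\int(|\nabla\varepsilon|^{2}+\varepsilon^{2}+\eta^{2})\psi_{0,B}\,dy$ in the statement.

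\textbf{Main obstacle.} The hard part will be ingredient (c): the nonlocal inverse $\mathcal{L}^{-1}$ (equivalently the $-3Q^{2}$ term in $\mathcal{L}$) cannot be controlled by smallness of $B$ or $\delta$. I would handle it by factoring $\mathcal{L}^{-1}=(I-3(1-\Delta)^{-1}Q^{2})^{-1}(1-\Delta)^{-1}$ on $\{\nabla Q\}^{\perp}$ and observing that, because the resolvent kernel of $1-\Delta$ decays exponentially and $Q$ is a fixed exponentially localized profile, conjugating $(1-\Delta)^{-1}Q^{2}$ by the slowly varying weights $\psi_{0,B}^{1/2},\psi_{1,B}^{1/2}$ changes it by a $B$-independent amount; since $I-3(1-\Delta)^{-1}Q^{2}$ is invertible on $L^{2}(\{\nabla Q\}^{\perp})$ by the spectral theory of $\mathcal{L}$ (Proposition~\ref{prop:Spectral}), its inverse stays bounded on the weighted spaces with constant independent of $B$ (and of $\delta$). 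The secondary point requiring care is bookkeeping the powers of $\delta$, so that $\nabla\varepsilon$ genuinely gains a factor $\delta$ while $\eta$ loses $\delta^{-1/2}$ per derivative, exactly as in the statement.
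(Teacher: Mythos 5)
Your plan is a resolvent-algebra argument, whereas the paper (by citing the analogous [CLY, Lemmas 4.7--4.8] with the hint ``integration by parts and coercivity of $\mathcal{L}$'') uses a quadratic-form method: one multiplies $\mathcal{L}\varepsilon=(1-\delta\Delta)\eta$ by $\varepsilon\gamma^{2}$ (or by $\eta\gamma^{2}$), integrates by parts, and invokes Proposition~\ref{prop:Spectral}(iii) together with the orthogonality conditions~\eqref{equ:ortho} (resp.\ Lemma~\ref{le:equeta}(i)); all commutator errors from $[\nabla,\gamma^{2}]$ are $O(B^{-1})$ by Lemmas~\ref{le:psiphi2} and~\ref{le:psi0Bpsi1B}, and the $\delta^{\pm}$ weights drop out of Young's inequality without ever needing $\mathcal{L}^{-1}$ as an operator. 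Your direction ``$\eta$ controlled by $\varepsilon$'' (the last two inequalities in (i)) is fine as written --- it needs only Lemma~\ref{le:2DSobolevvirial} applied to $\nabla\varepsilon$ and the comparison $e^{-|y|/10}\lesssim\psi_{0,B}\le\psi_{1,B}$ --- but the converse direction hinges on your ingredient (c), and there you have a genuine gap.

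Concretely, the closed form $\varepsilon=\delta\eta+(1-\delta)\mathcal{L}^{-1}\eta+3\delta\,\mathcal{L}^{-1}(Q^{2}\eta)$ requires $\mathcal{L}^{-1}$ to be bounded, uniformly in $B$, on the weighted $H^{1}$ spaces, and your proposed proof via the factorization $\mathcal{L}^{-1}=(I-3(1-\Delta)^{-1}Q^{2})^{-1}(1-\Delta)^{-1}$ on $\{\nabla Q\}^{\perp}$ does not work as stated: $I-3(1-\Delta)^{-1}Q^{2}$ has the two-dimensional kernel $\ker\mathcal{L}$, so it is not invertible; and neither $(1-\Delta)^{-1}$ nor multiplication by $Q^{2}$ preserves $\{\nabla Q\}^{\perp}$, so ``invertible on $L^{2}(\{\nabla Q\}^{\perp})$'' does not even parse for this factor. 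The clean way to repair it is to conjugate $\mathcal{L}$ itself: $\gamma\mathcal{L}\gamma^{-1}=\mathcal{L}+2(\nabla\gamma/\gamma)\cdot\nabla+(\Delta\gamma/\gamma)$ is a $B^{-1}$-small first-order perturbation, and $\varepsilon\gamma$ is nearly orthogonal to $Q^{3}$ and $\nabla Q$ because $\gamma$ is nearly constant on their supports --- but once you write this down you have essentially reproduced the coercivity argument, so you may as well use it directly. A secondary issue: in (ii) the weight $\sqrt{\varphi'_{B}}$ does not satisfy the slow-variation hypothesis of Lemma~\ref{le:2DSobolevvirial} (the ratio $\varphi''_{B}/\varphi'_{B}$ is not uniformly bounded where $\varphi'_{B}$ degenerates), so the commutator splitting you sketch via Lemma~\ref{le:psi0Bpsi1B}(i) must be carried out explicitly; in the quadratic-form approach the error structure is transparent because $\varphi''_{B}$, $\varphi'''_{B}$ appear directly in the integration by parts, and the $B^{-20}\psi_{0,B}$ term in the statement is read off from Lemma~\ref{le:psi0Bpsi1B}(i) at once.
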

\begin{proof}
    The proof relies on a standard argument based on integration by parts and the coercivity of $\mathcal{L}$ (see \emph{e.g.}~\cite[Lemma 4.7 and 4.8]{CLY}), and we omit it.
\end{proof}

We now give a complete proof of Proposition~\ref{prop:virial}.
\begin{proof}[Proof of Proposition~\ref{prop:virial}]
By Lemma~\ref{le:equeta}, we decompose
\begin{equation}\label{equ:dsP}
\frac{\dd \mathcal{P}}{\dd s}
=\mathcal{J}_{4}+\mathcal{J}_{5}+\mathcal{J}_{6}+\mathcal{J}_{7},
\end{equation}
where 
\begin{equation*}
    \begin{aligned}
           \mathcal{J}_{4}&=
        2\int_{\R^{2}}\left((1-\delta\Delta)^{-1}\mathcal{L}
        \left({\rm{Mod}}_{\eta}
        -\partial_{y_{1}}R_{NL}
        \right)
        \right)
        \eta \rho_{B}\dd y,
        \\
\mathcal{J}_{5}&=2\int_{\R^{2}}\left((1-\delta\Delta)^{-1}{\mathcal{L}}(\Psi_{M}-\Psi_{N}-\Psi_{W}-\Psi_{b})\right)\eta \rho_{B}\dd y,\\
      \mathcal{J}_{6}&=-6\delta\int_{\R^{2}}
        \left((1-\delta\Delta)^{-1}\left(\Delta(Q^{2})\partial_{y_{1}}\eta+4Q\nabla Q\cdot \nabla \partial_{y_{1}}\eta\right)\right)\eta \rho_{B}\dd y,\\
            \mathcal{J}_{7}&=2\int_{\R^{2}}
        \left(\mathcal{L}\partial_{y_{1}}\eta\right)\eta \rho_{B}\dd y
-4\left(\frac{\lambda_{s}}{\lambda}+\Gamma+b\right)
\int_{\R^{2}}
\left((1-\delta\Delta)^{-1}Q\right)\eta \rho_{B}\dd y.
    \end{aligned}
\end{equation*}

\textbf{Step 1.}
Estimate on $\mathcal{J}_{4}$.
We claim that, there exist some universal constants $B>1$ large enough, $0<\kappa_{1}<B^{-1}$ small enough and $|s_{0}|$ large enough (possible depending on $B$ and $C_{1}$) such that
\begin{equation}\label{est:J4}
\begin{aligned}
\mathcal{J}_{4}&\lesssim
 B^{-10}
        \int_{\R^{2}}\left(|\nabla \eta|^{2}+\eta^{2}\right)
        \psi_{0,B}\dd y\\
        &+B^{-10}
        \int_{\R^{2}}\left(|\nabla \varepsilon|^{2}+\varepsilon^{2}\right)
        \left(\psi_{0,B}+B\varphi'_{B}\right)\dd y.
\end{aligned}
\end{equation}
Indeed, from Lemma~\ref{le:equeta} and integration by parts, we decompose
\begin{equation*}
    \mathcal{J}_{4}=\mathcal{J}_{4,1}+\mathcal{J}_{4,2}+\mathcal{J}_{4,3}+\mathcal{J}_{4,4},
\end{equation*}
where
\begin{equation*}
\begin{aligned}
&
\mathcal{J}_{4,1}=2\int_{\R^{2}}\left((1-\delta\Delta)^{-1}\left(-\Delta+1-3Q^{2}\right)\partial_{y_{1}}R_{NL}\right)\eta \rho_{B}\dd y,
\\
   &\mathcal{J}_{4,2}=2\left(\frac{x_{1s}}{\lambda}-1\right)\int_{\R^{2}}
\left(\left(1-\delta\Delta\right)^{-1}\left(-\Delta+1-3Q^{2}\right)\partial_{y_{1}}\varepsilon\right)\eta \rho_{B}\dd y,\\
        &\mathcal{J}_{4,3}=-\frac{\lambda_{s}}{\lambda}\int_{\R^{2}}\eta ^{2}\rho'_{B}\dd y+6\frac{\lambda_{s}}{\lambda}\int_{\R^{2}}\left((1-\delta\Delta)^{-1}\left(\varepsilon y\cdot\nabla \left(Q^{2}\right)\right)\right)\eta \rho_{B}\dd y,\\
&\mathcal{J}_{4,4}=4\delta\frac{\lambda_{s}}{\lambda}\int_{\R^{2}}\left((1-\delta\Delta)^{-2}\Delta\mathcal{L}\varepsilon\right)\eta \rho_{B}\dd y
-4\frac{\lambda_{s}}{\lambda}\int_{\R^{2}}\left((1-\delta\Delta)^{-1}\Delta \varepsilon\right)\eta \rho_{B}\dd y.
        \end{aligned}
\end{equation*}
\emph{Estimate on $\mathcal{J}_{4,1}$.} 
Note that, from the definition of $R_{NL}$, we directly have 
\begin{equation*}
    \left|R_{NL}\right|
    \lesssim |W_{b,\lambda}^{2}-Q^{2}||\varepsilon|+|W_{b,\lambda}|\varepsilon^{2}+|\varepsilon|^{3}
    \lesssim \left(\lambda^{\theta}+|b|\right)|\varepsilon|+\varepsilon^{2}+|\varepsilon|^{3}.
\end{equation*}
It follows from~\eqref{est:pointpsi0Bpsi1B}, Lemma~\ref{le:2DSobolevvirial} and 2D Sobolev estimate that
\begin{equation*}
\begin{aligned}
    &\left\|\sqrt{\psi_{0,B}}\left((1-\delta\Delta)^{-1}\left(-\Delta+1-3Q^{2}\right)R_{NL}\right)\right\|_{L^{2}}\\
    & \lesssim B^{3}\left(
\left(\lambda^{\theta}+|b|\right)\left\|\varepsilon\sqrt{\psi_{0,B}}\right\|_{L^{2}}+\left\|\varepsilon\psi^{\frac{1}{4}}_{0,B}\right\|^{2}_{H^{1}}+\left\|\varepsilon\psi_{0,B}^{\frac{1}{6}}\right\|^{3}_{H^{1}}
    \right).
    \end{aligned}
    \end{equation*}
Based on the above estimate and~\eqref{est:Boot}, we find
\begin{equation*}
\begin{aligned}
   & \left\|\sqrt{\psi_{0,B}}\left((1-\delta\Delta)^{-1}\left(-\Delta+1-3Q^{2}\right)R_{NL}\right)\right\|_{L^{2}}\\
   &\lesssim B^{-30}\left(\int_{\R^{2}}\left(|\nabla \varepsilon|^{2}+\varepsilon^{2}\right)\left(\psi_{0,B}+B\varphi'_{B}\right)\dd y\right)^{\frac{1}{2}}.
    \end{aligned}
\end{equation*}
Here, we use the fact that 
\begin{equation*}
    \begin{aligned}
        \psi_{0,B}^{\frac{1}{2}}&\lesssim \psi_{B}\lesssim \varphi_{B}\quad 
        \mbox{and}\quad  \psi_{0,B}^{\frac{1}{2}}\lesssim \psi_{0,B}+B\varphi'_{B},\\
          \psi_{0,B}^{\frac{1}{3}}&\lesssim \psi_{B}\lesssim \varphi_{B}\quad 
        \mbox{and}\quad  \psi_{0,B}^{\frac{1}{3}}\lesssim \psi_{0,B}+B\varphi'_{B}.
    \end{aligned}
\end{equation*}
Combining the above estimate with Lemma~\ref{le:psi0Bpsi1B}, we obtain
\begin{equation}\label{est:J41}
    \begin{aligned}
        \mathcal{J}_{4,1}&\lesssim B^{-10}
        \int_{\R^{2}}\left(|\nabla \eta|^{2}+\eta^{2}\right)
        \psi_{0,B}\dd y\\
        &+B^{-10}
        \int_{\R^{2}}\left(|\nabla \varepsilon|^{2}+\varepsilon^{2}\right)
        \left(\psi_{0,B}+B\varphi'_{B}\right)\dd y.
    \end{aligned}
\end{equation}
\emph{Estimate on $\mathcal{J}_{4,2}-\mathcal{J}_{4,4}$.} 
Based on~\eqref{est:Geo} and a similar argument as above, we directly have 
\begin{equation}\label{est:J42J44}
\begin{aligned}
    \mathcal{J}_{4,2}+\mathcal{J}_{4,3}+\mathcal{J}_{4,4}
    &\lesssim B^{-10}
        \int_{\R^{2}}\left(|\nabla \eta|^{2}+\eta^{2}\right)
        \psi_{0,B}\dd y\\
        &+B^{-10}
        \int_{\R^{2}}\left(|\nabla \varepsilon|^{2}+\varepsilon^{2}\right)
        \left(\psi_{0,B}+B\varphi'_{B}\right)\dd y.
        \end{aligned}
\end{equation}
We see that~\eqref{est:J4} follows from~\eqref{est:J41} and~\eqref{est:J42J44}.

\smallskip
\textbf{Step 2}. Estimate on $\mathcal{J}_{5}$. 
We claim that 
\begin{equation}\label{est:J5}
    \mathcal{J}_{5}\le \frac{C_{9}}{B^{10}}\int_{\R^{2}}\left(|\nabla \eta|^{2}+\eta^{2}\right)\psi_{0,B}\dd y+\frac{C_{10}}{|s|^{10}}.
\end{equation}
Here, $C_{9}>1$ is a universal constant independent of $B$ and $C_{10}=C_{10}(B,C_{1})>1$ is a constant depending only on $B$ and $C_{1}$.

\smallskip
Indeed, from~\eqref{est:Boot}--\eqref{est:Geo} and Proposition~\ref{prop:W}--\ref{prop:Wb}, we deduce that
\begin{equation*}
\begin{aligned}
&\left\|\Psi_{M}\sqrt{\psi_{0,B}}\right\|_{L^{2}}
    +\left\|\Psi_{N}\sqrt{\psi_{0,B}}\right\|_{L^{2}}
+\left\|\Psi_{W}\sqrt{\psi_{0,B}}\right\|_{L^{2}}
    +\left\|\Psi_{b}\sqrt{\psi_{0,B}}\right\|_{L^{2}}
\\
&\lesssim C_{1}|s|^{-5}\left(\int_{\R^{2}}
            \left(e^{-\frac{|y|}{4}}+\langle y_{1}\rangle^{3} e^{-\frac{|y_{2}|}{2}}e^{\frac{6y_{1}}{B}}{\textbf{1}}_{(-\infty,0)}(y_{1})\right)
            \dd y\right)^{\frac{1}{2}}
            \lesssim C_{1}B^{2}|s|^{-5}.
            \end{aligned}
\end{equation*}
Using Lemma~\ref{le:psi0Bpsi1B}, Lemma~\ref{le:2DSobolevvirial} and the Cauchy-Schwarz inequality, we have 
\begin{equation*}
\begin{aligned}
    \mathcal{J}_{5}
    &\lesssim B^{12}\left( \left\|\Psi_{M}\sqrt{\psi_{0,B}}\right\|_{L^{2}}
    +\left\|\Psi_{N}\sqrt{\psi_{0,B}}\right\|_{L^{2}}
    \right)\left\|\eta\sqrt{\psi_{0,B}}\right\|_{L^{2}}\\
    &+B^{12}\left( \left\|\Psi_{W}\sqrt{\psi_{0,B}}\right\|_{L^{2}}
    +\left\|\Psi_{b}\sqrt{\psi_{0,B}}\right\|_{L^{2}}
    \right)\left\|\eta\sqrt{\psi_{0,B}}\right\|_{L^{2}}.
    \end{aligned}
\end{equation*}
Combining the above two estimates, we complete the proof of~\eqref{est:J5}.

\smallskip
\textbf{Step 3.} Estimate on $\mathcal{J}_{6}$. We claim that 
\begin{equation}\label{est:J6}
    \mathcal{J}_{6}\le \frac{C_{11}}{B^{\frac{3}{2}}}\int_{\R^{2}}\left(|\nabla \eta|^{2}+\eta^{2}\right)\psi_{0,B}\dd y.
\end{equation}
Here, $C_{11}>1$ is a universal constant independent of $B$.

\smallskip
Indeed, using an elementary computation, we find
\begin{equation*}
\begin{aligned}
    \mathcal{J}_{6}
    &=-24\delta \int_{\R^{2}}
    \left(
    (1-\gamma\Delta)^{-1}
    \left(\partial_{y_{1}}\left(Q\nabla Q\cdot \nabla\eta \right)\right)
    \right)
   \eta \rho_{B}\dd y
    \\
    &+6\delta\int_{\R^{2}}
        \left(
        (1-\delta\Delta)^{-1}\left(
       4\left( \partial_{y_{1}}\left(Q\nabla Q\right)\right)\cdot \nabla\eta
        -\left(\Delta(Q^{2})\right)\partial_{y_{1}}\eta\right)\right)\eta \rho_{B}\dd y.
        \end{aligned}
\end{equation*}
From~\eqref{est:pointpsi0Bpsi1B} and Lemma~\ref{le:2DSobolevvirial}, we deduce that 
\begin{equation*}
\begin{aligned}
\left\|\psi_{1,B}
        (1-\delta\Delta)^{-1}\left(
       \left(\Delta(Q^{2})\right)\partial_{y_{1}}\eta\right)\right\|_{L^{2}}
       &\lesssim \left\|\left|\nabla \eta\right|\sqrt{\psi_{0,B}}\right\|_{L^{2}},
       \\
    \delta^{\frac{1}{2}}\left\|\psi_{1,B} (1-\delta\Delta)^{-1}
    \partial_{y_{1}}
    \left(Q\nabla Q\cdot \nabla\eta \right)\right\|_{L^{2}}
    &\lesssim \left\||\nabla \eta|\sqrt{\psi_{0,B}}\right\|_{L^{2}},
    \\
    \left\|\psi_{1,B}
        (1-\delta\Delta)^{-1}
        \left(\left(
       \partial_{y_{1}}\left(Q\nabla Q\right)\right)\cdot \nabla\eta
       \right)
       \right\|_{L^{2}}&\lesssim \left\||\nabla \eta|\sqrt{\psi_{0,B}}\right\|_{L^{2}}.
    \end{aligned}
\end{equation*}
It follows from Lemma~\ref{le:psi0Bpsi1B} and the Cauchy-Schwarz inequality that 
\begin{equation*}
    \mathcal{J}_{6}\lesssim \delta \left(1+\delta^{-\frac{1}{2}}\right)\int_{\R^{2}}\left(|\nabla \eta|^{2}+\eta^{2}\right)\psi_{0,B}\dd y.
\end{equation*}
Combining the above estimate with $\delta=B^{-3}$, we complete the proof of~\eqref{est:J6}.

\smallskip
\textbf{Step 4.} Estimate on $\mathcal{J}_{7}$. We claim that, there exist some universal constants $B>1$ large enough, $0<\kappa_{1}<B^{-1}$ small enough and $|s_{0}|$ large enough (possible depending on $B$ and $C_{1}$) such that
\begin{equation}\label{est:J7}
\begin{aligned}
    \mathcal{J}_{7}&\le -\frac{2\mu_{2}}{B}\int_{\R^{2}}\left(
    |\nabla \eta|^{2}+\eta^{2}
    \right)\psi_{0,B}\dd y\\
    &+\frac{C_{12}}{B^{8}}\int_{\R^{2}}\left(
    |\nabla \varepsilon|^{2}+\varepsilon^{2}
    \right)\left(\psi_{0,B}+B^{23}\varphi'_{B}\right)\dd y+\frac{C_{13}}{|s|^{10}}.
    \end{aligned}
\end{equation}
Here, $C_{12}>1$ is a universal constant independent of $B$ and $C_{13}=C_{13}(B,C_{1})>1$ is a constant depending only on $B$ and $C_{1}$.

\smallskip
Indeed, from integration by parts, we find 
\begin{equation*}
\begin{aligned}
   2 \int_{\R^{2}}\left(\mathcal{L}\partial_{y_{1}}\eta\right)\eta \rho_{B}\dd y&=-\int_{\R^{2}}|\nabla\eta|^{2}\rho'_{B}\dd y
-2\int_{\R^{2}}\left(\partial_{y_{1}}\eta\right)^{2}\rho'_{B}\dd y
-\int_{\R^{2}}\eta^{2}\rho'_{B}\dd y\\
&+3\int_{\R^{2}}Q^{2}\eta^{2}\rho'_{B}\dd y
+6\int_{\R^{2}}\left(Q\partial_{y_{1}}Q\right)\eta^{2}\rho_{B}\dd y
+\int_{\R^{2}}\eta^{2}\rho'''_{B}\dd y.
\end{aligned}
\end{equation*}
From the above identity, we rewrite the term $\mathcal{J}_{7}$ by 
\begin{equation*}
\mathcal{J}_{7}=\mathcal{J}_{7,1}+\mathcal{J}_{7,2}+\mathcal{J}_{7,3},
\end{equation*}
where
\begin{equation*}
\begin{aligned}
\mathcal{J}_{7,1}&=
-\frac{2}{B}\int_{\R^{2}}\left(|\nabla\eta|^{2}+2\left(\partial_{y_{1}}\eta\right)^{2}\right)\psi_{0,B}\dd y
-\frac{2}{B}\int_{\R^{2}}\eta^{2}\psi_{0,B}\dd y\\
&+\frac{6}{B}\int_{\R^{2}}\left(Q^{2}+2y_{1}Q\partial_{y_{1}}Q\right)\eta^{2}\psi_{0,B}\dd y
-\frac{12(\eta,y_1Q)}{B(Q,Q)}
(\eta,Q^2\partial_{y_{1}}Q),\qquad \qquad \quad 
\end{aligned}
\end{equation*}
\begin{equation*}
\begin{aligned}
\mathcal{J}_{7,2}=\frac{12}{B}\frac{(\eta,Q^2\partial_{y_{1}}Q)}{(Q,Q)}(\eta,y_1Q)-4\left(\frac{\lambda_s}{\lambda}+\Gamma+b\right) \left((1-\delta\Delta)^{-1}Q,\eta\rho_{B}\right),\quad \quad \ \
\end{aligned}
\end{equation*}
\begin{equation*}
\begin{aligned}
\mathcal{J}_{7,3}&=
-\int_{\R^{2}}\left(|\nabla \eta|^{2}+2\left(\partial_{y_{1}}\eta\right)^{2}+\eta^{2}\right)\left(\rho'_{B}-\frac{2}{B}\psi_{0,B}\right)\dd y+\int_{\R^{2}}\eta^{2}\rho'''_{B}\dd y
\\
&+3\int_{\R^{2}}Q^{2}\eta^{2}\left(\rho'_{B}-\frac{2}{B}\psi_{0,B}\right)\dd y
+6\int_{\R^{2}}\left(Q\partial_{y_{1}}Q\right)\eta^{2}\left(\rho_{B}-\frac{2y_{1}}{B}\psi_{0,B}\right)\dd y.
\end{aligned}
\end{equation*}

\emph{Estimate on $\mathcal{J}_{7,1}$.}
By an elementary computation, we rewrite the term $\mathcal{J}_{7,1}$ by 
\begin{equation*}
\begin{aligned}
\mathcal{J}_{7,1}
&=-\frac{4}{B}\left(\mathcal{H}
\left(\eta \sqrt{\psi_{0,B}}\right),\eta \sqrt{\psi_{0,B}}\right)
-\frac{3}{B}\int_{\R^{2}}\eta^{2}\left(\psi''_{0,B}
-\frac{(\psi'_{0,B})^{2}}{2\psi_{0,B}}
\right)\dd y\\
&-\frac{12}{B(Q,Q)}\left(\left(\eta,y_{1}Q\right)\left(\eta,Q^{2}\partial_{y_{1}}Q\right)
-2\left(\eta\sqrt{\psi_{0,B}},y_{1}Q\right)\left(\eta\sqrt{\psi_{0,B}},Q^{2}\partial_{y_{1}}Q\right)
\right).
\end{aligned}
\end{equation*}
First, from Lemma~\ref{le:coerH}, we deduce that 
\begin{equation*}
    \begin{aligned}
        \left(\mathcal{H}
\left(\eta \sqrt{\psi_{0,B}}\right),\eta \sqrt{\psi_{0,B}}\right)
&\ge \mu_{2}\left\|\eta \sqrt{\psi_{0,B}}\right\|_{H^{1}}^{2}-\frac{1}{\mu_{2}}
\left(\eta \sqrt{\psi_{0,B}},Q\right)^{2}\\
&-\frac{1}{\mu_{2}}
\left(\eta \sqrt{\psi_{0,B}},\partial_{y_{1}}Q\right)^{2}
-\frac{1}{\mu_{2}}
\left(\eta \sqrt{\psi_{0,B}},\partial_{y_{2}}Q\right)^{2}.
    \end{aligned}
\end{equation*}
Note that, from Lemma~\ref{le:psi0Bpsi1B} and integration by parts, we have
\begin{equation*}
    \left\|\eta \sqrt{\psi_{0,B}}\right\|_{H^{1}}^{2}
    =\left(
    1+O\left(B^{-1}\right)
    \right)
    \int_{\R^{2}}\left(|\nabla \eta|^{2}+\eta^{2}\right)\psi_{0,B}\dd y.
\end{equation*}
Note also that, from (i) of Lemma~\ref{le:equeta} and $\delta=B^{-3}$, we have 
\begin{equation*}
 \left(\eta \sqrt{\psi_{0,B}},Q\right)^{2}
 + \left(\eta \sqrt{\psi_{0,B}},\partial_{y_{1}}Q\right)^{2}
 + \left(\eta \sqrt{\psi_{0,B}},\partial_{y_{2}}Q\right)^{2}\lesssim B^{-6}\int_{\R^{2}}\eta^{2}\psi_{0,B}\dd y.
\end{equation*}
Second, using again Lemma~\ref{le:psi0Bpsi1B}, we deduce that 
\begin{equation*}
   \left| \int_{\R^{2}}\eta^{2}\left(\psi''_{0,B}
-\frac{(\psi'_{0,B})^{2}}{2\psi_{0,B}}
\right)\dd y\right|\lesssim B^{-2}\int_{\R^{2}}\eta^{2}\psi_{0,B}\dd y.
\end{equation*}
Third, using the definition of $\psi_{0,B}$, we deduce that 
\begin{equation*}
\begin{aligned}
   & \left|\left(\eta,y_{1}Q\right)\left(\eta,Q^{2}\partial_{y_{1}}Q\right)
-2\left(\eta\sqrt{\psi_{0,B}},y_{1}Q\right)\left(\eta\sqrt{\psi_{0,B}},Q^{2}\partial_{y_{1}}Q\right)
\right|\\
&\lesssim B^{-2}\left(\int_{\R^{2}}\eta^{2}\psi_{0,B}\dd y\right)^{\frac{1}{2}}
\left(\int_{\R^{2}}\eta^{2}e^{-\frac{|y|}{10}}\dd y\right)^{\frac{1}{2}}
\lesssim B^{-2}\int_{\R^{2}}\eta^{2}\psi_{0,B}\dd y.
\end{aligned}
\end{equation*}
Combining the above estimates, for $B$ large enough, we obtain 
\begin{equation}\label{est:J71}
    \mathcal{J}_{7,1}\le -\frac{3\mu_{2}}{B}\int_{\R^{2}}\left(
    |\nabla \eta|^{2}+\eta^{2}
    \right)\psi_{0,B}\dd y.
\end{equation}

\emph{Estimate on $\mathcal{J}_{7,2}$.}
Note that
\begin{equation*}
    \left(1-\delta\Delta\right)^{-1}Q=Q+\delta\left(1-\delta\Delta\right)^{-1}\Delta Q.
\end{equation*}
Therefore, by an elementary computation, we decompose
\begin{equation*}
\mathcal{J}_{7,2}=\mathcal{J}_{7,2,1}+\mathcal{J}_{7,2,2}+\mathcal{J}_{7,2,3},
\end{equation*}
where
\begin{equation*}
    \begin{aligned}
\mathcal{J}_{7,2,1}&=12\frac{\left(\eta,Q^{2}\partial_{y_{1}}Q\right)}{(Q,Q)}\int_{\R^{2}}\eta Q\left(\frac{y_{1}}{B}-\rho_{B}\right)\dd y,\\
        \mathcal{J}_{7,2,2}&=-4\delta \left(
        \frac{\lambda_{s}}{\lambda}+\Gamma+b\right)
        \int_{\R^{2}}\left(
        \left(1-\delta\Delta\right)^{-1}\Delta Q\right)\eta \rho_{B}
        \dd y
        ,\\
        \mathcal{J}_{7,2,3}&=4
        \left(
        3\frac{(\eta,Q^{2}\partial_{y_{1}}Q)}{(Q,Q)}
        -\left(\frac{\lambda_{s}}{\lambda}+\Gamma+b\right)
        \right)\int_{\R^{2}}\eta Q\rho_{B}\dd y.
    \end{aligned}
\end{equation*}
First, from $\rho_{B}(y_{1})=\frac{y_{1}}{B}$ on $\left[-\frac{B}{2},\frac{B}{2}\right]$, we find 
\begin{equation*}
    \mathcal{J}_{7,2,1}\lesssim B^{-3}
    \left(\int_{\R^{2}}|\eta|e^{-\frac{|y|}{10}}\dd y\right)^{2}\lesssim B^{-3}\int_{\R^{2}}\eta^{2}\psi_{0,B}\dd y.
\end{equation*}
Second, using again~\eqref{est:pointpsi0Bpsi1B}, Lemma~\ref{le:psi0Bpsi1B} and Lemma~\ref{le:2DSobolevvirial},
\begin{equation*}
\begin{aligned}
&\left|  \int_{\R^{2}}\left(
        \left(1-\delta\Delta\right)^{-1}\Delta Q\right)\eta \rho_{B}
        \dd y
        \right|\\
        &\lesssim \int_{\R^{2}}\left|\psi_{1,B}\left(
        \left(1-\delta\Delta\right)^{-1}\Delta Q\right)\right|\left|\eta \sqrt{\psi_{0,B}}\right|\dd y\lesssim \left(\int_{\R^{2}}\eta^{2}\psi_{0,B}\dd y\right)^{\frac{1}{2}}.
        \end{aligned}
\end{equation*}
Based on the above estimate, \eqref{est:Geomain} and Lemma~\ref{le:viresteeta}, we find 
\begin{equation*}
    \mathcal{J}_{7,2,2}\lesssim B^{-3}\int_{\R^{2}}\left(|\nabla \eta|^{2}+\eta^{2}\right)\psi_{0,B}\dd y+C_{1}^{20}B^{-3}|s|^{-10}.
\end{equation*}
Third, from (i) of Lemma~\ref{le:equeta}, we compute 
\begin{equation*}
\begin{aligned}
&\left(
\mathcal{L}\partial_{y_{1}}\eta,
\left(1-\delta \Delta\right)Q
\right)-2\left(\frac{\lambda_{s}}{\lambda}+\Gamma+b\right)(Q,Q)\\
&=3\delta\left(\Delta (Q^{2})\partial_{y_{1}}\eta+4Q\nabla Q\cdot \nabla \partial_{y_{1}}\eta,Q\right)\\
&-\left(
{{\rm{Mod}}_{\eta}+\Psi_{M}}-\Psi_{N}
-\Psi_{W}-\Psi_{b}-\partial_{y_{1}}R_{NL}
,\mathcal{L}Q
\right).
\end{aligned}
\end{equation*}
On the other hand, from the fact that $-\Delta Q+Q-Q^{3}=0$, we have 
\begin{equation*}
    \left(
\mathcal{L}\partial_{y_{1}}\eta,
\left(1-\delta \Delta\right)Q
\right)=6\left(\eta, Q^{2}\partial_{y_{1}}Q\right)+\delta\left(\eta,\partial_{y_{1}}\mathcal{L}\Delta Q\right).
\end{equation*}
It follows from (iii) of Lemma~\ref{le:PsiMNWb} and Lemma~\ref{le:viresteeta} that 
\begin{equation*}
\begin{aligned}
    & \left|
        3\frac{(\eta,Q^{2}\partial_{y_{1}}Q)}{(Q,Q)}
        -\left(\frac{\lambda_{s}}{\lambda}+\Gamma+b\right)
        \right|\\
        &\lesssim B^{-3}\left(\int_{\R^{2}}\left(|\nabla \eta|^{2}+\eta^{2}\right)\psi_{0,B}\dd y\right)^{\frac{1}{2}}
        +C_{4}|s|^{-5}.
        \end{aligned}
\end{equation*}
Based on the above estimate and the definition of $\mathcal{J}_{7,2,3}$, we find 
\begin{equation*}
    \mathcal{J}_{7,2,3}\lesssim B^{-3}\int_{\R^{2}}\left(|\nabla \eta|^{2}+\eta^{2}\right)\psi_{0,B}\dd y+
        +C^{2}_{4}B^{3}|s|^{-10}.
\end{equation*}
Combining the above estimates, we obtain 
\begin{equation}\label{est:J72}
    \mathcal{J}_{7,2}\lesssim B^{-3}
    \int_{\R^{2}}\left(|\nabla \eta|^{2}+\eta^{2}\right)\psi_{0,B}\dd y
        +\left(C_{1}^{20}B^{-3}+C^{2}_{4}B^{3}\right)|s|^{-10}.
\end{equation}

\emph{Estimate on $\mathcal{J}_{7,3}$.}
Recall that, from Lemma~\ref{le:psi0Bpsi1B}, we have
\begin{equation*}
    \left|\rho'_{B}-\frac{2}{B}\psi_{0,B}\right|\lesssim B^{9}\varphi'_{B},\quad \mbox{on}\ \R.
\end{equation*}
It follows from (ii) of Lemma~\ref{le:viresteeta} that 
\begin{equation*}
\begin{aligned}
   &\left| \int_{\R^{2}}\left(|\nabla \eta|^{2}+2\left(\partial_{y_{1}}\eta\right)^{2}+\eta^{2}\right)\left(\rho'_{B}-\frac{2}{B}\psi_{0,B}\right)\dd y\right|\\
   &\lesssim B^{15}\int_{\R^{2}}\left(|\nabla \varepsilon|^{2}+\varepsilon^{2}\right)\varphi'_{B}\dd y
   +B^{-8}\int_{\R^{2}}
   \left(|\nabla \varepsilon|^{2}+\varepsilon^{2}+\eta^{2}\right)\psi_{0,B}\dd y.
   \end{aligned}
\end{equation*}
Then, using again Lemma~\ref{le:psi0Bpsi1B} and the exponential decay of $Q$, we have
\begin{equation*}
\begin{aligned}
&\left|\int_{\R^{2}}\eta^{2}\rho'''_{B}\dd y\right|
+\left|\int_{\R^{2}}\eta^{2}Q^{2}\left(\rho'_{B}-\frac{2}{B}\psi_{0,B}\right)\dd y\right|\\
&+\left|\int_{\R^{2}}\eta^{2}\left(Q\partial_{y_{1}}Q\right)\left(\rho_{B}-\frac{2y_{1}}{B}\psi_{0,B}\right)\dd y\right|\lesssim B^{-3}\int_{\R^{2}}\eta^{2}\psi_{0,B}\dd y.
\end{aligned}
\end{equation*}
Combining the above estimates, we obtain 
\begin{equation}\label{est:J73}
\begin{aligned}
    \mathcal{J}_{7,3}&\lesssim B^{-3}\int_{\R^{2}}\left(
    |\nabla \eta|^{2}+\eta^{2}
    \right)\psi_{0,B}\dd y\\
    &+B^{-8}\int_{\R^{2}}\left(|\nabla \varepsilon|^{2}+\varepsilon^{2}\right)\left(\psi_{0,B}+B^{23}\varphi'_{B}\right)\dd y.
    \end{aligned}
\end{equation}
We see that~\eqref{est:J7} follows from~\eqref{est:J71},~\eqref{est:J72} and~\eqref{est:J73}.

\smallskip
\textbf{Step 4.} Conclusion.  Combining the estimates~\eqref{est:J5},~\eqref{est:J6} and~\eqref{est:J7}, we complete the proof of~\eqref{est:dsP} by taking $B$ large enough.
\end{proof}

\subsection{Proof of Proposition~\ref{prop:Sn}}\label{SS:proofSn}
In this subsection, we will give a complete proof of Proposition~\ref{prop:Sn} via the energy method.
We first introduce the following energy-virial Lyapunov functional $\mathcal{W}$ which is based on the combination of the energy and virial quantities defined in Section~\ref{SS:Energy} and Section~\ref{SS:Virial}. Denote 
\begin{equation*}
\mathcal{W}=\mathcal{F}+\frac{\mathcal{P}}{B^{20}},\quad \mbox{for all}\ s\in \left[S_{n},S_{n}^{\star}\right].
\end{equation*}

\begin{proposition}\label{Prop:Mon}
There exist some universal constants $B>1$ large enough, $s_{0}<-1$ with $|s_{0}|$ large enough and $0<\kappa_{1}<B^{-1}$ small enough such that the following holds. Assume that for all $s\in [S_{n},S_{n}^{\star}]$, the solution $\phi(t)$ satisfies the bootstrap estimates~\eqref{est:NBbound} and~\eqref{est:Boot} with $0<\kappa<\kappa_{1}$. Then for all $s\in [S_{n},S_{n}^{\star}]$, the following estimates hold.
\begin{enumerate}
	\item \emph{Coercivity.} It holds
	\begin{equation*}
	\mathcal{N}^{2}_{B}\lesssim\mathcal{W}\lesssim \mathcal{N}_{B}^{2}.
	\end{equation*}
\item {\emph{Monotonicity formula}.}
It holds
\begin{equation*}
  \frac{\dd \mathcal{W}}{\dd s}+\frac{\mu_{3}}{2B^{27}}\int_{\R^{2}}\left(|\nabla \varepsilon|^{2}+\varepsilon^{2}\right)\left(\varphi'_{B}+\psi_{0,B}\right)\dd y\le \frac{C_{14}}{|s|^{10}}.
\end{equation*}
Here, $\mu_{3}>0$ is a universal constant independent of $B$ and
$C_{14}=C_{14}(B,C_{1})>1$ is a constant dependent only on $B$ and $C_{1}$.
\end{enumerate}
\end{proposition}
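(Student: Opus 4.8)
The plan is to establish the two assertions in turn. For the coercivity in (i), I would first expand
\begin{equation*}
    (W_{b,\lambda}+\varepsilon)^4-W_{b,\lambda}^4-4W_{b,\lambda}^3\varepsilon=6W_{b,\lambda}^2\varepsilon^2+4W_{b,\lambda}\varepsilon^3+\varepsilon^4,
\end{equation*}
so that $\mathcal{F}=\mathcal{N}_{B}^2-\tfrac12\int_{\R^2}\psi_B\left(6W_{b,\lambda}^2\varepsilon^2+4W_{b,\lambda}\varepsilon^3+\varepsilon^4\right)\dd y$. The upper bound $\mathcal{W}\lesssim\mathcal{N}_B^2$ is then immediate: by~\eqref{est:NBbound}, Lemma~\ref{le:2DSobolev} and the pointwise bound $|W_{b,\lambda}|\lesssim e^{-|y|/2}+\lambda^\theta+|b|$ of Proposition~\ref{prop:Wb}, the cubic and quartic terms are $\lesssim\varrho(\kappa)\mathcal{N}_B^2$ and the quadratic term is $\lesssim\mathcal{N}_B^2$, while $|\rho_B|\lesssim B^9\psi_{0,B}$ (Lemma~\ref{le:psi0Bpsi1B}), the weight comparison $\psi_{0,B}\lesssim\psi_B\le\varphi_B$ and the first-type estimates of Lemma~\ref{le:viresteeta} with $\delta=B^{-3}$ give $|\mathcal{P}|/B^{20}\lesssim B^{-8}\mathcal{N}_B^2$. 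For the lower bound, after discarding the cubic and quartic terms one is reduced to showing $\mathcal{N}_B^2-3\int_{\R^2}\psi_B W_{b,\lambda}^2\varepsilon^2\,\dd y\gtrsim\mathcal{N}_B^2$. I would do this by the localization argument of~\cite[Section~4]{CLY} (see also~\cite{CLYMINI,MMR1}): on a region $\{y_1\ge -cB\}$ the weights $\psi_B,\varphi_B$ are bounded above and below by universal constants and $W_{b,\lambda}^2=Q^2+O(\varrho(\kappa))$, so the spectral coercivity of $\mathcal{L}$ in Proposition~\ref{prop:Spectral}(iii), applied to a suitable spatial truncation of $\varepsilon$ and combined with the orthogonality conditions~\eqref{equ:ortho}, yields positivity up to errors $\lesssim\varrho(\kappa)\mathcal{N}_B^2$; on $\{y_1<-cB\}$ the potential $Q^2$ is exponentially small, so the integrand is pointwise $\gtrsim|\nabla\varepsilon|^2\psi_B+\varepsilon^2\varphi_B$. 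Absorbing $\mathcal{P}/B^{20}$ as above then gives (i).

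For the monotonicity formula (ii), the plan is to add Proposition~\ref{prop:energy} to $B^{-20}$ times Proposition~\ref{prop:virial}. This produces a differential inequality for $\mathcal{W}=\mathcal{F}+\mathcal{P}/B^{20}$ whose good terms are $\tfrac18\int(|\nabla\varepsilon|^2+\varepsilon^2)\varphi'_B$ and $\tfrac{\mu_2}{B^{21}}\int(|\nabla\eta|^2+\eta^2)\psi_{0,B}$, and whose error terms are $\tfrac{C_2}{B^{30}}\int(|\nabla\varepsilon|^2+\varepsilon^2)\psi_B$, $\tfrac{C_7}{B^{28}}\int(|\nabla\varepsilon|^2+\varepsilon^2)(\psi_{0,B}+B^{23}\varphi'_B)$, and $(C_3+C_8B^{-20})|s|^{-10}$. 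First I would absorb all contributions carrying $\varphi'_B$ — namely $\tfrac{C_7}{B^5}\int(\cdot)\varphi'_B$, and, after invoking the elementary pointwise comparison $\psi_B\lesssim\psi_{0,B}+B\varphi'_B$ (a consequence of the definitions and Lemmas~\ref{le:psiphi}--\ref{le:psiphi2}), also the piece $\tfrac{C_2}{B^{29}}\int(\cdot)\varphi'_B$ — into $\tfrac18\int(\cdot)\varphi'_B$, valid for $B$ large. The remaining $O(B^{-28})\int(|\nabla\varepsilon|^2+\varepsilon^2)\psi_{0,B}$ contributions are bounded by $O(B^{-28})\int(|\nabla\eta|^2+\eta^2)\psi_{0,B}$ via the first-type estimate of Lemma~\ref{le:viresteeta}, hence absorbed into $\tfrac{\mu_2}{B^{21}}\int(|\nabla\eta|^2+\eta^2)\psi_{0,B}$. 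Using once more $\int(|\nabla\eta|^2+\eta^2)\psi_{0,B}\gtrsim\int(|\nabla\varepsilon|^2+\varepsilon^2)\psi_{0,B}$ and merging $C_3,C_8$ into $C_{14}$, one arrives, for $\mu_3$ small and $B$ large, at
\begin{equation*}
    \frac{\dd\mathcal{W}}{\dd s}+\frac{\mu_3}{2B^{27}}\int_{\R^2}(|\nabla\varepsilon|^2+\varepsilon^2)\left(\varphi'_B+\psi_{0,B}\right)\dd y\le\frac{C_{14}}{|s|^{10}}.
\end{equation*}

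The hard part will be the coercivity lower bound in (i). In contrast to the cubic and higher-order nonlinear terms, the quadratic term $3\int\psi_B W_{b,\lambda}^2\varepsilon^2$ is comparable to $\mathcal{N}_B^2$ rather than small against it, so positivity cannot be reached by a perturbative estimate; it genuinely requires exploiting the spectral gap of $\mathcal{L}$ through~\eqref{equ:ortho}, together with a localization compatible with both the $B$-dependent weights $\psi_B,\varphi_B$ and the support of the refined profile $W_{b,\lambda}$. By comparison, part (ii) is a largely mechanical combination of the two differential inequalities of Propositions~\ref{prop:energy} and~\ref{prop:virial}, once the weight comparisons $\psi_{0,B}\lesssim\psi_B\le\varphi_B$ and $\psi_B\lesssim\psi_{0,B}+B\varphi'_B$ are in hand.
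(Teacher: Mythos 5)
Your plan for part (ii) is essentially the paper's: combine Propositions~\ref{prop:energy} and~\ref{prop:virial}, absorb the $\varphi'_B$ errors into the $\tfrac18\int\varphi'_B$ good term, convert the remaining $\psi_{0,B}$ errors in $\varepsilon$ to the $\eta$ variable via Lemma~\ref{le:viresteeta}, and finish with the pointwise comparison $\psi_B\lesssim\psi_{0,B}+B\varphi'_B$. That matches the paper.

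For the coercivity lower bound in (i), you take a genuinely different route. You correctly identify that the quadratic term $3\int\psi_B W_{b,\lambda}^2\varepsilon^2$ is not perturbative and must be handled by the spectral gap, but your mechanism for doing so — spatial localization, apply the coercivity of $\mathcal{L}$ on a truncation of $\varepsilon$, positivity on the exterior region — differs from the paper's. The paper instead performs the algebraic decomposition
\begin{equation*}
\mathcal{F}=\bigl(\mathcal{L}(\varepsilon\sqrt{\psi_B}),\varepsilon\sqrt{\psi_B}\bigr)+\int_{\R^2}\varepsilon^2(\varphi_B-\psi_B)\,\dd y
+\tfrac14\int\varepsilon^2\Bigl(\tfrac{(\psi'_B)^2}{\psi_B}-2\psi''_B\Bigr)
-\tfrac12\int\varepsilon^4\psi_B
-3\int(W_{b,\lambda}^2-Q^2)\varepsilon^2\psi_B
-2\int W_{b,\lambda}\varepsilon^3\psi_B,
\end{equation*}
and applies Proposition~\ref{prop:Spectral}(iii) directly to the global function $\varepsilon\sqrt{\psi_B}$. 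The near-orthogonality of $\varepsilon\sqrt{\psi_B}$ to $Q^3$ and $\nabla Q$ comes for free from~\eqref{equ:ortho} and the fact (Lemma~\ref{le:psiphi}(iii)) that $\sqrt{\psi_B}$ is nearly constant on the support of $Q$, while the nonnegative term $\int\varepsilon^2(\varphi_B-\psi_B)\ge0$ (since $\psi_B\le\varphi_B$) compensates the loss from $\|\varepsilon\sqrt{\psi_B}\|^2_{L^2}$ versus $\int\varepsilon^2\varphi_B$. This avoids cutoff functions entirely; no region-by-region analysis is needed, and the $W_{b,\lambda}^2-Q^2$ correction is treated as a separate small term via~\eqref{est:nonlinear}. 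Your localization approach can also be made to work, but note one inaccuracy in your formulation: on $\{y_1\ge-cB\}$ the weight $\varphi_B$ is \emph{not} bounded above (it grows like $(y_1/B^{10})^7$ for $y_1>B^{10}$), so you would need to localize to a region of the form $\{-cB<y_1<cB^{10}\}$ and handle $\{y_1>cB^{10}\}$ separately using the exponential decay of $Q$; the correction $W_{b,\lambda}^2-Q^2$ is not spatially localized either, so it must still be absorbed as a separate $O(\kappa)\mathcal{N}_B^2$ perturbation. The paper's weighted-coercivity trick buys a cleaner global argument; yours is more elementary in conception but requires more bookkeeping with cutoffs.
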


\begin{proof}
    Proof of (i). First, from (i) of Lemma~\ref{le:viresteeta}, we find 
    \begin{equation}\label{est:P1}
      \left|  \mathcal{P}\right|\lesssim B^{12}\int_{\R^{2}}\left(
        |\nabla \varepsilon|^{2}+\varepsilon^{2}
        \right)\psi_{0,B}\dd y\lesssim B^{12}\mathcal{N}_{B}^{2}
        \Longrightarrow \mathcal{W}\lesssim \mathcal{N}_{B}^{2}.
    \end{equation}
    Second, from the definition of $\mathcal{L}$ and $\mathcal{F}$, we decompose
    \begin{equation*}
    \begin{aligned}
\mathcal{F}&=\left(\mathcal{L}\left(\varepsilon\sqrt{\psi_{B}}\right),\varepsilon\sqrt{\psi_{B}}\right)+\frac{1}{4}\int_{\R^{2}}\varepsilon^{2}\left(\frac{(\psi'_{B})^{2}}{\psi_{B}}-2\psi''_{B}\right)\dd y
        -\frac{1}{2}\int_{\R^{2}}\varepsilon^{4}\psi_{B}\dd y
        \\
&+\int_{\R^{2}}\varepsilon^{2}(\varphi_{B}-\psi_{B})\dd y-3\int_{\R^{2}}\left(W^{2}_{b}-Q^{2}\right)\varepsilon^{2}\psi_{B}\dd y-2\int_{\R^{2}}W_{b,\lambda}\varepsilon^{3}\psi_{B}\dd y.
\end{aligned}
\end{equation*}
Using Proposition~\ref{prop:Spectral}, Lemma~\ref{le:psiphi} and~\eqref{equ:ortho}, we deduce that 
\begin{equation}\label{est:Lvarepslion}
\left(\mathcal{L}\left(\varepsilon\sqrt{\psi_{B}}\right),\varepsilon\sqrt{\psi_{B}}\right)+\int_{\R^{2}}\varepsilon^{2}(\varphi_{B}-\psi_{B})\dd y\ge \frac{1}{2}\min(1,\mu_{1})\mathcal{N}_{B}^{2}.
\end{equation}
Then, from~\eqref{est:Boot}, Lemma~\ref{le:psiphi}, Lemma~\ref{le:2DSobolev}  and the 2D Sobolev embedding, we find 
\begin{equation}\label{est:nonlinear}
\begin{aligned}
  \left|\int_{\R^{2}}\left(W^{2}_{b}-Q^{2}\right)\varepsilon^{2}\psi_{B}\dd y\right|&\lesssim B^{-10}\mathcal{N}_{B}^{2},\\
    \left|\int_{\R^{2}}\varepsilon^{2}\left(\frac{(\psi'_{B})^{2}}{\psi_{B}}-2\psi''_{B}\right)\dd y\right|&\lesssim B^{-10}\mathcal{N}_{B}^{2},\\
    \left|\int_{\R^{2}}W_{b,\lambda}\varepsilon^{3}\psi_{B}\dd y\right|+
    \left|\int_{\R^{2}}\varepsilon^{4}\psi_{B}\dd y\right|&\lesssim B^{-10}\mathcal{N}_{B}^{2}.
    \end{aligned}
\end{equation}
Combining the estimates~\eqref{est:P1}--\eqref{est:nonlinear}, we complete the proof of (i).

\smallskip
Proof of (ii). From Propostion~\ref{prop:energy} and Propostion~\ref{prop:virial}, we dedeuce that 
\begin{equation*}
    \begin{aligned}
        &\frac{\dd \mathcal{W}}{\dd s}+\frac{1}{8}\int_{\R^{2}}\left(|\nabla\varepsilon|^{2}+\varepsilon^{2}\right)\varphi'_{B}\dd y+\frac{\mu_{2}}{B^{21}}\int_{\R^{2}}\left(|\nabla\eta|^{2}+\eta^{2}\right)\psi_{0,B}\dd y\\
        &\le \frac{C_{15}}{B^{5}}\int_{\R^{2}}\left(|\nabla\varepsilon|^{2}+\varepsilon^{2}\right)\varphi'_{B}\dd y+\frac{C_{16}}{B^{28}}\int_{\R^{2}}\left(|\nabla\varepsilon|^{2}+\varepsilon^{2}\right)\psi_{B}\dd y+\frac{C_{14}}{|s|^{10}}.
    \end{aligned}
\end{equation*}
Here, $C_{15}>1$ and $C_{16}>1$ are some universal constants independent of $B$ and $C_{14}=C_{14}(B,C_{1})>1$ is a constant depending only on $B$ and $C_{1}$.
Based on the above estimate and Lemma~\ref{le:viresteeta}, there exists a universal constant $\mu_{3}>0$ (independent of $B$) such that 
\begin{equation*}
    \begin{aligned}
         &\frac{\dd \mathcal{W}}{\dd s}+\frac{1}{9}\int_{\R^{2}}\left(|\nabla\varepsilon|^{2}+\varepsilon^{2}\right)\varphi'_{B}\dd y\\
         &+\frac{\mu_{3}}{B^{27}}\int_{\R^{2}}\left(|\nabla\varepsilon|^{2}+\varepsilon^{2}\right)\psi_{0,B}\dd y\le\frac{C_{16}}{B^{28}}\int_{\R^{2}}\left(|\nabla\varepsilon|^{2}+\varepsilon^{2}\right)\psi_{B}\dd y+\frac{C_{14}}{|s|^{10}}.
    \end{aligned}
\end{equation*}
Therefore, from $\psi_{B}\lesssim \psi_{0,B}+B\varphi'_{B}$, we obtain
\begin{equation*}
    \frac{\dd \mathcal{W}}{\dd s}+\int_{\R^{2}}\left(|\nabla \varepsilon|^{2}+\varepsilon^{2}\right)\left(\frac{1}{10}\varphi'_{B}+\frac{\mu_{3}}{2B^{27}}\psi_{0,B}\right)\dd y\le \frac{C_{14}}{|s|^{10}},
\end{equation*}
which completes the proof of (ii).
\end{proof}

We now prove Proposition~\ref{prop:Sn} by improving all the estimates in~\eqref{est:NBbound} and~\eqref{est:Boot}.

\begin{proof}[Proof of Proposition~\ref{prop:Sn}.]
We consider the solution $\omega(s,y)$ with the initial data at $s=S_{n}$ as defined in Section~\ref{SS:Geo}. Recall that, for the initial data, we have 
\begin{equation}\label{est:Sn1}
    \omega(S_{n})=W(S_{n})\Longrightarrow \varepsilon(S_{n})=0\ \ \mbox{and}\ \ 
    b(S_{n})=0.
\end{equation}
\textbf{Step 1.} Closing the estimate in $\varepsilon$. First, from~\eqref{equ:ortho},~\eqref{est:Sn1} and conservation of mass, for any $s\in [S_{n},S_{n}^{\star}]$, we find 
\begin{equation*}
    \|\varepsilon(s)\|_{L^{2}}^{2}=\|W(S_{n})\|_{L^{2}}^{2}-\|W_{b,\lambda}(s)\|_{L^{2}}^{2}-2\int_{\R^{2}}\left(W_{b,\lambda}(s)-Q\right)\varepsilon(s)\dd y.
\end{equation*}
Based on~\eqref{est:Boot}, Lemma~\ref{le:estW1}, Proposition~\ref{prop:Wb} and the above identity, we obtain 
\begin{equation*}
    \|\varepsilon(s)\|_{L^{2}}^{2}\lesssim \lambda^{\theta}(S_{n})+\lambda^{\theta}(s)+|b(s)|\lesssim |s|^{-1}+C_{1}^{\theta}|s|^{-\theta-1}\log^{\theta}|s|.
    \end{equation*}
    This strictly improves the $L^{2}$ estimate on $\varepsilon$ in~\eqref{est:Boot} for taking $C_{1}$ large enough.

    \smallskip
    Second, integrating the monotonicity formula in Proposition~\ref{Prop:Mon} on $[S_{n},s]$ for any $s\in [S_{n},S_{n}^{\star}]$ and then using (i) of Proposition~\ref{Prop:Mon}, we obtain 
    \begin{equation*}
        \mathcal{N}_{B}^{2}(s)\lesssim \mathcal{W}(s)\lesssim C_{14}\int_{S_{n}}^{s}|\tau|^{-10}\dd \tau\lesssim C_{14}|s|^{-9}
        \Longrightarrow 
        \mathcal{N}_{B}(s)\lesssim C^{\frac{1}{2}}_{14}|s|^{-\frac{9}{2}}.
    \end{equation*}
This strictly improves the $\mathcal{N}_{B}$ estimate on $\varepsilon$ in~\eqref{est:Boot} for taking $|s_{0}|$ large enough.

\smallskip
\textbf{Step 2.} Closing the estimate in $b$. Note that, from~\eqref{est:Boot} and~\eqref{est:Geo}, we find 
\begin{equation*}
\begin{aligned}
    \left|\frac{\dd }{\dd s}(sb)\right|&\lesssim 
    |s|\left|b_{s}-b\lambda^{\theta}\right|+|s||b|\left|\lambda^{\theta}-|s|^{-1}\right|\\
    &\lesssim  |s|^{-4}+C_{1}^{10}|s|^{-5}+C_{1}^{2}|s|^{-5}\log |s|.
    \end{aligned}
\end{equation*}
This strictly improves the estimate on $b$ in~\eqref{est:Boot} for taking $C_{1}$ and $|s_{0}|$ large enough.

\smallskip
\textbf{Step 3.} Closing the estimate in $\lambda$. Note that, from~\eqref{est:Boot} and the definition of $G(\lambda)$ in Section~\ref{SS:Boot}, for $|s_{0}|$ large enough, we find 
\begin{equation*}
    \frac{\dd G(\lambda(s))}{\dd s}=\lambda_{s}G'(\lambda)=-1+O\left(C_{1}|s|^{-3}\right)=-1+O(|s|^{-2}).
\end{equation*}
Therefore, from Remark~\ref{re:G} and $G(\lambda(S_{n}))=-S_{n}$, we obtain 
\begin{equation*}
    \left|G(\lambda(s))-|s|\right|\lesssim |s|^{-1}\Longrightarrow
    \lambda(s)=|s|^{-\frac{1}{\theta}}+O\left(|s|^{-1-\frac{1}{\theta}}\log |s|\right).
\end{equation*}
This strictly improves the estimate on $\lambda$ in~\eqref{est:Boot} for taking $C_{1}$ large enough.

\smallskip
\textbf{Step 4.} Closing the estimate in $x_{1}$. Note that, using again~\eqref{est:Boot}, ~\eqref{est:Geo} and the estimate in Step 3, we find  
\begin{equation*}
    \left|x_{1s}-|s|^{-\frac{1}{\theta}}\right|\lesssim
    |s|^{-\left(\frac{17}{4}+\frac{1}{\theta}\right)}+|s|^{-1-\frac{1}{\theta}}\log |s|+C_{1}^{10}|s|^{-\left(5+\frac{1}{\theta}\right)}.
\end{equation*}
Integrating the above estimate on $[S_{n},s]$ for any $s\in [S_{n},S_{n}^{\star}]$ and then using $x_{1}(S_{n})=-\frac{\theta}{\theta-1}|S_{n}|^{1-\frac{1}{\theta}}$, for $|s_{0}|$ large enough, we find 
\begin{equation*}
\begin{aligned}
    x_{1}(s)&=-\frac{\theta}{\theta-1}|S_{n}|^{1-\frac{1}{\theta}}+\int_{S_{n}}^{s}|\tau|^{-\frac{1}{\theta}}\dd \tau\\
    &+O\left(|s|^{-\frac{1}{\theta}}\log |s|\right)=-\frac{\theta}{\theta-1}|s|^{1-\frac{1}{\theta}}+O\left(|s|^{-\frac{1}{\theta}}\log |s|\right).
    \end{aligned}
\end{equation*}
This strictly improves the estimate on $x_{1}$ in~\eqref{est:Boot} for taking $C_{1}$ large enough.

\smallskip
\textbf{Step 5.} Conclusion. Note that, the estimates in~\eqref{est:NBbound} are standard consequences of the estimates in~\eqref{est:Boot}, and thus, we have strictly improved the estimates in~\eqref{est:NBbound} and~\eqref{est:Boot}. As a consequence of improving all the estimates in the
 bootstrap assumption~\eqref{est:NBbound} and~\eqref{est:Boot}, for any initial data $\phi_{n}$ satisfying~\eqref{equ:deflambdan}, we conclude that $S_{n}^{\star}=s_{0}$ for $|s_{0}|$ large enough. The proof of Proposition~\ref{prop:Sn} is complete.
\end{proof}

\section{Proof of Theorem~\ref{thm:main}}\label{S:Endproof}

From Proposition \ref{prop:Sn}, we know that there exists a sequence of solutions $(\phi_n)_{n\in \mathbb{N}}$ of~\eqref{CP} that satisfying the bootstrap estimates \eqref{est:Boot}. From now on, the implied constants in $\lesssim$ and $O$ can depend on the large constant $C_{1}$ in~\eqref{est:Boot}. Recall that, from~\eqref{est:Boot} and the definition of $s$, we have 
\begin{equation*}
    \frac{\dd t}{\dd s}=\lambda^3_n(s)=|s|^{-\frac{3}{\theta}}+O\left(|s|^{-1-\frac{3}{\theta}}\log |s|\right),\quad \mbox{for all}\  s\in[S_n,s_0].
\end{equation*}
Therefore, from the choice of initial time of $(S_{n},T_{n})=\left(-n,\theta/[(3-\theta)n^{\frac{3-\theta}{\theta}}]\right)$, 
\begin{equation*}
\begin{aligned}
    t(s)&=T_n+\int_{S_n}^s\left(|\tau|^{-\frac{3}{\theta}}+O\left(\frac{\log |\tau|}{|\tau|^{1+\frac{3}{\theta}}}\right)\right)\dd \tau\\
    &=\frac{\theta}{(3-\theta)|s|^{\frac{3-\theta}{\theta}}}+O\left(|s|^{-\frac{3}{\theta}}\log|s|\right).
    \end{aligned}
\end{equation*}
Then, we conclude that there exists $T_0>0$ independent of $n$ such that 
\begin{equation}\label{est:uniform}
\left\{
	\begin{aligned}
		&|b_{n}(t)|
		\lesssim t^{\frac{4\theta}{3-\theta}},\quad 
        \|\varepsilon_n(t)\|_{L^2}^2\lesssim t^{\frac{\theta}{3-\theta}},
        \\
		&\left|\lambda_n(t)-\left(\frac{3-\theta}{\theta}\right)^{\frac{1}{3-\theta}}t^{\frac{1}{3-\theta}}\right|
		\lesssim t^{\frac{1+\theta}{3-\theta}}\left|\log t\right|,\\
		&\left|x_{1n}(t)+\frac{\theta}{\theta-1}\left(\frac{\theta}{3-\theta}\right)^{\frac{\theta-1}{3-\theta}}t^{\frac{1-\theta}{3-\theta}}\right|\lesssim t^{\frac{1}{3-\theta}}\left|\log t\right|,
	\end{aligned}
	\right.\quad \mbox{for all}\ t\in [T_{n},T_{0}].
\end{equation}
Here, the implied constants in \eqref{est:uniform} are independent of $n$.

\smallskip
Using \eqref{est:Boot} and \eqref{est:Geo}, for all $t\in [T_{n},T_{0}]$, we deduce that 
\begin{equation}\label{est:t}
\left\{   \begin{aligned}
    &\left|\frac{\lambda_{ns}}{\lambda_{n}}+\Gamma_{n}+b_{n}\right|+\left|\frac{x_{1ns}}{\lambda_{n}}-1\right|\lesssim\lambda_{n}^{\frac{17}{4}\theta},\\
    &\left|b_{ns}-b_{n}\lambda_{n}^{\theta}\right|\lesssim \lambda_{n}^{5\theta},\ \ |b_{n}|\lesssim \lambda_{n}^{4\theta} \ \ \mbox{and}\ \  |x_{1n}(t)|\lesssim \lambda_{n}^{1-\theta}.
    \end{aligned} \right.
\end{equation}
Using again \eqref{est:Boot} and \eqref{est:Geo}, for all $t\in [T_{n},T_{0}]$, we deduce that 
\begin{equation}\label{est:t2}
      \int_{\mathbb{R}^2}
    \left(
    |\nabla \varepsilon_{n}(t)|^2+
    \varepsilon_{n}^{2}(t)
    \right)e^{-\frac{|y|}{10}}\dd y\lesssim \mathcal{N}^{2}_{B}(\varepsilon_{n})\lesssim\lambda_{n}^{8\theta}.
\end{equation}
In addition, for any $\rho>\theta-3$, we have 
\begin{equation}\label{est:lambda}
    \int_{T_n}^T\lambda_{n}^\rho(t)\dd t\lesssim \int_{T_n}^Tt^{\frac{\rho}{3-\theta}}\dd t\lesssim T^{1+\frac{\rho}{3-\theta}}\ll 1,\quad \mbox{for all}\ \ T\in [T_{n},T_{0}]. 
\end{equation}

\subsection{Uniform estimate of the $H^{\frac{7}{9}}$ norm.}\label{SS:Uniform}
In this subsection, we are devoted to providing a uniform estimate (with respect to $n$) for $\|\phi_n(t)\|_{H^{\frac{7}{9}}}$. Denote $U(t)=e^{-t\partial_{x_1}\Delta}$ by the semigroup relate to the 2D ZK equations. In addition, for all $T>0$, and $(p,q)\in[1,+\infty]^{2}$, we denote by
\begin{equation*}
\|f\|_{L_{x_1}^pL_{x_2T}^q}=\left\|\left\|f\right\|_{L_{x_2,t}^q(\mathbb{R}\times [0,T])}\right\|_{L_{x_1}^p(\mathbb{R})}\quad 
\mbox{and}\quad 
\|f\|_{L_T^qL_x^p}=\left\|\left\|f\right\|_{L_{x}^p(\mathbb{R}^2)}\right\|_{L_{t}^q([0,T])}.
\end{equation*}

\smallskip
We first recall the following linear estimates and smoothing effects related to $U(t)$.
\begin{lemma}\label{L1}
Let $0\le T\leq 1$. Then the following estimates hold. 
\begin{enumerate}
\item For all $\phi_0\in\mathcal{S}(\mathbb{R}^2)$, we have
\begin{align}
\|U(t)\phi_0\|_{L^2(\mathbb{R}^2)}&=\|\phi_0\|_{L^2(\mathbb{R}^2)},\label{41}\\
\|U(t)\phi_0\|_{L_T^{\frac{9}{4}}L_x^\infty}&\lesssim \min\left\{\|\phi_0\|_{L^2},\|D_{x_1}^{-\frac{2}{9}}\phi_0\|_{L^2}\right\}.\label{410}
\end{align}
\item  For all $\phi_0\in\mathcal{S}(\mathbb{R}^2)$ and $r>\frac{3}{4}$, we have
\begin{align}
\|U(t)\phi_0\|_{L^2_{x_1}L^\infty_{x_2T}}&\lesssim \|\phi_0\|_{H^{r}(\mathbb{R}^2)}\label{46},\\
\|\partial_{x_1} U(t)\phi_0\|_{L^\infty_{x_1}L^2_{x_2T}}&\lesssim \|\phi_0\|_{L^2(\mathbb{R}^2)}.\label{42}
\end{align}
\item For all regular and localized function $g$ on $[0,T]\times\mathbb{R}^2$, we have
\begin{align}
    \left\|\int_0^tU(t-\tau)g(\tau)\dd \tau\right\|_{L_{T}^{\infty}L^2_{x}}&\lesssim \|g\|_{L_{T}^1L_{x}^2},\quad \label{47}\\
\left\|\partial_{x_1}\int_0^t U(t-\tau)g(\tau)\dd \tau\right\|_{L_{T}^{\infty}L^2_{x}}&\lesssim \|g\|_{L_{x_1}^1L_{x_2T}^2} \label{44}.
\end{align}
\item For all regular and localized function $g$ on $[0,T]\times\mathbb{R}^2$, we have
\begin{equation}\label{412}
\begin{aligned}
    \left\|\int_0^t U(t-\tau)g(\tau)\dd \tau\right\|_{L_T^{\frac{9}{4}}L^\infty_{x}}&\lesssim \|g\|_{L_T^{1}L_x^2},\\
     \left\|\int_0^t U(t-\tau)g(\tau)\dd \tau\right\|_{L_T^{\frac{9}{4}}L^\infty_{x}}&\lesssim \|D_{x_1}^{-\frac{2}{9}}g\|_{L_T^1L_x^2},
\end{aligned}
\end{equation}
\begin{equation}\label{419}
\begin{aligned}
\left\|\partial_{x_{1}}\int_0^t U(t-\tau)g(\tau)\dd \tau\right\|_{L_T^{\frac{9}{4}}L^\infty_{x}}&\lesssim
\|\langle x_1\rangle g\|_{L_T^2L_x^2},\\
\left\|\partial_{x_{1}}\int_0^t U(t-\tau)g(\tau)\dd \tau\right\|_{L_T^{\frac{9}{4}}L^\infty_{x}}&\lesssim
\|\langle x_1\rangle D_{x_1}^{-\frac{2}{9}}g\|_{L_T^2L_x^2}.
\end{aligned}
\end{equation}
\item For all regular and localized function $g$ on $[0,T]\times\mathbb{R}^2$, we have
\begin{align}
    \left\|\int_0^tU(t-\tau)g(\tau)\dd \tau\right\|_{L^2_{x_1}L^\infty_{x_2T}}
    &\lesssim \|g\|_{L_T^1H_x^{\frac{7}{9}}},\label{43}\\
    \left\|\partial_{x_{1}}\int_0^tU(t-\tau)g(\tau)\dd \tau\right\|_{L^\infty_{x_1}L^2_{x_2T}}
    &\lesssim\|g\|_{L_T^1L_x^{2}},\label{417}\\
    \left\|
    \partial_{x_1}^2
    \int_0^tU(t-\tau)g(\tau)\dd \tau\right\|_{L^\infty_{x_1}L^2_{x_2T}}&\lesssim\|g\|_{L_{x_1}^1L_{x_2T}^2}.\label{413}
    \end{align}
    In addition, the following smoothing effect holds
    \begin{equation}\label{45}
         \left\|\partial_{x_1}\int_0^tU(t-\tau)g(\tau)\dd \tau\right\|_{L^2_{x_1}L^\infty_{x_2T}}\lesssim \left\|(1+D_{x_1}^{\frac{7}{9}}+D_{x_2}^{\frac{7}{9}})g\right\|_{L_{x_1}^1L_{x_2T}^2}.
    \end{equation}
\end{enumerate}
\end{lemma}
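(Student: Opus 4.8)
The plan is to establish the homogeneous estimates (i)--(ii) first, by Fourier analysis together with standard oscillatory-integral bounds for the 2D ZK phase, and then to derive every retarded/inhomogeneous estimate in (iii)--(v) from these by duality, the Christ--Kiselev lemma, and interpolation. A large part of this is already available in the literature for the 2D ZK group (see \emph{e.g.}~\cite{Faminskii,LinPas,LinPasJFA,Kinoshita,BGFR}), so the task is mainly to collect the known statements and supply the short arguments for those that remain.

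For the homogeneous part I would argue as follows. Identity~\eqref{41} is immediate from Plancherel, since the symbol $e^{it(\xi_1^3+\xi_1\xi_2^2)}$ of $U(t)$ has modulus one. For~\eqref{410} I would use the pointwise dispersive decay of the oscillatory integral with phase $x\cdot\xi+t(\xi_1^3+\xi_1\xi_2^2)$: a van der Corput / stationary-phase analysis in $(\xi_1,\xi_2)$ yields an $L^1_x\to L^\infty_x$ time-decay bound for $U(t)$ (with a loss of $x_1$-derivatives reflecting the degeneracy at $\xi_1=0$), which combined with the $TT^\ast$ argument and the Hardy--Littlewood--Sobolev inequality gives the $L^{9/4}_TL^\infty_x$ estimate; the $D_{x_1}^{-2/9}$ version then follows by a Littlewood--Paley decomposition in $\xi_1$ and summation. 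The Kato smoothing estimate~\eqref{42} I would obtain by taking the Fourier transform in $(x_2,t)$, using $\tau=\xi_1^3+\xi_1\xi_2^2$ as a change of variables in $\xi_1$ (Jacobian $3\xi_1^2+\xi_2^2$) and noting that $|\xi_1|(3\xi_1^2+\xi_2^2)^{-1/2}\lesssim1$. Finally,~\eqref{46} is the 2D ZK analogue of the $H^{3/4+}$ maximal-function estimate of Kenig--Ruiz--Vega for KdV, which I would prove by combining the local smoothing bound in $x_1$ with a Sobolev embedding in the $(x_2,t)$ variables; this embedding is what forces the threshold $r>\tfrac34$.

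For the inhomogeneous part: \eqref{47} follows at once from~\eqref{41} and Minkowski; \eqref{44} is the dual of the smoothing estimate~\eqref{42}, upgraded from the full time integral to the retarded one by the Christ--Kiselev lemma (applicable because the relevant exponents are strictly ordered). In the same way,~\eqref{412} and~\eqref{419} come from pairing the Strichartz bounds of~\eqref{410} with the dual of~\eqref{42} and of its $D_{x_1}^{-2/9}$ variant, while~\eqref{43},~\eqref{417} and~\eqref{413} come from pairing the maximal estimate~\eqref{46} and the one-derivative-higher smoothing estimate with duality, again followed by Christ--Kiselev. The last estimate~\eqref{45} is the delicate one: it gains one full $x_1$-derivative, as in Kato smoothing, but pays for the $L^\infty_{x_2}$ maximal norm with a loss of $\tfrac79$ derivatives in either horizontal direction. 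I would prove it by interpolating between the retarded Kato smoothing estimate (gain, no maximal control, of the type~\eqref{413}) and the retarded maximal estimate (maximal control, derivative loss, of the type~\eqref{46}), and then passing to the retarded integral by Christ--Kiselev; the exponent $\tfrac79$ is precisely the value at which this interpolation closes, which is why it propagates to the Sobolev index in Theorem~\ref{thm:main}.

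The hard part will be the two estimates~\eqref{46} and~\eqref{45}. Both require a careful dyadic decomposition in $\xi_1$ and $\xi_2$ to control the two degeneracies of the ZK phase: the vanishing of the $x_1$-smoothing as $\xi_1\to0$, and the collision of stationary points when $\xi_1^2\sim\xi_2^2$. Once these are in hand, all the remaining bounds are routine consequences of duality, interpolation and the Christ--Kiselev lemma.
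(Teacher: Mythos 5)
Your overall strategy — prove the homogeneous bounds first, then transfer to retarded Duhamel integrals by duality, Minkowski, and Christ--Kiselev — is essentially the paper's strategy for most of the lemma, and your derivations of~\eqref{41},~\eqref{410},~\eqref{42},~\eqref{46},~\eqref{47},~\eqref{44},~\eqref{412},~\eqref{419},~\eqref{43},~\eqref{417} and~\eqref{45} are all plausible (the paper in fact simply cites~\eqref{410},~\eqref{46},~\eqref{42} from Linares--Pastor, Faminskii and Kato's smoothing effect, and handles~\eqref{45} by a Littlewood--Paley summation in the spirit of Molinet--Ribaud and Ribaud--Vento rather than by a literal interpolation, but these are cosmetic differences).

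There is, however, a genuine gap in your treatment of~\eqref{413}. You propose to get it from duality and Christ--Kiselev, but Christ--Kiselev is \emph{inapplicable} here: the estimate pairs the gain-one-derivative smoothing bound~\eqref{42} with its dual, so both sides of~\eqref{413} carry an $L^2_{x_2 T}$ time norm, $\|\cdot\|_{L^\infty_{x_1}L^2_{x_2T}}\lesssim\|\cdot\|_{L^1_{x_1}L^2_{x_2T}}$. The Christ--Kiselev lemma requires a strict inequality $p<q$ between the time exponents of the source and target, and it fails at the endpoint $p=q=2$. This is the standard obstruction for ``double smoothing'' retarded estimates, and it is precisely why Kenig--Ponce--Vega and Ribaud--Vento (and the present paper, in Step 6) handle this estimate directly: the retarded integral is split as
$\int_0^t = \tfrac12\int_{\mathbb R}\operatorname{sgn}(\tau)\,\dd\tau + \tfrac12\int_{\mathbb R}\operatorname{sgn}(t-\tau)\,\dd\tau$,
the first piece is controlled by the non-retarded (full-integral) bound, and the second piece is a Hilbert-transform-in-time term whose Fourier multiplier $\operatorname{p.v.}\,\xi_1^2/(\sigma-\xi_1|\xi|^2)$ one must show has a bounded inverse Fourier transform in $\xi_1$; this requires an explicit partial-fraction analysis around the real root of $z(z^2+1)=\overline\sigma$ and is the one nontrivial computation in the proof of the lemma. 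Your proposal omits this entirely.

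A secondary point: you identify~\eqref{46} and~\eqref{45} as ``the hard part.'' In fact~\eqref{46} is simply quoted from Faminskii, and~\eqref{45} follows from~\eqref{46},~\eqref{44} and a short dyadic summation (the dyadic gain $2^{-k/36}$ is what closes the sum). The genuinely delicate estimate of this lemma is~\eqref{413}, and it is the one your blueprint does not actually prove.
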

\begin{proof}
    For the sake of completeness and the readers’ convenience, the details of the proof for Lemma~\ref{L1} are given in Appendix \ref{App:linear}.
\end{proof}

\smallskip
Second, we recall the following fractional Leibniz's rule.
\begin{lemma}\label{le:Leibniz}
The following fractional Leibniz's rules hold.
\begin{enumerate}
  \item Let $(\alpha,p)\in (0,1)\times (1,\infty)$. For all $(f,g)\in\mathcal{S}(\mathbb{R})\times \mathcal{S}(\R)$, we have 
  \begin{equation}\label{est:1Dfrac}
       \|D_{x}^\alpha(fg)-fD_{x}^\alpha g\|_{L_{x}^p(\mathbb{R})}\lesssim \|g\|_{L_{x}^\infty(\mathbb{R})}\|D_{x}^\alpha f\|_{L^p_{x}(\mathbb{R})}.
  \end{equation}
  \item Let $(\alpha,p)\in (0,1)\times (1,\infty)$. For all $(f,g)\in\mathcal{S}(\mathbb{R}^2)\times \mathcal{S}(\R^{2})$, we have 
\begin{equation}\label{49}
\begin{aligned}
\|D_{x_1}^{\alpha} (fg)-fD_{x_1}^\alpha g\|_{L^p(\mathbb{R}^2)}&\lesssim \|g\|_{L^\infty(\mathbb{R}^2)}\|D_{x_1}^{\alpha} f\|_{L^p(\mathbb{R}^2)},\\
\|D_{x_2}^{\alpha} (fg)-fD_{x_2}^\alpha g\|_{L^p(\mathbb{R}^2)}&\lesssim \|g\|_{L^\infty(\mathbb{R}^2)}\|D_{x_2}^{\alpha} f\|_{L^p(\mathbb{R}^2)}.
\end{aligned}
\end{equation}
\end{enumerate}
\end{lemma}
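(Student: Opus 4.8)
\textbf{Proof proposal for Lemma~\ref{le:Leibniz}.}

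The plan is to prove the one-dimensional statement~\eqref{est:1Dfrac} first, as it is the standard Kato--Ponce commutator estimate at the endpoint where only one factor carries a derivative, and then to derive the two-dimensional estimates~\eqref{49} from it by viewing $D_{x_1}^\alpha$ and $D_{x_2}^\alpha$ as acting in one variable with the other variable frozen.

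For~\eqref{est:1Dfrac}, I would invoke the Kato--Ponce fractional Leibniz rule in the sharp form due to Grafakos--Oh (or Bourgain--Li), which states precisely that for $\alpha\in(0,1)$ and $p\in(1,\infty)$ one has
\begin{equation*}
\|D_x^\alpha(fg) - f D_x^\alpha g - g D_x^\alpha f\|_{L^p_x} \lesssim \|D_x^{\alpha_1} f\|_{L^{p_1}}\|D_x^{\alpha_2}g\|_{L^{p_2}}
\end{equation*}
for suitable exponent splittings, together with the simpler estimate $\|D_x^\alpha(fg)-fD_x^\alpha g\|_{L^p}\lesssim \|D_x^\alpha f\|_{L^p}\|g\|_{L^\infty} + \|f\|_{L^{p_1}}\|D_x^\alpha g\|_{L^{p_2}}$. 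To obtain exactly the stated form with no derivative falling on $g$, the cleanest route is a Coifman--Meyer / paraproduct decomposition $fg = \Pi_f g + \Pi_g f + R(f,g)$: in the piece $\Pi_g f$ (low frequencies of $g$, high of $f$) the operator $D_x^\alpha$ essentially lands on $f$ and the $g$-factor is controlled in $L^\infty$ after summing a geometric series in the Littlewood--Paley scale; in $\Pi_f g$ and the resonant term $R(f,g)$, the frequency of the output is comparable to that of $g$, so $D_x^\alpha(fg)$ and $f D_x^\alpha g$ differ only by a commutator with a smooth symbol localized at the frequency of $g$, which is bounded on $L^p$ by a Mikhlin-type multiplier argument and again gives $\|g\|_{L^\infty}\|D_x^\alpha f\|_{L^p}$ after summation. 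This is routine Littlewood--Paley bookkeeping and I would not spell out every constant.

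For~\eqref{49}, the point is that the inequality~\eqref{est:1Dfrac} is a statement in a single variable with an implied constant depending only on $\alpha$ and $p$, hence uniform in any parameters. Fixing $x_2$, apply~\eqref{est:1Dfrac} in the $x_1$ variable to get
\begin{equation*}
\big\| D_{x_1}^\alpha(fg)(\cdot,x_2) - f(\cdot,x_2) D_{x_1}^\alpha g(\cdot,x_2)\big\|_{L^p_{x_1}}^p \lesssim \big\| g(\cdot,x_2)\big\|_{L^\infty_{x_1}}^p \big\|D_{x_1}^\alpha f(\cdot,x_2)\big\|_{L^p_{x_1}}^p,
\end{equation*}
then bound $\|g(\cdot,x_2)\|_{L^\infty_{x_1}} \le \|g\|_{L^\infty(\R^2)}$ uniformly in $x_2$, pull it out, and integrate in $x_2$; this yields the first line of~\eqref{49}, and the second line follows identically with the roles of $x_1$ and $x_2$ exchanged. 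The main (and only real) obstacle is securing the endpoint commutator estimate~\eqref{est:1Dfrac} in exactly the asymmetric form stated — the symmetric Kato--Ponce inequality is classical, but putting all the regularity on $f$ and only $L^\infty$ on $g$ requires the paraproduct splitting above rather than a black-box citation; everything after that is a trivial Fubini-type reduction.
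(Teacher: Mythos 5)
Your overall strategy is the same as the paper's: prove the one-dimensional estimate, then obtain the two-dimensional one by freezing a variable and applying Fubini, using that $\|g(\cdot,x_2)\|_{L^\infty_{x_1}}\le\|g\|_{L^\infty(\mathbb{R}^2)}$ uniformly. That reduction is exactly right and matches what the paper does when it writes the one-variable commutator estimate at fixed $x_2$ (resp.\ $x_1$) and then takes the $L^p$ norm in the other variable.

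Where you diverge is in the one-dimensional input. You claim the asymmetric form of~\eqref{est:1Dfrac} (all regularity on $f$, only $L^\infty$ on $g$) ``requires the paraproduct splitting above rather than a black-box citation.'' That is not accurate: Theorem~A.12 of Kenig--Ponce--Vega, which the paper cites, already gives the three-term commutator bound
$\|D^\alpha(fg)-fD^\alpha g-gD^\alpha f\|_{L^p}\lesssim \|D^{\alpha_1}f\|_{L^{p_1}}\|D^{\alpha_2}g\|_{L^{p_2}}$
for $\alpha_1+\alpha_2=\alpha$, $1/p=1/p_1+1/p_2$, with the endpoint $\alpha_i=0$, $p_i=\infty$ explicitly allowed; specializing to $\alpha_1=\alpha$, $p_1=p$, $\alpha_2=0$, $p_2=\infty$ and adding back $gD^\alpha f$ by H\"older and the triangle inequality gives~\eqref{est:1Dfrac} with no further work. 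So the Coifman--Meyer/paraproduct machinery you sketch is a valid way to reprove the estimate, but it is not needed — a single citation suffices — and treating it as ``the main obstacle'' overstates the difficulty. The two-dimensional step, which you correctly identify as trivial, is indeed the only content here once the one-dimensional estimate is in hand.
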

\begin{proof}
Recall that, from~\cite[Theorem A.12]{KPV}, we directly have~\eqref{est:1Dfrac}. Then, using again~\cite[Theorem A.12]{KPV}, we also have
\begin{equation*}
\begin{aligned}
    \|D_{x_1}^\alpha(fg)-gD_{x_1}^\alpha f-fD_{x_1}^\alpha g\|_{L_{x_1}^p(\mathbb{R})}&\lesssim \|g\|_{L_{x_1}^\infty(\mathbb{R})}\|D_{x_1}^\alpha f\|_{L^p_{x_1}(\mathbb{R})},\quad \mbox{for all}\ x_{2}\in \R,\\
    \|D_{x_2}^\alpha(fg)-gD_{x_2}^\alpha f-fD_{x_2}^\alpha g\|_{L_{x_2}^p(\mathbb{R})}&\lesssim \|g\|_{L_{x_2}^\infty(\mathbb{R})}\|D_{x_2}^\alpha f\|_{L^p_{x_2}(\mathbb{R})},\quad \mbox{for all}\ x_{1}\in \R.
    \end{aligned}
\end{equation*}
Taking $L_{x_{2}}^{p}$ (or $L_{x_{1}}^{p}$) norm for the above estimates, we complete the proof.
\end{proof}

Third, we introduce the following estimates related to the space-time mixed norm.
\begin{lemma}\label{le:holder}
    For any regular functions $f$ and $g$ on $[0,T]\times \R^{2}$, we have 
\begin{equation*}
\begin{aligned}
\left\|fg\right\|_{L_{T}^{2}W_{x}^{1,\frac{18}{11}}}&\lesssim \|f\|_{L_{T}^{4}H_{x}^{1}}^{2}+\|g\|_{L_{T}^{4}H_{x}^{1}}^{2},\\
    \left\|fg\right\|_{L_{T}^{2}L^{2}_{x_{2}}L^{\frac{18}{13}}_{x_{1}}}&\lesssim \|f\|_{L_{T}^{4}H_{x}^{1}}^{2}+\|g\|_{L_{T}^{4}H_{x}^{1}}^{2}.
    \end{aligned}
\end{equation*}
\end{lemma}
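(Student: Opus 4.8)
\textbf{Proof plan for Lemma~\ref{le:holder}.}
The plan is to reduce both estimates to a combination of two-dimensional Sobolev embeddings in the space variables together with H\"older's inequality in the time variable. The key algebraic observation is that the exponent pairs are chosen so that a product of two factors, each lying in $H^{1}(\R^{2})$, lands in the target space: indeed $\frac{11}{18}=\frac{1}{2}+\frac{1}{9}$, and by the 2D Sobolev embedding $H^{1}(\R^{2})\hookrightarrow L^{p}(\R^{2})$ for every $p<\infty$, one has $\nabla f\in L^{2}_{x}$ and $g\in L^{9}_{x}$ (and symmetrically), so that $g\,\nabla f\in L^{18/11}_{x}$ by H\"older; the same holds for $f\,\nabla g$ and for $fg$ itself since $\frac{11}{18}<1$. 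In the time variable, both factors are taken in $L^{4}_{T}$, and $\frac{1}{4}+\frac{1}{4}=\frac{1}{2}$, which matches the $L^{2}_{T}$ on the left; finally one absorbs the product $\|f\|_{L^4_T H^1_x}\|g\|_{L^4_T H^1_x}$ into $\|f\|_{L^4_TH^1_x}^2+\|g\|_{L^4_TH^1_x}^2$ by the arithmetic–geometric mean inequality.

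Concretely, for the first estimate I would first bound, for fixed $t$, the $W^{1,18/11}_{x}$ norm of $f(t)g(t)$ by $\|f(t)g(t)\|_{L^{18/11}_x}+\|(\nabla f)(t)\,g(t)\|_{L^{18/11}_x}+\|f(t)\,(\nabla g)(t)\|_{L^{18/11}_x}$, then apply H\"older in $x$ with exponents $(2,9)$ to each term, and use $H^{1}(\R^{2})\hookrightarrow L^{9}(\R^{2})$ to get the pointwise-in-$t$ bound $\|f(t)g(t)\|_{W^{1,18/11}_x}\lesssim \|f(t)\|_{H^1_x}\|g(t)\|_{H^1_x}$. Taking the $L^{2}_{T}$ norm, applying Cauchy–Schwarz in $t$ (i.e. H\"older with exponents $(2,2)$ on the time integral of the squared product, which is the same as splitting $L^2_T = L^4_T\cdot L^4_T$), and then the AM–GM inequality yields the claimed bound. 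For the second estimate the argument is identical except that the spatial H\"older step is performed iteratively: first in $x_{1}$ with exponents so that $L^{18/13}_{x_1}$ is reached from $L^{2}_{x_1}\cdot L^{9}_{x_1}$ (note $\frac{13}{18}=\frac12+\frac19$), then in $x_{2}$ to land in $L^{2}_{x_2}$, invoking the 1D Sobolev embedding $H^{1}(\R)\hookrightarrow L^{\infty}(\R)$ or a mixed-norm 2D embedding on the appropriate factor; alternatively one can use the anisotropic Gagliardo–Nirenberg/Sobolev inequality $\|h\|_{L^{2}_{x_2}L^{9}_{x_1}}\lesssim \|h\|_{H^1(\R^2)}$ directly. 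No derivatives fall on the output in the second estimate, so only the zeroth-order term is needed there.

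The only mildly delicate point — and the step I would be most careful about — is verifying that all the intermediate Lebesgue exponents are admissible, i.e. strictly between $1$ and $\infty$, so that both the fractional/integer Sobolev embeddings on $\R^{2}$ (or $\R$) and H\"older's inequality apply without endpoint issues; since $\frac{18}{11},\frac{18}{13},9\in(1,\infty)$ and $\frac{11}{18}=\frac12+\frac19$, $\frac{13}{18}=\frac12+\frac19$ exactly, everything is strictly subcritical and no endpoint Sobolev embedding is invoked. Thus the proof is essentially bookkeeping with H\"older and Sobolev, and there is no genuine analytic obstacle; I would simply present the chain of inequalities and the AM–GM step to conclude.
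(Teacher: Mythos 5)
Your proof of the first estimate is correct and matches the paper's argument: H\"older in $x$ with exponents $(2,9)$ using $\tfrac{11}{18}=\tfrac12+\tfrac19$, the 2D Sobolev embedding $H^1(\R^2)\hookrightarrow L^9(\R^2)$, H\"older in $t$ to split $L^2_T$ into $L^4_T\cdot L^4_T$, and AM--GM.

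For the second estimate there is a genuine arithmetic error that breaks the argument as written: you claim $\tfrac{13}{18}=\tfrac12+\tfrac19$, but in fact $\tfrac12+\tfrac19=\tfrac{11}{18}$, not $\tfrac{13}{18}$. Consequently the H\"older step $\|fg\|_{L^{18/13}_{x_1}}\le\|f\|_{L^2_{x_1}}\|g\|_{L^9_{x_1}}$ is false (it produces $L^{18/11}_{x_1}$, and there is no inclusion between $L^{18/11}$ and $L^{18/13}$ on the line), so your chain of inequalities does not reach the target space $L^2_{x_2}L^{18/13}_{x_1}$. The error is also visible at the end where you write both $\tfrac{11}{18}=\tfrac12+\tfrac19$ and $\tfrac{13}{18}=\tfrac12+\tfrac19$, which cannot both hold. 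To fix it you need a different split of $\tfrac{13}{18}$, e.g.\ $\tfrac{13}{18}=\tfrac12+\tfrac29$ (so $L^{18/13}_{x_1}\leftarrow L^2_{x_1}\cdot L^{9/2}_{x_1}$) together with a verified anisotropic embedding such as $\|g\|_{L^4_{x_2}L^{9/2}_{x_1}}\lesssim\|g\|_{H^1(\R^2)}$; the scaling is subcritical so this should be fine, but you should check it rather than just invoke ``anisotropic Gagliardo--Nirenberg.'' The paper takes yet another split, $\tfrac12=\tfrac14+\tfrac14$ in $x_2$ and $\tfrac{13}{18}=\tfrac14+\tfrac{17}{36}$ in $x_1$, putting $f$ in the isotropic $L^4_x\hookleftarrow H^1_x$ and $g$ in $L^4_{x_2}L^{36/17}_{x_1}$, and then proves $\|D_{x_1}^{-1/36}D_{x_2}^{-1/4}F\|_{L^4_{x_2}L^{36/17}_{x_1}}\lesssim\|F\|_{L^2_x}$ from Hardy--Littlewood--Sobolev and Minkowski. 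Either route works, but the exponent bookkeeping must be done correctly.
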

\begin{proof}
    First, from the H\"older inequality and the 2D Sobolev embedding, we have 
    \begin{equation*}
        \begin{aligned}
           \left\|fg\right\|_{W_{x}^{1,\frac{18}{11}}}&\lesssim \|f\|_{L_{x}^{2}}\|g\|_{L_{x}^{9}}+
           \|\nabla f\|_{L_{x}^{2}}\|g\|_{L_{x}^{9}}\\
           &+\|f\|_{L_{x}^{9}}\|\nabla g\|_{L_{x}^{2}}\lesssim \|f\|_{H_{x}^{1}}^{2}+\|g\|_{H_{x}^{1}}^{2},
        \end{aligned}
    \end{equation*}
    which directly completes the proof for the first estimate.

    \smallskip
    Second, from the Hardy-Littlewood-Sobolev estimate and the Minkowski inequality, for any regular function $F$, we have
\begin{equation*}
\begin{aligned}
    \left\|D_{x_{1}}^{-\frac{1}{36}}D_{x_{2}}^{-\frac{1}{4}}F\right\|_{L_{x_{2}}^{4}L_{x_{1}}^{\frac{36}{17}}}
    &\lesssim 
     \left\|\frac{1}{|x_{2}|^{\frac{3}{4}}}\star_{2}\left(\frac{1}{|x_{1}|^{\frac{35}{36}}}\star_{1}F\right)\right\|_{L_{x_{2}}^{4}L_{x_{1}}^{\frac{36}{17}}}\\
     &\lesssim  \left\|\frac{1}{|x_{2}|^{\frac{3}{4}}}\star_{2}\left\|\left(\frac{1}{|x_{1}|^{\frac{35}{36}}}\star_{1}F\right)\right\|_{L_{x_{1}}^{\frac{36}{17}}}\right\|_{L_{x_{2}}^{4}}\lesssim \|F\|_{L_{x}^{2}}.
     \end{aligned}
\end{equation*}
It follows from the H\"older inequality that 
\begin{equation*}
\begin{aligned}
     \left\|fg\right\|_{L_{T}^{2}L^{2}_{x_{2}}L^{\frac{18}{13}}_{x_{1}}}
     &\lesssim \|f\|_{L_{T}^{4}L_{x}^{4}}\|g\|_{L_{T}^{4}L_{x_{2}}^{4}L_{x_{1}}^{\frac{36}{17}}}\\
     &\lesssim \|f\|_{L_{T}^{4}L_{x}^{4}}\|D_{x_{2}}^{\frac{1}{4}}D_{x_{1}}^{\frac{1}{36}}g\|_{L_{T}^{4}L_{x}^{2}}\lesssim \|f\|_{L_{T}^{4}H_{x}^{1}}^{2}+\|g\|_{L_{T}^{4}H_{x}^{1}}^{2},
     \end{aligned}
\end{equation*}
which directly completes the proof for the second estimate.
\end{proof}

From now on, for any $t\in [T_{n},T_{0}]$, we set 
\begin{equation*}
   X(t,x_1,x_2)=\frac{1}{\lambda_{n}(t)}Q\left(\frac{x_1-x_{1n}(t)}{\lambda_{n}(t)},\frac{x_2}{\lambda_{n}(t)}\right).
\end{equation*}
In addition, for any $t\in [T_{n},T_{0}]$, we also set 
\begin{equation*}
    w_n(t,x_1,x_2)=\frac{1}{\lambda_n(t)}\varepsilon_n\left(t,\frac{x_1-x_{1n}(t)}{\lambda_n(t)},\frac{x_2}{\lambda_n(t)}\right)\quad \mbox{and}\quad  Q_{S,n}=\phi_n-w_{n}.
\end{equation*}
We have the following uniform $H^{\frac{7}{9}}$ control for $w_{n}$.
\begin{proposition}\label{Prop:bdd}
Let $0<T_0<1$ be small enough $($independent of $n$$)$.
Then there exists a small universal constant $a=a(\theta)>0$, such that for all $n\in \mathbb{N}^{+}$ large enough, 
\begin{equation*}
    \|w_n(T)\|_{H^{\frac{7}{9}}}\le T^a,\quad \mbox{for any}\ T\in [T_{n},T_{0}].
\end{equation*}
\end{proposition}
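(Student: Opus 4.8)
The plan is to use the Duhamel formula for the solution $\phi_n$ and to exploit the known decomposition $\phi_n = Q_{S,n} + w_n$ together with the sharp linear estimates of Lemma~\ref{L1}. Writing $\phi_n = U(t-T_n)\phi_n(T_n) - \int_{T_n}^t U(t-\tau)\partial_{x_1}(\phi_n^3)(\tau)\,\dd\tau$ and $Q_{S,n} = U(t-T_n)Q_{S,n}(T_n) - \int_{T_n}^t U(t-\tau)\partial_{x_1}(Q_{S,n}^3 + \text{corrections})(\tau)\,\dd\tau$, subtracting gives an integral equation for $w_n$ whose source term is the remainder $\mathcal{E}_b$ (rescaled) plus the nonlinear interaction terms of the form $Q_{S,n}^2 w_n$, $Q_{S,n} w_n^2$, $w_n^3$. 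The norm I would propagate is a space-time mixed norm on $[T_n,T]$, something like $\mathcal{Z}(T) = \|w_n\|_{L^\infty_T H^{7/9}_x} + \|w_n\|_{L^{9/4}_T L^\infty_x} + \|D_{x_1}^{7/9}\partial_{x_1} w_n\|_{L^\infty_{x_1}L^2_{x_2 T}} + \cdots$, chosen so that each term is controlled by the linear estimates \eqref{46}--\eqref{45}, and then run a bootstrap/continuity argument: assume $\mathcal{Z}(T) \le T^a$ on a maximal subinterval and improve it.

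First I would record the contributions of the initial data. At $t=T_n$ we have $w_n(T_n)=0$ (since $\varepsilon_n(T_n)=0$), so the linear evolution of the data contributes nothing — this is the key simplification coming from Proposition~\ref{prop:Sn}. Hence $w_n$ is entirely driven by the Duhamel term, and the whole estimate reduces to bounding $\left\|\int_{T_n}^t U(t-\tau)\,g_n(\tau)\,\dd\tau\right\|$ in the various norms, where $g_n = \partial_{x_1}(\phi_n^3 - Q_{S,n}^3) - (\text{rescaled }\mathcal{E}_b)$. The forcing term $\mathcal{E}_b$, after rescaling, has size governed by $\lambda_n^{5\theta}$, $|b_n|$, $\mathcal{N}_B(\varepsilon_n) \lesssim \lambda_n^{4\theta}$ etc., and \eqref{est:lambda} shows that any power $\lambda_n^\rho$ with $\rho > \theta - 3$ is time-integrable with a small positive power of $T$ gained; this is exactly where the exponent $a=a(\theta)>0$ comes from. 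I would split $g_n$ into: (a) the pure forcing from $\mathcal{E}_b$ (handled by \eqref{43}, \eqref{47}, \eqref{45} and the decay estimates in Proposition~\ref{prop:W}--\ref{prop:Wb} and Lemma~\ref{le:PsiMNWb}, each giving $O(T^{1+\rho/(3-\theta)})$); (b) the linear-in-$w_n$ terms $Q_{S,n}^2 w_n$ (soliton coefficient, so $Q_{S,n}^2$ is a rescaled exponentially localized bump of mass $\sim 1$ but its $L^\infty$ in $t$ norm over short time is small after integrating, using Hölder in $t$ and \eqref{est:lambda} again); (c) the genuinely nonlinear terms $Q_{S,n}w_n^2$ and $w_n^3$, absorbed by $\mathcal{Z}(T)$ being small.

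The main obstacle will be term (b), the linear coupling $\partial_{x_1}(Q_{S,n}^2 w_n)$, because $Q_{S,n}^2$ carries no smallness in $L^\infty$ pointwise — only after exploiting the scaling $Q_{S,n}(t,x) = \lambda_n^{-1}Q(\cdots)$, which makes $\|Q_{S,n}^2\|_{L^{p}}$ scale like $\lambda_n^{-2+2/p}$, and combining with Hölder in time over $[T_n,T]$ and the integrability \eqref{est:lambda}. Getting the fractional derivative $D^{7/9}$ to fall in the right place requires the Leibniz rules of Lemma~\ref{le:Leibniz} — I would write $D_{x_j}^{7/9}(Q_{S,n}^2 w_n) = Q_{S,n}^2 D_{x_j}^{7/9}w_n + (\text{commutator})$, the commutator being controlled by $\|D_{x_j}^{7/9}(Q_{S,n}^2)\|_{L^p}\|w_n\|_{L^\infty}$, and the $D^{7/9}$ of the localized soliton-squared is again just a rescaling factor times a fixed Schwartz function. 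The space-time estimates of Lemma~\ref{le:holder} are precisely tailored to bound the products $w_n \cdot (\text{soliton})$ and $w_n^2$ in the $L^2_T W^{1,18/11}_x$ and $L^2_T L^2_{x_2}L^{18/13}_{x_1}$ norms that feed into \eqref{419} and \eqref{45}. Once all pieces are assembled, the bootstrap closes: $\mathcal{Z}(T) \le C(T^a + T^{a_0}\mathcal{Z}(T) + \mathcal{Z}(T)^2)$ for some $a_0 > 0$, hence $\mathcal{Z}(T) \lesssim T^a$ for $T_0$ small, and in particular $\|w_n(T)\|_{H^{7/9}} \le T^a$ uniformly in $n$, which is the claim. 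A subtlety to check carefully is that $Q_{S,n}$ is not exactly a soliton — it is $\lambda_n^{-1}W_{b_n,\lambda_n}$ — so I would first replace $W_{b_n,\lambda_n}$ by $Q$ up to errors controlled by Lemma~\ref{le:estW1}(i) and Proposition~\ref{prop:Wb}(i)--(ii), which are all $O(\lambda_n^\theta + |b_n|^{5/8})$ and hence time-integrable with a gain.
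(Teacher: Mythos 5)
Your plan is essentially the paper's own proof: a bootstrap in the mixed space-time norm $\vertiii{w_n}_T$ (your $\mathcal{Z}(T)$), Duhamel decomposition of $w_n$ into the error forcing $\mathcal{E}_b$ plus the terms $wQ_S^2$, $w^2Q_S$, $w^3$ (the paper's $\mathcal{K}_1,\dots,\mathcal{K}_4$), closed via the linear/smoothing estimates of Lemma~\ref{L1}, the fractional Leibniz rules, the mixed-norm product estimates of Lemma~\ref{le:holder}, and the time-integrability gain from \eqref{est:lambda}. You have correctly identified both the crucial simplification $w_n(T_n)=0$ and the main obstacle (the linear coupling $\partial_{x_1}(Q_S^2 w_n)$), and your treatment — splitting $Q_S$ into the pure soliton $X$ plus a small correction, exploiting scaling and the $\langle x_1\rangle$-weighted estimates — matches the paper's handling of $\mathcal{K}_{2}$ and $\mathcal{K}_3$ closely enough that there is no meaningful difference of route.
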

\begin{proof}
We first introduce the following bootstrap estimate:
\begin{equation}\label{48}
    \begin{aligned}
\vertiii{w_n}_T\coloneqq&\sup_{t\in[T_n,T]}\|w_n(t)\|_{H^{\frac{7}{9}}}+\|w_n\|_{L^2_{x_1}L^\infty_{x_2T}}+\|w_n\|_{L^{\frac{9}{4}}_TL_x^\infty}\\
&+\|\partial_{x_1}w_n\|_{L^{\frac{9}{4}}_TL_x^\infty}+\sum_{j=1,2}\|
\partial_{x_1}
D_{x_j}^{\frac{7}{9}}w_n\|_{L^\infty_{x_1}L^2_{x_2T}}
\le 2T^a. 
    \end{aligned}
    \end{equation}
For $0<T_{0}\ll 1$ (independent with $n$), we define $T_{n}^{\star}\in [T_{n},T_{0}]$ by 
\begin{equation*}
    T_{n}^{\star}=\sup\left\{T\in [T_{n},T_{0}] \ \mbox{such that}~\eqref{48}  \ \mbox{holds on}\ [T_{n},T_{n}^{\star}]\right\}.
\end{equation*}
From $\varepsilon_{n}(T_{n})=w_{n}(T_{n})=0$, we directly have $T_{n}^{\star}>T_{n}$. In what follows, we will show that $T_{n}^{\star}=T_{0}$ for any large enough $n\in \mathbb{N}^{+}$.
For simplicity of notation, we drop the $n$ index of the function $(\lambda_{n},b_{n},x_{1n},\varepsilon_{n},w_{n})$ in the following part of the proof.

\smallskip
We denote 
\begin{equation*}
    \mathcal{E}_{w}=\partial_tQ_S+\partial_{x_1}(\Delta Q_S+Q_S^3)=\frac{1}{\lambda^4(t)}\mathcal{E}_b\left(t,\frac{x_1-x_{1}(t)}{\lambda(t)},\frac{x_2}{\lambda(t)}\right).
\end{equation*}
By an elementary computation, we decompose
\begin{align*}
w=\mathcal{K}_1+\mathcal{K}_2+\mathcal{K}_3+\mathcal{K}_4,
\end{align*}
where
\begin{equation*}
    \begin{aligned}
&\mathcal{K}_1=-\int_{T_n}^TU(t-\tau){\mathcal{E}}_w(\tau)\dd \tau,
\quad\mathcal{K}_2=-3\partial_{x_1}\int_{T_n}^TU(t-\tau)\left(wQ^{2}_{S}\right)(\tau)\dd \tau,\\
&\mathcal{K}_3=-3\partial_{x_1}\int_{T_n}^TU(t-\tau)\left(w^{2}Q_S\right)(\tau)\dd \tau,\quad\mathcal{K}_4=-\partial_{x_1}\int_{T_n}^TU(t-\tau)w^3(\tau)\dd \tau.
\end{aligned}
\end{equation*}

{\bf Step 1.} Estimate of $\mathcal{K}_1$. From the decomposition of $\mathcal{E}_{b}$ in Section~\ref{SS:Geo}, 
\begin{equation*}
\begin{aligned}
    \left\|\mathcal{E}_{\omega}\right\|_{H_{x}^{\frac{7}{9}}}
    &\lesssim \lambda^{-\frac{34}{9}}\left(\left|\frac{\lambda_s}{\lambda}+\Gamma+b\right|+\left|\frac{x_{1s}}{\lambda}-1\right|\right)\\
    &+\lambda^{-\frac{34}{9}}\left(\|\Psi_M\|_{H_{y}^{1}}+\|\Psi_N\|_{H_{y}^{1}}+\|\Psi_W\|_{H_{y}^1}+\|\Psi_b\|_{H_{y}^{1}}\right).
    \end{aligned}
\end{equation*}
It follows from~\eqref{est:t}, Proposition~\ref{prop:W}, Proposition~\ref{prop:Wb} that 
\begin{equation*}
     \left\|\mathcal{E}_{\omega}\right\|_{H_{x}^{\frac{7}{9}}}
    \lesssim \lambda^{-\frac{34}{9}}\left(\lambda^{\frac{17}{4}\theta}+|b|\lambda^{\theta-\frac{4}{5}}+|b|^{\frac{11}{8}}+\lambda^{\theta}|b|^{\frac{5}{8}}+\lambda^{\theta+\frac{4}{5}}+\lambda^{5\theta}\right)\lesssim \lambda^{\theta-\frac{134}{45}}.
\end{equation*}
Here, we use the fact that 
\begin{equation*}
 \begin{aligned}
    \left\|\Psi_{W}\right\|_{H_{y}^{1}}+\left\|\Psi_{b}\right\|_{H_{y}^{1}}
      &\lesssim \lambda^{5\theta}+\lambda^{\theta+\frac{4}{5}}+b|^{\frac{5}{8}}\lambda^{\theta}+|b|^{\frac{11}{8}},\\
     \left\|\Psi_{M}\right\|_{H_{y}^{1}}+\left\|\Psi_{N}\right\|_{H_{y}^{1}}
     &\lesssim \lambda^{\frac{17}{4}\theta}+|b|\lambda^{\theta+\frac{4}{5}}+|b|^{\frac{13}{8}}+|b|^{\frac{5}{8}}\lambda^{\theta}.
 \end{aligned}
\end{equation*}
Therefore, from~\eqref{est:lambda}, \eqref{47}, \eqref{412}, \eqref{43} and \eqref{417}, we have
\begin{equation}\label{est:K1}
\vertiii{\mathcal{K}_1}_T\lesssim 
 \left\|\mathcal{E}_{\omega}\right\|
_{L_{T}^1H_{x}^{\frac{7}{9}}}\lesssim\int_{T_n}^{T}
\lambda^{\theta-\frac{134}{45}}
\dd t\lesssim T^{\frac{1}{45(3-\theta)}}.
\end{equation}

\smallskip
{\textbf{Step 2.}} Estimate of $\mathcal{K}_2$. We claim that 
\begin{equation}\label{est:K2}
    \vertiii{\mathcal{K}_{2}}_{T}\lesssim 
     T^{\frac{5\theta-1}{2(3-\theta)}}+T^{\frac{63\theta-5}{18(3-\theta)}}+
    T^{a+\frac{\theta-1}{3-\theta}}+T^{a+\frac{9\theta-13}{18(3-\theta)}}+T^{a+\frac{45\theta-4}{30(3-\theta)}}.
\end{equation}

Indeed, using again~\eqref{44}, \eqref{419}, \eqref{413} and \eqref{45},
we have
\begin{equation*}
    \begin{aligned}
        \vertiii{\mathcal{K}_{2}}_{T}&
        \lesssim \sum_{k=0,1}\sum_{j=1,2}\big\|D_{x_j}^{\frac{7k}{9}}\left(w{Q}^{2}_{S}\right)\big\|_{L_{x_1}^1L_{x_2T}^2}\\
     &+\sum_{k=0,1}\big\|\langle x_1\rangle D_{x_1}^{\frac{7k}{9}}(wQ^{2}_{S})\big\|_{L_{T}^2L_{x}^2}.
    \end{aligned}
\end{equation*}
It follows from the AM-GM inequality that 
\begin{equation*}
\begin{aligned}
        \vertiii{\mathcal{K}_{2}}_{T}&
        \lesssim \sum_{k=0,1}\sum_{j=1,2}\big\|
        \langle x_{1}\rangle
        D_{x_j}^{\frac{7k}{9}}\left(wX(X-2Q_{S})\right)\big\|_{L_{T}^{2}L_{x}^2}\\
     &+\sum_{k=0,1}\sum_{j=1,2}\big\|\langle x_1\rangle D_{x_j}^{\frac{7k}{9}}(w(X-Q_{S})^{2})\big\|_{L_{T}^2L_{x}^2}\coloneqq \mathcal{K}_{2,1}+\mathcal{K}_{2,2}.
    \end{aligned}
    \end{equation*}
\emph{Estimate on $\mathcal{K}_{2,1}$.} By an elementary computation, we find 
\begin{equation*}
\begin{aligned}
    \mathcal{K}_{2,1}&
    \lesssim 
    \|
    \langle x_{1}\rangle
    wX(X-2Q_S)\|_{L_T^2H_x^1}\\
    &+\sum_{j=1,2}\big\|[x_1,D_{x_j}^{\frac{7}{9}}](wX(X-2Q_S))\big\|_{L^2_TL_x^2}
    \coloneqq\mathcal{K}_{2,1,1}+\mathcal{K}_{2,1,2}.
    \end{aligned}
\end{equation*}
Recall that, from the definition of space-time variable $(s,y)$ and~\eqref{est:t}, we have 
\begin{equation}\label{est:x1791}
    x_1=\lambda(t)y_1+x_1(t) \quad \mbox{and}\quad |x_1(t)|\lesssim \lambda^{1-\theta}(t).
\end{equation}
Based on the above estimate and \eqref{est:t2}, we find 
\begin{equation*}
\begin{aligned}
     &\|
    \langle x_{1}\rangle
    wX(X-2Q_S)\|_{H_x^1}^{2}\\
    &\lesssim\lambda^{-6}\int_{\R^{2}}\left(1+y_{1}^{2}\right)
    \left(|\nabla \varepsilon|^{2}+\varepsilon^{2}\right)e^{-\frac{|y|}{10}}\dd y\\
    &+\lambda^{-6}x_{1}^{2}\int_{\R^{2}}
    \left(|\nabla \varepsilon|^{2}+\varepsilon^{2}\right)e^{-\frac{|y|}{10}}\dd y\lesssim \lambda^{6\theta-4},
    \end{aligned}
\end{equation*}
which implies 
\begin{equation*}
    \mathcal{K}_{2,1,1}\lesssim \|
    \langle x_{1}\rangle
    wX(X-2Q_S)\|_{L_{T}^{2}H_x^1}\lesssim 
    \left(\int_{T_{n}}^{T}\lambda^{6\theta-4}(t)\dd t\right)^{\frac{1}{2}}\lesssim T^{\frac{5\theta-1}{2(3-\theta)}}.
\end{equation*}
Then, for any regular function $f$, it is easy to check that
\begin{equation}\label{est:D-29}
    [x_1,D_{x_1}^{\frac{7}{9}}]f=\frac{7i}{9}\mathcal{F}^{-1}_{\xi\to x}\left(\frac{\xi_{1}}{|\xi_{1}|^{\frac{11}{9}}}\widehat{f}(\xi)\right)
    \quad \mbox{and}\quad 
    [x_1,D_{x_2}^{\frac{7}{9}}]f=0.
\end{equation}
It follows from 1D Sobolev embedding that%
\footnote{Recall that, the Sobolev embedding $L^{\frac{18}{13}}(\R)
\hookrightarrow \dot{H}^{-\frac{2}{9}}(\mathbb{R})
$ holds true.} 
\begin{equation*}
    \begin{aligned}
       \mathcal{K}_{2,1,2}&\lesssim
       \big \|D_{x_1}^{-\frac{2}{9}}(wX(X-2Q_S))\big\|_{L_T^2L_x^2} \lesssim\big\|wX(X-2Q_S)\big\|_{L_T^2L_{x_2}^2L_{x_1}^{\frac{18}{13}}}.
    \end{aligned}
\end{equation*}
Based on the above estimate~\eqref{est:t2}, we find 
\begin{equation*}
    \big\|wX(X-2Q_S)\big\|^{2}_{L_{x_2}^2L_{x_1}^{\frac{18}{13}}}\lesssim \lambda^{-\frac{32}{9}}\int_{\R^{2}}\varepsilon^{2}e^{-\frac{|y|}{10}}\dd y\lesssim \lambda^{8\theta-\frac{32}{9}},
\end{equation*}
which implies 
\begin{equation*}
    \mathcal{K}_{2,1,2}\lesssim
    \big\|wX(X-2Q_S)\big\|_{L_T^2L_{x_2}^2L_{x_1}^{\frac{18}{13}}}\lesssim T^{\frac{63\theta-5}{18(3-\theta)}}.
\end{equation*}
Combining the above estimates, we obtain 
\begin{equation}\label{est:K21}
    \mathcal{K}_{2,1}\lesssim \mathcal{K}_{2,1,1}+\mathcal{K}_{2,1,2}\lesssim
    T^{\frac{5\theta-1}{2(3-\theta)}}+T^{\frac{63\theta-5}{18(3-\theta)}}.
\end{equation}

\emph{Estimate on $\mathcal{K}_{2,2}$.} 
We decompose 
\begin{equation*}
\begin{aligned}
    \mathcal{K}_{2,2}&\lesssim
    \|w(Q_S-X)^2\|_{L_T^2H_x^{\frac{7}{9}}}+\|x_1w(Q_S-X)^2\|_{L_T^2H_x^{\frac{7}{9}}}\\
    &+\sum_{j=1,2}\big\|[x_1,D_{x_j}^{\frac{7}{9}}](w(Q_S-X)^2)\big\|_{L_T^2L_x^2}\coloneqq\mathcal{K}_{2,2,1}+\mathcal{K}_{2,2,2}+\mathcal{K}_{2,2,3}.
    \end{aligned}
\end{equation*}
Note that, from Lemma~\ref{le:Leibniz}, we deduce that 
\begin{equation*}
\begin{aligned}
    &\left\|w(Q_{S}-X)^{2}\right\|_{H_{x}^{\frac{7}{9}}}+\|x_1w(Q_S-X)^2\|_{H_x^{\frac{7}{9}}}\\
    &\lesssim 
    \|w\|_{H_{x}^{\frac{7}{9}}}\|\langle x_{1}\rangle (Q_{S}-X)^{2}\|_{L_{x}^{\infty}}+\|w\|_{L_{x}^{9}}\big\|\langle x_{1}\rangle(Q_{S}-X)^{2}\big\|_{W_{x}^{1,\frac{18}{7}}}.
    \end{aligned}
\end{equation*}
It follows from~\eqref{48} and the H\"older inequality that 
\begin{equation*}
    \mathcal{K}_{2,2,1}+\mathcal{K}_{2,2,2}
    \lesssim T^{a}\left(
    \|\langle x_{1}\rangle (Q_{S}-X)^{2}\|_{L_{T}^{\infty}L_{x}^{\infty}}
    +\big\|\langle x_{1}\rangle(Q_{S}-X)^{2}\big\|_{L_{T}^{2}W_{x}^{1,\frac{18}{7}}}
    \right).
\end{equation*}
Using~\eqref{est:t}, we check that 
\begin{equation}\label{estLinftyL18}
\begin{aligned}
\|\langle x_{1}\rangle ({Q}_S-X)^{2}\|_{L_T^\infty L_x^\infty}&\lesssim 
\lambda^{-2}(T)x_{1}(T)\|W_{b,\lambda}-Q\|_{L_{T}^{\infty}L_{y}^{\infty}}\lesssim 
T^{\frac{\theta-1}{3-\theta}}.
\end{aligned}
\end{equation}
Moreover, we have
\begin{equation*}
\begin{aligned}
   & \big\|\langle x_{1}\rangle(Q_{S}-X)^{2}\big\|_{W_{x}^{1,\frac{18}{7}}}^{2}
   \\
   &\lesssim \lambda^{-\frac{40}{9}}\left(\int_{\R^{2}}\left(1+\lambda^{\frac{18}{7}}y_{1}^{\frac{18}{7}}+x_{1}^{\frac{18}{7}}\right)\left(|\nabla V|^{\frac{36}{7}}+b^{\frac{36}{7}}|\nabla P_{b}|^{\frac{36}{7}}\right)\dd y
    \right)^{\frac{7}{9}}\\
   &+\lambda^{-\frac{22}{9}}\left(
    \int_{\R^{2}}\left(1+\lambda^{\frac{18}{7}}y_{1}^{\frac{18}{7}}+x_{1}^{\frac{18}{7}}\right)\left(V^{\frac{36}{7}}+b^{\frac{36}{7}}P_{b}^{\frac{36}{7}}\right)\dd y
    \right)^{\frac{7}{9}}\lesssim \lambda^{2\theta-\frac{40}{9}}.
    \end{aligned}
\end{equation*}
It follows from~\eqref{est:lambda} that 
\begin{equation*}
    \big\|\langle x_{1}\rangle(Q_{S}-X)^{2}\big\|_{L_{T}^{2}W_{x}^{1,\frac{18}{7}}}\lesssim \left(\int_{T_{n}}^{T}\lambda^{2\theta-\frac{40}{9}}(t)\dd t \right)^{\frac{1}{2}}\lesssim T^{\frac{9\theta-13}{18(3-\theta)}}.
\end{equation*}
Based on the above estimates, we deduce that 
\begin{equation}\label{est:K221K222}
    \mathcal{K}_{2,2,1}+\mathcal{K}_{2,2,2}\lesssim T^{a+\frac{\theta-1}{3-\theta}}+T^{a+\frac{9\theta-13}{18(3-\theta)}}.
\end{equation}
Then, using again~\eqref{48},~\eqref{est:D-29} and the Minkowski inequality, we directly have 
\begin{equation*}
\begin{aligned}
    \mathcal{K}_{2,2,3}\lesssim \left\|w(Q_S-X)^2\right\|_{L_{x_1}^{\frac{18}{13}}L_{x_2T}^2}&\lesssim
    \|w\|_{L_{x_1}^2L_{x_2T}^{\infty}}\|Q_S-X\|_{L_{x_1}^6L_{x_2T}^{6}}\|Q_S-X\|_{L_{x_1}^{18}L_{x_2T}^3}\\
    &\lesssim T^{a}\|Q_S-X\|_{L_{T}^6L_{x}^{6}}\|Q_S-X\|_{L_{T}^{3}L_{x_{1}}^{18}L^{3}_{x_{2}}}.
\end{aligned}
\end{equation*}
Recall that, using again~\eqref{est:t}, we find 
\begin{equation*}
\begin{aligned}
    \|Q_S-X\|^6_{L^6_TL_{x}^{6}}
    &\lesssim 
    \int_{T_n}^T \lambda^{-4}(t)\|V\|_{L^{6}_{y}}^6\dd t\\
    &+\int_{T_n}^T \lambda^{-4}(t)\|bP_{b}\|_{L_{y}^{6}}^6\dd t\lesssim T^{\frac{25\theta-13}{5(3-\theta)}},
    \end{aligned}
\end{equation*}
\begin{equation*}
\begin{aligned}
    \|Q_S-X\|^3_{L^3_TL_{x_1}^{18}L_{x_2}^3}
    &\lesssim 
    \int_{T_n}^T \lambda^{-\frac{11}{6}}(t)\|V\|_{L_{y_1}^{18}L_{y_2}^3}^3\dd t\\
    &+\int_{T_n}^T \lambda^{-\frac{11}{6}}(t)\|bP_{b}\|_{L_{y_1}^{18}L_{y_2}^3}^3\dd t\lesssim T^{\frac{20\theta+9}{10(3-\theta)}},
    \end{aligned}
\end{equation*}
which implies
\begin{equation*}
    \mathcal{K}_{2,2,3}\lesssim T^{a}\|Q_S-X\|_{L_{T}^6L_{x}^{6}}\|Q_S-X\|_{L_{T}^{3}L_{x_{1}}^{18}L^{3}_{x_{2}}}
    \lesssim T^{a+\frac{45\theta-4}{30(3-\theta)}}.
\end{equation*}
Combining the above estimate with~\eqref{est:K221K222}, we obtain
\begin{equation}\label{est:K22}
    \mathcal{K}_{2,2}\lesssim 
    \mathcal{K}_{2,2,1}+\mathcal{K}_{2,2,2}+\mathcal{K}_{2,2,3}
    \lesssim
    T^{a+\frac{\theta-1}{3-\theta}}+T^{a+\frac{9\theta-13}{18(3-\theta)}}+T^{a+\frac{45\theta-4}{30(3-\theta)}}.
\end{equation}
We see that~\eqref{est:K2} follows from~\eqref{est:K21} and~\eqref{est:K22}.

\smallskip
{\textbf{Step 3}.} Estimate of $\mathcal{K}_3$. 
We claim that 
\begin{equation}\label{est:K3}
\begin{aligned}
    \vertiii{\mathcal{K}_{3}}_{T}
    &\lesssim 
    T^{\frac{11\theta+1}{4(3-\theta)}}+T^{2a+\frac{\theta-1}{3-\theta}}+T^{2a+\frac{15\theta-23}{20(3-\theta)}}\\
    &+T^{2a+\frac{17-5\theta}{120(3-\theta)}}+T^{2a+\frac{20\theta+9}{30(3-\theta)}}.
    \end{aligned}
\end{equation}
Indeed, using again~\eqref{44}, \eqref{419}, \eqref{413} and \eqref{45},
 we have
 \begin{equation*}
 \begin{aligned}
     \vertiii{\mathcal{K}_3}_T&\lesssim
     \sum_{k=0,1}\sum_{j=1,2}\big\|D_{x_j}^{\frac{7k}{9}}\left(w^{2}{Q}_{S}\right)\big\|_{L_{x_1}^1L_{x_2T}^2}\\
     &+\sum_{k=0,1}\big\|\langle x_1\rangle D_{x_1}^{\frac{7k}{9}}(w^{2}Q_{S})\big\|_{L_{T}^2L_{x}^2}.
     \end{aligned}
 \end{equation*}
 It follows from the AM-GM inequality that 
 \begin{align*}
\vertiii{\mathcal{K}_3}_T
\lesssim &\sum_{k=0,1}\sum_{j=1,2}\big\|\langle x_1\rangle D_{x_j}^{\frac{7k}{9}}\big(w^2X\big)\big\|_{L_{T}^2L_x^2}+\big\|D_{x_2}^{\frac{7}{9}}\big(w^2\big(Q_S-X\big)\big)\big\|_{L_{x_1}^1L_{x_2T}^2}\\
&
+\sum_{k=0,1}\big\|\langle x_1\rangle D_{x_1}^{\frac{7k}{9}}
\big(w^2\big(Q_S-X\big)\big)\big\|_{L_{T}^2L_{x}^2}\coloneqq\mathcal{K}_{3,1}+\mathcal{K}_{3,2}+\mathcal{K}_{3,3}.
\end{align*}
\emph{Estimate on $\mathcal{K}_{3,1}$.} First, from the Sobolev embedding, we have%
\footnote{Recall that, the Sobolev embedding $W^{1,\frac{18}{11}}(\mathbb{R}^2)\hookrightarrow H^{\frac{7}{9}}(\mathbb{R}^2)$ holds true.}
\begin{equation}\label{51}
\begin{aligned}
\mathcal{K}_{3,1}&\lesssim
\big\|w^2X\big\|_{L_T^2H_x^{\frac{7}{9}}}+
\big\|x_1w^2X\big\|_{L_T^2H_x^{\frac{7}{9}}}+
\sum_{j=1}^2\big\|[x_1,D_{x_j}^{\frac{7}{9}}]\big(w^2X\big)\big\|_{L_T^2L_x^2}\\
&\lesssim 
\big\|w^2X\big\|_{L_T^2W_x^{1,\frac{18}{11}}}+
\big\|x_1w^2X\big\|_{L_T^2W_x^{1,\frac{18}{11}}}
+\sum_{j=1}^2\big\|[x_1,D_{x_j}^{\frac{7}{9}}](w^2X)\big\|_{L_T^2L_x^2}.
\end{aligned}
\end{equation}
From Lemma~\ref{le:holder}, we directly have 
\begin{equation*}
    \big\|w^2X\big\|_{L_T^2W_x^{1,\frac{18}{11}}}+
\big\|x_1w^2X\big\|_{L_T^2W_x^{1,\frac{18}{11}}}\lesssim
\big\|wX^{\frac{1}{2}}\big\|_{L_T^4H_{x}^{1}}^{2}
+\big\|x_{1}wX^{\frac{1}{2}}\big\|_{L_T^4H_{x}^{1}}^{2}.
\end{equation*}
Recall that, for any regular function $f$, it is easy to check that
\begin{equation*}
    [x_1,D_{x_1}^{\frac{7}{9}}]f=\frac{7i}{9}\mathcal{F}^{-1}_{\xi\to x}\left(\frac{\xi_{1}}{|\xi_{1}|^{\frac{11}{9}}}\widehat{f}(\xi)\right)
    \quad \mbox{and}\quad 
    [x_1,D_{x_2}^{\frac{7}{9}}]f=0.
\end{equation*}
It follows from 1D Sobolev embedding and Lemma~\ref{le:holder} that%
\footnote{Recall again that, the Sobolev embedding $L^{\frac{18}{13}}(\R)
\hookrightarrow \dot{H}^{-\frac{2}{9}}(\mathbb{R})
$ holds true.} 
\begin{equation}\label{54}
    \begin{aligned}
        \sum_{j=1}^2\big\|[x_1,D_{x_j}^{\frac{7}{9}}](w^2X)\big\|_{L_T^2L_x^2}
      &\lesssim
      \big\|D_{x_1}^{-{\frac{2}{9}}}(w^2X)\big\|_{L_T^{2}L_x^2}\\
&\lesssim\big\|w^2X\|_{L_T^2L_{x_2}^2L_{x_1}^{\frac{18}{13}}}\lesssim \big\| wX^{\frac{1}{2}}\big\|_{L_T^4H_x^1}^2.
    \end{aligned}
\end{equation}
Combining the above estimates, we deduce that 
\begin{equation}\label{est:R11}
    \mathcal{K}_{3,1}\lesssim \big\|wX^{\frac{1}{2}}\big\|_{L_T^4H_{x}^{1}}^{2}
    +\big\|x_{1}wX^{\frac{1}{2}}\big\|_{L_T^4H_{x}^{1}}^{2}.
\end{equation}
Recall again that, from the definition of space-time variable $(s,y)$ and~\eqref{est:t}, we have 
\begin{equation}\label{est:x179}
    x_1=\lambda(t)y_1+x_1(t) \quad \mbox{and}\quad |x_1(t)|\lesssim \lambda^{1-\theta}(t).
\end{equation}
It follows from \eqref{est:t} that 
\begin{equation*}
\begin{aligned}
    \big\|wW^{\frac{1}{2}}\big\|^{2}_{H_{x}^{1}}
    &\lesssim \lambda^{-3}(t)\int_{\R^{2}}\left(1+x^{2}_{1}(t)\right)(|\nabla \varepsilon(t)|^2+\varepsilon^{2}(t))e^{-\frac{|y|}{10}}\dd y\\
    &\lesssim \lambda^{-3}(t)\left(\lambda^{8\theta}(t)+\lambda^{6\theta+2}(t)\right)\lesssim 
    \lambda^{8\theta-3}(t)+
    \lambda^{6\theta-1}(t),
    \end{aligned}
\end{equation*}
\begin{equation*}
\begin{aligned}
    \big\|x_{1}wW^{\frac{1}{2}}\big\|^{2}_{H_{x}^{1}}
    &\lesssim \lambda^{-3}(t)\int_{\R^{2}}\left(1+x^{2}_{1}(t)\right)(|\nabla \varepsilon(t)|^2+\varepsilon^{2}(t))e^{-\frac{|y|}{10}}\dd y\\
    &\lesssim \lambda^{-3}(t)\left(\lambda^{8\theta}(t)+\lambda^{6\theta+2}(t)\right)\lesssim 
    \lambda^{8\theta-3}(t)+
    \lambda^{6\theta-1}(t). \qquad 
    \end{aligned}
\end{equation*}
Combining the above estimates with~\eqref{est:lambda} and~\eqref{est:R11}, we obtain 
\begin{equation}\label{est:R1}
    \mathcal{K}_{3,1}\lesssim \left(\int_{T_{n}}^{T}
    \left(\lambda^{16\theta-6}(t)+
    \lambda^{12\theta-2}(t)\right)\dd t\right)^{\frac{1}{4}}\lesssim 
    T^{\frac{11\theta+1}{4(3-\theta)}}.
\end{equation}

\emph{Estimate on $\mathcal{K}_{3,2}$.} From~\eqref{est:1Dfrac}, for all $(x_{1},T)\in \mathbb{R}\times[T_{n},T_{n}^{\star}]$, we have 
\begin{equation*}
\begin{aligned}
    \left\|D_{x_2}^{\frac{7}{9}}(w^2(Q_S-X))\right\|_{L_{x_2}^2}
    &\lesssim \|Q_{S}-X\|_{L_{x_{2}}^{\infty}}\|w\|_{L_{x_{2}}^{\infty}}\|D_{x_{2}}^{\frac{7}{9}}w\|_{L_{x_{2}}^{2}}\\
    &+\|w\|_{L_{x_{2}}^{\infty}}\|w\|_{L_{x_{2}}^{4}}\|D_{x_{2}}^{\frac{7}{9}}(Q_{S}-X)\|_{L_{x_{2}}^{4}},
    \end{aligned}
\end{equation*}
which implies 
\begin{equation*}
    \begin{aligned}
        \left\|D_{x_2}^{\frac{7}{9}}(w^2(Q_S-X))\right\|_{L_{x_2T}^2}
         &\lesssim \|Q_{S}-X\|_{L_{x_{2}T}^{\infty}}\|w\|_{L_{x_{2}T}^{\infty}}\|D_{x_{2}}^{\frac{7}{9}}w\|_{L_{x_{2}T}^{2}}\\
    &+\|w\|_{L_{x_{2}T}^{\infty}}\|w\|_{L_{x_{2}T}^{4}}\|D_{x_{2}}^{\frac{7}{9}}(Q_{S}-X)\|_{L_{x_{2}T}^{4}}.
    \end{aligned}
\end{equation*}
Taking $L_{x_1}^1$ norm on both sides of the above inequality and then using~\eqref{48},
\begin{equation*}
\begin{aligned}
    \mathcal{K}_{3,2}&\lesssim \|Q_{S}-X\|_{L_{T}^{\infty}L_{x}^{\infty}}\|w\|_{L_{x_{1}}^{2}L_{x_{2}T}^{\infty}}\|D_{x_{2}}^{\frac{7}{9}}w\|_{L_{T}^{2}L_{x}^{2}}\\
&+\|w\|_{L_{x_{1}}^{2}L_{x_{2}T}^{\infty}}\|w\|_{L_{T}^{4}H_{x}^{1}}\|D_{x_{2}}^{\frac{7}{9}}(Q_{S}-X)\|_{L_{T}^{4}L_{x}^{4}}\\
&\lesssim T^{2a}
\|Q_{S}-X\|_{L_{T}^{\infty}L_{x}^{\infty}}+T^{2a}\|(Q_{S}-X)\|_{L_{T}^{4}W_{x}^{1,4}}.
    \end{aligned}
\end{equation*}
Using again~\eqref{est:t}, we check that 
\begin{equation}\label{53}
\begin{aligned}
\|{Q}_S-X\|_{L_T^\infty L_x^\infty}&\lesssim 
\lambda^{-1}(T)\|W_{b,\lambda}-Q\|_{L_{T}^{\infty}L_{y}^{\infty}}\lesssim 
T^{\frac{\theta-1}{3-\theta}},\\
\|{Q}_S-X\|^{4}_{L_T^4 W_x^{1,4}}&\lesssim\int_{T_n}^T\lambda^{-6}(t)\|W_{b,\lambda}-Q\|^4_{W_y^{1,4}}\dd t\lesssim T^{\frac{15\theta-23}{5(3-\theta)}}.
\end{aligned}
\end{equation}
Here, we use the fact that 
\begin{equation*}
    \|W_{b,\lambda}-Q\|_{L_{T}^{\infty}L_{y}^{\infty}}\lesssim \lambda^{\theta}+|b|\quad \mbox{and}\quad \|W_{b,\lambda}-Q\|^4_{W_y^{1,4}}\lesssim \lambda^{4\theta-\frac{8}{5}}+|b|^{\frac{13}{4}}.
\end{equation*}
Combining the above estimates, we obtain
\begin{equation}\label{est:R2}
    \mathcal{K}_{3,2}\lesssim T^{2a+\frac{\theta-1}{3-\theta}}+T^{2a+\frac{15\theta-23}{20(3-\theta)}}.
\end{equation}

\emph{Estimate on $\mathcal{K}_{3,3}$.}
We decompose
\begin{align*}
\mathcal{K}_{3,3}&\lesssim \|(1+|x_1|)w^2(Q_S-X)\|_{L_T^2L_x^{2}}+\left\|\partial_{x_1}\left(x_1w^2(Q_S-X)\right)\right\|_{L_T^2L_x^{2}}\\
&\quad+\big\|[x_1,D_{x_1}^{\frac{7}{9}}](w^2(Q_S-X))\big\|_{L_T^2L_x^2}\coloneqq \mathcal{K}_{3,3,1}+\mathcal{K}_{3,3,2}+\mathcal{K}_{3,3,3}.
\end{align*}
First, using the H\"older inequality and~\eqref{48}, 
\begin{equation*}
\begin{aligned}
\mathcal{K}_{3,3,1}&\lesssim \|w\|^2_{L_T^8L_x^8}\|\langle x_{1}\rangle(Q_S-X)\|_{L_T^4L_x^4}\lesssim T^{2a}\|\langle x_{1}\rangle(Q_S-X)\|_{L_T^4L_x^4}.
\end{aligned}
\end{equation*}
It follows from~\eqref{est:x179} and $\theta>\frac{8}{5}$ that 
\begin{align*}
&\| \langle x_{1}\rangle (Q_S-X)\|_{L_T^4L_{x}^4}^4\\
&\lesssim \int_{T_n}^T{\lambda^{-2}(t)}\left(\int_{\mathbb{R}^2}(1+x^{4}_1(t)+\lambda^{4}(t)y_{1}^{4})(V^{4}+b^{4}P_{b}^{4})\dd y\right)\dd t\\
&\lesssim \int_{T_n}^{T}{\lambda^{-2}(t)}
\left(\lambda^{\frac{12}{5}}(t)+\lambda(t)^{4-4\theta}|b(t)|^{\frac{13}{4}}+\lambda^{4}(t)|b(t)|^{\frac{1}{4}}\right)\dd t\lesssim T^{1+\frac{2}{5(3-\theta)}}.
\end{align*}
Combining the above estimates, we obtain 
\begin{equation*}
    \mathcal{K}_{3,3,1} \lesssim T^{2a}\|(1+|x_1|)(Q_S-X)\|_{L_T^4L_x^4}\lesssim T^{2a+\frac{17-5\theta}{20(3-\theta)}}.
\end{equation*}

Second, using again the H\"older inequality and~\eqref{48},
\begin{equation*}
    \begin{aligned}
        \mathcal{K}_{3,3,2}&\lesssim\|w\|^2_{L_T^\infty L_x^9}\left\|\partial_{x_1}(x_1(Q_S-X))\right\|_{L_T^2L_x^{\frac{18}{5}}}\\
        &+ \|\partial_{x_{1}}w\|_{L_T^{\frac{9}{4}}L_x^\infty}\|w\|_{L_T^\infty L_x^{\frac{9}{4}}}\|x_1(Q_S-X)\|_{L_T^{18}L_x^{18}}\\
&\lesssim T^{2a}
\|\partial_{x_1}(x_1(Q_S-X))\|_{L_T^2L_x^{\frac{18}{5}}}+ T^{2a}\|x_1(Q_S-X)\|_{L_T^{18}L_x^{18}}.
    \end{aligned}
\end{equation*}
Using again~\eqref{est:x179} and $\frac{8}{5}<\theta<\frac{19}{9}$, we have
\begin{equation*}
\begin{aligned}
    &\|\partial_{x_1}(x_1(Q_S-X))\|^{2}_{L_T^2L_x^{\frac{18}{5}}}\\
    &\lesssim\int_{T_n}^T {\lambda^{-\frac{8}{9}}(t)}\left(\int_{\mathbb{R}^2}\big(V^{\frac{18}{5}}+b^{\frac{18}{5}}P_{b}^{\frac{18}{5}}\big)\dd y\right)^{\frac{5}{9}}\dd t\\
    &+\int_{T_n}^T{\lambda^{-\frac{26}{9}}(t)}\left(\int_{\mathbb{R}^2}
    \big(x^{\frac{18}{5}}_1(t)+\lambda^{\frac{18}{5}}(t)y_1^{\frac{18}{5}}\big)
    ((\partial_{x_{1}}V)^{\frac{18}{5}}
    \dd y\right)^{\frac{5}{9}}\dd t\\
    &+\int_{T_n}^T{\lambda^{-\frac{26}{9}}(t)}b^{2}(t)\left(\int_{\mathbb{R}^2}
    \big(x^{\frac{18}{5}}_1(t)+\lambda^{\frac{18}{5}}(t)y_1^{\frac{18}{5}}\big)
    (\partial_{x_{1}}P_{b})^{\frac{18}{5}})
    \dd y\right)^{\frac{5}{9}}\dd t,
    \end{aligned}
\end{equation*}
which implies 
\begin{equation*}
    \|\partial_{x_1}(x_1(Q_S-X))\|^{2}_{L_T^2L_x^{\frac{18}{5}}}
    \lesssim \int_{T_{n}}^{T}\left(\lambda^{2\theta-\frac{16}{9}}(t)+\lambda^{-\frac{8}{9}}(t)\right)\dd t\lesssim T^{\frac{19-9\theta}{9(3-\theta)}}.
\end{equation*}
Similarly, we have 
\begin{equation*}
\begin{aligned}
\| x_{1}(Q_S-X)\|_{L_T^{18}L_{x}^{18}}^{18}&\lesssim \int_{T_n}^T{\lambda^{-16}(t)}\left(\int_{\mathbb{R}^2}(x^{18}_1(t)+\lambda^{18}(t)y_{1}^{18})(V^{18}+b^{18}P_{b}^{18})\dd y\right)\dd t\\
&\lesssim \int_{T_n}^{T}
\left(\lambda^{\frac{2}{5}}(t)+\lambda^{2-18\theta}(t)|b(t)|^{\frac{69}{4}}+\lambda^{2}(t)|b(t)|^{\frac{15}{4}}\right)\dd t\lesssim T^{\frac{17-5\theta}{5(3-\theta)}}.
\end{aligned}
\end{equation*}
Combining the above estimates, we obtain 
\begin{equation*}
    \mathcal{K}_{3,3,2}\lesssim T^{2a}
\|\partial_{x_1}(x_1(Q_S-X))\|_{L_T^2L_x^{\frac{18}{5}}}+ T^{2a}\|x_1(Q_S-X)\|_{L_T^{18}L_x^{18}}\lesssim T^{2a+\frac{17-5\theta}{120(3-\theta)}}.
\end{equation*}

Using again the Minkowski inequality, the H\"older inequality and~\eqref{48}, we find 
\begin{equation*}
\begin{aligned}
    \mathcal{K}_{3,3,3}
    &\lesssim \|w\|_{L_{T}^6L_{x}^{6}}\|w\|_{L_{x_1}^2L_{x_2T}^{\infty}}\|Q_S-X\|_{L_{x_1}^{18}L_{x_2T}^3}\\
    &\lesssim \vertiii{w}_{T}^{2}\|Q_S-X\|_{L_{T}^{3}L_{x_1}^{18}L_{x_2}^3}
    \lesssim T^{2a}\|Q_S-X\|_{L_{T}^{3}L_{x_1}^{18}L_{x_2}^3}.
    \end{aligned}
\end{equation*}
Recall that, using again~\eqref{est:t}, we directly have 
\begin{equation}\label{52}
\begin{aligned}
    \|Q_S-X\|^3_{L^3_TL_{x_1}^{18}L_{x_2}^3}
    &\lesssim 
    \int_{T_n}^T \lambda^{-\frac{11}{6}}(t)\|V\|_{L_{y_1}^{18}L_{y_2}^3}^3\dd t\\
    &+\int_{T_n}^T \lambda^{-\frac{11}{6}}(t)\|bP_{b}\|_{L_{y_1}^{18}L_{y_2}^3}^3\dd t\lesssim T^{\frac{20\theta+9}{10(3-\theta)}},
    \end{aligned}
\end{equation}
which implies 
\begin{equation*}
    \mathcal{K}_{3,3,3}\lesssim
    T^{2a}\|Q_S-X\|_{L_{T}^{3}L_{x_1}^{18}L_{x_2}^3}\lesssim T^{2a+\frac{20\theta+9}{30(3-\theta)}}.
\end{equation*}
Combining the above estimates, we obtain 
\begin{equation}\label{est:R3}
    \mathcal{K}_{3,3}\lesssim \mathcal{K}_{3,3,1}+\mathcal{K}_{3,3,2}+\mathcal{K}_{3,3,3}\lesssim
    T^{2a+\frac{17-5\theta}{5(3-\theta)}}+T^{2a+\frac{20\theta+9}{30(3-\theta)}}.
\end{equation}

We see that~\eqref{est:K3} follows from~\eqref{est:R1},~\eqref{est:R2} and~\eqref{est:R3}.

\smallskip
{\textbf{Step 4.}} Estimate of $\mathcal{K}_4$. We claim that 
\begin{equation}\label{est:K4}
    \vertiii{\mathcal{K}_{4}}_{T}\lesssim T^{3a}.
\end{equation}
Indeed, from the H\"older inequality and~\eqref{48}, we directly have 
\begin{equation*}
    \|\partial_{x_{1}}(w^{3})\|_{L_{T}^{1}L_{x}^{2}}\lesssim \|\partial_{x_{1}}w
    \|_{L_T^{\frac{9}{4}}L_x^\infty}\|w\|_{L_T^{\frac{9}{4}}L_x^\infty}\|w\|_{L_T^{9}L_x^2}\lesssim \vertiii{w}_T^3\lesssim T^{3a}.
\end{equation*}
Then, using again Lemma~\ref{le:Leibniz}, we deduce that 
\begin{equation*}
\begin{aligned}
     \sum_{j=1,2}\big\|D_{x_j}^{\frac{7}{9}}\partial_{x_1}(w^3)\big\|_{L^2_x}
    &\lesssim \|\partial_{x_{1}}w\|_{L_{x}^{\infty}}\|w\|_{L_{x}^{\infty}}
    \big\|D_{x_{j}}^{\frac{7}{9}}w\big\|_{L_{x}^{2}}\\
    &+\|w\|_{L_{x}^{\infty}}\big\|wD_{x_{j}}^{\frac{7}{9}}\partial_{x_{1}}w\big\|_{L_{x}^{2}}.
    \end{aligned}
\end{equation*}
It follows from~\eqref{48} that 
\begin{equation*}
    \begin{aligned}
         \sum_{j=1,2}\big\|D_{x_j}^{\frac{7}{9}}\partial_{x_1}(w^3)\big\|_{L_{T}^{1}L^2_x}&\lesssim
         \sum_{j=1,2}\|\partial_{x_{1}}w\|_{L_T^{\frac{9}{4}}L_x^\infty}\|w\|_{L_T^{\frac{9}{4}}L_x^\infty}
         \big\|D_{x_j}^{\frac{7}{9}}w\big\|_{L_T^{9}L_x^2}\\
         &+T^{\frac{1}{18}}\sum_{j=1,2}\|w\|_{L_T^{\frac{9}{4}}L_x^\infty}
         \|w(D_{x_j}^{\frac{7}{9}}\partial_{x_{1}}w)\|_{L_T^2L_x^2}\lesssim T^{3a}.
    \end{aligned}
\end{equation*}
Here, we use the fact that 
\begin{equation*}
     \sum_{j=1,2}\|w(D_{x_j}^{\frac{7}{9}}\partial_{x_{1}}w)\|_{L_T^2L_x^2}\lesssim
    \sum_{j=1,2} \|w\|_{L_{x_{1}}^{2}L_{x_{2}T}^{\infty}}
     \big\|D^{\frac{7}{9}}_{x_{j}}\partial_{x_{1}}w\big\|_{L_{x_{1}}^{\infty}L_{x_{2}T}^{2}}\lesssim \vertiii{w}^{2}_{T}.
\end{equation*}
Therefore, from \eqref{47}, \eqref{412}, \eqref{43}, \eqref{417} and \eqref{est:lambda}, we have
\begin{equation*}
    \vertiii{\mathcal{K}_{4}}_{T}\lesssim
     \|\partial_{x_{1}}(w^{3})\|_{L_{T}^{1}L_{x}^{2}}+
     \sum_{j=1,2}\big\|D_{x_j}^{\frac{7}{9}}\partial_{x_1}(w^3)\big\|_{L_{T}^{1}L^2_x}\lesssim T^{3a},
\end{equation*}
which completes the proof of~\eqref{est:K4}.

\smallskip
\textbf{Step 4.} Conclusion. From~\eqref{est:K1},~\eqref{est:K2},~\eqref{est:K3} and~\eqref{est:K4}, we have strictly improved estimates in~\eqref{48}, and thus, the proof of Proposition~\ref{Prop:bdd} is complete.
\end{proof}

\subsection{End of the proof for Theorem~\ref{thm:main}}

We start with the following result of the weak $H^{\frac{7}{9}}$ stability 
and convergence of the geometric parameters.
\begin{lemma}\label{lemma:wc}
	Let $v_n\in C([0,1];H^{\frac{7}{9}}(\R^{2}))$ be a sequence of solutions to \eqref{CP} with initial data $v_n(0)=v_{0n}\in H^{\frac{7}{9}}(\R^{2})$. Assume that there exits $v_0\in H^{\frac{7}{9}}(\R^{2})$ such that 
    \begin{equation*}
    v_{0n}\rightharpoonup v_{0} \ \ \ \mbox{in}\ H^{\frac{7}{9}}\ \mbox{as}\ n\to \infty\quad \mbox{and}\quad 
       \sup_{n\in \mathbb{N}^{+}} \sup_{t\in[0,1]}\|v_n(t)\|_{H^{{\frac{7}{9}}}}<\infty.
    \end{equation*}
	Denote $v(t)$ by the corresponding solution to \eqref{CP} with initial data $v(0)=v_0\in H^{\frac{7}{9}}$. Then the following holds.
\begin{enumerate}
\item The solution $v(t)$ exists on $[0,1]$ and satisfies
\begin{equation*}
     v_{n}(t)\rightharpoonup v(t) \ \ \ \mbox{in}\ H^{\frac{7}{9}}\ \mbox{as}\ n\to \infty,\quad \mbox{for all}\ t\in [0,1].
\end{equation*} 
\item Suppose that $\left\{v_n(t)\right\}_{n\in \mathbb{N}^{+}}$ satisfy the geometric decomposition and orthogonality conditions in Proposition~\ref{Prop:decomposition} on $[0,1]$.
Assume further that
\begin{equation*}
\sup_{n\in\mathbb{N}^{+}}\|
\left(\log\lambda_n,b_{n},x_{1n}\right)\|_{C^1([0,1])}<\infty.
\end{equation*}
	Then, $v(t)$ satisfies the geometric decomposition in Proposition~\ref{Prop:decomposition} on $[0,1]$ and its decomposition $(\lambda,b,x_{1},\varepsilon)\in (0,\infty)\times \mathbb{R}^2\times  H^{\frac{7}{9}}(\R^{2})$ satisfies
    \begin{equation*}
    \begin{aligned}
        \varepsilon_{n}(t)\rightharpoonup \varepsilon(t),\quad \mbox{in}\ \ H^{\frac{7}{9}}\ \ \  \mbox{as}\ \ n\to \infty,\quad \mbox{for all}\ t\in [0,1],\\
        \lim_{n\to \infty}\left(\lambda_{n}(t),b_{n}(t),x_{1n}(t)\right)=(\lambda(t),b(t),x_{1}(t)),\quad \mbox{for all}\ t\in [0,1].
        \end{aligned}
    \end{equation*}
\end{enumerate}
\end{lemma}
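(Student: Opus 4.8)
\emph{Part (i): weak $H^{7/9}$ stability.} The plan is a standard compactness plus uniqueness argument. From the uniform bound $M:=\sup_n\sup_{t\in[0,1]}\|v_n(t)\|_{H^{7/9}}<\infty$ and the equation $\partial_t v_n=-\partial_{x_1}(\Delta v_n+v_n^3)$, I would first derive a uniform bound for $\partial_t v_n$ in $L^\infty([0,1];H^{-3}_{\mathrm{loc}}(\R^2))$, using that $v_n^3$ is bounded in $L^\infty_t L^3$ via the embedding $H^{7/9}(\R^2)\hookrightarrow L^9(\R^2)$ together with $L^3(B_R)\hookrightarrow H^{-3}(B_R)$. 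Then, applying the Aubin--Lions--Simon lemma on each ball $B_R$ (with $H^{7/9}(B_R)\hookrightarrow\hookrightarrow H^s(B_R)\hookrightarrow H^{-3}(B_R)$ for any $s<7/9$) and extracting a diagonal subsequence, a subsequence of $(v_n)$ converges in $C([0,1];H^s_{\mathrm{loc}}(\R^2))$, $s<7/9$, to some $v_*$; by weak-$*$ lower semicontinuity $v_*\in L^\infty([0,1];H^{7/9})$ with $\|v_*(t)\|_{H^{7/9}}\le M$, and $v_*\in C_w([0,1];H^{7/9})$ with $v_n(t)\rightharpoonup v_*(t)$ in $H^{7/9}$ for each fixed $t$ along that subsequence. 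Since $s>\tfrac13$, the strong local convergence upgrades to $v_n^3\to v_*^3$ in $L^1_{\mathrm{loc}}([0,1]\times\R^2)$, so one passes to the limit in the weak (or Duhamel) formulation of \eqref{CP} and finds that $v_*$ solves \eqref{CP} with $v_*(0)=v_0$. By the local well-posedness of \eqref{CP} in $H^s$, $s\ge\tfrac14$, and uniqueness in that class, $v_*$ coincides with the $H^{7/9}$ solution $v$ on any subinterval of $[0,1]$ on which the latter is defined; the a priori bound $\|v_*(t)\|_{H^{7/9}}\le M$ together with the blow-up criterion \eqref{equ:blowCauchy} then rules out breakdown before $t=1$, so $v$ is defined on all of $[0,1]$ and $v=v_*$ there. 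Finally, since every subsequence of $(v_n)$ admits a further subsequence with $v_n(t)\rightharpoonup v(t)$ in $H^{7/9}$ for all $t$, the full sequence converges weakly in $H^{7/9}$ at each time, which is (i).

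\emph{Part (ii): convergence of the modulation parameters.} By Arzel\`a--Ascoli, the $C^1$-bounded family $(\log\lambda_n,b_n,x_{1n})$ has a subsequence converging in $C([0,1])$ to some Lipschitz triple $(\log\lambda_*,b_*,x_{1*})$; in particular $\lambda_*$ is bounded and bounded away from $0$. The two elementary facts I would use are: (a) the profile map $(b,\mu)\mapsto W_{b,\mu}$ is continuous into $H^1\hookrightarrow H^{7/9}$ (smooth dependence of the localized part $V$ on $\mu>0$, and $\|bP_b\|_{H^1}\lesssim|b|^{5/8}\to0$ covers $b\to0$), so $W_{b_n(t),\lambda_n(t)}\to W_{b_*(t),\lambda_*(t)}$ strongly; and (b) for $\mu_n\to\mu>0$, $z_n\to z$, the rescaling operators $T_{\mu,z}:f\mapsto\mu f(\mu\,\cdot\,+(z,0))$ are uniformly bounded on $H^{7/9}$ and satisfy $T_{\mu_n,z_n}g_n\rightharpoonup T_{\mu,z}g$ whenever $g_n\rightharpoonup g$ in $H^{7/9}$ with $\sup_n\|g_n\|_{H^{7/9}}<\infty$ (test against $\varphi\in H^{-7/9}$ and use that $T^*_{\mu_n,z_n}\varphi\to T^*_{\mu,z}\varphi$ strongly). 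Combining (a), (b) with part (i) applied to $v_n$ gives, along the subsequence and for each $t\in[0,1]$,
\[
\varepsilon_n(t)=T_{\lambda_n(t),x_{1n}(t)}v_n(t)-W_{b_n(t),\lambda_n(t)}\ \rightharpoonup\ \varepsilon_*(t):=T_{\lambda_*(t),x_{1*}(t)}v(t)-W_{b_*(t),\lambda_*(t)}\quad\text{in }H^{7/9}.
\]

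\emph{Identification via uniqueness.} Testing the orthogonality conditions \eqref{equ:ortho} against $Q,Q^3,\partial_{y_1}Q\in\mathcal{S}(\R^2)\subset H^{-7/9}$ and passing to the weak limit yields $(\varepsilon_*(t),Q)=(\varepsilon_*(t),Q^3)=(\varepsilon_*(t),\partial_{y_1}Q)=0$, while weak lower semicontinuity of the $L^2$ norm gives $\|\varepsilon_*(t)\|_{L^2}\le\liminf_n\|\varepsilon_n(t)\|_{L^2}\le\varrho(\kappa)<\kappa_1$. Rescaling the decomposition once more (using $W_{b_n,\lambda_n}=Q+O_{H^1}(\lambda_n^{\theta-4/5}+|b_n|^{5/8})$ and the smallness of $\|\varepsilon_n\|_{L^2}$) shows that $v(t)$ itself satisfies the hypothesis \eqref{est:decomposition} of Proposition~\ref{Prop:decomposition}, hence admits a \emph{unique} decomposition $(\lambda,b,x_1,\varepsilon)$ with the orthogonality conditions and with $(\lambda,b,x_1)$ of class $C^1$. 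The uniqueness part of Proposition~\ref{Prop:decomposition} then forces $(\lambda(t),b(t),x_1(t),\varepsilon(t))=(\lambda_*(t),b_*(t),x_{1*}(t),\varepsilon_*(t))$. Since this identifies every subsequential limit with the same object, the full sequence converges: $(\lambda_n(t),b_n(t),x_{1n}(t))\to(\lambda(t),b(t),x_1(t))$ and $\varepsilon_n(t)\rightharpoonup\varepsilon(t)$ in $H^{7/9}$ for every $t\in[0,1]$, proving (ii).

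\emph{Main obstacle.} I expect the only genuinely delicate step to lie in (i): securing enough time-compactness from the equation to pass to the limit in the cubic term at the low regularity $H^{7/9}$, and, crucially, using the uniform $H^{7/9}$ bound together with the blow-up criterion \eqref{equ:blowCauchy} to guarantee that the limit $v$ is defined on the entire interval $[0,1]$ rather than merely on a short subinterval determined by the local theory. Part (ii) should then be essentially bookkeeping, the one non-routine input being the uniqueness statement of Proposition~\ref{Prop:decomposition}, which upgrades subsequential weak convergence to convergence of the whole sequence.
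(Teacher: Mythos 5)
Your proposal is correct in spirit but takes a genuinely different route from the paper, and part~(i) has a subtle gap that the paper's argument is specifically designed to avoid.

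For part~(i), you pass to a weak limit via Aubin--Lions, pass to the limit in $v_n^3$ in $L^1_{\rm loc}$ using the $H^{7/9}\hookrightarrow L^9$ embedding, and then identify the limit $v_*$ with the LWP solution $v$ by invoking ``uniqueness in the $H^s$, $s\ge\tfrac14$ theory.'' The delicate point is that the local well-posedness results quoted in the paper (Faminskii, Linares--Pastor, Kinoshita) give uniqueness only \emph{in a certain class} --- typically a Bourgain-type or space--time mixed-norm class in which the contraction is run --- and it is not automatic that the weak solution $v_*$ produced by a compactness argument lies in that class. Without additional work (e.g.\ a weak--strong uniqueness statement or an unconditional uniqueness theorem at $H^{7/9}$), the identification $v_*=v$ is not justified, and with it the claim that the full sequence $v_n(t)\rightharpoonup v(t)$ (which also relies on uniqueness of the subsequential limit). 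The paper's proof sidesteps this entirely: Step~1 establishes the weak-continuity statement at $H^{10}$ regularity --- where the solution is classical and uniqueness in $C([0,1];H^{10})$ is unconditional --- using a duality argument against a backward linear adjoint problem rather than your $L^1_{\rm loc}$ limit passage; Step~2 then descends to $H^{7/9}$ by frequency truncation of the data and the Lipschitz continuity of the data-to-solution map in $H^r$ for $r\in(\tfrac34,\tfrac79)$, which avoids ever having to invoke uniqueness for a weak/compactness solution at low regularity. Your route could presumably be repaired, but it would require establishing membership of $v_*$ in the relevant uniqueness class, which is a non-trivial additional step; the frequency-truncation route is more robust precisely because it only uses the \emph{smooth dependence on data} property of the LWP theory, never a weak--strong uniqueness statement.

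For part~(ii), your argument (Arzel\`a--Ascoli on the parameters, continuity of the profile map $(b,\mu)\mapsto W_{b,\mu}$ into $H^1$, weak continuity of the rescaling operators, then identification via the uniqueness clause of Proposition~\ref{Prop:decomposition} and the standard subsequence trick) matches the paper's Step~3 essentially verbatim, and is fine once part~(i) is in place. The observations~(a) and~(b) about the profile map and the rescaling operators are correct and are the right technical inputs.

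In short: same strategy for~(ii), genuinely different strategy for~(i), and in~(i) you need to either close the uniqueness-class gap or adopt the paper's high-regularity-plus-frequency-truncation route.
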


\begin{proof}
    For the sake of completeness and the readers’ convenience, the details of the proof for Lemma~\ref{L1} are given in Appendix \ref{App:wc}.
\end{proof}

\smallskip
We are in a position to complete the proof of Theorem~\ref{thm:main}. 
\begin{proof}[End of the proof of Theorem~\ref{thm:main}]
From~\eqref{est:uniform} and Proposition~\ref{Prop:bdd}, for any $n\in \mathbb{N}^{+}$, there exists a solution $\phi_{n}(t)$ of~\eqref{CP} on $[T_{n},T_{0}]$ such that for all $T_1\in (0,T_0]$, it holds
\begin{equation}\label{est:uniformn}
    \sup_{n\geq n_0}\left(\sup_{t\in[T_{1},T_0]}\|\phi_{n}(t)\|_{H^{\frac{7}{9}}}+\left\|\left(\log \lambda_{n},b_{n},x_{1n}\right)\right\|_{C^1([T_{1},T_0])}\right)<C(T_1)<\infty,
\end{equation}
provided that $n_0=n_0(T_1)\gg1$. Here, $(\lambda_{n}(t),b_{n}(t),x_{1n}(t))$ is the geometric parameters of the decomposition for $\phi_{n}(t)$ on $[T_{n},T_{0}]$. Since the sequence $(\phi_{n}(T_{0}))_{n\in \mathbb{N}^{+}}$ is bounded in $H^{\frac{7}{9}}$, up to the extraction of a subsequence, there exists a $\phi_{0}\in H^{\frac{7}{9}}$ such that $\phi_{n}(T_{0})\rightharpoonup\phi_{0}$ weakly in $H^{\frac{7}{9}}$.
Let $\mathcal{S}(t)$ be the backward-in-time solution with initial data $\mathcal{S}(T_{0})=\phi_0$. Using Lemma \ref{lemma:wc}, we know that  $\mathcal{S}(t)$ exists on $(0,T_0]$. Here, we use the fact that $T_{n}\downarrow0$ as $n\to \infty$ and~\eqref{est:uniformn}. Using again Proposition~\ref{Prop:bdd}, Lemma~\ref{lemma:wc} and~\eqref{est:uniformn},
for all $t\in (0,T_{0}]$, we have 
\begin{equation*}
\begin{aligned}
    \mathcal{S}(t,x)
    &=\frac{1}{\lambda(t)}(W_{b(t),\lambda(t)}+\varepsilon)\left(t,\frac{x_1-x_{1}(t)}{\lambda(t)},\frac{x_2}{\lambda(t)}\right)\\
    &=\frac{1}{\lambda(t)}W_{b(t),\lambda(t)}\left(t,\frac{x_1-x_{1}(t)}{\lambda(t)},\frac{x_2}{\lambda(t)}\right)+w(t,x),
    \end{aligned}
\end{equation*}
and
\begin{gather*}
        \varepsilon_{n}(t)\rightharpoonup \varepsilon(t),\quad \mbox{in}\ \ H^{\frac{7}{9}},\\
        \lim_{n\to \infty}\left(\lambda_{n}(t),b_{n}(t),x_{1n}(t)\right)=(\lambda(t),b(t),x_{1}(t)).
\end{gather*}
Here, $(\lambda(t),b(t),x_{1}(t))$ is the geometric parameters of the decomposition for $\phi(t)$ on $(0,T_{0}]$. In addition, using again~\eqref{est:uniform}, we deduce that 
\begin{equation}\label{est:u}
\left\{
	\begin{aligned}
		&|b(t)|
		\lesssim t^{\frac{4\theta}{3-\theta}},\quad 
        \|w(t)\|_{H^\frac{7}{9}}\lesssim t^{a},
        \\
		&\left|\lambda(t)-\left(\frac{3-\theta}{\theta}\right)^{\frac{1}{3-\theta}}t^{\frac{1}{3-\theta}}\right|
		\lesssim t^{\frac{1+\theta}{3-\theta}}\left|\log t\right|,\\
		&\left|x_{1}(t)+\frac{\theta}{\theta-1}\left(\frac{\theta}{3-\theta}\right)^{\frac{\theta-1}{3-\theta}}t^{\frac{1-\theta}{3-\theta}}\right|\lesssim t^{\frac{1}{3-\theta}}\left|\log t\right|,
	\end{aligned}
	\right.\quad \mbox{for all}\ t\in (0,T_{0}].
\end{equation}
It follows directly that $\mathcal{S}(t)\in C((0,T_0];H^{\frac{7}{9}})$ is a solution of \eqref{CP} which blows up backward at $t=0$. Next, using Lemma \ref{le:estW1}, Proposition \ref{prop:Wb}, \eqref{est:u} and the conservation law of mass, we see that 
\begin{equation*}
    \lim_{t\to 0}\|\mathcal{S}(t)\|_{L^{2}}=\|Q\|_{L^{2}}\Longrightarrow \|\mathcal{S}(t)\|_{L^{2}}\equiv \|Q\|_{L^{2}}.
\end{equation*}
Last, using Lemma \ref{le:estW1} , Proposition \ref{prop:Wb} and \eqref{est:u} again, we have
\begin{equation*}
    \begin{aligned}
        \left\|\mathcal{S}(t)-\frac{1}{\lambda(t)}Q\left(\frac{\cdot-x(t)}{\lambda(t)}\right)\right\|_{H^{\frac{7}{9}}} &\lesssim \frac{1}{\lambda^{\frac{7}{9}}(t)}
        \left\|W_{b(t),\lambda(t)}-Q\right\|_{H^{1}}+\|w(t)\|_{H^{\frac{7}{9}}}\\
        & \lesssim \lambda^{\frac{5\theta}{2}-\frac{7}{9}}(t)+\lambda^{\theta-\frac{71}{45}}(t)+t^a\to 0,\quad \mbox{as}\ \ t\downarrow 0.
    \end{aligned}
\end{equation*}
Here, we denote $x(t)=(x_{1}(t),0)$ and then use the fact that 
\begin{equation*}
    \theta\in \left(\frac{8}{5},\frac{9}{5}\right)\Longrightarrow \frac{5\theta}{2}-\frac{7}{9}>0\quad \mbox{and}\quad \theta-\frac{71}{45}>0.
\end{equation*}
The proof of Theorem~\ref{thm:main} is complete.
\end{proof}

\appendix
\section{Proof of Lemma \ref{L1}}\label{App:linear}
In this appendix, we give a complete proof of Lemma \ref{L1}.
\begin{proof}[Proof of Lemma~\ref{L1}]
{\textbf{Step 1.}} Proof of \eqref{41}--\eqref{44}. Estimate \eqref{41} follows from Fourier-Plancherel's theorem and the definition of $U(t)$. Estimates~\eqref{410} and~\eqref{46} were proved in Linares-Pastor \cite[Lemma 2.6]{LinPas} and Faminskii \cite[Theorem 2.4]{Faminskii}, respectively. Estimate \eqref{42} follows directly from Kato's smoothing effect (see \emph{e.g.}~\cite[Theorem 2.2]{Faminskii}). Estimates \eqref{47} and~\eqref{44} follow from \eqref{41} and \eqref{42}, respectively.

\smallskip
{\textbf{Step 2.}} 
Proof of \eqref{412}. By the Minkowski inequality, we have
\begin{equation*}
\begin{aligned}
    \left\|\int_0^t U(t-\tau)g(\tau)\dd \tau\right\|_{L_T^{\frac{9}{4}}L^\infty_{x}}&\lesssim \int_0^T\left(\int_{\tau}^T\|U(t-\tau)g(\tau)\|^{\frac{9}{4}}_{L_x^\infty}\dd t\right)^{\frac{4}{9}}\dd \tau\\
    &\lesssim \int_0^T\left\|U(t)U(-\tau)g(\tau)\right\|_{L_T^{\frac{9}{4}}L_x^\infty}\dd \tau.
    \end{aligned}
\end{equation*}
Combining the above estimate with~\eqref{410}, we obtain
\begin{equation*}
\begin{aligned}
 \left\|\int_0^t U(t-\tau)g(\tau)\dd \tau\right\|_{L_T^{\frac{9}{4}}L^\infty_{x}}&\lesssim \int_0^T  \|U(-\tau)g(\tau)\|_{L_x^2}\dd \tau\lesssim \|g\|_{L_T^1L_x^2},\\
     \left\|\int_0^t U(t-\tau)g(\tau)\dd \tau\right\|_{L_T^{\frac{9}{4}}L^\infty_{x}}&\lesssim \int_0^T  \|D_{x_1}^{-\frac{2}{9}}U(-\tau)g(\tau)\|_{L_x^2}\dd \tau\lesssim \| D_{x_1}^{-\frac{2}{9}}g\|_{L_T^1L_x^2},
     \end{aligned}
\end{equation*}
which complete the proof of~\eqref{412}.

\smallskip
{\textbf{Step 3.}}
Proof of \eqref{419}. 
Using \eqref{410}, we directly have 
\begin{equation*}
\begin{aligned}
    \left\|\partial_{x_1}\int_0^TU(t-\tau)g(\tau)\dd \tau\right\|_{L^{\frac{9}{4}}_{T}L^\infty_{x}}
    &\lesssim
    \left\|U(t)\int_0^TU(-\tau)\partial_{x_1}g(\tau)\dd \tau\right\|_{L^{\frac{9}{4}}_{T}L^\infty_{x}}\\
    &\lesssim \left\|\partial_{x_{1}}\int_0^tU(t-\tau)g(\tau)\dd \tau\right\|_{L_{T}^{\infty}L^2_{x}}.
    \end{aligned}
\end{equation*}
Based on the above estimate and~\eqref{44}, we obtain 
\begin{equation*}
    \left\|\partial_{x_1}\int_0^TU(t-\tau)g(\tau)\dd \tau\right\|_{L^{\frac{9}{4}}_{T}L^\infty_{x}}\lesssim\|g\|_{L_{x_1}^1L_{x_2T}^2}\lesssim \|\langle x_1\rangle g\|_{L_T^2L_x^2}.
\end{equation*}
Using a similar argument as above, we also obtain 
\begin{equation*}
     \left\|\partial_{x_1}\int_0^TU(t-\tau)g(\tau)\dd \tau\right\|_{L^{\frac{9}{4}}_{T}L^\infty_{x}}\lesssim\|D_{x_{1}}^{-\frac{2}{9}}g\|_{L_{x_1}^1L_{x_2T}^2}\lesssim \|\langle x_1\rangle 
     D_{x_{1}}^{-\frac{2}{9}}
     g\|_{L_T^2L_x^2}.
\end{equation*}
Combining above estimates with \cite[Lemma 3.1]{SmithSogge}, we complete the proof of~\eqref{419}.

\smallskip
\textbf{Step 4.} Proof of~\eqref{43} and~\eqref{45}. Using \eqref{46}, we directly have
\begin{align*}
&\left\|\int_0^TU(t-\tau)g(\tau)\dd \tau\right\|_{L^2_{x_1}L^\infty_{x_2T}}\lesssim\left\|\int_0^TU(-\tau)g(\tau)\dd \tau\right\|_{H^{\frac{7}{9}}(\mathbb{R}^2)}\lesssim \|g\|_{L_T^1H_x^{\frac{7}{9}}}.
\end{align*}
Combining the above estimate with a similar argument as \cite[Lemma 3]{MoRi} and \cite[Proposition 3.7]{RibaudVento2}, we complete the proof of~\eqref{43}.

\smallskip
We denote by $\{\Delta_k\}_{k=0}^{\infty}$ the standard Littlewood-Paley decomposition over $\mathbb{R}^2$. First, from~\eqref{46} and \eqref{44}, for all $k\in \mathbb{N}$ and $0<\delta\ll1$, we have 
\begin{equation*}
    \begin{aligned}
        &\left\|
        \Delta_{k}\partial_{x_{1}}
        \int_0^TU(t-\tau)g(\tau)\dd \tau\right\|_{L^2_{x_1}L^\infty_{x_2T}}\\
&\lesssim\left\|\partial_{x_1}\int_0^TU(-\tau)\Delta_kg(\tau)\dd \tau\right\|_{H^{\frac{3}{4}+\delta}(\mathbb{R}^2)}\\
&\lesssim 2^{k\left(-\frac{1}{36}+\delta\right)}\left\|\partial_{x_1}\int_0^TU(T-\tau)\left(1+D_{x_1}^{\frac{7}{9}}+D_{x_2}^{\frac{7}{9}}\right)\Delta_kg(\tau)\dd \tau\right\|_{L^2(\mathbb{R}^2)}\\
&\lesssim2^{k\left(-\frac{1}{36}+\delta\right)}\left\|\partial_{x_1}\int_0^TU(T-\tau)\left(1+D_{x_1}^{\frac{7}{9}}+D_{x_2}^{\frac{7}{9}}\right)g(\tau)\dd \tau\right\|_{L^2(\mathbb{R}^2)}\\
&\lesssim 2^{k\left(-\frac{1}{36}+\delta\right)}\left\| \left(1+D_{x_1}^{\frac{7}{9}}+D_{x_2}^{\frac{7}{9}}\right)g\right\|_{L_{x_1}^1L_{x_2T}^2},
    \end{aligned}
\end{equation*}
which implies 
\begin{equation*}
        \left\|
        \partial_{x_{1}}
        \int_0^TU(t-\tau)g(\tau)\dd \tau\right\|_{L^2_{x_1}L^\infty_{x_2T}}\lesssim
        \left\|
        \left(1+D_{x_1}^{\frac{7}{9}}+D_{x_2}^{\frac{7}{9}}\right)g
        \right\|_{L^{1}_{x_{1}}L_{x_{2}T}^{2}}.
\end{equation*}
Combining the above estimate with a similar argument as \cite[Lemma 3]{MoRi} and \cite[Proposition 3.7]{RibaudVento2}, we complete the proof of~\eqref{45}.

\smallskip
\textbf{Step 5.} Proof of \eqref{417}. Using again the Minkowski inequality, we have 
\begin{equation*}
    \begin{aligned}
        \left\|\partial_{x_1}\int_0^tU(t-\tau)g(\tau)\dd \tau\right\|_{L^\infty_{x_1}L^2_{x_2T}}
        &\lesssim \left\|\int_0^T\left(\int_{\tau}^T\left|U(t-\tau)\partial_{x_1}g(\tau)\right|^2\dd t\right)^{\frac{1}{2}}\dd \tau\right\|_{L^\infty_{x_1}L_{x_2}^2}\\
        &\lesssim \int_0^T\left\|\partial_{x_1}U(t)U(-\tau)g(\tau)\right\|_{L^\infty_{x_1}L_{x_2T}^2}\dd \tau.
    \end{aligned}
\end{equation*}
Combining the above estimate with~\eqref{42}, we obtain 
\begin{equation*}
    \left\|\partial_{x_1}\int_0^tU(t-\tau)g(\tau)\dd \tau\right\|_{L^\infty_{x_1}L^2_{x_2T}}\lesssim \int_{0}^{T}\|g(\tau)\|_{L_{x}^{2}}\dd \tau \lesssim \|g\|_{L_{T}^{1}L_{x}^{2}},
\end{equation*}
which completes the proof of~\eqref{417}.

\smallskip
{\textbf{Step 6.}} Proof of \eqref{413}. The proof of estimate \eqref{413} is similar to \cite[Theorem 3.5]{KPV} and \cite[Proposition 3.6]{RibaudVento2}, but it is given for completeness and the reader's convenience. 
To simplify notation, we denote 
\begin{equation*}
    D(t,x)=\int_{0}^{t}U(t-\tau)g(\tau,x)\dd \tau,\quad \mbox{for any}\ (t,x)\in [0,T]\times \R^{2}.
\end{equation*}
We decompose 
\begin{equation*}
    D(t,x)=2D_{1}(t,x)+2D_{2}(t,x),
\end{equation*}
with
\begin{equation*}
\begin{aligned}
D_{1}(t,x)&=\int_{\R}U(t-\tau)g(\tau){\rm sgn}(\tau)\dd \tau,\\
    D_{2}(t,x)&=\int_{\R}U(t-\tau)g(\tau){\rm sgn}(t-\tau)\dd \tau.
    \end{aligned}
\end{equation*}

\emph{Estimate on $D_{1}$.} Without loss of generality, we may assume that 
\begin{equation*}
    g\equiv 0,\quad \mbox{for}\ t\notin [0,T]\Longrightarrow
    D_{1}(t,x)=\int_{0}^{T}U(t-\tau)g(\tau)\dd \tau.
\end{equation*}
Note that, from~\eqref{42}, we directly have 
\begin{equation}\label{est:AppD1}
\left\|\partial_{x_{1}}^{2}D_{1}\right\|_{L_{x_{1}}^{\infty}L_{x_{2}T}^{2}}
\lesssim \left\|\partial_{x_{1}}\int_{0}^{T}U(T-\tau)g(\tau)\dd \tau\right\|_{L^{2}}\lesssim \|g\|_{L^1_{x_1}L_{x_2T}^2}.
\end{equation}

\emph{Estimate on $D_{2}$.} By an elementary computation, we find 
\begin{equation*}
     D_{2}(t,x)=\mathcal{F}^{-1}_{(\sigma,\xi)\to (t,x)}\left(\widehat{\rm sgn}(\sigma-\xi_1|\xi|^2)\widehat{g}(\sigma,\xi)\right).
\end{equation*}
Using the Fourier-Plancherel theorem, we have
\begin{equation}\label{est:D}
    \begin{aligned}
\left\|\partial_{x_1}^2D_1\right\|_{L_{x_2T}^2}
=&\left\|\mathcal{F}_{\xi_1\to x_{1}}^{-1}\left(\xi_1^2\widehat{\rm sgn}(\sigma-\xi_1|\xi|^2)\widehat{g}(\sigma,\xi)\right)\right\|_{L^2_{\xi_2\sigma}}\\
=&2\left\|K(\sigma,x_1,\xi_2)\star
\mathcal{F}_{(t,x_2)\to (\sigma,\xi_{2})}(g(x_1))(\sigma,\xi_2)\right\|_{L_{\xi_2\sigma}^2}.
    \end{aligned}
\end{equation}
where $K$ is the inverse Fourier transform (in $\xi_1$) of the tempered distribution given by the principal value (in $\sigma$) of $\frac{\xi_1^2}{\sigma-\xi_1|\xi|^2}$.
Here, $\mathcal{F}_{\xi_1\to x_{1}}^{-1}$ denotes the inverse Fourier transform in $\xi_1$, $\mathcal{F}_{(t,x_2)\to (\sigma,\xi_{2})}$ denotes the Fourier transform in $(t,x_2)$ and $\star$ denotes convolution for $x_1$.

\smallskip
We claim that
\begin{equation*}
    K(\sigma,x_{1},\xi_{2})\in L^{\infty}(\R^{3}).
\end{equation*}
Indeed, we compute 
\begin{equation}
\begin{aligned}
K(\tau,x_1,\xi_2)&=\frac{1}{\sqrt{2\pi}}\int_{\mathbb{R}}e^{ix_1\xi_1}\frac{\xi_1^2}{\sigma-\xi_1(\xi_1^2+\xi_2^2)}\dd\xi_1\\
&=\frac{1}{\sqrt{2\pi}}\int_{\mathbb{R}}e^{i\overline{x}_1\xi_1}\frac{\xi_1^2}{\overline{\sigma}-\xi_1(1+\xi_1^2)}\dd \xi_1,
\end{aligned}
\end{equation}
where $\overline{x}_1=|\xi_2|x_1$ and $\overline{\sigma}=\sigma/|\xi_2|^3$. Here, the integral related to $K$ is understood as an oscillatory integral.
We denote $\overline{z}=\overline{z}(\sigma,\xi_2)$ by the only real root of the polynomial $z(z^2+1)-\overline{\sigma}=0$. Then we decompose
\begin{equation*}
    \sqrt{2\pi}K=K_{1}+K_{2}+K_{3},
\end{equation*}
where
\begin{equation*}
\begin{aligned}
    K_{1}(\sigma,x_{1},\xi_{2})&=\frac{\overline{z}^2}{3\overline{z}^2+1}\int_{\mathbb{R}}\frac{e^{i\overline{x}_{1}\xi_1}}{\overline{z}-\xi_{1}}\dd \xi_1,\\
    K_{2}(\sigma,x_{1},\xi_{2})&=-
    \frac{2\overline{z}^2+1}{3\overline{z}^2+1}\int_{\mathbb{R}}\frac{\xi_1
    e^{i\overline{x}_{1}\xi_1}
    }{\xi_1^2+\overline{z}\xi_1+\overline{z}^2+1}\dd \xi_1
    ,\\
    K_{3}(\sigma,x_{1},\xi_{2})&=-\frac{\overline{z}^3+\overline{z}}{3\overline{z}^2+1}\int_{\mathbb{R}}\frac{e^{i\overline{x}_1\xi_1}}{\xi_1^2+\overline{z}\xi_1+\overline{z}^2+1}\dd\xi_1.
    \end{aligned}
\end{equation*}
First, from an elementary computation, we have 
\begin{equation*}
    \left|K_{1}\right|\lesssim 
    \left|\frac{\overline{z}^2}{3\overline{z}^2+1}e^{i\overline{x}_{1}\overline{z}}\int_{\R}\frac{e^{i\overline{x}_{1}\xi_1}}{\xi_{1}}\dd \xi_{1}\right|
    \lesssim \left|\frac{\overline{z}^2}{3\overline{z}^2+1}e^{i\overline{x}_{1}\overline{z}}{\rm{sgn}}(\overline{x}_{1})\right|
    \lesssim 1.
\end{equation*}
Second, by an elementary computation, we find 
\begin{equation*}
    \frac{\xi_1}
{\xi_1^2+\overline{z}\xi_1+\overline{z}^2+1}=\frac{\xi_{1}+\frac{\overline{z}}{2}}{\left(\xi_{1}+\frac{\overline{z}}{2}\right)^{2}+\frac{3}{4}\overline{z}^{2}+1}+\frac{\overline{z}}{2\left(\xi_1^2+\overline{z}\xi_1+\overline{z}^2+1\right)}.
\end{equation*}
From the change of variable and the computation of the Fourier transform of $e^{-|x|}$,
\begin{equation*}
    \begin{aligned}
        \left|K_{2}\right|&\lesssim \left|\int_{\R}\exp\left({i\sqrt{1+\frac{3}{4}\overline{z}^{2}}\overline{x}_{1}\xi_{1}}\right)\frac{\xi_{1}}{1+\xi_{1}^{2}}\dd \xi\right|+\left|\int_{\R^{2}}\frac{\overline{z}\dd \xi_{1}}{\xi_1^2+\overline{z}\xi_1+\overline{z}^2+1}\right|\lesssim 1.
    \end{aligned}
\end{equation*}
Then, from the Cauchy-Schwarz inequality and the change of variable, we find 
\begin{equation*}
    \left|K_{3}\right|\lesssim |\overline{z}|\int_{\R^{2}}\left|\frac{\dd \xi_{1}}{\xi_1^2+\overline{z}\xi_1+\overline{z}^2+1}\right|\lesssim 1.
\end{equation*}
Combining the above estimates for $K_{1}$, $K_{2}$ and $K_{3}$, we see that $K\in L^{\infty}(\R^{3})$.
Therefore, from Young's inequality and \eqref{est:D}, we obtain 
\begin{equation*}
    \left\|\partial_{x_1}^2D_1\right\|_{L_{x_1}^\infty L_{x_2T}^2}\lesssim \|K\|_{L^\infty}\left\|\mathcal{F}_{(t,x_2)\to (\sigma,\xi_{2})}(g(x_1))(\sigma,\xi_2)\right\|_{L_{x_1}^1L_{\xi_2\sigma}^2}\lesssim\|g\|_{L^1_{x_1}L_{x_2T}^2}.
\end{equation*}
which completes the proof of \eqref{413}.

\end{proof}

\section{Proof of Lemma~\ref{lemma:wc}}\label{App:wc}
In this appendix, we give a complete proof of Lemma~\ref{lemma:wc}. 
\begin{proof}[Proof of Lemma~\ref{lemma:wc}]
\textbf{Step 1}. The $H^{10}$ case. We assume that 
\begin{equation*}
    v_{0n}\rightharpoonup v_{0} \ \ \ \mbox{in}\ H^{10}\ \mbox{as}\ \ n\to \infty\quad \mbox{and}\quad 
       \sup_{n\in \mathbb{N}^{+}} \sup_{t\in[0,1]}\|v_n(t)\|_{H^{{10}}}<\infty.
    \end{equation*}
From the local theory of Cauchy problem~\eqref{CP}, we may {\it a priori} assume that
\begin{equation*}
    v(t)\ \ \mbox{exists on}\ \ [0,1]\quad \mbox{and}\quad 
    \sup_{t\in[0,1]}\|v(t)\|_{H^{10}}<\infty.
\end{equation*}
Without loss of generality, we only consider the weak convergence at $t=1$.

\smallskip
After passing to a subsequence,  there exists $\widetilde{v}:[0,1]\times \R^{2}\to \R$ such that
\begin{equation*}
    v_{n}\rightharpoonup \widetilde{v}\ \ \ \mbox{in}\ \ C([0,1];H^{10})\ \ \mbox{as}\ \ n\to \infty.
\end{equation*}
Since $(v_n)_{n\in \mathbb{N}^{+}}$ are solutions of \eqref{CP} and bounded in $C([0,1];H^{10})$, we find $(v_n)_{n\in \mathbb{N}^{+}}$ are also bounded in $C^1\left([0,1];H^7\right)$. Hence, for any compact set $K\subset \R^{2}$, we have 
\begin{equation*}
    C^1([0,1];H^{7})\hookrightarrow C([0,1];L^2(K))\Longrightarrow
      v_{n}\to \widetilde{v}\ \ \ \mbox{in}\ \ C([0,1];L^{2}(K))\ \ \mbox{as}\ \ n\to \infty.
\end{equation*}

Let $\omega_{n}=v_{n}-v$ and $\zeta_{n}=v_{0n}-v_{0}$.  Then we have
\begin{equation*}
		\partial_t\omega_{n}+\partial_{x_1}\Delta \omega_n+3v_n^2\partial_{x_{1}}\omega_n+3\omega_n(v_n+v)\partial_{x_{1}}v=0,\ \ \mbox{with}\ \ \omega_{n}(0)=\zeta_{n}.
\end{equation*}
Next, for any $g\in C_c^\infty(\mathbb{R}^2)$, we consider $z\in C([0,1];H^3)$ to be the solution of%
\footnote{The linear operator associated to this linear PDE is $$H=\partial_{x_1}\Delta+3\widetilde{v}^2\partial_{x_1}+6\widetilde{v}\partial_{x_{1}}\widetilde{v}-3(\widetilde{v}+v)\partial_{x_{1}}v.$$
It is easy to check that, for any $\lambda>0$ large enough, the operator $H-\lambda$ with domain $H^3$ is maximal dissipative. Therefore, this linear PDE has a unique strong solution in $C([0,1];H^3)$.}
\begin{equation*}
    \partial_tz+\partial_{x_1}\Delta z+3\partial_{x_{1}}\left(\widetilde{v}^2z\right)-3z(\widetilde{v}+v)\partial_{x_{1}}v=0,\quad \mbox{with}\ \ z(1)=g.
\end{equation*}
It is easy to check that 
\begin{equation*}
    \begin{aligned}
    \sup_{n\in \mathbb{N}^{+}}\sup_{t\in[0,1]}\left(\|\omega_n(t)\|_{H^{10}}+\|v_n(t)\|_{H^{10}}\right)&<\infty,\\
        \sup_{t\in [0,1]}\left(\|z(t)\|_{H^{3}}+\|v(t)\|_{H^{10}}+\|\widetilde{v}(t)\|_{H^{10}}\right)&<\infty.
    \end{aligned}
\end{equation*}
Then, from integration by parts and the equation of $\omega_{n}$ and $z$, we find 
\begin{align*}
	&\int_{\mathbb{R}^2}\omega_n(1,x)g(x)\dd x-\int_{\mathbb{R}^2}\zeta_n(x)z(0,x)\dd x\\
	&=3\int_0^1\int_{\mathbb{R}^2}\omega_nz\left(\widetilde{v}-v_n\right)\left(\partial_{x_{1}}v\right)\dd x\dd t+3\int_0^1\int_{\mathbb{R}^2}z\left(\widetilde{v}^2-v_n^2\right)\left(\partial_{x_{1}}\omega_{n}\right)\dd x\dd t\\
	&\lesssim \sup_{t\in[0,1]}\left(\int_{|x|<R}|\widetilde{v}(t,x)-v_n(t,x)|^2\dd x\right)^{\frac{1}{2}}+\int_0^1\int_{|x|>R}|z(t,x)|^2\dd x\dd t.
\end{align*}
Here, we choose $R$ to be large enough and $K=\left\{x\in \R^{2}:|x|\le \R\right\}$ to be a compact set on $\R^{2}$. Recall that, we have 
\begin{equation*}
     v_{n}\to \widetilde{v}\ \ \ \mbox{in}\ \ C([0,1];L^{2}(K))\quad \mbox{and}\quad \lim_{R\rightarrow\infty}\int_0^1\int_{|x|>R}|z(t,x)|^2\dd x\dd t=0.
\end{equation*}
Combining the above estimates, we obtain 
\begin{align*}
	\lim_{n\rightarrow\infty}\left(\int_{\mathbb{R}^2}\omega_n(1,x)g(x)\dd x-\int_{\mathbb{R}^2}\zeta_n(x)z(0,x)\dd x\right)=0,
\end{align*}
which implies $\omega_n(1)\rightharpoonup 0$ in $H^{\frac{7}{9}}$, and thus, $v_{n}(t)\rightharpoonup v(t)$ in $H^{\frac{7}{9}}$.

\smallskip
\textbf{Step 2.} General case. Suppose that  
\begin{equation}\label{est:AppBuni}
    v_{0n}\rightharpoonup v_{0} \ \ \ \mbox{in}\ H^{\frac{7}{9}}\ \mbox{as}\ \ n\to \infty\quad \mbox{and}\quad 
       \sup_{n\in \mathbb{N}^{+}} \sup_{t\in[0,1]}\|v_n(t)\|_{H^{{\frac{7}{9}}}}<\infty.
    \end{equation}
    For any $N\in \mathbb{N}^{+}$ large enough, we denote 
    \begin{equation*}
        v_{0n}^{N}=\mathcal{F}^{-1}\left(\mathbf{1}_{\{|\xi|\leq N\}}(\xi)\widehat{v}_{0n}(\xi)\right)
        \quad \mbox{and}\quad 
        v_{0}^{N}=\mathcal{F}^{-1}\left(\mathbf{1}_{\{|\xi|\leq N\}}(\xi)\widehat{v}_{0}(\xi)\right).
    \end{equation*}
Using~\eqref{est:AppBuni}, we directly have 
\begin{equation}\label{est:AppBsupn}
    \sup_{n\in \mathbb{N}^{+}}\|v_{0n}^N\|_{H^{10}}^2
    \lesssim\int_{\{|\xi|\leq N\}}\langle\xi\rangle^{20}|\widehat{v}_{0n}(\xi)|^2\dd\xi\lesssim N^{20}\sup_{n\in \mathbb{N}^{+}}\|v_{0n}\|_{L^2}^2\lesssim N^{20}.
\end{equation}
Let $v_n^N\in C([0,T];H^{10})$ and $v^N\in C([0,T];H^{10})$ be the solution to \eqref{CP} with initial data $v^N_n(0)=v^N_{0n}$ and $v^N(0)=v^N_0$, respectively. Recall that, the Cauchy problem \eqref{CP} is locally well-posed in $H^{r}$ for all $r>\frac{3}{4}$ and \emph{the initial data to solution} map is smooth. Therefore, from~\eqref{est:AppBsupn} and the conclusion in Step 1, for any $t\in [0,1]$ and $g\in C_{c}^{\infty}(\R^{2})$, we obtain
\begin{equation}\label{est:AppBweakN}
    \sup_{n\in \mathbb{N}^{+}}\sup_{t\in [0,1]}\|v^N_n(t)\|_{H^{10}}<\infty \Longrightarrow
    \lim_{n\rightarrow\infty}\int_{\mathbb{R}^2}\left(v_n^N(t,x)-v^N(t,x)\right)g(x)\dd x=0.
\end{equation}
On the other hand, from~\eqref{est:AppBuni}, for any $r\in\left(\frac{3}{4},\frac{7}{9}\right)$, we have 
\begin{equation*}
\begin{aligned}
    \|v_{0n}^N-v_{0n}\|_{H^{r}}^2+ \|v_{0}^N-v_{0}\|_{H^{r}}^2
    &\lesssim \int_{\left\{\left|\xi\right|\ge N\right\}}\langle\xi\rangle^{2r}
    \left(|\widehat{v}_{0n}(\xi)|^2+|\widehat{v}_{0}(\xi)|^2\right)\dd\xi\\
    &\lesssim N^{-\frac{14}{9}+2r}\left(\|v_{0n}\|_{H^{\frac{7}{9}}}^2+\|v_{0}\|_{H^{\frac{7}{9}}}\right)\to 0, \ \mbox{as}\ N\to \infty.
    \end{aligned}
\end{equation*}
Using again the local theory of the Cauchy problem~\eqref{CP},  we have
\begin{equation}\label{est:AppBHr}
    \sup_{n\in \mathbb{N}^{+}}\sup_{t\in[0,1]}\left(\|v^N_n(t)-v_n(t)\|_{H^{r}}+\|v^N(t)-v(t)\|_{H^{r}}\right)\to 0,\quad \mbox{as}\ N\to \infty.
\end{equation}
Combining estimates~\eqref{est:AppBweakN} and~\eqref{est:AppBHr}, for any $g\in C_{c}^{\infty}(\R^{2})$, we obtain 
\begin{equation*}
    \lim_{n\rightarrow\infty}\int_{\mathbb{R}^2}\left(v_n(t,x)-v(t,x)\right)g(x)\dd x=0,
\end{equation*}
which implies a weak convergence immediately, and thus the proof is complete.

\smallskip
\textbf{Step 3.} Geometric parameters.
By the Arzela-Ascoli Theorem, we know that there exist 
$(\lambda,b,x_{1},\varepsilon)$ and a subsequences $\left(\lambda_{n_k},b_{n_k},x_{1n_k},\varepsilon_{n_{k}}\right)_{k\in \mathbb{N}^{+}}$ satisfy
\begin{equation*}
    \begin{aligned}
        \varepsilon_{n_{k}}(t)\rightharpoonup \varepsilon(t),\quad \mbox{in}\ \ H^{\frac{7}{9}}\ \ \  \mbox{as}\ \ k\to \infty,\quad \mbox{for all}\ t\in [0,1],\\
        \lim_{k\to \infty}\left(\lambda_{n_{k}}(t),b_{n_{k}}(t),x_{1n_{k}}(t)\right)=(\lambda(t),b(t),x_{1}(t)),\quad \mbox{for all}\ t\in [0,1].
        \end{aligned}
    \end{equation*}
    It follows directly that $v(t)$ satisfies the geometric decomposition in Proposition~\ref{Prop:decomposition} with parameters $(\lambda,b,x_{1},\varepsilon)$ on $[0,1]$. Last, using a standard contradiction argument and the uniqueness of the decomposition, we conclude that 
    \begin{equation*}
    \begin{aligned}
        \varepsilon_{n}(t)\rightharpoonup \varepsilon(t),\quad \mbox{in}\ \ H^{\frac{7}{9}}\ \ \  \mbox{as}\ \ n\to \infty,\quad \mbox{for all}\ t\in [0,1],\\
        \lim_{n\to \infty}\left(\lambda_{n}(t),b_{n}(t),x_{1n}(t)\right)=(\lambda(t),b(t),x_{1}(t)),\quad \mbox{for all}\ t\in [0,1].
        \end{aligned}
    \end{equation*}
    The proof of Lemma~\ref{lemma:wc} is complete.

\end{proof}


\begin{thebibliography}{99}
	
	\bibitem{Banica}
	V. Banica, R. Carles and T. Duyckaerts.
	Minimal blow-up solutions to the mass-critical inhomogeneous NLS equation. \emph{Comm. Partial Differential Equations} \textbf{36} (2011), no. 3, 487–531.
	
\bibitem{BGFR}
D. Bhattacharya, L. G. Farah and S. Roudenko.
Global well-posedness for low regularity data in the 2d modified Zakharov-Kuznetsov equation.
\emph{J. Differential Equations} \textbf{268} (2020), no. 12, 7962–7997.

\bibitem{BGMY} 
F. Bozgan, T. E. Ghoul, N. Masmoudi and K. Yang.
Blow-Up Dynamics for the $L^{2}$ critical case of the 2D Zakharov-Kuznetsov equation.
Preprint, arXiv: 2406.06568.

\bibitem{Caz}
T. Cazenave. 
\emph{Semilinear Schr\"odinger equations.}
 Courant Lecture Notes in Mathematics 10. Providence,
RI: American Mathematical Society (AMS); New York, NY: CIMS (2003).

\bibitem{CLY}
G. Chen, Y. Lan and  X. Yuan. 
On the near soliton dynamics for the 2D cubic Zakharov-Kuznetsov equations.
To appear in \emph{Comm. Math. Phys.}

\bibitem{CLYMINI}
G. Chen, Y. Lan and X. Yuan. 
Nonexistence of minimal mass blow-up solution for the 2D cubic Zakharov-Kuznetsov equation. To appear in \emph{SIAM J. Math. Anal.}

\bibitem{Coll}
J.~Colliander and P.~Rapha\"el.
Rough blowup solutions to the $L^{2}$ critical NLS.
\emph{Math. Ann.} \textbf{345} (2009), no. 2, 307–366.

\bibitem{Com}
V. Combet and F. Genoud.
Classification of minimal mass blow-up solutions for an $L^{2}$ critical inhomogeneous NLS. 
\emph{J. Evol. Equ.} \textbf{16} (2016), no. 2, 483–500.

\bibitem{Comkdv}
V. Combet and Y. Martel.
Sharp asymptotics for the minimal mass blow up solution of the critical gKdV equation. 
\emph{Bull. Sci. Math.} \textbf{141} (2017), no. 2, 20–103. 

\bibitem{CMPS} 
R. C\^ote, C. Mu\~noz. D. Pilod and G. Simpson.
 Asymptotic stability of high-dimensional Zakharov-Kuznetsov solitons. 
\emph{Arch. Ration. Mech. Anal}. \textbf{220} (2016), no. 2, 639–710.

\bibitem{DEZK}
A. de Bouard.
Stability and instability of some nonlinear dispersive solitary waves in higher dimension.
\emph{Proc. Roy. Soc. Edinburgh Sect. A} \textbf{126} (1996), no. 1, 89–112.

\bibitem{Dodson1}
B. Dodson. 
A determination of the blowup solutions to the focusing NLS with mass equal to the mass of the soliton. \emph{Ann. PDE} \textbf{9} (2023), no. 1, Paper No. 3, 86 pp.

\bibitem{Dodson2}
B. Dodson.
A determination of the blowup solutions to the focusing, quintic NLS with mass equal to the mass of the soliton. \emph{Anal. PDE} \textbf{17} (2024), no. 5, 1693--1760.

\bibitem{Faminskii}
A. V. Faminskii.
The Cauchy problem for the Zakharov-Kuznetsov equation. (Russian)
\emph{Differentsialnye Uravneniya} \textbf{31} (1995), no. 6, 1070–1081, 1103; translation in
\emph{Differential Equations} \textbf{31} (1995), no. 6, 1002–1012.

\bibitem{FHR}
L.~G.~Farah, J.~Holmer and S.~Roudenko.
Instability of solitons in the 2d cubic Zakharov-Kuznetsov equation. 
\emph{Nonlinear dispersive partial differential equations and inverse scattering}, 295–371, Fields Inst. Commun., 83, Springer, New York, 2019.

\bibitem{FHR1}
L.~G.~Farah, J.~Holmer and S.~Roudenko.
Instability of solitons – revisited, II: The supercritical Zakharov-Kuznetsov equation.
\emph{Nonlinear dispersive waves and fluids}, 89–109, Contemp. Math., 725, 
\emph{Amer. Math. Soc.,} 2019.

\bibitem{FHRY1} 
L.~G.~Farah, J.~Holmer, S.~Roudenko and K. Yang.
Asymptotic stability of solitary waves of the 3D quadratic Zakharov-Kuznetsov equation. 
\emph{Amer. J. Math.} \textbf{145} (2023), no. 6, 1695–1775.
 
\bibitem{FHRY}
L.~G.~Farah, J.~Holmer, S.~Roudenko and K. Yang.
Blow-up in finite or infinite time of the 2D cubic Zakharov-Kuznetsov equation.
Preprint, arXiv: 1810.05121.

%
%
%
\bibitem{KenigMartel}
C. E. Kenig and Y. Martel.
Asymptotic stability of solitons for the Benjamin-Ono equation. 
\emph{Rev. Mat. Iberoam.} \textbf{25} (2009), no. 3, 909–970.

\bibitem{KPV}
C. E. Kenig, G. Ponce, and L. Vega. 
Well‐posedness and scattering results for the generalized Korteweg‐de Vries equation via the contraction principle. 
\emph{Comm. Pure Appl. Math.} \textbf{46} (1993), no. 4, 527–620.

\bibitem{Kinoshita}
S. Kinoshita.
Well-posedness for the Cauchy problem of the modified Zakharov-Kuznetsov equation.
\emph{Funkcial. Ekvac.} \textbf{65} (2022), no. 2, 139–158.

%
%
%
%
%

\bibitem{Krieger}
J. Krieger, E. Lenzmann and P. Rapha\"el.
Nondispersive solutions to the $L^{2}$-critical half-wave equation. 
\emph{Arch. Ration. Mech. Anal.} \textbf{209} (2013), no. 1, 61–129.

%
%
%
%
%
%
%
\bibitem{LinPas}
F. Linares and A. Pastor.
Well-posedness for the two-dimensional modified Zakharov-Kuznetsov equation. 
\emph{SIAM J. Math. Anal.} \textbf{41} (2009), no. 4, 1323–1339.

\bibitem{LinPasJFA}
F. Linares and A. Pastor. 
Local and global well-posedness for the 2D generalized Zakharov-Kuznetsov equation.
\emph{J. Funct. Anal.} \textbf{260} (4) (2011) 1060–1085. 

\bibitem{LeCoz}
S. Le Coz, Y. Martel and P. Rapha\"el.
Minimal mass blow up solutions for a double power nonlinear Schr\"odinger equation.
\emph{Rev. Mat. Iberoam.} \textbf{32} (2016), no. 3, 795–833.

%
%
%
%
%
%
%
%
\bibitem{MMDUKE}
Y. Martel and F. Merle.
Nonexistence of blow-up solution with minimal $L^{2}$-mass for the critical gKdV equation. 
\emph{Duke Math. J.} \textbf{115} (2002), no. 2, 385–408.

%
%
\bibitem{MMR1}
Y. Martel, F. Merle and P. Rapha\"el.
Blow up for the critical gKdV equation. II: Minimal mass dynamics. 
\emph{J. Eur. Math. Soc. (JEMS)} \textbf{17} (2015), no. 8, 1855–1925.

%
%
%

\bibitem{MartelPilodAnn}
Y. Martel and D. Pilod.
Construction of a minimal mass blow up solution of the modified Benjamin-Ono equation. 
\emph{Math. Ann.} \textbf{369} (2017), no. 1-2, 153–245.


\bibitem{MartelPilod}
Y. Martel and D. Pilod.
Finite point blowup for the critical generalized Korteweg–de Vries equation.
\emph{Ann. Sc. Norm. Super. Pisa Cl. Sci.} (5) \textbf{25} (2024), no. 1, 371–425.

\bibitem{Merlenls}
F. Merle.
Determination of blow-up solutions with minimal mass for nonlinear Schr\"odinger equations with critical power. \emph{Duke Math. J.} \textbf{69} (1993), no. 2, 427–454.

\bibitem{Merlenonnls}
F. Merle.
Nonexistence of minimal blow-up solutions of equations 
$iu_{t}=-\Delta u-k(x)|u|^{4/N}u$ in $\R^{N}$.
Ann. Inst. H. Poincar\'e Phys. Th\'eor. \textbf{64} (1996), no. 1, 33–85.

%
%
%
%
%
%
%
%

\bibitem{MoRi}
L.~Molinet and F.~Ribaud.
Well-posedness results for the generalized Benjamin–Ono equation with small initial data. 
\emph{J. Math. Pures Appl.}, \textbf{83} (2004), no. 2, 277--311.

%

\bibitem{PV2} 
D. Pilod and F. Valet. 
Asymptotic stability of a finite sum of solitary waves for the Zakharov-Kuznetsov equation. 
\emph{Nonlinearity} \textbf{37} (2024), no. 10, Paper No. 105001, 41 pp.

\bibitem{PV1} 
D. Pilod and F. Valet. 
Dynamics of the collision of two nearly equal solitary waves for the Zakharov-Kuznetsov equation. 
\emph{Comm. Math. Phys.} \textbf{405} (2024), no. 12, Paper No. 287, 94 pp.


%

\bibitem{RaSz}
P. Rapha\"el and J. Szeftel.
Existence and uniqueness of minimal blow-up solutions to an
inhomogeneous mass critical NLS.
\emph{J. Am. Math. Soc.} \textbf{24} (2011), No. 2, 471-576.

%
%

\bibitem{RibaudVento2}
F. Ribaud and S. Vento.
Well-posedness results for the three-dimensional Zakharov--Kuznetsov equation. 
\emph{SIAM J. Math. Anal.} \textbf{44} (2012), no. 4, 2289--2304. 

\bibitem{SmithSogge}
H. F. Smith and C. D. Sogge. 
Global Strichartz estimates for nontrapping perturbations of the laplacian: Estimates for nontrapping perturbations.
\emph{Comm. Partial Differential Equations} \textbf{25} (2000), no.11--12, 2171-2183.

\bibitem{Suminimal}
Y. Su and D. Zhang.
Construction of minimal mass blow-up solutions to rough nonlinear Schr\"odinger equations.
\emph{J. Funct. Anal.} \textbf{284} (2023), no. 5, Paper No. 109796, 61 pp.


\bibitem{SunZheng}
C. Sun and J. Zheng.
Low regularity blowup solutions for the mass-critical NLS in higher dimensions.
\emph{J. Math. Pures Appl.} (9) \textbf{134} (2020), 255–298.

\bibitem{TangXu}
X. Tang and G. Xu.
Minimal mass blow-up solutions for the $L^{2}$-critical NLS with the delta potential for even data in one dimension. 
\emph{SIAM J. Math. Anal.} \textbf{56} (2024), no. 2, 1727–1769.


%
%
\bibitem{Weinstein}
M. I. Weinstein.
Nonlinear Schr\"odinger equations and sharp interpolation estimates. 
\emph{Comm. Math. Phys.} \textbf{87} (1982/83), no. 4, 567–576.

\end{thebibliography}
\end{document}